\documentclass[a4paper, 12pt]{article}

\usepackage{amsmath} 
\usepackage{amstext}
\usepackage{amsthm}
\usepackage{amscd} 
\usepackage{amsopn} 
\usepackage{verbatim} 
\usepackage{amssymb}
\usepackage{amsfonts}
\usepackage{mathtools}
\usepackage[mathscr]{euscript}
\usepackage{fullpage}
\usepackage{enumitem}
\usepackage{ulem}
\usepackage{nicematrix}

\usepackage[bbgreekl]{mathbbol}
\usepackage{tikz}
\usepackage{tikz-cd}
\usetikzlibrary{babel}
\usetikzlibrary{matrix}
\usetikzlibrary{shapes}
\usetikzlibrary{calc,3d}
\usetikzlibrary{through}
\usetikzlibrary{arrows,calc,matrix,topaths,positioning,scopes,shapes,decorations,decorations.markings,decorations.pathreplacing,decorations.pathmorphing} 

\usepackage{graphicx}

\usetikzlibrary{shapes.geometric}

\tikzset{
    edge/.style={draw, -}
}

\tikzset{
    bu/.style={circle, fill, inner sep=1.2pt},
    bbrace/.style={decorate, decoration={brace, amplitude=3pt}},
    mbrace/.style={decorate, decoration={brace, amplitude=3pt, mirror}}
}

\tikzset{ext/.style={circle, draw,inner sep=1pt},int/.style={circle,draw,fill,inner sep=1pt},nil/.style={inner sep=1pt}}
\tikzset{exte/.style={circle, draw,inner sep=3pt},inte/.style={circle,draw,fill,inner sep=3pt}}
\tikzset{diagram/.style={matrix of math nodes, row sep=3em, column sep=2.5em, text height=1.5ex, text depth=0.25ex}}
\tikzset{diagram2/.style={matrix of math nodes, row sep=0.5em, column sep=0.5em, text height=1.5ex, text depth=0.25ex}}
\tikzset{every picture/.append style={baseline=-.65ex}}
\tikzset{de/.style={-latex}} 
\tikzset{ed/.style={latex-}} 

\tikzstyle{int}=[circle, draw,fill=black,outer sep=0,minimum size=3pt, inner sep=0]
\tikzstyle{ext}=[circle, draw=black,outer sep=0,inner sep=1pt]
\tikzstyle{black}=[circle,fill=black!,inner sep=0pt,minimum size=2mm]
\tikzstyle{white}=[circle,inner sep=0pt,draw=black,minimum size=2mm]
\tikzstyle{invisible}=[coordinate,inner sep=0pt,minimum size=2mm]
\tikzstyle{biw}=[circle split,inner sep=1pt,draw=black,minimum size=8mm]
\tikzstyle{biwb}=[rectangle,inner sep=1pt,draw=black,minimum size=8mm]
\tikzstyle{uniw}=[circle,inner sep=1pt,draw=black,minimum size=8mm]
\tikzstyle{unimin}=[circle,inner sep=1pt,draw=black,minimum size=5mm]

\tikzstyle{sq}=[draw, fill, rectangle, minimum size=3pt, inner sep=0pt]		
\tikzstyle{colie}   = [circle,thin, minimum width=4pt, draw, inner sep=0pt, path picture={\draw (path picture bounding box.south east) -- (path picture bounding box.north west) (path picture bounding box.south west) -- (path picture bounding box.north east);}]
\tikzstyle{coliel}   = [circle,thin, minimum width=6pt, draw, inner sep=0pt, path picture={\draw (path picture bounding box.south east) -- (path picture bounding box.north west) (path picture bounding box.south west) -- (path picture bounding box.north east);}]
\tikzstyle{lie}=[draw,thin,fill, circle, minimum size=4pt, inner sep=0pt]	
\tikzstyle{prelie}=[draw,thin, fill=blue!50, circle, minimum size=4pt, inner sep=0pt]	
\tikzstyle{coprod}=[draw,thin, circle, minimum size=4pt, inner sep=0pt]	
\tikzstyle{prod}=[draw,thin, fill=black, circle, minimum size=4pt, inner sep=0pt]	
\tikzstyle{circblack}=[draw,thin,fill, circle, minimum size=4pt, inner sep=0pt]

\usepackage{hyperref}

\providecommand{\eprint}[2][]{\href{http://arxiv.org/abs/#2}{arXiv:#2}}

\usepackage[OT2, T1]{fontenc}
\newcommand\cyrillic[1]{{\fontencoding{OT2}\fontfamily{wncyr}\selectfont#1}}
\newcommand\mathcyr[1]{\text{\cyrillic{#1}}}
\newcommand\shuffle{\textbf{\mathcyr{Sh}}}

\newcommand{\cin}{\mathsf{in}}
\newcommand{\cout}{\mathsf{out}}
\newcommand{\shape}{\mathsf{Shape}}

\newcommand{\cB}{{\mathcal B}}

\newcommand{\cF}{{\mathcal F}}

\newcommand{\cK}{{\mathcal K}}
\newcommand{\caL}{{\mathcal L}}

\newcommand{\cP}{{\mathcal P}}

\newcommand{\Hom}{{\mathrm H\mathrm o\mathrm m}}

\newcommand{\Ass}{\mathcal{A} \mathit{ss}}

\newcommand{\QP}{\mathcal{QP}\mathit{ois}}

\newcommand{\LieBi}{{\caL \mathit{ieb}}}

\newcommand{\LB}{\mathcal{L}\mathit{ieb}}
\newcommand{\LieTriang}{\mathcal{L}\mathit{ieb}^{\triangle}}

\newcommand{\ad}{\mathsf{ad}}

\newcommand{\Frob}{{\cF rob}}

\newcommand{\gr}{\mathrm{gr}}

\newcommand{\ar}{\mathsf{ar}}

\newcommand{\ldot}{{\:\raisebox{1.5pt}{\selectfont\text{\circle*{1.5}}}}}
\newcommand{\udot}{{\:\raisebox{4pt}{\selectfont\text{\circle*{1.5}}}}}

\newcommand{\ttt}{\text{-}}

\let\leq\leqslant
\let\geq\geqslant

\newcommand{\calB}{\mathcal{B}}
\newcommand{\calC}{\mathcal{C}}
\newcommand{\calD}{\mathcal{D}}

\newcommand{\calP}{\mathcal{P}}
\newcommand{\calQ}{{\mathcal{Q}}}
\newcommand{\calF}{{\mathcal{F}}}
\newcommand{\calI}{{\mathcal{I}}}

\newcommand{\calV}{\mathcal V}

\newcommand{\VV}{{\mathbb{V}}}

\newcommand{\Sgn}{\mathsf{Sgn}}
\newcommand{\End}{\mathit{End}}

\newcommand\Com{\mathsf{Comm}}

\newcommand\Lie{\mathsf{Lie}}

\newcommand{\Id}{\mathsf{Id}}

\newcommand{\one}{\mathbb{1}}
\newcommand{\sgn}{\mathsf{Sgn}}

\newcommand{\Free}{\mathrm{Free}}
\newcommand{\coFree}{\mathrm{coFree}}

\DeclareRobustCommand{\rchi}{{\mathpalette\irchi\relax}}
\newcommand{\irchi}[2]{\raisebox{\depth}{$#1\chi$}}

  \newcommand{\bF}{{\mathbb F}}

\newcommand{\bS}{{\mathbb S}} 
  
  \newcommand{\bZ}{{\mathbb Z}}

\newcommand\quis{\mathsf{quis}}

\numberwithin{equation}{subsection}

\newtheorem{theorem}[equation]{Theorem}
\newtheorem*{theorem*}{Theorem}
\newtheorem{proposition}[equation]{Proposition}
\newtheorem*{proposition*}{Proposition}

\newtheorem*{statement*}{Statement}
\newtheorem{lemma}[equation]{Lemma}
\newtheorem*{lemma*}{Lemma}
\newtheorem{corollary}[equation]{Corollary}
\newtheorem*{corollary*}{Corollary}

\newtheorem{definition}[equation]{Definition}
\newtheorem*{definition*}{Definition}
\newtheorem{notation}[equation]{Notation}
\newtheorem{remark}[equation]{Remark}
\newtheorem*{remark*}{Remark}
\newtheorem{example}[equation]{Example}
\newtheorem*{example*}{Example}

\newcommand{\antish}{\text{\raisebox{\depth}{\textexclamdown}}} 

\newcommand{\op}{\mathbf{op}}
\newcommand{\diop}{\mathsf{diop}}

\tikzset{ver/.style={circle, draw,inner sep=2pt}}

\newcommand{\liePic}[2]
{
\begin{tikzpicture}[scale=0.1pt, baseline=(current bounding box.center)]
	\draw (0,-0.55) -- (0,-2.5);
	\draw (0.5,0.5) -- (2.2,2.2);
	\draw (-0.48,0.48) -- (-2.2,2.2);
	\node[ext] (A) at (0,0)   {};
	\node at (2.7,2.8) {$\scriptscriptstyle #2$};
	\node at (-2.7,2.8) {$\scriptscriptstyle #1$};
\end{tikzpicture}
}

\newcommand{\coliePic}[2]
{
	\begin{tikzpicture}[scale=0.1pt,yscale=-1, baseline=(current bounding box.center)]
		\draw (0,-0.55) -- (0,-2.5);
		\draw (0.5,0.5) -- (2.2,2.2);
		\draw (-0.48,0.48) -- (-2.2,2.2);
		\node[ext] (A) at (0,0)   {};
		\node at (2.7,2.8) {$\scriptscriptstyle #2$};
		\node at (-2.7,2.8) {$\scriptscriptstyle #1$};
	\end{tikzpicture}
}

\newcommand{\multPic}[2]
{
	\begin{tikzpicture}[scale=0.1pt , baseline=(current bounding box.center)]
		\draw (0,-0.55) -- (0,-2.5);
		\draw (0.5,0.5) -- (2.2,2.2);
		\draw (-0.48,0.48) -- (-2.2,2.2);
		\node[int] (A) at (0,0)   {};
		\node at (2.7,2.8) {$\scriptscriptstyle #2$};
		\node at (-2.7,2.8) {$\scriptscriptstyle #1$};
	\end{tikzpicture}
}

\newcommand{\comultPic}[2]
{
	\begin{tikzpicture}[scale=0.1pt,yscale=-1 , baseline=(current bounding box.center)]
		\draw (0,-0.55) -- (0,-2.5);
		\draw (0.5,0.5) -- (2.2,2.2);
		\draw (-0.48,0.48) -- (-2.2,2.2);
		\node[int] (A) at (0,0)   {};
		\node at (2.7,2.8) {$\scriptscriptstyle #2$};
		\node at (-2.7,2.8) {$\scriptscriptstyle #1$};
	\end{tikzpicture}
}

\newcommand{\LLiePic}[3]
{{
\begin{tikzpicture}[x=1.2mm,y=1.2mm , baseline=(current bounding box.center)]	
	\node[ext] (A) at (0,0) {};
	\draw (0,-0.49) -- (0,-3.0);
	\draw (0.49,0.49) -- (1.9,1.9);
	\draw (-0.5,0.5) -- (-1.9,1.9);	
	\node[ext] (B) at (-2.3,2.3) {};
	\draw (-1.8,2.8) -- (0,4.9);
	\draw (-2.8,2.9) -- (-4.6,4.9);	
	\node at (2.7,2.3) {\scriptsize $#3$};
	\node at (0.4,5.3) {\scriptsize $#2$};
	\node at (-5.1,5.3) {\scriptsize $#1$};
\end{tikzpicture}
}}

\newcommand{\LLiePicReverse}[3]
{{
\begin{tikzpicture}[x=1.2mm,y=1.2mm, xscale=-1 , baseline=(current bounding box.center)]	
	\node[ext] (A) at (0,0) {};
	\draw (0,-0.49) -- (0,-3.0);
	\draw (0.49,0.49) -- (1.9,1.9);
	\draw (-0.5,0.5) -- (-1.9,1.9);	
	\node[ext] (B) at (-2.3,2.3) {};
	\draw (-1.8,2.8) -- (0,4.9);
	\draw (-2.8,2.9) -- (-4.6,4.9);	
	\node at (2.7,2.3) {\scriptsize $#1$};
	\node at (0.4,5.3) {\scriptsize $#2$};
	\node at (-5.1,5.3) {\scriptsize $#3$};
\end{tikzpicture}
}}

\newcommand{\mLiePic}[3]
{{
		\begin{tikzpicture}[x=1mm,y=1mm , baseline=(current bounding box.center)]	
			\node[int] (A) at (0,0) {};
			\draw (0,-0.49) -- (0,-3.0);
			\draw (0.49,0.49) -- (1.9,1.9);
			\draw (-0.5,0.5) -- (-1.9,1.9);	
			\node[int] (B) at (-2.3,2.3) {};
			\draw (-1.8,2.8) -- (0,4.9);
			\draw (-2.8,2.9) -- (-4.6,4.9);	
			\node at (2.7,2.3) {\scriptsize $#3$};
			\node at (0.4,5.3) {\scriptsize $#2$};
			\node at (-5.1,5.3) {\scriptsize $#1$};
		\end{tikzpicture}
}}

\newcommand{\coLLiePic}[3]
{
	\begin{tikzpicture}[x=1mm,y=1mm, yscale=-1 , baseline=(current bounding box.center)] 	
		\node[ext] (A) at (0,0) {};
		
		\draw (0,-0.49) -- (0,-3.0);
		
		\draw (0.49,0.49) -- (1.9,1.9);
		\draw (-0.5,0.5) -- (-1.9,1.9);
		
		\node[ext] (B) at (-2.3,2.3) {};
		
		\draw (-1.8,2.8) -- (0,4.9);
		\draw (-2.8,2.9) -- (-4.6,4.9);
		
		\node at (2.7,2.3) {\scriptsize $#3$};
		\node at (0.4,5.3) {\scriptsize $#2$};
		\node at (-5.1,5.3) {\scriptsize $#1$};
	\end{tikzpicture}
}

\newcommand{\coLLiePicReverse}[3]
{
	\begin{tikzpicture}[x=1mm,y=1mm,yscale=-1,xscale=-1 , baseline=(current bounding box.center)]	
		\node[ext] (A) at (0,0) {};
		
		\draw (0,-0.49) -- (0,-3.0);
		
		\draw (0.49,0.49) -- (1.9,1.9);
		\draw (-0.5,0.5) -- (-1.9,1.9);
		
		\node[ext] (B) at (-2.3,2.3) {};
		
		\draw (-1.8,2.8) -- (0,4.9);
		\draw (-2.8,2.9) -- (-4.6,4.9);
		
		\node at (2.7,2.3) {\scriptsize $#1$};
		\node at (0.4,5.3) {\scriptsize $#2$};
		\node at (-5.1,5.3) {\scriptsize $#3$};
	\end{tikzpicture}
}

\newcommand{\mcoLLiePic}[3]
{
	\begin{tikzpicture}[x=1mm,y=1mm, yscale=-1 , baseline=(current bounding box.center)]	
		\node[int](A) at (0,0) {};
		
		\draw (0,-0.49) -- (0,-3.0);
		
		\draw (0.49,0.49) -- (1.9,1.9);
		\draw (-0.5,0.5) -- (-1.9,1.9);
		
		\node[int](B) at (-2.3,2.3) {};
		
		\draw (-1.8,2.8) -- (0,4.9);
		\draw (-2.8,2.9) -- (-4.6,4.9);
		
		\node at (2.7,2.3) {\scriptsize $#3$};
		\node at (0.4,5.3) {\scriptsize $#2$};
		\node at (-5.1,5.3) {\scriptsize $#1$};
	\end{tikzpicture}
}

\newcommand{\LiecoLiePic}[4]
{
\begin{tikzpicture}[x=1mm,y=1mm , baseline=(current bounding box.center)]
	\node[ext] (A) at (0,-0.8) {};
	\draw (0,-1.3) -- (0,-3.5);                 
	\draw (0.38,-0.2) -- (2.0,2.0);             
	\draw (-0.38,-0.2) -- (-2.2,2.2);           
	\node[ext] (B) at (2.4,2.4) {};
	\draw (2.77,2.0) -- (4.4,-0.8);             
	\draw (2.4,3.0) -- (2.4,5.2);               
	\node at (0,-5.3)  {\scriptsize $#1$};
	\node at (5.1,-2.6){\scriptsize $#2$};
	\node at (-2.8,2.5){\scriptsize $#3$};
	\node at (2.4,5.7) {\scriptsize $#4$};	
\end{tikzpicture}
}

\newcommand{\LiecoLiePicT}[4]
{
\begin{tikzpicture}[x=1mm,y=1mm,xscale=-1 , baseline=(current bounding box.center)]
	\node[ext] (A) at (0,-0.8) {};
	\draw (0,-1.3) -- (0,-3.5);                 
	\draw (0.38,-0.2) -- (2.0,2.0);             
	\draw (-0.38,-0.2) -- (-2.2,2.2);           
	\node[ext] (B) at (2.4,2.4) {};
	\draw (2.77,2.0) -- (4.4,-0.8);             
	\draw (2.4,3.0) -- (2.4,5.2);               
	\node at (0,-5.3)  {\scriptsize $#1$};
	\node at (5.1,-2.6){\scriptsize $#2$};
	\node at (-2.8,2.5){\scriptsize $#3$};
	\node at (2.4,5.7) {\scriptsize $#4$};	
\end{tikzpicture}
}

\newcommand{\mLiecoLiePic}[4]
{
	\begin{tikzpicture}[x=1mm,y=1mm , baseline=(current bounding box.center)]
		
		\node[int] (A) at (0,-0.8) {};
		
		\draw (0,-1.3) -- (0,-3.5);                 
		\draw (0.38,-0.2) -- (2.0,2.0);             
		\draw (-0.38,-0.2) -- (-2.2,2.2);           
		
		\node[int] (B) at (2.4,2.4) {};
		
		\draw (2.77,2.0) -- (4.4,-0.8);             
		\draw (2.4,3.0) -- (2.4,5.2);               
		
		\node at (0,-5.3)  {\scriptsize $#1$};
		\node at (5.1,-2.6){\scriptsize $#2$};
		\node at (-2.8,2.5){\scriptsize $#3$};
		\node at (2.4,5.7) {\scriptsize $#4$};
	\end{tikzpicture}
}

\newcommand{\mVPic}[2]
{
	\begin{tikzpicture}[x=1mm,y=1mm , baseline=(current bounding box.center)]
		\node[ext] (A) at (0,-0.8) {};	
		\draw (0,-1.3) -- (0,-3.5);                 
		\draw (0.38,-0.2) -- (2.0,2.0);             
		\draw (-0.38,-0.2) -- (-2.2,2.2);           
		\node[int] (B) at (2.4,2.4) {};
		
		\draw (2.77,2.0) -- (4.4,-0.8);             
		
		\node at (0,-5.3)  {\scriptsize $#1$};
		\node at (5.1,-2.6){\scriptsize $#2$};
		\end{tikzpicture}
}
\newcommand{\mVPicT}[2]
{
	\begin{tikzpicture}[x=1mm,y=1mm, xscale = -1 , baseline=(current bounding box.center)]
		\node[ext] (A) at (0,-0.8) {};	
		\draw (0,-1.3) -- (0,-3.5);                 
		\draw (0.38,-0.2) -- (2.0,2.0);             
		\draw (-0.38,-0.2) -- (-2.2,2.2);           
		\node[int] (B) at (2.4,2.4) {};
		
		\draw (2.77,2.0) -- (4.4,-0.8);             
		
		\node at (0,-5.3)  {\scriptsize $#2$};
		\node at (5.1,-2.6){\scriptsize $#1$};
	\end{tikzpicture}
}

\newcommand{\qpgen}[4]{
	\begin{tikzpicture}[scale=0.7]
		\node[int] (v) at (0,0) {};
		\node (v0) at (-.5,.5) {\tiny{#1}};
		\node (v1) at (.5,.5) {\tiny{#2}};
		\node (u0) at (-.5,-.5) {\tiny{#3}};
		\node (u1) at (.5,-.5) {\tiny{#4}};
		\draw (u0) -- (v);
		\draw (u1) -- (v);
		\draw (v) -- (v0);
		\draw (v) -- (v1);
	\end{tikzpicture}
}

\newcommand{\qprel}[6]{
	\begin{tikzpicture}[scale=0.4]
		\node[int] (v) at (0,0) {};
		\node[int] (u) at (1,.5) {};
		\node (v1) at (-.5,1) {\tiny{#1}};
		\node (v2) at (.5,1.5) {\tiny{#2}};
		\node (v3) at (1.5,1.5) {\tiny{#3}};
		\node (u1) at (-.5,-1) {\tiny{#4}};
		\node (u2) at (.5,-1) {\tiny{#5}};
		\node (u3) at (1.5, -0.5) {\tiny{#6}};
		\draw (u1) -- (v);
		\draw (u2) -- (v);
		\draw (u3) -- (u);
		\draw (v) -- (u);
		\draw (v) -- (v1);
		\draw (u) -- (v2);
		\draw (u) -- (v3);
	\end{tikzpicture}
}

\newcommand{\RLie}[2]
{
	\begin{tikzpicture}[scale=0.1pt, yscale =-1 , baseline=(current bounding box.center)]
		\draw (0.5,0.5) -- (2.2,2.2);
		\draw (-0.48,0.48) -- (-2.2,2.2);
		\node[circle,draw,inner sep=1.5pt, fill = black] (A) at (0,0)   {};
		\node at (2.7,2.8) {$\scriptscriptstyle #2$};
		\node at (-2.7,2.8) {$\scriptscriptstyle #1$};
	\end{tikzpicture}
}	
\newcommand{\RLieRel}[3]
{
	\begin{tikzpicture}[x=1mm,y=1mm, scale=0.65, yscale =-1 , baseline=(current bounding box.center)]
		\node at (-5,7) {\scriptsize $#1$};
		\node at (-12,1) {\scriptsize $#2$};
		\node at (2,1)   {\scriptsize $#3$};
		
		\node[draw, circle, inner sep=1pt, fill = white] (L) at (-5,1) {};
		\node[fill=black,circle,inner sep=1.5pt] (C) at (-8,-5) {};
		\node[fill=black,circle,inner sep=1.5pt] (D) at (-2,-5) {};
		
		\draw (D) -- (L);
		\draw (D) -- (2,1);
		\draw (C) -- (L);
		\draw (C) -- (-12,1);
		\draw (-5,6) -- (L);
		
	\end{tikzpicture}
}

\newcommand{\strt}{{|}}
\newcommand{\dott}{
	\text{{\bf\scalebox{0.5}{\scriptsize $\vdots$}}}
}

\newcommand{\ovaltwo}[2]{%
  \coordinate (mid) at ($(#1)!0.5!(#2)$); 
  
  \coordinate (p1) at ($(mid)!0.5!90:(#2)$);
  \coordinate (p2) at ($(mid)!0.5!-90:(#2)$);

  \coordinate (ext1) at ($(mid)!1.5!(#1)$);
  \coordinate (ext2) at ($(mid)!1.5!(#2)$);

  \draw[red, smooth cycle, tension=1] 
    plot coordinates {(ext1) (p1) (ext2) (p2)};
}

\newcommand{\ovalthree}[3]{%
  \coordinate (mid) at ($(#1)!0.5!(#3)$); 
  
  \coordinate (p1) at ($(mid)!0.6!-90:(#3)$);
  \coordinate (p2) at ($(mid)!0.6!90:(#3)$);

  \coordinate (ext1) at ($(mid)!1.3!(#1)$);
  \coordinate (ext2) at ($(mid)!1.3!(#3)$);

  \draw[red, smooth cycle, tension=1] 
    plot coordinates {(ext1) (p1) (ext2) (p2)};
}

\newcommand{\LwordRLRLR}[3]{
\begin{tikzpicture}[scale=0.22, xscale=0.55]
   	\node[int] (v0) at (0,-2) {};
    \node (l0) at (0,-3.8) {};
    \draw (v0) -- (l0);
	\node[ext] (v1) at (0,0) {};
    \draw[dotted] (v0) -- (v1);    
	\node[int] (v2) at (-2,2) {};
    \draw (v1) -- (v2);
    \node (l1) at (2,2) {$\scriptstyle #3$};
    \draw[dotted] (v1) -- (l1);
    \node[ext] (v3) at (-2,4) {};
    \draw[dotted] (v2) -- (v3);
    \node (l2) at (0,6) {$\scriptstyle #2$};
    \draw[dotted] (v3) -- (l2);
    \node[int] (v4) at (-4,6) {};
    \draw (v3) -- (v4);
    \node (l3) at (-4,8.3) {$\scriptstyle #1$};
    \draw[dotted] (v4) -- (l3);
\end{tikzpicture}
}

\newcommand{\LwordLRLRR}[3]{
\begin{tikzpicture}[scale=0.22,  xscale=0.55]
   	\node[ext] (v0) at (0,0) {};
    \node (l0) at (0,-1.8) {};
    \draw (v0) -- (l0);
	\node[int] (v1) at (2,2) {};
    \draw (v0) -- (v1);    
	\node[int] (v2) at (-2,2) {};
    \draw (v0) -- (v2);
    \node (l1) at (2,4.3) {$\scriptstyle #3$};
    \draw[dotted] (v1) -- (l1);
    \node[ext] (v3) at (-2,4) {};
    \draw[dotted] (v2) -- (v3);
    \node (l2) at (0,6) {$\scriptstyle #2$};
    \draw[dotted] (v3) -- (l2);
    \node[int] (v4) at (-4,6) {};
    \draw (v3) -- (v4);
    \node (l3) at (-4,8.3) {$\scriptstyle #1$};
    \draw[dotted] (v4) -- (l3);
\end{tikzpicture}
}

\newcommand{\LwordLRRLR}[3]{
\begin{tikzpicture}[scale=0.22,  xscale=0.55]
   	\node[ext] (v0) at (0,0) {};
    \node (l0) at (0,-1.8) {};
    \draw (v0) -- (l0);
	\node[int] (v1) at (2,2) {};
    \draw (v0) -- (v1);    
	\node[int] (v2) at (-2,2) {};
    \draw (v0) -- (v2);
    \node (l1) at (-2,4.3) {$\scriptstyle #1$};
    \draw[dotted] (v2) -- (l1);
    \node[ext] (v3) at (2,4) {};
    \draw[dotted] (v1) -- (v3);
    \node (l2) at (4,6) {$\scriptstyle #3$};
    \draw[dotted] (v3) -- (l2);
    \node[int] (v4) at (0,6) {};
    \draw (v3) -- (v4);
    \node (l3) at (0,8.3) {$\scriptstyle #2$};
    \draw[dotted] (v4) -- (l3);
\end{tikzpicture}
}

\newcommand{\LwordRLLRR}[3]{
\begin{tikzpicture}[scale=0.22, xscale=0.55]
   	\node[int] (v0) at (0,-2) {};
    \node (l0) at (0,-3.8) {};
    \draw (v0) -- (l0);
	\node[ext] (v1) at (0,0) {};
    \draw[dotted] (v0) -- (v1);    
	\node[ext] (v2) at (-2,2) {};
    \draw (v1) -- (v2);
    \node (l1) at (2,2) {$\scriptstyle #3$};
    \draw[dotted] (v1) -- (l1);
    \node[int] (v3) at (0,4) {};
    \draw (v2) -- (v3);
    \node (l2) at (0,6.3) {$\scriptstyle #2$};
    \draw[dotted] (v3) -- (l2);
    \node[int] (v4) at (-4,4) {};
    \draw (v2) -- (v4);
    \node (l3) at (-4,6.3) {$\scriptstyle #1$};
    \draw[dotted] (v4) -- (l3);
\end{tikzpicture}
}

\newcommand{\LwordLLRRR}[3]{
\begin{tikzpicture}[scale=0.22, xscale=0.55]
   	\node[ext] (v0) at (0,0) {};
    \node (l0) at (0,-1.8) {};
    \draw (v0) -- (l0);
	\node[int] (v1) at (2,2) {};
    \draw (v0) -- (v1);    
	\node[ext] (v2) at (-2,2) {};
    \draw (v0) -- (v2);
    \node (l1) at (2,4.3) {$\scriptstyle #3$};
    \draw[dotted] (v1) -- (l1);
    \node[int] (v3) at (0,4) {};
    \draw (v2) -- (v3);
    \node (l2) at (0,6.3) {$\scriptstyle #2$};
    \draw[dotted] (v3) -- (l2);
    \node[int] (v4) at (-4,4) {};
    \draw (v2) -- (v4);
    \node (l3) at (-4,6.3) {$\scriptstyle #1$};
    \draw[dotted] (v4) -- (l3);
\end{tikzpicture}
}

\newcommand{\RwordLRRLR}[3]{
\begin{tikzpicture}[scale=0.22,  xscale=-0.55]
   	\node[ext] (v0) at (0,0) {};
    \node (l0) at (0,-1.8) {};
    \draw (v0) -- (l0);
	\node[int] (v1) at (2,2) {};
    \draw (v0) -- (v1);    
	\node[int] (v2) at (-2,2) {};
    \draw (v0) -- (v2);
    \node (l1) at (2,4.3) {$\scriptstyle #1$};
    \draw[dotted] (v1) -- (l1);
    \node[ext] (v3) at (-2,4) {};
    \draw[dotted] (v2) -- (v3);
    \node (l2) at (0,6) {$\scriptstyle #2$};
    \draw[dotted] (v3) -- (l2);
    \node[int] (v4) at (-4,6) {};
    \draw (v3) -- (v4);
    \node (l3) at (-4,8.3) {$\scriptstyle #3$};
    \draw[dotted] (v4) -- (l3);
\ovalthree{v2}{v3}{v4};
\end{tikzpicture}
}

\newcommand{\RwordLRLRR}[3]{
\begin{tikzpicture}[scale=0.22,  xscale=-0.55]
   	\node[ext] (v0) at (0,0) {};
    \node (l0) at (0,-1.8) {};
    \draw (v0) -- (l0);
	\node[int] (v1) at (2,2) {};
    \draw (v0) -- (v1);    
	\node[int] (v2) at (-2,2) {};
    \draw (v0) -- (v2);
    \node (l1) at (-2,4.3) {$\scriptstyle #3$};
    \draw[dotted] (v2) -- (l1);
    \node[ext] (v3) at (2,4) {};
    \draw[dotted] (v1) -- (v3);
    \node (l2) at (4,6.3) {$\scriptstyle #1$};
    \draw[dotted] (v3) -- (l2);
    \node[int] (v4) at (0,6) {};
    \draw (v3) -- (v4);
    \node (l3) at (0,8.3) {$\scriptstyle #2$};
    \draw[dotted] (v4) -- (l3);
\ovalthree{v1}{v3}{v4};
\end{tikzpicture}
}
\newcommand{\RwordRLLRR}[3]{
\begin{tikzpicture}[scale=0.22, xscale=-0.55]
   	\node[int] (v0) at (0,-2) {};
    \node (l0) at (0,-3.8) {};
    \draw (v0) -- (l0);
	\node[ext] (v1) at (0,0) {};
    \draw[dotted] (v0) -- (v1);    
	\node[ext] (v2) at (-2,2) {};
    \draw (v1) -- (v2);
    \node (l1) at (2,2) {$\scriptstyle #1$};
    \draw[dotted] (v1) -- (l1);
    \node[int] (v3) at (0,4) {};
    \draw (v2) -- (v3);
    \node (l2) at (0,6.3) {$\scriptstyle #2$};
    \draw[dotted] (v3) -- (l2);
    \node[int] (v4) at (-4,4) {};
    \draw (v2) -- (v4);
    \node (l3) at (-4,6.3) {$\scriptstyle #3$};
    \draw[dotted] (v4) -- (l3);
    \ovaltwo{v1}{v2};
\end{tikzpicture}
}

\newcommand{\RLwordLRLRR}[3]{
\begin{tikzpicture}[scale=0.22, xscale=-0.55]
   	\node[ext] (v0) at (0,0) {};
    \node (l0) at (0,-1.8) {};
    \draw (v0) -- (l0);
	\node[int] (v1) at (2,2) {};
    \draw (v0) -- (v1);    
	\node[ext] (v2) at (-2,2) {};
    \draw (v0) -- (v2);
    \node (l1) at (2,4.3) {$\scriptstyle #1$};
    \draw[dotted] (v1) -- (l1);
    \node[int] (v3) at (0,4) {};
    \draw (v2) -- (v3);
    \node (l2) at (0,6.3) {$\scriptstyle #2$};
    \draw[dotted] (v3) -- (l2);
    \node[int] (v4) at (-4,4) {};
    \draw (v2) -- (v4);
    \node (l3) at (-4,6.3) {$\scriptstyle #3$};
    \draw[dotted] (v4) -- (l3);
    \ovaltwo{v0}{v2};
\end{tikzpicture}
}

\newcommand{\LRwordRLLRR}[3]{
\begin{tikzpicture}[scale=0.22,xscale=0.55]
   	\node[int] (v0) at (0,-2) {};
	\node[ext] (v1) at (0,0) {};
	\node[ext] (v2) at (-2,2) {};
    \node[int] (v3) at (2,2) {};
    \node[int] (v4) at (-4,4) {};
    
    \node (l0) at (0,-3.8) {};
    \node (l1) at (-4,6.3) {$\scriptstyle #1$};
    \node (l2) at (0,4.3) {$\scriptstyle #2$};
    \node (l3) at (3,4.3) {$\scriptstyle #3$};

    \draw (v0) -- (l0);
    \draw[dotted] (v0) -- (v1);    
    \draw[dotted] (v1) -- (v2);
    \draw (v1) -- (v3);
    \draw[dotted] (v2) -- (l2);
    \draw (v2) -- (v4);
    \draw[dotted] (v4) -- (l1);
    \draw[dotted] (v3) -- (l3);
    \ovalthree{v0}{v1}{v3};
\end{tikzpicture}
}

\newcommand{\LRwordRLRLR}[3]{
\begin{tikzpicture}[scale=0.22,xscale=0.55]
   	\node[int] (v0) at (0,-2) {};
	\node[ext] (v1) at (0,0) {};
	\node[ext] (v2) at (-2,2) {};
    \node[int] (v3) at (2,2) {};
    \node[int] (v4) at (0,4) {};
    
    \node (l0) at (0,-3.8) {};
    \node (l1) at (-4.1,4.1) {$\scriptstyle #1$};
    \node (l2) at (0,6.3) {$\scriptstyle #2$};
    \node (l3) at (3,4.3) {$\scriptstyle #3$};

    \draw (v0) -- (l0);
    \draw[dotted] (v0) -- (v1);    
    \draw[dotted] (v1) -- (v2);
    \draw (v1) -- (v3);
    \draw[dotted] (v2) -- (l1);
    \draw (v2) -- (v4);
    \draw[dotted] (v4) -- (l2);
    \draw[dotted] (v3) -- (l3);
    \ovalthree{v0}{v1}{v3};
\end{tikzpicture}
}

\newcommand{\RLwordRLRLR}[3]{
\begin{tikzpicture}[scale=0.22,xscale=0.55]
   	\node[int] (v0) at (0,-2) {};
	\node[ext] (v1) at (0,0) {};
	\node[int] (v2) at (-2,2) {};
    \node[ext] (v3) at (-2,4) {};
    \node[int] (v4) at (0,6) {};
    
    \node (l0) at (0,-3.8) {};
    \node (l1) at (-4.1,6.1) {$\scriptstyle #1$};
    \node (l2) at (0,8.3) {$\scriptstyle #2$};
    \node (l3) at (3,2.3) {$\scriptstyle #3$};

    \draw (v0) -- (l0);
    \draw[dotted] (v0) -- (v1);    
    \draw (v1) -- (v2);
    \draw[dotted] (v1) -- (l3);
    \draw[dotted] (v2) -- (v3);
    \draw (v3) -- (v4);
    \draw[dotted] (v3) -- (l1);
    \draw[dotted] (v4) -- (l2);
\ovalthree{v2}{v3}{v4};
\end{tikzpicture}
}

\newcommand{\RLwordRLrLR}[3]
{
\begin{tikzpicture}[scale=0.22,xscale=0.55]
   	\node[int] (v0) at (0,-2) {};
	\node[ext] (v1) at (0,0) {};
	\node[int] (v2) at (2,2) {};
    \node[ext] (v3) at (2,4) {};
    \node[int] (v4) at (0,6) {};
    
    \node (l0) at (0,-3.8) {};
    \node (l3) at (4.1,6.1) {$\scriptstyle #3$};
    \node (l2) at (0,8.3) {$\scriptstyle #2$};
    \node (l1) at (-3,2.3) {$\scriptstyle #1$};

    \draw (v0) -- (l0);
    \draw[dotted] (v0) -- (v1);    
    \draw (v1) -- (v2);
    \draw[dotted] (v1) -- (l1);
    \draw[dotted] (v2) -- (v3);
    \draw (v3) -- (v4);
    \draw[dotted] (v3) -- (l3);
    \draw[dotted] (v4) -- (l2);
    \ovalthree{v0}{v1}{v2};
\end{tikzpicture}
}

\newcommand{\LRwordRLRRL}[3]
{
\begin{tikzpicture}[scale=0.22,xscale=0.55]
   	\node[int] (v0) at (0,-2) {};
	\node[ext] (v1) at (0,0) {};
	\node[int] (v2) at (-2,2) {};
    \node[ext] (v3) at (2,2) {};
    \node[int] (v4) at (0,4) {};
    
    \node (l0) at (0,-3.8) {};
    \node (l3) at (4.1,4.3) {$\scriptstyle #3$};
    \node (l2) at (0,6.3) {$\scriptstyle #2$};
    \node (l1) at (-3,4.3) {$\scriptstyle #1$};

    \draw (v0) -- (l0);
    \draw[dotted] (v0) -- (v1);    
    \draw (v1) -- (v2);
    \draw[dotted] (v1) -- (v3);
    \draw[dotted] (v2) -- (l1);
    \draw (v3) -- (v4);
    \draw[dotted] (v3) -- (l3);
    \draw[dotted] (v4) -- (l2);
    \ovaltwo{v1}{v3};
\end{tikzpicture}
}

\newcommand{\LRwordRRLLR}[3]{
\begin{tikzpicture}[scale=0.22,xscale=0.55]
   	\node[int] (v0) at (0,-2) {};
	\node[ext] (v1) at (0,0) {};
	\node[int] (v2) at (-2,2) {};
    \node[ext] (v3) at (2,2) {};
    \node[int] (v4) at (4,4) {};
    
    \node (l0) at (0,-3.8) {};
    \node (l1) at (-3,4.3) {$\scriptstyle #1$};    
    \node (l2) at (0,4.3) {$\scriptstyle #2$};
    \node (l3) at (4.3,6.3) {$\scriptstyle #3$};

    \draw (v0) -- (l0);
    \draw[dotted] (v0) -- (v1);    
    \draw (v1) -- (v2);
    \draw[dotted] (v1) -- (v3);
    \draw[dotted] (v2) -- (l1);
    \draw (v3) -- (v4);
    \draw[dotted] (v3) -- (l2);
    \draw[dotted] (v4) -- (l3);
    \ovaltwo{v1}{v3};
\end{tikzpicture}
}

\newcommand{\compos}[2]{{{}_{#1}\circ_{#2}}}

\usepackage{color}

\newcommand\cbl[1]{{\color{blue} #1}}

\newcommand\cbrown[1]{{\color{brown} #1}}

\author{Anton Khoroshkin\thanks{Department of Mathematics, University of Haifa, Mount Carmel, 3498838, Haifa, Israel}
}
\title{Input/output coloring and Gr\"obner basis for dioperads}
\date{}

\begin{document}
\maketitle

\begin{abstract}
We introduce a functor $\Psi$ that associates to a dioperad $\mathcal{P}$ acting on a vector space $V$ a two-colored operad $\Psi(\mathcal{P})$ acting on the pair $(V, V^*)$. The construction is based on a simple pictorial idea: by selecting one input or output and dualizing, if necessary, the remaining ones, any dioperadic tree can be ``rerooted’’ as a standard colored operadic tree. This transformation allows one to apply the standard operadic machinery -- such as Gr\"obner bases and Hilbert series -- to the study of dioperads.

We illustrate the method with several examples and applications. (1) We compute the dimensions of the spaces of operations for the dioperad of Lie bialgebras. (2) We describe a Gr\"obner basis and construct a minimal resolution for the dioperad of triangular Lie bialgebras. (3) We perform explicit computations for the dioperad of ``algebraic string operations''. (4) We give a pictorial construction proving the existence of quadratic Gr\"obner bases and establishing the Koszul property for a broad class of dioperads arising from cyclic operads.
\end{abstract}

\small

\tableofcontents

\normalsize

\setcounter{section}{-1}

\renewcommand{\theequation}{\Alph{equation}}

\section{Introduction}

\subsection{Motivation}
Many algebraic structures across different areas of mathematics are governed by sets of multilinear operations and the identities they satisfy. The systematic description of these multilinear operations is a central theme in universal algebra. For classical algebraic structures such as associative, Lie, and Poisson algebras, these operations typically feature multiple inputs and a single output (i.e., they belong to $\Hom(V^{\otimes n}, V)$). The corresponding universal framework governing such operations is called an \emph{operad}. However, for many other natural algebraic structures, such as Hopf algebras and Frobenius algebras, one must deal with operations featuring multiple inputs and multiple outputs (i.e., belonging to $\Hom(V^{\otimes m}, V^{\otimes n})$).

While the compositions of multilinear operations with a single output are natively encoded by rooted trees, the compositions of operations with multiple outputs correspond to graphs that may have higher genus, making their description significantly more intricate. Depending on the class of graphs allowed for these compositions, one arrives at different categorical frameworks: \emph{PROPs}~\cite{Boarman_Vogt} and \emph{properads}~\cite{Vallette_Props} correspond to (connected) directed acyclic graphs, \emph{wheeled properads}~\cite{Merkulov_Markl_Wheeled} permit directed cycles, and \emph{modular operads}~\cite{Getzler_Kapranov} are governed by ribbon graphs embedded in surfaces. Despite their theoretical importance, these broader frameworks are notoriously difficult to handle computationally. The lack of a general computational theory often necessitates ad-hoc, case-by-case analyses. 

However, for many fundamental algebraic structures---including Lie bialgebras~\cite{Markl_Voronov}, double Poisson brackets~\cite{Double::Poisson}, coboundary Lie bialgebras~\cite{Merkulov_Properad_Twisting}, and balanced infinitesimal bialgebras~\cite{BIB}---their presentations involve only graphs of genus zero. This observation strongly motivates the study of \emph{dioperads}: algebraic structures that encode multilinear operations with multiple inputs and multiple outputs, but strictly restrict their compositions to directed trees.

In particular, enumerative and homological questions (such as Koszul duality) concerning the (wheeled) properadic envelopes of dioperads should naturally be examined first at the level of the underlying dioperads. Although this task is conceptually much simpler for dioperads than for general properads, to the best of our knowledge, it has not yet been systematically addressed. The primary goal of this paper is to bridge this gap by establishing a method to reduce the combinatorics and arithmetic of dioperads to the well-understood setting of (colored) operads. Crucially, this reduction allows us to pull back powerful, well-established tools for verifying the Koszul property---such as Gr\"obner bases and Hilbert series computations---directly into the realm of dioperads.

\subsection{Main Results}

Our approach is based on a simple yet powerful pictorial insight: by designating a specific input or output as a ``global root'' and dualizing the remaining leaves, any dioperadic tree can be unambiguously ``rerooted'' into a standard operadic tree with 2-colored edges. We illustrate this transformation below for the case of corollas --examining the scenarios where either an output or an input is selected as the root -- and refer the reader to Figure~\ref{pic::diop::oper::coloring} for a more detailed treatment.

\begin{equation}
\label{eq::picture::Psi}   
\begin{array}{ccccccc}
\text{\scriptsize dioperadic } & & \text{\scriptsize 2-colored operadic} & & \text{\scriptsize dioperadic } & & \text{\scriptsize 2-colored operadic} \\
\text{\scriptsize $(m,n)$-operation} & &\text{\scriptsize  $(m|n-1)$-operation} & & \text{\scriptsize $(m,n)$-operation} & & \text{\scriptsize  $(m-1|n)$-operation}\\  
\text{\scriptsize from} & & \text{\scriptsize from}  & & \text{\scriptsize from} & & \text{\scriptsize from} \\
{\scriptstyle \Hom(V^{\otimes m}, V^{\otimes n})}  & \simeq & {\scriptstyle  \Hom(V^{\otimes m}\otimes (V^{*})^{\otimes n-1}, V)} & & 
{\scriptstyle \Hom(V^{\otimes m}, V^{\otimes n})} 
& \simeq & 
{\scriptstyle  \Hom(V^{\otimes m-1}\otimes (V^{*})^{\otimes n}, V^*) } 
\\
\begin{tikzpicture}[scale=0.4]
	\tikzstyle{inner}=[circle,draw,inner sep=1.5pt]	
	\node[inner] (A) at (1,0)   {};	
	
	\node (in1) at (-1.5,2.05) {\scriptsize $1$};
	\node (in2) at (-0.5,2.05) {\scriptsize $2$};
	\node (in3) at (0.5,2.05)  {\scriptsize \dots};
	\node (in4) at (2.5,2.05)  {\scriptsize $m$};
	
	\node[draw,circle,inner sep=1.5pt] (out1) at (-1.5,-2.45) {\scriptsize $\bar{1}$};
	\node (out2) at (-0.5,-2.45) {\scriptsize $\bar{2}$};
	\node (out3) at (1.0,-2.45)  {\scriptsize \ldots};
	\node (out4) at (3.0,-2.45) {\scriptsize $\bar{n}$};
	
	\draw[->] (in1) --  (A);
	\draw[->] (in2) --  (A);
	\draw[->] (in3) --  (A);
	\draw[->] (in4) --  (A);

    \draw[->] (A) -- (out1);
    \draw[->] (A) -- (out2);
    \draw[->] (A) -- (out3);
    \draw[->] (A) -- (out4);
\end{tikzpicture}
& \stackrel{\Psi}{\mapsto} 
&
\begin{tikzpicture}[scale=0.4]
	\tikzstyle{inner}=[circle,draw,inner sep=1.5pt]	
	\node[inner] (A) at (1,0)   {};	
	\node (in1) at (-1.5,2.05) {\scriptsize $1$};
	\node (in3) at (-0.5,2.05)  {\scriptsize \ldots};
	\node (in4) at (0.5,2.05)  {\scriptsize $m$};
	
	\node[draw,circle,inner sep=1.5pt] (out1) at (1,-2.45) {\scriptsize $\bar{1}$};
	\node (out2) at (1.5,2.05) {\scriptsize $\bar{2}$};
	\node (out3) at (2.5,2.05)  {\scriptsize \ldots};
	\node (out4) at (3.5,2.05) {\scriptsize $\bar{n}$};
	
	\draw[->] (in1) --  (A);
	\draw[->] (in3) --  (A);
	\draw[->] (in4) --  (A);

    \draw (A) -- (out1);
    \draw[dotted] (A) -- (out2);
    \draw[dotted] (A) -- (out3);
    \draw[dotted] (A) -- (out4);
\end{tikzpicture}
& ; &
\begin{tikzpicture}[scale=0.4]
	\tikzstyle{inner}=[circle,draw,inner sep=1.5pt]	
	\node[inner] (A) at (1,0)   {};	
	
	\node[draw,circle,inner sep=1.5pt] (in1) at (-1.5,2.05) {${\scriptstyle 1}$};
	\node (in2) at (-0.5,2.05) {\scriptsize $2$};
	\node (in3) at (0.5,2.05)  {\scriptsize \dots};
	\node (in4) at (2.5,2.05)  {\scriptsize $m$};
	
	\node (out1) at (-1.5,-2.45) {\scriptsize $\bar{1}$};
	\node (out2) at (-0.5,-2.45) {\scriptsize $\bar{2}$};
	\node (out3) at (1.0,-2.45)  {\scriptsize \ldots};
	\node (out4) at (3.0,-2.45) {\scriptsize $\bar{n}$};
	
	\draw[->] (in1) --  (A);
	\draw[->] (in2) --  (A);
	\draw[->] (in3) --  (A);
	\draw[->] (in4) --  (A);

    \draw[->] (A) -- (out1);
    \draw[->] (A) -- (out2);
    \draw[->] (A) -- (out3);
    \draw[->] (A) -- (out4);
\end{tikzpicture}
& \stackrel{\Psi}{\mapsto} 
&
\begin{tikzpicture}[scale=0.4]
	\tikzstyle{inner}=[circle,draw,inner sep=1.5pt]	
	\node[inner] (A) at (1,0)   {};	
	\node (in1) at (-1.5,2.05) {\scriptsize $2$};
	\node (in3) at (-0.5,2.05)  {\scriptsize \ldots};
	\node (in4) at (0.5,2.05)  {\scriptsize $m$};
	
	\node[draw,circle,inner sep=1.5pt] (out1) at (1,-2.45) {\scriptsize ${1}$};
	\node (out2) at (1.5,2.05) {\scriptsize $\bar{1}$};
	\node (out3) at (2.5,2.05)  {\scriptsize \ldots};
	\node (out4) at (3.5,2.05) {\scriptsize $\bar{n}$};
	
	\draw (in1) --  (A);
	\draw (in3) --  (A);
	\draw (in4) --  (A);
    \draw[dotted] (A) -- (out1);
    \draw[dotted] (A) -- (out2);
    \draw[dotted] (A) -- (out3);
    \draw[dotted] (A) -- (out4);
\end{tikzpicture}
\end{array}
\end{equation}
Our first main result can be summarized as follows:
\begin{theorem}[see Theorem~\ref{thm::diop->op}]
The pictorial map~\eqref{eq::picture::Psi} defines a faithful, exact functor
$$\Psi : \textbf{Dioperads} \longrightarrow \textbf{2-colored operads},$$
such that for a dioperad $\calP$ acting on $V$ there is a natural $\Psi(\calP)$ action on the pair $(V,V^{*})$.
Moreover, $\Psi$ maps free dioperads to free $2$-colored operads and commutes with the bar and cobar constructions.

Consequently, a quadratic dioperad $\calP$ is Koszul if and only if its image $\Psi(\calP)$ is a Koszul $2$-colored operad.
\end{theorem}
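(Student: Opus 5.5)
The plan is to define $\Psi$ explicitly on the level of underlying $\bS$-bimodules, then check compatibility with compositions, and derive the remaining properties from the fact that $\Psi$ is, componentwise, a direct sum of copies of components of $\calP$.

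\textbf{Construction.} For a dioperad $\calP$ with components $\calP(m,n)$ ($m$ inputs, $n$ outputs), set the 2-colored collection
$$\Psi(\calP)(m|n-1;\,\text{out}=V) := \calP(m,n), \qquad \Psi(\calP)(m-1|n;\,\text{out}=V^*) := \calP(m,n).$$
The symmetric group actions are restricted to the stabilizer of the chosen root: $S_m\times S_{n-1}$ in the first case and $S_{m-1}\times S_n$ in the second. More invariantly, $\Psi(\calP)$ with output colour $V$ and inputs consisting of $m$ copies of $V$ and $n-1$ copies of $V^*$ is the space of $\calP(m,n)$ with one output marked. The operadic composition $\circ_i$ along an edge of colour $V$ (resp.\ $V^*$) is defined as the dioperadic composition that glues an output of the lower vertex to an input of the upper vertex. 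Which output and which input are glued is read off from the rerooted picture~\eqref{eq::picture::Psi}: a solid edge is traversed along its orientation, and a dotted edge against it. The key combinatorial lemma is that rerooting gives a bijection between directed trees with a chosen leg (input or output) and 2-colored rooted trees whose vertices have the local colour types above. Indeed, given a tree and a root leg, each internal edge acquires a unique direction toward the root, and its colour records whether this direction agrees with the original orientation. Under this bijection, associativity and equivariance of $\Psi(\calP)$ follow directly from those of $\calP$, since every iterated operadic composite equals the dioperadic composite along the underlying tree. The action on $(V,V^*)$ comes from the standard identification $\Hom(V^{\otimes m},V^{\otimes n})\simeq\Hom(V^{\otimes m}\otimes (V^*)^{\otimes n-1},V)$, which is natural for finite-dimensional $V$ and uses evaluation pairings. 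Compatibility with compositions is a check that contracting a $V$–$V^*$ pair corresponds to gluing.

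\textbf{Formal properties.} Faithfulness and exactness are immediate, because on underlying collections $\Psi$ is a direct sum of restriction functors applied to the components of $\calP$, and restriction of group actions is exact and faithful. For free objects, the tree bijection shows that the free dioperad $\mathcal{F}(E)$ on $E$, given as a sum over directed trees decorated by $E$, maps to the sum over 2-colored rooted trees decorated by $\Psi(E)$, which is exactly the free 2-colored operad on $\Psi(E)$. Commutation with bar and cobar follows by the same token. The bar construction of $\calP$ is the cofree cooperad on $s\bar\calP$ with differential given by contracting edges. Applying the bijection vertex-by-vertex identifies the underlying graded objects, and edge contractions correspond under rerooting. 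The delicate point is that the signs and orientation conventions (suspensions, orderings of edges) must match. I would handle this by using the determinant of the set of internal edges, which is unchanged by rerooting.

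\textbf{Koszulity.} The final consequence follows from these properties. Quadratic data $(E,R)$ go to $(\Psi E,\Psi R)$, so $\Psi(\calP)$ is quadratic, and its Koszul dual cooperad is $\Psi(\calP^{\antish})$. The Koszul complex $\calP^{\antish}\circ_\kappa\calP$ is mapped by $\Psi$ isomorphically onto the Koszul complex for $\Psi(\calP)$, up to the componentwise sum structure. Since $\Psi$ is exact and faithful, the first complex is acyclic if and only if the second is. Faithfulness on each component is needed for the ``if'' direction: every component of $\calP$ appears in $\Psi(\calP)$, so a nonzero homology class of $\calP$ remains nonzero after applying $\Psi$.

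I expect the main obstacle to be the tree bijection in full detail: verifying that every 2-colored operadic tree arises, including the constraint that vertex colour types be consistent, and making the sign conventions in bar/cobar agree.
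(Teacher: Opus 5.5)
Everything before your Koszulity paragraph follows the paper's argument: componentwise definition of $\Psi$, the rerooting bijection of trees, free goes to free, and bar/cobar commute (Proposition~\ref{prp::Psi:diop->op}, Theorem~\ref{thm::diop->op}). The representation on $(V,V^*)$ also comes from the same dualization. Your use of the determinant of internal edges to fix signs is more careful than the paper, which does not address signs. The obstacle you anticipate in the tree bijection does not arise. The inverse of rerooting is simply ``reverse the dotted edges''. Every coloured corolla corresponds to a dioperadic arity: straight output with $a$ straight and $b$ dotted inputs gives $(a,b+1)$, and dotted output gives $(a+1,b)$. Colour consistency along an internal edge is automatic, since the colour is attached to the edge. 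The real problem is the last step.

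\textbf{The Koszulity step fails as written.} $\Psi$ is defined on dioperads, and $\calP^{\antish}\circ_\kappa\calP$ is not one. So the claim that ``$\Psi$ maps the Koszul complex isomorphically onto the Koszul complex of $\Psi(\calP)$'' has no meaning as it stands. For the natural dioperadic analogue it is false. That analogue is the complex of genus-zero two-level trees, with $\calP^{\antish}$ on the bottom level and $\calP$ on top, as for properads.
\begin{itemize}
\item Its bottom level can contain several non-trivial $\calP^{\antish}$-vertices. For example, take a $\calP$-vertex of type $(1,2)$ whose two outputs feed two distinct $\calP^{\antish}$-vertices.
\item Rerooted at an output of one of them, this becomes a three-level coloured tree $\calP^{\antish}$--$\calP$--$\calP^{\antish}$.
\item That tree does not lie in $\Psi(\calP^{\antish})\circ_\kappa\Psi(\calP)$, which has exactly one coalgebra vertex, the one carrying the root.
\item The coloured complex depends on the choice of root leg, whereas the dioperadic one does not.
\end{itemize}
So exactness and faithfulness of $\Psi$ cannot transfer acyclicity between these two complexes.

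\textbf{Repair.} The fix uses only what you have already proved, and it is the paper's proof of Corollary~\ref{cor::Psi::Koszul}. Koszulness is defined by $\Omega_{\diop}(\calP^{\antish})\to\calP$ being a quasi-isomorphism.
\begin{itemize}
\item You showed $\Psi\,\Omega_{\diop}=\Omega\,\Psi^{\op}$.
\item $\Psi^{\op}(\calP^{\antish})$ is the Koszul dual cooperad of $\Psi(\calP)$. Indeed, cofree goes to cofree, and two-vertex subtrees correspond bijectively, so the cogenerating space $s^2R$ goes to $s^2\Psi(R)$.
\item $\Psi$ both preserves and reflects quasi-isomorphisms. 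Each component of $\Psi(X)$ is some $X(m,n)$ with the same differential, and every $X(m,n)$ occurs.
\end{itemize}
If you want a Koszul complex afterwards, work directly with the coloured one $\Psi(\calP^{\antish})\circ_\kappa\Psi(\calP)$, as the paper does in Corollary~\ref{cor::sereis::diop::Koszul}. On the dioperadic side it corresponds to a family of rooted complexes, one for each choice of root leg, rather than to a single complex.
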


By exploiting the functor $\Psi$, we derive a functional equation relating the generating series of any Koszul dioperad to that of its Koszul dual:
\begin{theorem}[see Corollary~\ref{cor::sereis::diop::Koszul}]
\label{thm::series::intro}
If the dioperad $\cP$ is Koszul, then the graded generating series
$$
\rchi_{\cP}(x,y;q) := \sum_{m,n} \left(\sum_{r=0}^{\infty} q^{r}\dim \cP(m,n)^{(r)}\right) \frac{x^m}{m!}\frac{y^n}{n!}
= \sum_{m,n} \dim_q \cP(m,n) \frac{x^m}{m!}\frac{y^n}{n!}.
$$
of $\cP$ and the corresponding generating series $\rchi_{\cP^{!}}(x,y;q)$ of its Koszul dual $\cP^{!}$ satisfy the following functional identity:
\begin{equation*}
\left(\frac{\partial \rchi_{\cP^{!}}(x,y;-q)}{\partial y}; \frac{\partial \rchi_{\cP^{!}}(x,y;-q)}{\partial x}\right) \circ    
\left(\frac{\partial \rchi_{\cP}(x,y;q)}{\partial y}; \frac{\partial \rchi_{\cP}(x,y;q)}{\partial x}\right) = (x;y).
\end{equation*}
Here, $f\circ g$ denotes the composition of power series maps with respect to the formal variables $x$ and $y$.   
\end{theorem}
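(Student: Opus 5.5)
Since $\cP$ is Koszul, the first theorem above says $\Psi(\cP)$ is a Koszul $2$-colored operad with Koszul dual $\Psi(\cP^{!})$; here we use that $\Psi$ commutes with bar and cobar constructions, so $\Psi(\cP^{!})$ is the Koszul dual of $\Psi(\cP)$ up to the usual suspension. The plan is to prove the identity for $2$-colored operads and then translate generating series through $\Psi$. For a Koszul $2$-colored operad $\calQ$ with colors $\{\text{solid},\text{dotted}\}$, let $f_{\calQ}=(f^{s}_{\calQ}(x,y;q),f^{d}_{\calQ}(x,y;q))$ be its pair of exponential generating series. Here $f^{s}$ counts operations with solid output, $f^{d}$ those with dotted output, and $x$, $y$ mark solid and dotted inputs. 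The unit of each color contributes the linear term. We use the classical Ginzburg--Kapranov functional equation for colored operads: $f_{\calQ^{!}}(x,y;-q)\circ f_{\calQ}(x,y;q)=(x,y)$. Its proof is the standard Euler-characteristic argument on the Koszul complex $\calQ^{\text{!`}}\circ\calQ\simeq\mathbb{1}$, which works verbatim colorwise. Composition of colored symmetric sequences corresponds to substitution of the two variables, because the colored composite product is an exponential-type substitution in each color.

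The combinatorial step is to compute $f_{\Psi(\cP)}$ from $\rchi_{\cP}$. By the picture~\eqref{eq::picture::Psi}, an $(m,n)$-operation of $\cP$ rerooted at an output gives an operation with solid root, $m$ solid inputs and $n-1$ dotted inputs. Rerooted at an input, it gives a dotted root, $m-1$ solid inputs and $n$ dotted inputs. So, as $\bS$-modules, $\Psi(\cP)(m|n-1;\text{solid})\cong\operatorname{Res}^{\bS_m\times\bS_n}_{\bS_m\times\bS_{n-1}}\cP(m,n)$, and similarly for the other color. Dimensions are unchanged by restriction. Inserting $\frac{x^m}{m!}\frac{y^{n-1}}{(n-1)!}$ for each term gives exactly $f^{s}_{\Psi(\cP)}=\partial\rchi_{\cP}/\partial y$ and $f^{d}_{\Psi(\cP)}=\partial\rchi_{\cP}/\partial x$. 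The weight grading $r$ is preserved because $\Psi$ keeps the number of vertices of the underlying trees. The units (the identity $(1,1)$-operation read in both directions) give the terms $x$ and $y$.

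Applying the same computation to $\cP^{!}$ and substituting into the colored Ginzburg--Kapranov identity gives the stated formula. The main obstacle is bookkeeping of degrees and signs in the Koszul dual. We must check that the suspension or sign twists relating $\Psi(\cP)^{!}$ and $\Psi(\cP^{!})$ only change the series by $q\mapsto -q$ and do not rescale $x$, $y$ or introduce extra signs in one color. This depends on whether the convention of Theorem~\ref{thm::diop->op} includes an operadic suspension. I would fix the grading so that weight-$r$ operations of $\cP^{!}$ sit in homological degree $r$, as in the graded version of the classical identity, and verify on the free dioperad on a single $(1,1)$ generator, or on $\LieBi$ in low arity, that no residual sign appears.
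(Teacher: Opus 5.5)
Your proposal is correct and follows essentially the same route as the paper: pass to the Koszul $2$-colored operad $\Psi(\cP)$, use the Euler-characteristic identity coming from the acyclic Koszul complex $\Psi(\cP^{\antish})\circ\Psi(\cP)$, and identify the two color components of its generating series with $\partial\rchi_{\cP}/\partial y$ and $\partial\rchi_{\cP}/\partial x$. The sign bookkeeping you flag is harmless, and no example check is needed: sign twists and suspensions do not change dimensions, and the syzygy degree in the Koszul complex is exactly the weight of the $\Psi(\cP^{\antish})$ factor, which is what the substitution $q\mapsto -q$ records.
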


Applying this to the dioperad of Lie bialgebras ($\mathcal{L}ieb$), we obtain a striking closed-form formula for the dimensions of its spaces of operations.
\begin{theorem}[see Proposition~\ref{prp::Liebi::genseries}]
The dimension of the space of $(m,n)$-ary operations in the dioperad of Lie bialgebras is given by:
\[
\dim \mathcal{L}ieb(m,n) = \frac{(m+n-2)!^2}{(m-1)!(n-1)!}.
\]
\end{theorem}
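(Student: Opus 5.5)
Since $\LB$ is Koszul as a dioperad and its Koszul dual is known, the plan is to apply the functional identity of Theorem~\ref{thm::series::intro} to the pair $(\LB,\LB^{!})$ and solve it. The dual $\LB^{!}$ is the dioperad of Frobenius-type (commutative and cocommutative, up to suspension) bialgebras. Its $(m,n)$ component is one-dimensional, sitting in weight $m+n-2$, because any tree of multiplications and comultiplications with $m$ inputs and $n$ outputs reduces to a single element. Up to the sign conventions of the suspension, this gives
\[
\rchi_{\LB^{!}}(x,y;q) = \sum_{m,n\geq 1,\,(m,n)\neq(1,1)} q^{m+n-2}\frac{x^m}{m!}\frac{y^n}{n!},
\]
which is essentially $q^{-2}(e^{qx}-1)(e^{qy}-1)$ with the linear term removed. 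I would double-check this formula for the $\mathfrak{S}$-module signs by evaluating it on small arities.

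Next I compute the map $G=(\partial_y\rchi_{\LB^!}(x,y;-q),\ \partial_x\rchi_{\LB^!}(x,y;-q))$. Its two components are $q^{-1}(e^{-qx}-1)e^{-qy}$-type expressions, up to sign and the correction coming from the excluded $(1,1)$ term. By Theorem~\ref{thm::series::intro}, the map $F=(\partial_y\rchi_{\LB},\partial_x\rchi_{\LB})$ is the compositional inverse of $G$. I will therefore invert $G$ explicitly. Setting $u=\partial_y\rchi_{\LB}$ and $v=\partial_x\rchi_{\LB}$, the equations $G(u,v)=(x,y)$ reduce to a system of the form $u=x\,e^{\pm qv}$ and $v=y\,e^{\pm qu}$ (after normalization). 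Taking $q=\pm1$ for dimensions, this gives $u=x e^{v}$ and $v=y e^{u}$, up to signs fixed in the first step.

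The main obstacle is extracting coefficients from this implicit system. I would use the two-variable Lagrange inversion (Good's formula) for the system $u=x\phi_1(u,v)$, $v=y\phi_2(u,v)$ with $\phi_1=e^{v}$ and $\phi_2=e^{u}$. Good's formula gives $[x^ay^b]\,H(u,v)$ as the coefficient $[u^av^b]$ of $H\,\phi_1^a\phi_2^b\det(\delta_{ij}-\tfrac{u_j}{\phi_i}\partial_{u_j}\phi_i)$. Here the determinant equals $1-uv$, which makes the computation tractable. I then recover $\rchi_{\LB}$ from $u=\partial_y\rchi$, so that $\dim\LB(m,n)$ is $(m-1)!\,n!$ times the coefficient $[x^{m-1}y^{n}]$ of $u$, up to the indexing shift. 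With $H=u$, the required coefficient is
\[
[u^{a-1}v^b]\,e^{av+bu}(1-uv) = \frac{b^{a-1}a^b}{(a-1)!\,b!}-\frac{b^{a-2}a^{b-1}}{(a-2)!(b-1)!},
\]
which is the count of spanning trees of $K_{a,b}$.

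This coefficient is not yet the claimed product formula. The spanning-tree count is only what appears before accounting for the $q$-grading and the correct signs in $\rchi_{\LB^!}$. In the correct version, the exponential generating functions should be replaced by $\sum x^m/m$-type series, because the components of $\LB^!$ live in $\Lie$/$\Com$ shifted by $\mathrm{sgn}$. This changes the system to a rational one such as $u=x(1+v)$-type equations, whose Lagrange inversion gives binomial products. Expanding those products should yield $\frac{(m+n-2)!^2}{(m-1)!(n-1)!}$.

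So the delicate step is getting $\rchi_{\LB^!}$ exactly right, including the signs from suspensions. I would fix it by matching small cases: $\dim\LB(1,2)=\dim\LB(2,1)=1$ and $\dim\LB(2,2)=4$ (five graphs minus the one relation, i.e. the dimension of the quadratic part). The final step is then a routine binomial simplification.
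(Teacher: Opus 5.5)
There is a genuine gap: the inversion step is wrong, and the repair you propose targets the wrong ingredient.

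\textbf{The input series.} The input is simply $\rchi_{\Frob}=(e^x-1)(e^y-1)$, since each $\Frob(m,n)$ with $m,n\geq 1$ is one-dimensional of weight $m+n-2$. The $(1,1)$ term must be kept: it is the identity, and without it $G$ has no linear part and is not invertible.

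\textbf{The inversion.} At $q=1$ the identity $G(u,v)=(x,y)$ reads
\[
(1-e^{-u})\,e^{-v}=x,\qquad (1-e^{-v})\,e^{-u}=y ,
\]
that is, $1-e^{-u}=xe^{v}$ and $1-e^{-v}=ye^{u}$. It is not $u=xe^{v}$, $v=ye^{u}$: you replaced $1-e^{-u}$ by its linear part. The error shows up immediately. At $y=0$ your system gives $u=x$, i.e.\ $\dim\LB(m,1)=0$ for $m\geq 2$, whereas the correct system gives $u=-\ln(1-x)$, the $\Lie$ series. For $(m,n)=(2,2)$ your coefficient gives $[x^2y]\,u=1$ instead of $2$.

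\textbf{The diagnosis.} Your explanation of the discrepancy is also off. Twisting by sign representations does not change dimensions, so $\rchi_{\LB^{!}}$ really is exponential. The logarithmic series live on the $\LB$ side and arise only as the output of the inversion. Replacing the input by ``$\sum x^m/m$-type'' series is therefore unjustified. An argument that the result ``should yield'' the formula, plus checks in small arities, does not establish it for all $(m,n)$.

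\textbf{The missing step.} Change variables to $\tilde u=1-e^{-u}$, $\tilde v=1-e^{-v}$. The correct system becomes rational:
\[
\tilde u=\frac{x}{1-\tilde v},\qquad \tilde v=\frac{y}{1-\tilde u},\qquad u=-\ln(1-\tilde u).
\]
This is what the paper does, with the letters $x,y$ and $u,v$ exchanged.
\begin{itemize}
\item Eliminating $\tilde v$ gives the single equation $\tilde u=x\bigl(1-\frac{y}{1-\tilde u}\bigr)^{-1}$.
\item One-variable Lagrange inversion in $\tilde u$, with $y$ as a parameter and $H(\tilde u)=-\ln(1-\tilde u)$, gives $[x^a y^b]\,u=\frac1a\binom{a+b-1}{a-1}^2$.
\item Hence $\dim\LB(a,b+1)=(a+b-1)!\binom{a+b-1}{a-1}$, which is the claimed formula with $m=a$, $n=b+1$.
\end{itemize}
Your two-variable Good formula also works once the system is corrected. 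Take $\phi_1=(1-\tilde v)^{-1}$, $\phi_2=(1-\tilde u)^{-1}$ and $H=-\ln(1-\tilde u)$; the determinant becomes $1-\frac{\tilde u\tilde v}{(1-\tilde u)(1-\tilde v)}$. The one-variable route is shorter.
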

Furthermore, Theorem~\ref{thm::series::intro} enables us to disprove the Koszul property for the dioperad $\mathcal{W}^{(d)}$ introduced in~\cite{Tradler}, resolving an open question regarding its Koszulness.

The core \textbf{theoretical contribution} of this paper is the adaptation of the (colored) shuffle operad \textbf{Gr\"obner basis} theory to the setting \textbf{of dioperads} (Section~\ref{sec::Shuffle::Grobner}).
We illustrate this machinery by providing explicit computations for several foundational dioperads, including those governing Frobenius algebras, Lie bialgebras, quadratic Poisson structures, and algebraic string operations. Moreover, we resolve a conjecture regarding the dioperad of triangular Lie bialgebras ($\mathcal{L}ieb^{\triangle}$).
\begin{theorem*}
\begin{itemize}[itemsep=0pt,topsep=0pt]
\item  {\rm(see Theorem~\ref{thm::triangular::Grobner})} 
The spanning (quadratic and cubic) Relations~\eqref{eq::triang::LieBi} for the dioperad of triangular Lie bialgebras form a Gr\"obner basis. 
\item {\rm(see Theorem~\ref{thm::Triang::Lieb::Anick}) }
The Anick-type resolution constructed from this basis is minimal, confirming the conjectural resolution proposed by Merkulov in~\cite{Merkulov_Properad_Twisting}.
\end{itemize}
\end{theorem*}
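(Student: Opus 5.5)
Both bullets rest on passing to the colored shuffle operad $\Psi(\LieTriang)^{f}$, using Theorem~\ref{thm::diop->op} and the shuffle Gr\"obner machinery of Section~\ref{sec::Shuffle::Grobner}. First I fix an admissible ordering on colored shuffle tree monomials, a path-lexicographic order refined by the color pattern of the root path. With respect to it I identify the leading terms of the quadratic relations, namely Jacobi, co-Jacobi, the Drinfeld compatibility and the triangularity relation involving the $r$-matrix generator. I also identify the leading terms of the cubic relations~\eqref{eq::triang::LieBi}.

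\textbf{Gr\"obner basis.} For the first bullet I enumerate all small common multiples (S-polynomials) of pairs of leading monomials. Each is reduced to zero modulo the relation set. Since there are finitely many generators and a fixed color structure, the overlaps live in arity at most $5$ or $6$, and the check is finite.

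I would cross-check this numerically. The normal monomials give an upper bound on $\dim \Psi(\LieTriang)$ in each bi-arity. A lower bound comes from a known model, for example the description of $\LieTriang$ through twisting of $\LB$ and the formula $\dim \LB(m,n)=\frac{(m+n-2)!^2}{(m-1)!(n-1)!}$ from Proposition~\ref{prp::Liebi::genseries}. When the two bounds agree, the Gr\"obner property holds.

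\textbf{Anick resolution.} For the second bullet I form the Anick-type resolution from the chains of the Gr\"obner basis. In the shuffle operad setting, Anick $k$-chains are tree monomials covered by a minimal overlapping sequence of $k$ leading terms. They generate a free colored operad with a differential whose leading part is dictated by the overlaps. Because $\Psi$ commutes with free objects and with bar/cobar, and is faithful and exact, this resolution descends to a free dioperad resolution of $\LieTriang$.

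Two things remain: minimality, meaning the differential has no linear part, and agreement with Merkulov's conjectural generators.

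\textbf{Minimality via Euler characteristic.} I would compute the generating function of Anick chains by weight, arity and homological degree. I then compare it with the minimal model series forced by Theorem~\ref{thm::series::intro}, or more precisely by its non-Koszul analogue $\chi_{\text{gen}}$ extracted from the inverse of $(\partial_y\rchi,\partial_x\rchi)$. If the chains in each bi-arity and weight occur in a single homological degree, no cancellation is possible and the resolution is minimal.

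I expect the chains to come as families of trees in which the triangular generator is attached to Lie/coLie combs. This should reproduce Merkulov's generators: corollas with the prescribed degrees, and the differential given by the tree-splittings he proposed. To confirm the identification, I check that the leading term of the Anick differential matches the leading term of Merkulov's differential, and that the generator counts agree.

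\textbf{Main obstacle.} The hard step is proving the absence of linear terms in the Anick differential. This is not automatic, because the Gr\"obner basis is inhomogeneous in weight: it mixes quadratic and cubic elements. My first attempt is the degree-concentration argument above, using weight plus homological grading. If chains of different homological degree collide in the same weight, I would instead construct an explicit filtration under which the associated graded resolution is the monomial one, and show its homology matches Merkulov's count.
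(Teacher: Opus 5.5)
Both halves of your proposal have genuine gaps.

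\textbf{First bullet.} You are working with the wrong presentation. $\LieTriang$ is generated only by the bracket in arity $(2,1)$ and $r$ in arity $(0,2)$, and Relations~\eqref{eq::triang::LieBi} are just Jacobi and the CYBE. There is no cobracket, hence no co-Jacobi and no Drinfeld compatibility. Under $\Psi$ this gives three colored brackets and one unary generator, subject to four colored Jacobi identities and one colored Yang--Baxter relation. All of these are homogeneous: in fact arity $(m,n)$ forces exactly $m+n-2$ brackets and $n-1$ copies of $r$. So the ``inhomogeneity'' you single out as the main obstacle comes from a redundant generator $\delta=\mathrm{ad}(r)$ that is not part of the statement.

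Being a Gr\"obner basis is also relative to an ordering, and yours is never pinned down. The paper uses the reverse path-lexicographic order with generators ranked as in Theorem~\ref{thm::triangular::Grobner}. For that order the only nontrivial overlaps are Jacobi/Jacobi and the self-overlap of the Yang--Baxter leading term on the five-vertex monomial $r$--bracket--$r$--bracket--$r$. That S-polynomial is reduced to zero by explicit computation.

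Your dimension cross-check cannot close. The image of $\LB$ is in general a proper part of $\LieTriang$: already $\dim\LB(1,2)=1<2=\dim\LieTriang(1,2)$, and $\LB$ says nothing about arities $(0,n)$. So the lower bound from $\LB$ will never meet the normal-form count.

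\textbf{Second bullet.} The step ``the shuffle Anick resolution descends to a dioperad resolution because $\Psi$ is faithful, exact and commutes with free objects'' does not hold. That resolution is built from ordered shuffle monomials, is not $\bS$-equivariant, and is not a priori of the form $\shuffle\Psi(\cdot)$ of anything. Faithfulness gives no way to pull objects back. Matching generator counts and leading terms with Merkulov's model also does not show that his specific differential is a quasi-isomorphism, which is what the statement asserts.

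The paper argues in the opposite direction.
\begin{enumerate}
\item Take Merkulov's quasi-free dioperad $(\calQ,d)$. Its differential is visibly decomposable, so minimality is automatic rather than the hard step.
\item Note $d^2=0$, and $H^0=\LieTriang$ because $d\ell_3$ gives Jacobi and $dr_3$ gives the CYBE.
\item Apply the exact functors $\shuffle\circ\Psi$ and filter by the number of left-growing internal edges.
\item The associated graded differential only creates right-growing edges. It coincides with the inclusion--exclusion complex of the monomial shuffle operad of leading terms: the generators are right combs of brackets for $\ell_m$ and alternating $r$--bracket--$\cdots$--$r$ chains for $r_{n+1}$. That complex is acyclic outside degree $0$.
\item The spectral sequence then kills the negative cohomology of $\calQ$.
\end{enumerate}
Your fallback filtration is the right tool, but it must be applied to Merkulov's complex to prove acyclicity, not to the Anick resolution to prove minimality.
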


We also establish a systematic way to generate Koszul dioperads from cyclic operads. By partitioning the legs of a cyclic operad into inputs and outputs following the given rule $\mathfrak{c}$, we define a pictorial functor $\Theta_{\mathfrak{c}}$ that maps cyclic operads to dioperads. 

\begin{theorem}[see Corollary~\ref{cor::coloring::Koszul}]
\label{thm::cycl->diop::intro}
If a cyclic operad $\mathcal{P}$ admits a quadratic Gr\"obner basis with caterpillar normal forms,\footnote{A caterpillar operadic monomial is represented by a tree that has no vertices with nontrivial branches growing in different directions. Right and left combs are particular examples of caterpillar monomials. See Definition~\ref{def::caterpilar} for details.} 
then the induced dioperad $\Theta_{\mathfrak{c}}(\mathcal{P})$ also admits caterpillar normal forms and a quadratic convergent rewriting system.\footnote{A convergent rewriting system is a slightly weaker notion than a Gr\"obner basis, yet it retains the same powerful properties for working with monomials. See \S~\ref{sec::rewriting::systems} for details.} 
\end{theorem}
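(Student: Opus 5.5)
Here is how I would prove Theorem~\ref{thm::cycl->diop::intro}: transport a rewriting system from the cyclic operad $\mathcal{P}$ to $\Theta_{\mathfrak{c}}(\mathcal{P})$ and check termination and confluence.

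\textbf{Construction.} For a given coloring $\mathfrak{c}$, a dioperadic tree monomial in $\Theta_{\mathfrak{c}}(\mathcal{P})$ is a tree whose vertices are generators of $\mathcal{P}$ with legs split into inputs and outputs. Forgetting the directions, every such tree is an unrooted tree monomial of the cyclic operad $\mathcal{P}$. Conversely, choosing a leg as root (this is exactly the rerooting picture behind $\Psi$ in~\eqref{eq::picture::Psi}) turns it into a shuffle operadic tree monomial. I would fix a compatible rerooting convention: root at the minimal leg with respect to a chosen total order on inputs and outputs. Each quadratic relation of $\mathcal{P}$, with its leading term taken as a caterpillar of two vertices, then becomes a rewriting rule on two-vertex dioperadic subtrees. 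The direction of an edge is forced by $\mathfrak{c}$, and only trees admissible for $\mathfrak{c}$ occur. The rewriting rules of $\Theta_{\mathfrak{c}}(\mathcal{P})$ are these rules, restricted to admissible monomials. This gives a quadratic rewriting system generating the ideal of relations of $\Theta_{\mathfrak{c}}(\mathcal{P})$. That it generates the ideal follows because $\Theta_{\mathfrak{c}}$ is pictorial: the relations are precisely the admissible colorings of the relations of $\mathcal{P}$.

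\textbf{Termination.} Each rewriting step lowers the path-lexicographic order of the underlying rerooted tree, since it replaces a leading term by smaller terms. This is a well-order on the finitely many monomials of fixed arity, so every rewriting sequence stops.

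\textbf{Confluence.} By the Diamond Lemma, it suffices to resolve critical pairs. These are cubic: three vertices, with two overlapping two-vertex leading terms. Every cubic admissible dioperadic monomial, after rerooting, is a cubic operadic monomial of $\mathcal{P}$. The statement that $\mathcal{P}$ has a quadratic Gr\"obner basis whose normal forms are caterpillars means every cubic critical pair reduces to zero in $\mathcal{P}$ through caterpillar intermediate monomials.

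\textbf{Main obstacle.} The key point is to show that this reduction in $\mathcal{P}$ never passes through a tree that is not admissible for $\mathfrak{c}$. Equivalently, the diamond in $\mathcal{P}$ must restrict to the admissible colored subspace. I would attack this by proving that admissibility is a property of the unrooted tree together with the leg coloring alone, and that every step only rearranges internal edges within a caterpillar. For a caterpillar, each internal edge's direction is determined by which legs lie on each side of it. So orientations are inherited consistently, and inadmissible trees, such as a vertex with no outputs, cannot appear. I would verify this directly on the cubic caterpillars; this is the step I expect to be hardest.

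\textbf{Normal forms.} Irreducible monomials are then exactly the admissible colorings of caterpillar normal forms of $\mathcal{P}$. This shows that $\Theta_{\mathfrak{c}}(\mathcal{P})$ has caterpillar normal forms and a quadratic convergent rewriting system.
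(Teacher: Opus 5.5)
The proposal breaks at the claim that the direction of each internal edge is forced by $\mathfrak{c}$. You restate this later as ``in a caterpillar an edge's direction is determined by the legs on either side'', and it is false. Take $\calP=\Com$ and $\mathfrak{c}=\bZ_{>0}\times\bZ_{>0}$. In a two-vertex tree where each vertex carries one external input and one external output, the internal edge may point either way. Both dioperadic trees are admissible, and their equality is exactly the Frobenius relation.

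In general, one tree monomial of $\calP$ with a fixed input/output labelling of its legs corresponds to several dioperadic monomials. These are identified in $\Theta_{\mathfrak{c}}(\calP)$ because cyclic composition does not see orientations. That identification is an extra family of quadratic relations, \eqref{eq::input/output:rel} in Proposition~\ref{prp::cyclic::coloring}. It is not a coloring of a relation of $\calP$: for a free cyclic operad $R=\emptyset$, yet these relations are still there. So your rules do not generate the ideal of $\Theta_{\mathfrak{c}}(\calP)$.

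Your list of irreducibles, ``all admissible colorings of caterpillar normal forms of $\calP$'', also overcounts. Let $S$ be the standard basis of $\Com$, whose normal forms are right combs. In $\Psi(\Frob)$, the right comb with straight root, leaves $1,2$ straight and leaf $3$ dotted admits both colorings of its internal edge. Neither coloring contains a leading term of $S$, yet $\dim\Frob(2,2)=1$. So the irreducible monomials cannot form a basis, and what you describe is not a convergent rewriting system for $\Theta_{\mathfrak{c}}(\calP)$.

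The missing ingredient has two parts. First, add the recoloring rules~\eqref{eq::rewrite::recolor}, which turn a dotted internal edge into a straight one. Second, fix a canonical orientation on right-hand sides: rewrite each colored leading term $(\tau_j)_{\bar c,\bar b}$ to $[f_j]^{\max}_{\bar c}$, taking every monomial of $f_j$ in its maximally straight admissible coloring.

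This is where the caterpillar hypothesis actually enters. By Lemma~\ref{lem::caterpillar::coloring}, the poset $([T]_{\mathfrak{c}},\prec)$ of internal colorings of a caterpillar shape has a unique maximum. For non-caterpillar shapes it need not (see~\eqref{ex::shape::monomial}), so there would be no canonical representative.

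Termination then comes from counting straight internal edges, together with termination of $S$. Your path-lexicographic argument on the underlying tree does not cover this, since a recoloring step leaves the underlying tree unchanged. Confluence is then reduced to that of $S$ on maximally straight caterpillar monomials.

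The difficulty you singled out, reductions passing through inadmissible trees, is not the real one. For $\mathfrak{c}=\bZ_{>0}\times\bZ_{>0}$ an admissible internal coloring exists whenever the leg coloring is admissible; the issue is non-uniqueness, not non-existence. Finally, the hypothesis does not say that critical pairs reduce through caterpillar monomials. An overlap of two quadratic leading terms at a vertex with two internal children is itself not a caterpillar.
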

In particular, Theorem~\ref{thm::cycl->diop::intro} provides a purely combinatorial proof of the Koszul property for a broad family of dioperads.

\subsection{Structure of the paper}
Following a brief survey of the theory of dioperads in Section \ref{sec::dioperads}, we introduce the pictorial functor $\Psi$ from dioperads to colored operads in the subsequent Section~\ref{sec::Psi}. Its applications to generating series and the Koszul property are discussed in 
Section~\ref{sec::Hilbert}.
The theory of shuffle operads and Gr\"obner bases, adapted to the framework of the 2-colored operads arising from $\Psi$ is recalled in Section~\ref{sec::Shuffle::Grobner}. Finally, in Section~\ref{sec::Anick} we outline the homological applications of Gr\"obner bases.

Section~\ref{sec::coloring::cyclic} is devoted to the functor $\Theta$, which constructs a dioperad from a cyclic operad. Theorem~\ref{thm::cycl->diop} establishes that a rewriting system for a dioperad can be derived from a Gr\"obner basis of the underlying cyclic operad under specific assumptions.

Finally, Section~\ref{sec::examples} provides various illustrative examples. Notable applications include computing the dimensions of operations for Lie bialgebras (\S\ref{example::Lie::Bialg}), as well as confluence computations and the construction of Anick resolutions for triangular Lie bialgebras (\S\ref{example::Lie::Triang}) and the dioperad of Tradler and Zeinalian (\S\ref{example::Tradler}).

\renewcommand{\theequation}{\thesubsection.\arabic{equation}}

\subsubsection*{Acknowledgments}
This project grew from a simple idea that originally spanned only a few pages and remained an unpublished draft for several years. I am grateful to several people whose interest motivated me to finally complete it. First, I would like to thank Sergei Merkulov for posing the question regarding the resolution of the dioperad of triangular Lie bialgebras. Second, I am grateful to Pedro Tamaroff for his interest in disproving the Koszul property for the operad $\mathcal{W}^{(d)}$ introduced by K.\,Poirier and T.\,Tradler in~\cite{Tradler}. Third, I thank my former student Vladislav Kharitonov, who promised to assist with computer programs for searching for quadratic Gr\"obner bases for colored operads. Finally, I am grateful to Vladimir Dotsenko for his valuable feedback on an earlier version of this manuscript.

\section{Dioperads -- recollection}
\label{sec::dioperads}

We assume the reader is familiar with the notion of an algebraic operad and its corresponding Koszul duality theory (see, e.g.,~\cite{Ginzburg::Kapranov, Loday::Vallette} for comprehensive details). Recall that a standard operad consists of a collection of operations featuring multiple inputs and a single output. This paper, however, is devoted to developing computational methods for algebraic structures of an operadic nature that possess multiple inputs and multiple outputs.

When working with such structures, an element of $\calP(m,n)$ is typically depicted as a corolla with $m$ incoming edges and $n$ outgoing edges. While more complex categorical frameworks (such as PROPs, properads, wheeled properads, and modular operads) exist to capture compositions along arbitrary graphs or surfaces, dioperads govern the simplest, most fundamental subset of these compositions: those restricted exclusively to directed, genus-zero trees. In particular, the category of dioperads sits inside all of the aforementioned mathematical universes. 

One of the main goals of this paper is to establish an "arithmetic" of monomials that enables systematic answers to basic enumerative and homological questions specifically for dioperads.
Let us recall the formal definition of a \emph{dioperad} (we refer the reader to~\cite{Gan::dioperads} for further details):
\begin{definition}
	A dioperad $\calP$ consists of:
	\begin{itemize}[itemsep=0pt,topsep=0pt]
		\item a collection of operations $\calP(m,n)$ for $m$ inputs and $n$ outputs, equipped with a right action of $\bS_m$ on the inputs and a left action of $\bS_n$ on the outputs.
		\item infinitesimal composition operations: $\compos{i}{j} : \calP(m,n) \otimes \calP(m',n') \rightarrow \calP(m+m'-1,n+n'-1)$ that insert the $i$-th output of $\calP(m,n)$ into the $j$-th input of $\calP(m',n')$.
	\end{itemize}
These composition operations must be \emph{associative} and equivariant with respect to the $\bS \times \bS^{\op}$ actions. 
\end{definition}

To formulate iterated infinitesimal compositions, one assigns a global composition map $\circ_{T}$ to any \emph{dioperadic tree} $T$:
$$\circ_{T} : \bigotimes_{v\in V(T)} \calP(\cin(v),\cout(v)) \rightarrow \calP(\cin(T),\cout(T)).$$
Here, a dioperadic tree $T$ is a directed tree equipped with a global labeling/numbering of its external incoming edges $\cin(T)$ and its external outgoing edges $\cout(T)$. The associativity constraint dictates that the result of an iterated composition does not depend on the order in which the internal edges of the dioperadic tree are contracted.

For example, given the tree:
$$
T:=
\begin{tikzpicture}[scale=0.5, baseline=-0.5ex]
	\tikzstyle{inner}=[circle,draw,inner sep=1.5pt]
	
	\node[inner] (A) at (0,0)   {};
	\node[inner] (B) at (3,0)   {};
	\node[inner] (C) at (1.5,-1.8) {};
	
	\node (in1) at (-1.5,1.8) {};
	\node (in2) at (-0.5,1.8) {};
	\node (in3) at (0.5,1.8) {};
	\node (in4) at (2.5,1.8) {};
	\node (in5) at (3.5,1.8) {};
	
	\node (out1) at (-1.5,-3.4) {};
	\node (out2) at (-0.5,-3.4) {};
	\node (out4) at (1.0,-3.4)  {};
	\node (out5) at (2.0,-3.4)  {};
	\node (out6) at (3.0,-3.4)  {};
	\node (out3) at (4.0,-3.4)  {};
	
	\draw[->] (in1) --  (A);
	\draw[->] (in2) --  (A);
	\draw[->] (in3) --  (A);
	\draw[->] (in4) --  (B);
	\draw[->] (in5) --  (B);
	
	\draw[->] (A) -- (C);
	\draw[->] (B) -- (C);
	
	\draw[->] (A) -- (out1);
	\draw[->] (A) -- (out2);
	
	\draw[->] (B) --  (out3);
	
	\draw[->] (C) --  (out4);
	\draw[->] (C) --  (out5);
	\draw[->] (C) --  (out6);
\end{tikzpicture}
\quad \substack{\text{ we have the }\\ \text{composition map }} \circ_{T}: \calP(3,3)\otimes \calP(2,2)\otimes \calP(2,3) \rightarrow \calP(5,6).
$$

It is worth noting that the free dioperad $\Free(\Upsilon)$ generated by an $\bS_m\times\bS_n^{\op}$-collection $\Upsilon(m,n)$ is linearly spanned by all dioperadic trees $T$ whose vertices $v$ are decorated by elements of $\Upsilon(\cin(v),\cout(v))$.

\begin{definition}
\label{def::reps::dioperad}
	A structure of a $\calP$-algebra (also called a representation of a dioperad $\calP$) on a vector space $V$ is a morphism of dioperads $\rho:\calP\to \End_V$, where $\End_{V}$ is the endomorphism dioperad defined by $\End_V(m,n):=\Hom(V^{\otimes m},V^{\otimes n})$ equipped with the natural composition rules.
\end{definition}

As with operads, one can define the bar and cobar constructions for dioperads. The bar construction functor $\cB$ maps a dioperad $\cP$ to the free dg-co-dioperad generated by the suspended space $s\cP$, while the cobar construction functor $\Omega$ associates to a co-dioperad $\calQ$ a quasi-free dg-dioperad generated by $s^{-1}\calQ$. For any dioperad $\cP$ and codioperad $\calQ$, there are canonical quasi-isomorphism of (co)dioperads: 
$$\Omega(\cB(\cP))\rightarrow \cP, \quad \calQ\rightarrow \cB(\Omega(\calQ)).$$ 

However, this canonical free resolution is generally far from minimal. For quadratic dioperads, one extends the standard Koszul duality theory to construct minimal resolutions for "good" (i.e., Koszul) quadratic presentations. Koszul duality theory, which is classically known for associative algebras~\cite{Priddy, PP_Koszul}, was generalized to quadratic operads by Ginzburg and Kapranov~\cite{Ginzburg::Kapranov}, and has been subsequently extended to various operadic-type structures. In this paper, we are primarily interested in the extensions to colored operads~\cite{Laan} and dioperads~\cite{Gan::dioperads}. Our notation aligns closely with the standard operadic literature (e.g.,~\cite{Loday::Vallette}), and we adopt the following equivalent definition of Koszulness.

\begin{definition}
	A quadratic dioperad $\cP := \Free(\Upsilon)/(R)$ generated by $\bS\times\bS^{\op}$-collection $\Upsilon$ and subject to quadratic relations $R$ is called \emph{Koszul} if the canonical projection from the cobar construction of its quadratic dual co-dioperad $\cP^{\antish} := \coFree(s\Upsilon|sR^{\perp})$,
	\[
	\Omega_{\diop}(\cP^{\antish}) \stackrel{\quis}{\longrightarrow} \cP,
	\]
	is a quasi-isomorphism.	
\end{definition}

\section{From {\it dioperads} to {\it colored (shuffle) operads}}

This section details the central pictorial intuition that allows us to derive a theory of Gr\"obner bases for dioperads from the established machinery for colored operads. We omit a formal survey of colored operads here and instead refer the reader to~\cite{vanderLaan}, as well as our preceding paper~\cite{colored_Grobner} and the references therein, for a comprehensive treatment of Gr\"obner bases in the colored setting.

\subsection{The pictorial map $\Psi$}
\label{sec::Psi}
Suppose $T$ is a dioperadic tree. All edges of $T$ are directed; we refer to this as the \emph{inner} orientation.
Choose a leaf $r$ of $T$ and declare it the \emph{global root}. This choice defines a unique path from each vertex to $r$. We then impose a new orientation on the edges such that $r$ becomes the unique sink, which we call the \emph{outer} orientation.

The outer orientation transforms $T$ into an operadic tree $T_r$, where each vertex has exactly one outgoing edge, and all other incident edges are incoming.
This resulting operadic tree carries a two-coloring of its edges: an edge is drawn as a \emph{straight line} if its outer and inner orientations \emph{coincide}, and as a \emph{dotted line} if they are \emph{opposite}.
Note that the outer orientation is entirely determined by the choice of the root, and the inner orientation can be recovered from the coloring.

Figure~\eqref{pic::diop::oper::coloring} below illustrates the procedure for converting a dioperadic tree into a two-colored operadic tree. The first example shows the transformation with outgoing edge $\bar{6}$ as the root; the second uses incoming edge $5$ as the root.
\begin{equation}
\label{pic::diop::oper::coloring}
\begin{array}{ccccc}
\substack{\text{ a dioperadic tree } \\ \text{ with a chosen root}} & & 
\text{ edge coloring } & & 
\substack{\text{ the corresponding } \\ 
\text{colored operadic tree}} \\
\begin{tikzpicture}[scale=0.5]
	\tikzstyle{inner}=[circle,draw,inner sep=1.5pt]
	
	\node[inner] (A) at (0,0)   {};
	\node[inner] (B) at (3,0)   {};
	\node[inner] (C) at (1.5,-1.8) {};
	
	\node (in1) at (-1.5,1.8) {};
	\node (in2) at (-0.5,1.8) {};
	\node (in3) at (0.5,1.8) {};
	\node (in4) at (2.5,1.8) {};
	\node (in5) at (3.5,1.8) {};
	
	\node at (-1.5,2.05) {\scriptsize $1$};
	\node at (-0.5,2.05) {\scriptsize $2$};
	\node at (0.5,2.05)  {\scriptsize $3$};
	\node at (2.5,2.05)  {\scriptsize $4$};
	\node at (3.5,2.05)  {\scriptsize $5$};
	
	\node (out1) at (-1.5,-3.4) {};
	\node (out2) at (-0.5,-3.4) {};
	\node (out4) at (1.0,-3.4)  {};
	\node (out5) at (2.0,-3.4)  {};
	\node (out6) at (3.0,-3.4)  {};
	\node (out3) at (4.0,-3.4)  {};
	
	\node at (-1.5,-3.65) {\scriptsize $\bar{1}$};
	\node at (-0.5,-3.65) {\scriptsize $\bar{2}$};
	\node at (4.0,-3.65)  {\scriptsize $\bar{3}$};
	\node at (1.0,-3.65)  {\scriptsize $\bar{4}$};
	\node at (2.0,-3.65)  {\scriptsize $\bar{5}$};
	
	\node[draw,circle,inner sep=1.5pt] at (3.0,-3.65) {\scriptsize $\bar{6}$};
	
	\draw[->] (in1) --  (A);
	\draw[->] (in2) --  (A);
	\draw[->] (in3) --  (A);
	\draw[->] (in4) --  (B);
	\draw[->] (in5) --  (B);
	
	\draw[->] (A) -- (C);
	\draw[->] (B) -- (C);
	
	\draw[->] (A) -- (out1);
	\draw[->] (A) -- (out2);
	\draw[->] (B) -- (out3);
	\draw[->] (C) -- (out4);
	\draw[->] (C) -- (out5);
	\draw[->] (C) -- (out6);
\end{tikzpicture}
&
\mapsto 
&
\begin{tikzpicture}[scale=0.5]
	\tikzstyle{inner}=[circle,draw,inner sep=1.5pt]
	
	\node[inner] (A) at (0,0)   {};
	\node[inner] (B) at (3,0)   {};
	\node[inner] (C) at (1.5,-1.8) {};
	
	\node (in1) at (-1.5,1.8) {};
	\node (in2) at (-0.5,1.8) {};
	\node (in3) at (0.5,1.8) {};
	\node (in4) at (2.5,1.8) {};
	\node (in5) at (3.5,1.8) {};
	
	\node at (-1.5,2.05) {\scriptsize $1$};
	\node at (-0.5,2.05) {\scriptsize $2$};
	\node at (0.5,2.05)  {\scriptsize $3$};
	\node at (2.5,2.05)  {\scriptsize $4$};
	\node at (3.5,2.05)  {\scriptsize $5$};
	
	\node (out1) at (-1.5,-3.4) {};
	\node (out2) at (-0.5,-3.4) {};
	\node (out4) at (1.0,-3.4)  {};
	\node (out5) at (2.0,-3.4)  {};
	\node (out6) at (3.0,-3.4)  {};
	\node (out3) at (4.0,-3.4)  {};
	
	\node at (-1.5,-3.65) {\scriptsize $\bar{1}$};
	\node at (-0.5,-3.65) {\scriptsize $\bar{2}$};
	\node at (4.0,-3.65)  {\scriptsize $\bar{3}$};
	\node at (1.0,-3.65)  {\scriptsize $\bar{4}$};
	\node at (2.0,-3.65)  {\scriptsize $\bar{5}$};
	\node[draw,circle,inner sep=1.5pt] at (3.0,-3.65) {\scriptsize $\bar{6}$};
	
	\draw[->] (in1) --  (A);
	\draw[->] (in2) --  (A);
	\draw[->] (in3) --  (A);
	\draw[->] (in4) --  (B);
	\draw[->] (in5) --  (B);
	
	\draw[->] (A) -- (C);
	\draw[->] (B) -- (C);
	\draw[->] (C) -- (out6); 
	
	\draw[->,dotted] (A) -- (out1);
	\draw[->,dotted] (A) -- (out2);
	\draw[->,dotted] (B) -- (out3);
	\draw[->,dotted] (C) -- (out4);
	\draw[->,dotted] (C) -- (out5);
\end{tikzpicture}
&
=
&
\begin{tikzpicture}[scale=0.5,>=stealth]
	\tikzstyle{inner}=[circle,draw,inner sep=1.5pt]
	
	\node[inner] (A) at (0,0)   {};   
	\node[inner] (B) at (4,0)   {};   
	
	\node[inner] (C) at (2,-2)  {};   
	
	\node (in1) at (-1.5,1.5) {};
	\node (in2) at (-0.5,1.5) {};
	\node (in3) at ( 0.5,1.5) {};
	
	\node at (-1.5,1.8) {\scriptsize $\bar{2}$};
	\node at (-0.5,1.8) {\scriptsize $1$};
	\node at ( 0.5,1.8) {\scriptsize $2$};
	
	\node (bin1) at (-2.5,1.5) {};
	\node (bin2) at ( 1.5,1.5) {};
	
	\node at (-2.5,1.8) {\scriptsize $\bar{1}$};
	\node at ( 1.5,1.8) {\scriptsize $3$};
	
	\draw[-]            (bin2)  -- (A);
	\draw[-]            (in2)  -- (A);
	\draw[-]            (in3)  -- (A);
	\draw[-,dotted]    (bin1) -- (A);
	\draw[-,dotted]    (in1) -- (A);
	
	\node (rin1) at (3.0,1.5) {};
	\node (bin3) at (5.0,1.5) {};
	
	\node at (3.0,1.8) {\scriptsize $4$};
	\node at (5.0,1.8) {\scriptsize $\bar{3}$};
	
	\node (rin2) at (4.0,1.5) {};
	\node at (4.0,1.8) {\scriptsize $5$};
	
	\draw[-]         (rin1) -- (B);
	\draw[-]         (rin2) -- (B);
	\draw[-,dotted] (bin3) -- (B);
	
	\node (cin1) at (1.5,-0.5) {};
	\node (cin2) at (2.5,-0.5) {};
	
	\node at (1.5,-0.2) {\scriptsize $\bar{4}$};
	\node at (2.5,-0.2) {\scriptsize $\bar{5}$};
	
	\draw[-,dotted] (cin1) -- (C);
	\draw[-,dotted] (cin2) -- (C);
	
	\draw[-] (A) -- (C);
	\draw[-] (B) -- (C);
	
	\node (outroot) at (2,-3.5) {};
	\draw[-] (C) -- (outroot);
	
\end{tikzpicture}
\\
& & & & \\
\begin{tikzpicture}[scale=0.5]
	\tikzstyle{inner}=[circle,draw,inner sep=1.5pt]
	
	\node[inner] (A) at (0,0)   {};
	\node[inner] (B) at (3,0)   {};
	\node[inner] (C) at (1.5,-1.8) {};
	
	\node (in1) at (-1.5,1.8) {};
	\node (in2) at (-0.5,1.8) {};
	\node (in3) at (0.5,1.8) {};
	\node (in4) at (2.5,1.8) {};
	\node (in5) at (3.5,1.8) {};
	
	\node at (-1.5,2.05) {\scriptsize $1$};
	\node at (-0.5,2.05) {\scriptsize $2$};
	\node at (0.5,2.05)  {\scriptsize $3$};
	\node at (2.5,2.05)  {\scriptsize $4$};
	\node[draw,circle,inner sep=1.5pt] at (3.5,2.05)  {\scriptsize $5$};
	
	\node (out1) at (-1.5,-3.4) {};
	\node (out2) at (-0.5,-3.4) {};
	\node (out4) at (1.0,-3.4)  {};
	\node (out5) at (2.0,-3.4)  {};
	\node (out6) at (3.0,-3.4)  {};
	\node (out3) at (4.0,-3.4)  {};
	
	\node at (-1.5,-3.65) {\scriptsize $\bar{1}$};
	\node at (-0.5,-3.65) {\scriptsize $\bar{2}$};
	\node at (4.0,-3.65)  {\scriptsize $\bar{3}$};
	\node at (1.0,-3.65)  {\scriptsize $\bar{4}$};
	\node at (2.0,-3.65)  {\scriptsize $\bar{5}$};
	
	\node at (3.0,-3.65) {\scriptsize $\bar{6}$};
	
	\draw[->] (in1) --  (A);
	\draw[->] (in2) --  (A);
	\draw[->] (in3) --  (A);
	\draw[->] (in4) --  (B);
	\draw[->] (in5) --  (B);
	
	\draw[->] (A) -- (C);
	\draw[->] (B) -- (C);
	
	\draw[->] (A) -- (out1);
	\draw[->] (A) -- (out2);
	\draw[->] (B) -- (out3);
	\draw[->] (C) -- (out4);
	\draw[->] (C) -- (out5);
	\draw[->] (C) -- (out6);
\end{tikzpicture}
&
\mapsto 
&
\begin{tikzpicture}[scale=0.5]
	\tikzstyle{inner}=[circle,draw,inner sep=1.5pt]
	
	\node[inner] (A) at (0,0)   {};
	\node[inner] (B) at (3,0)   {};
	\node[inner] (C) at (1.5,-1.8) {};
	
	\node (in1) at (-1.5,1.8) {};
	\node (in2) at (-0.5,1.8) {};
	\node (in3) at (0.5,1.8) {};
	\node (in4) at (2.5,1.8) {};
	\node (in5) at (3.5,1.8) {};
	
	\node at (-1.5,2.05) {\scriptsize $1$};
	\node at (-0.5,2.05) {\scriptsize $2$};
	\node at (0.5,2.05)  {\scriptsize $3$};
	\node at (2.5,2.05)  {\scriptsize $4$};
	\node[draw,circle,inner sep=1.5pt] at (3.5,2.05)  {\scriptsize $5$};
	
	\node (out1) at (-1.5,-3.4) {};
	\node (out2) at (-0.5,-3.4) {};
	\node (out4) at (1.0,-3.4)  {};
	\node (out5) at (2.0,-3.4)  {};
	\node (out6) at (3.0,-3.4)  {};
	\node (out3) at (4.0,-3.4)  {};
	
	\node at (-1.5,-3.65) {\scriptsize $\bar{1}$};
	\node at (-0.5,-3.65) {\scriptsize $\bar{2}$};
	\node at (4.0,-3.65)  {\scriptsize $\bar{3}$};
	\node at (1.0,-3.65)  {\scriptsize $\bar{4}$};
	\node at (2.0,-3.65)  {\scriptsize $\bar{5}$};
	\node at (3.0,-3.65) {\scriptsize $\bar{6}$};
	
	\draw[->] (in1) --  (A);
	\draw[->] (in2) --  (A);
	\draw[->] (in3) --  (A);
	\draw[->] (in4) --  (B);
	\draw[->] (in5) --  (B);
	
	\draw[->] (A) -- (C);
	
	\draw[->,dotted] (A) -- (out1);
	\draw[->,dotted] (A) -- (out2);
	\draw[->,dotted] (B) -- (out3);
    \draw[->,dotted] (B) -- (C);
	\draw[->,dotted] (C) -- (out4);
	\draw[->,dotted] (C) -- (out5);
    \draw[->,dotted] (C) -- (out6);
\end{tikzpicture}
&
=
&
\begin{tikzpicture}[scale=0.4,>=stealth]
	\tikzstyle{inner}=[circle,draw,inner sep=1.5pt]	
	\node[inner] (A) at (0,0)   {};   
	\node[inner] (B) at (4,-3.5)   {};   
	
	\node[inner] (C) at (2,-2)  {};   
	
	\node (in1) at (-1.5,1.5) {};
	\node (in2) at (-0.5,1.5) {};
	\node (in3) at ( 0.5,1.5) {};
	
	\node at (-1.5,1.8) {\scriptsize $\bar{2}$};
	\node at (-0.5,1.8) {\scriptsize $1$};
	\node at ( 0.5,1.8) {\scriptsize $2$};
	\node (bin1) at (-2.5,1.5) {};
	\node (bin2) at ( 1.5,1.5) {};
	\node at (-2.5,1.8) {\scriptsize $\bar{1}$};
	\node at ( 1.5,1.8) {\scriptsize $3$};
	\draw[-]            (bin2)  -- (A);
	\draw[-]            (in2)  -- (A);
	\draw[-]            (in3)  -- (A);
	\draw[-,dotted]    (bin1) -- (A);
	\draw[-,dotted]    (in1) -- (A);
	\node (rin1) at (3.5,-.5) {};
	\node at (3.5,-.2) {\scriptsize $\bar{6}$};
	\node at (5.5,-1.3) {\scriptsize $\bar{3}$};
	\node (bin3) at (5.5,-1.5) {};
	\node (rin2) at (4.5,-1.5) {};
	\node at (4.5,-1.3) {\scriptsize $4$};
	\draw[-,dotted]         (rin1) -- (C);
	\draw[-]         (rin2) -- (B);
	\draw[-,dotted] (bin3) -- (B);
	
	\node (cin1) at (1.5,-0.5) {};
	\node (cin2) at (2.5,-0.5) {};
	
	\node at (1.5,-0.2) {\scriptsize $\bar{4}$};
	\node at (2.5,-0.2) {\scriptsize $\bar{5}$};
	
	\draw[-,dotted] (cin1) -- (C);
	\draw[-,dotted] (cin2) -- (C);
	
	\draw[-] (A) -- (C);
	\draw[-,dotted] (B) -- (C);
	
	\node (outroot) at (4,-5.5) {};
	\draw[-,dotted] (B) -- (outroot);
\end{tikzpicture}
\end{array}
\end{equation}
The result depends strongly on which input or output is selected as the root. Specifically, to each dioperadic corolla with $m$ inputs and $n$ outputs, we associate two types of operadic corollas: one with $m$ straight inputs and $n-1$ dotted inputs, and another with $m-1$ straight inputs and $n$ dotted inputs. The operadic composition of these colored trees corresponds exactly to the dioperadic composition in the original dioperad, leading to the following observation:

\begin{proposition}
\label{prp::Psi:diop->op}
	The collection of all possible root choices in a dioperad defines a faithful exact functor
	\[
	\Psi: \text{Dioperads} \longrightarrow \text{$2$-colored Operads}.
	\]
The space of operations $\Psi(\cP)^{\strt}(m,n-1)$ with a straight output, $m$ straight inputs, and $n-1$ dotted inputs, and the space $\Psi(\cP)^{\dott}(m-1,n)$ with a dotted output, $m-1$ straight inputs, and $n$ dotted inputs, are both canonically isomorphic to the space $\cP(m,n)$ of operations in the underlying dioperad $\cP$:
	\[
	\Psi(\cP)^{\strt}(m,n-1)
	=
	\Psi(\cP)^{\dott}(m-1,n)
	:=
	\cP(m,n).
	\]
Moreover, operadic composition in the colored operad agrees with composition in the dioperad.
\end{proposition}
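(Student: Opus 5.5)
We construct $\Psi$ explicitly on objects and morphisms. The colors are \emph{straight} and \emph{dotted}. The underlying 2-colored collection is defined by declaring
$$\Psi(\cP)^{\strt}(m,n-1):=\cP(m,n)\quad\text{(root is the first output, and the other outputs become dotted inputs)},$$
$$\Psi(\cP)^{\dott}(m-1,n):=\cP(m,n)\quad\text{(root is the first input, and the remaining inputs stay straight while all outputs become dotted)}.$$
More invariantly, $\Psi(\cP)^{\strt}$ in arity $(S|D)$ for finite sets $S,D$ is $\cP(S, D\sqcup\{r\})$, and similarly for the dotted output. The symmetric group actions are then the restrictions of the $\bS_m\times\bS_n^{\op}$ actions to the stabilizer of the root leg, with inversion used to turn the left action on outputs into a right action on dotted inputs. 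Morphisms of dioperads act componentwise. This makes functoriality immediate. Faithfulness and exactness hold because, as collections, $\Psi$ is just two copies of the underlying collection of $\cP$ with relabeled legs. Hence $\Psi$ preserves and reflects injectivity, surjectivity and kernels, and it commutes with taking homology.

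Next we define the colored operadic partial compositions. Consider an element of $\Psi(\cP)$ with a chosen root, and an input leg $\ell$ of it of some color. Into $\ell$ we may only plug an element whose output has the same color. If $\ell$ is straight, it corresponds to a genuine input of the dioperadic corolla. We plug in an operation whose root is an output, and the composition is the dioperadic $\compos{i}{j}$ gluing the root output of the second operation to the input $\ell$. If $\ell$ is dotted, it is an output of the original corolla. We plug in an operation whose root is an input (dotted output), and we use the dioperadic composition in the other direction, gluing the output $\ell$ to the root input. In both cases we check that the new root is still the root of the first factor and that the leg colors come out as prescribed. The straight/dotted status of each external leg is unchanged, as the edge-coloring rule of \S\ref{sec::Psi} predicts. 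This is exactly the statement that the operadic composition agrees with the dioperadic one.

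The main step is verifying the operad axioms, namely sequential and parallel associativity and equivariance. We use the tree description. An iterated composition in $\Psi(\cP)$ along a 2-colored rooted tree $T_r$ is, by construction, the dioperadic composition $\circ_T$ along the directed tree $T$ obtained by forgetting the outer orientation and recovering the inner one from the coloring. The correspondence $T\leftrightarrow (T_r)$ described before the proposition is a bijection between dioperadic trees with a chosen external leg and 2-colored operadic trees. Associativity of dioperadic composition says $\circ_T$ is independent of the order of edge contractions. It follows that every bracketing of the operadic composition gives the same result, and this yields both associativity axioms at once. Equivariance follows from equivariance of the $\compos{i}{j}$.

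The expected obstacle is bookkeeping rather than concept. One must fix conventions for the Koszul signs and for the relabeling of legs (outputs turned into dotted inputs, with the $\bS^{\op}$ action turned into a right action). The relabeling must be compatible with the two kinds of gluing: a straight edge, where output meets input, and a dotted edge, where the order is reversed. I would handle this by working with finite sets of labels throughout, so that no renumbering is needed. Then the only check left is that the two gluing rules are literally the same dioperadic composition read from opposite ends.
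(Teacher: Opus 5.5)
Your proposal is correct and follows the paper's approach. The paper's proof is the one-line remark that everything follows from the pictorial rerooting, and you have made that explicit: the componentwise definition with restricted symmetric-group actions, the two gluing rules for straight and dotted legs, and associativity transported through the bijection between dioperadic trees with a marked leg and $2$-colored operadic trees (the same bijection the paper invokes for Theorem~\ref{thm::diop->op}).

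One small caveat, which the paper also leaves implicit, concerns faithfulness. The straight part only sees $\cP(m,n)$ with $n\geq 1$ and the dotted part only those with $m\geq 1$, so "two copies of the underlying collection" tacitly assumes $\cP(0,0)=0$. An operation with no legs cannot be rooted, and composites $\cP(0,1)\otimes\cP(1,0)\to\cP(0,0)$ are invisible in $\Psi(\cP)$.
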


\begin{proof}
	This follows directly from the pictorial description.
\end{proof}

Recall from Definition~\ref{def::reps::dioperad} that a representation $(V,\rho)$ of a dioperad $\calP$ assigns to each element $\gamma\in \calP(m,n)$ a multilinear operation $\rho(\gamma): V^{\otimes m} \to V^{\otimes n}$. By dualizing either an output or an input, we can reinterpret these maps as operations ending in $V$ or $V^*$, respectively. This observation allows us to relate dioperad representations to those of the associated colored operad:

\begin{proposition}
Any finite-dimensional representation $(V,\rho)$ of a dioperad $\calP$ induces a representation $\bar{\rho}$ of the colored operad $\Psi(\calP)$ on the pair of vector spaces $(V, V^*)$, where $V$ corresponds to the "straight" color and the dual space $V^*$ to the "dotted" one.
\end{proposition}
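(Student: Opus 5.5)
The plan is to build $\bar\rho$ from the representation $\rho:\calP\to\End_V$ and then check it is a morphism of colored operads. There are three steps: construct the two-colored endomorphism operad of $(V,V^*)$, construct a morphism of colored operads $\Psi(\End_V)\to\End_{(V,V^*)}$, and compose it with $\Psi(\rho)$. Functoriality of $\Psi$ (Proposition~\ref{prp::Psi:diop->op}) gives a morphism $\Psi(\rho):\Psi(\calP)\to\Psi(\End_V)$. So everything reduces to the universal case $\calP=\End_V$, and then $\bar\rho$ is the composite $\Psi(\calP)\to\Psi(\End_V)\to\End_{(V,V^*)}$.

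First, $\End_{(V,V^*)}$ is the two-colored operad whose operations with $m$ straight inputs, $k$ dotted inputs and a straight (resp. dotted) output are $\Hom(V^{\otimes m}\otimes (V^*)^{\otimes k},V)$ (resp. with target $V^*$). Composition is ordinary substitution, compatible with colors. Next, using finite-dimensionality, we have the canonical isomorphisms
\[
\Hom(V^{\otimes m},V^{\otimes n})\cong V^{\otimes n}\otimes (V^*)^{\otimes m}\cong \Hom(V^{\otimes m}\otimes (V^*)^{\otimes n-1},V)\cong \Hom(V^{\otimes m-1}\otimes (V^*)^{\otimes n},V^*).
\]
The second isomorphism contracts all outputs except the chosen root output against dotted inputs. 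The third contracts the chosen root input, whose $V^*$ factor becomes the output. By Proposition~\ref{prp::Psi:diop->op} these are exactly the identifications $\Psi(\End_V)^{\strt}(m,n-1)=\Psi(\End_V)^{\dott}(m-1,n)=\End_V(m,n)$. They define the componentwise map $\Psi(\End_V)\to\End_{(V,V^*)}$, which is clearly equivariant and in fact a bijection on components.

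The main step is checking compatibility with compositions. Write every operation as a tensor in $V^{\otimes\cout}\otimes(V^*)^{\otimes\cin}$. A dioperadic composition $\compos{i}{j}$ is then the contraction of the $i$-th $V$ factor of one tensor with the $j$-th $V^*$ factor of the other. A colored operadic composition in $\End_{(V,V^*)}$ along an edge is also a single contraction of a $V$ slot with a $V^*$ slot: along a straight edge, the output $V$ is plugged into a $V$-input; along a dotted edge, the output $V^*$ is plugged into a $V^*$-input, i.e. the root input of the lower vertex is paired with an output of the upper one. So both sides of the composition square are the same tensor contraction, indexed by the same internal edge of the underlying tree. The only bookkeeping is matching the orientation flip, which the straight/dotted coloring records, and the ordering of legs and Koszul signs. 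I expect this sign and leg-ordering check to be the main (though routine) obstacle; I would handle it by treating all maps as contractions of tensors with labeled factors, so that no ordering is ever chosen. Associativity and equivariance then follow from those of tensor contraction. Finally, I would note that finite-dimensionality is needed precisely for $\Hom(V^{\otimes m},V^{\otimes n})\cong V^{\otimes n}\otimes (V^*)^{\otimes m}$.
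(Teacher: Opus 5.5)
Your proposal is correct and follows the same route as the paper. Both define $\bar\rho$ componentwise through the canonical finite-dimensional isomorphisms $\Hom(V^{\otimes m},V^{\otimes n})\simeq\Hom(V^{\otimes m}\otimes (V^{*})^{\otimes n-1},V)\simeq\Hom(V^{\otimes m-1}\otimes (V^{*})^{\otimes n},V^*)$. Your factorization through $\Psi(\End_V)\to\End_{(V,V^*)}$, together with the tensor-contraction check that compositions are respected, simply makes explicit what the paper's one-line proof leaves implicit.
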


\begin{proof}
By Definition~\ref{def::reps::dioperad}, the fact that $V$ is a representation of $\calP$ is equivalent to the existence of a dioperad morphism $\rho:\calP \to \End_{V}$. 
Each $(m,n)$-ary operation $\gamma \in \calP(m,n)$ defines a multilinear map $\rho(\gamma) \in \Hom(V^{\otimes m}, V^{\otimes n})$. Since $V$ is finite-dimensional, we have the following canonical isomorphisms:
$$
\begin{array}{ccccc}
\Hom(V^{\otimes m}, V^{\otimes n})  & \simeq & \Hom(V^{\otimes m}\otimes (V^{*})^{\otimes n-1}, V) & \simeq &
\Hom(V^{\otimes m-1}\otimes (V^{*})^{\otimes n}, V^*)      \\
\rotatebox[origin=c]{90}{$\in$} &  & \rotatebox[origin=c]{90}{$\in$} &  & \rotatebox[origin=c]{90}{$\in$}
\\
\rho(\gamma) & \leftrightarrow & \bar{\rho}(\Psi^{\strt}(\gamma)) & \leftrightarrow & \bar{\rho}(\Psi^{\dott}(\gamma))
\end{array}
$$
where $\Psi^{\strt}(\gamma)$ and $\Psi^{\dott}(\gamma)$ are the operations corresponding to the two possible target colors in $\Psi(\calP)$.
\end{proof}

\begin{example}
\label{ex::diop->op}
Let $\calF_{\diop}(\mu)$ be the free dioperad generated by a single element $\mu\in\calF(2,2)$, symmetric with respect to inputs and skew-symmetric with respect to outputs:
	\[
\mu:=	\qpgen{1}{2}{1}{2} = - \qpgen{2}{1}{1}{2} = \qpgen{1}{2}{2}{1} = -\qpgen{2}{1}{2}{1}.
	\] 
Then the $2$-colored operad $\Psi(\calF_{\diop}(\mu))$ is the free $2$-colored operad generated by the following two operations, which satisfy the following symmetry conditions:    
$$
\mu_+:=
\begin{tikzpicture}[scale =1.2]
            \node[invisible] (r) at (0,-.25) {};
			\node[int] (c) at (0,0) {};
            \draw[-] (r) -- (c);
			\node[invisible] at (0,0.25) {}
			edge [-] (c);
			\node[invisible] (r) at (-0.25,0.25) {}
			edge [-] (c);
			\node[invisible] at (0.25,0.25) {}
			edge[dotted] [-] (c);
			\node[] at (-0.25,0.4) {$\scriptstyle 1$};
			\node[] at (0,0.4) {$\scriptstyle 2$};
			\node[] at (0.25,0.4) {$\scriptstyle \bar{1}$};
\end{tikzpicture} = 
\begin{tikzpicture}[scale =1.2]
            \node[invisible] (r) at (0,-.25) {};
			\node[int] (c) at (0,0) {};
            \draw[-] (r) -- (c);
			\node[invisible] at (0,0.25) {}
			edge [-] (c);
			\node[invisible] (r) at (-0.25,0.25) {}
			edge [-] (c);
			\node[invisible] at (0.25,0.25) {}
			edge[dotted] [-] (c);
			\node[] at (-0.25,0.4) {$\scriptstyle 2$};
			\node[] at (0,0.4) {$\scriptstyle 1$};
			\node[] at (0.25,0.4) {$\scriptstyle \bar{1}$};
\end{tikzpicture}; \ \quad
\mu_{-}:=
\begin{tikzpicture}[scale =1.2]
            \node[invisible] (r) at (0,-.25) {};
			\node[int] (c) at (0,0) {};
            \draw[-,dotted] (r) -- (c);
			\node[invisible] at (0,0.25) {}
			edge[dotted] [-] (c);
			\node[invisible] (r) at (-0.25,0.25) {}
			edge [-] (c);
			\node[invisible] at (0.25,0.25) {}
			edge[dotted] [-] (c);
			\node[] at (-0.25,0.4) {$\scriptstyle 1$};
			\node[] at (0,0.4) {$\scriptstyle \bar{1}$};
			\node[] at (0.25,0.4) {$\scriptstyle \bar{2}$};
\end{tikzpicture} = -
\begin{tikzpicture}[scale =1.2]
            \node[invisible] (r) at (0,-.25) {};
			\node[int] (c) at (0,0) {};
            \draw[-,dotted] (r) -- (c);
			\node[invisible] at (0,0.25) {}
			edge[dotted] [-] (c);
			\node[invisible] (r) at (-0.25,0.25) {}
			edge [-] (c);
			\node[invisible] at (0.25,0.25) {}
			edge[dotted] [-] (c);
			\node[] at (-0.25,0.4) {$\scriptstyle 1$};
			\node[] at (0,0.4) {$\scriptstyle \bar{2}$};
			\node[] at (0.25,0.4) {$\scriptstyle \bar{1}$};
\end{tikzpicture}.
$$
\end{example}

The functor $\Psi$ generalizes straightforwardly to the functor $\Psi^{\op}$ for co-dioperads and colored co-operads, leading to the following observation:
\begin{theorem}
\label{thm::diop->op}
	The functors $\Psi$ and $\Psi^{\op}$ 
    $$
\begin{array}{cccc}
\Psi: &  \textbf{Dioperads} &\rightarrow & \textbf{$2$-colored Operads} \\
    \Psi^{\op}: & \textbf{co-Dioperads} & \rightarrow & \textbf{$2$-colored co-Operads}
\end{array}    
    $$ 
    are exact, maps (co)free objects to (co)free objects and commutes with the bar and cobar constructions $\cB, \Omega$:
\begin{equation}
	\label{eq::cobar::op::diop}
		\Psi \circ \Omega_{\mathsf{Dioperads}} = \Omega_{\mathsf{Operads}} \circ \Psi^{\op}, \quad
	\Psi^{\op} \circ \cB_{\mathsf{Dioperads}} = \cB_{\mathsf{Operads}} \circ \Psi.
\end{equation}		
\end{theorem}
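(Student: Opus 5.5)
The plan is to reduce everything to a combinatorial statement about trees: for every $2$-colored operadic tree there is exactly one dioperadic tree with a chosen root leaf producing it under the rerooting procedure of \S\ref{sec::Psi}, and conversely. By Proposition~\ref{prp::Psi:diop->op}, $\Psi(\cP)$ is, colorwise, just $\cP$ itself: $\Psi(\cP)^{\strt}(m,n-1)=\Psi(\cP)^{\dott}(m-1,n)=\cP(m,n)$, with the evident identification of symmetric group actions, where the root leg is excluded from the permuted legs. Hence on underlying collections $\Psi$ is a direct sum of copies of the identity functor, indexed by the choice of root leg. A short exact sequence of dioperads therefore goes to a direct sum of copies of itself, which gives exactness, and faithfulness follows as well. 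The same reasoning applies verbatim to $\Psi^{\op}$ for co-dioperads.

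Next, I would show that $\Psi$ sends the free dioperad $\Free(\Upsilon)$ to the free $2$-colored operad on $\Psi(\Upsilon)$. The free dioperad is spanned by dioperadic trees with vertices decorated by $\Upsilon$, and the free colored operad is spanned by colored rooted trees with vertices decorated by $\Psi(\Upsilon)$. The key step is to construct the bijection between these two sets of trees. Given a dioperadic tree and a leaf $r$, orient all edges toward $r$ and color them straight or dotted according to whether the two orientations agree; each vertex $v$ then becomes a corolla of $\Psi(\Upsilon)$, with its outgoing edge toward $r$ determining the color of its root. Conversely, given a colored operadic tree, recover the inner orientation edge by edge from the coloring. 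Two checks are needed. First, the result is a directed tree whose vertex types match, and this is local, vertex by vertex. Second, the vertex decorations correspond, which is the case because $\Psi(\Upsilon)$ was defined vertexwise with exactly this convention. Since a tree has a unique path to the root, no choices are involved, so the correspondence is bijective and compatible with automorphisms and labelings. Because compositions on both sides are grafting of trees, this bijection is an isomorphism $\Psi(\Free(\Upsilon))\cong\Free(\Psi(\Upsilon))$. The dual argument handles cofree co-dioperads.

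For the bar and cobar constructions, I would note that $\Omega(\calQ)$ is the free dioperad on $s^{-1}\bar\calQ$ with differential determined on generators by the infinitesimal (two-vertex) decomposition of $\calQ$. Under the isomorphism above, $\Psi(\Omega(\calQ))$ and $\Omega(\Psi^{\op}\calQ)$ have the same underlying free colored operad. Since both differentials are derivations, it suffices to compare them on generators. A two-vertex dioperadic tree with a chosen root leg corresponds exactly to a two-vertex colored operadic tree, so the decomposition component of $\calQ$ along that tree matches the cooperadic decomposition of $\Psi^{\op}\calQ$. The bar construction case is handled by the dual argument.

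The main obstacle is the bookkeeping of signs and degrees. Suspensions $s^{\pm1}$ attached to vertices, together with orientation conventions on edges in the bar/cobar differentials, must be checked to agree after rerooting; in particular, the ordering of vertices used to define the signs must not depend on the root. My first approach is to use the standard convention in which signs come from the determinant of the set of internal edges, or equivalently of the vertices. This convention is intrinsic to the underlying undirected tree and does not depend on the orientation, so rerooting preserves it.
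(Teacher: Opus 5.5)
Your proposal is correct and takes essentially the same route as the paper. The bijection between dioperadic trees with a chosen root leg and $2$-colored operadic trees gives $\Psi(\Free(\Upsilon))\cong\Free(\Psi(\Upsilon))$, and the (co)bar identities are then read off from this isomorphism of (co)free objects. Your extra checks fill in details the paper leaves implicit: that the differentials agree on generators through the correspondence of two-vertex trees, and that signs can be taken from the vertex set, which rerooting does not change. One small imprecision: on underlying collections $\Psi$ is a sum of two restriction functors indexed by the color of the root (root leg an input or an output), not one copy per individual root leg.
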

\begin{proof}
The proof relies on a direct comparison of dioperadic and operadic trees and their coloring.
Each dioperadic tree is uniquely identified by replacing the inner and outer directions with coloring information, establishing a bijection between these classes of trees.
Consequently, $\Psi$ maps a free dioperad to a free $2$-colored operad.
The (co)bar construction $\Omega_{\mathsf{Dioperads}}$ assigns to a co-dioperad $\cP$ the free dg-dioperad generated by $s\cP$, where $s$ denotes the appropriate shift of the collection (cf. \cite{Loday::Vallette}).
Similarly, the construction $\Omega_{\mathsf{Operads}}$ assigns to a colored cooperad $\Psi^{\op}(\cP)$ the free colored dg-operad generated by $s\Psi^{\op}(\cP)$ with the same shift. The equivalences in \eqref{eq::cobar::op::diop} then follow from the isomorphism of the underlying free objects.
\end{proof}

\subsection{Koszul property and Hilbert series}
\label{sec::Hilbert}
\begin{corollary}
\label{cor::Psi::Koszul}
The dioperad $\calP$ is quadratic (respectively Koszul) if and only if the associated $2$-colored operad $\Psi(\calP)$ is quadratic (respectively Koszul).
\end{corollary}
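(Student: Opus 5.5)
The plan is to reduce both claims to Theorem~\ref{thm::diop->op}, using the exactness and faithfulness of $\Psi$ from Proposition~\ref{prp::Psi:diop->op}.

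\emph{Quadraticity.} Write $\calP=\Free(\Upsilon)/(R)$ as a quotient of a free dioperad by the dioperadic ideal generated by $R$. Since $\Psi$ sends free dioperads to free $2$-colored operads, we have $\Psi(\Free(\Upsilon))=\Free(\Psi(\Upsilon))$. Here $\Psi(\Upsilon)$ is the $2$-colored collection that places each $\Upsilon(m,n)$ in the two colored arities $(m|n-1)$ with straight output and $(m-1|n)$ with dotted output. Because $\Psi$ is exact, $\Psi(\calP)=\Free(\Psi(\Upsilon))/\Psi((R))$.

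The key step is to show that $\Psi((R))$ is the operadic ideal generated by $\Psi(R)$, namely by all rerootings of the two-vertex relations. One inclusion holds because $\Psi$ preserves compositions. For the other, take an element of the dioperadic ideal: a tree $T$ with a relation inserted at one internal edge. For any choice of root $r$, the rerooted tree $T_r$ is an operadic tree in which the two-vertex subtree carrying the relation appears as an operadic subtree, rooted at its own leaf closest to $r$. So the element lies in the operadic ideal generated by $\Psi(R)$.

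Weight gradings match because the number of vertices is preserved. Therefore a quadratic presentation of $\calP$ yields one of $\Psi(\calP)$. Conversely, suppose $\Psi(\calP)$ is quadratic with respect to the grading induced by $\Psi(\Upsilon)$. Faithfulness, together with the identification $\Psi(\calP)^{\strt}(m,n-1)=\calP(m,n)$, recovers the dioperadic relations in each arity. These are then generated in weight $2$. I expect this direction to need the most care: one has to say that quadraticity is read off in the grading coming from $\Psi(\Upsilon)$, and that the generating relations in weight $\geq 3$ of $\calP$ and of $\Psi(\calP)$ agree arity by arity.

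\emph{Koszulness.} First check that the quadratic duals correspond, that is, $\Psi^{\op}(\calP^{\antish})=\Psi(\calP)^{\antish}$. This holds because $\Psi$ is the identity on the underlying spaces of the weight-$2$ trees, by the bijection of trees. Hence the orthogonal complement $R^{\perp}\subset s^2\Free(\Upsilon)^{(2)}$ is carried to the orthogonal complement of $\Psi(R)$, with the same suspensions. Then Theorem~\ref{thm::diop->op} gives
\[
\Psi\bigl(\Omega_{\diop}(\calP^{\antish})\bigr)=\Omega_{\mathsf{Operads}}\bigl(\Psi^{\op}(\calP^{\antish})\bigr)=\Omega_{\mathsf{Operads}}\bigl(\Psi(\calP)^{\antish}\bigr).
\]
The image of the canonical projection to $\calP$ is the canonical projection to $\Psi(\calP)$.

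Since $\Psi$ is exact, it commutes with homology. Moreover, on each colored arity it is literally the identity on the underlying complexes, because each $\Psi(\calP)$-component equals some $\calP(m,n)$-component. So a map of dg dioperads is a quasi-isomorphism if and only if its image under $\Psi$ is one. The reverse implication uses that every $(m,n)$ with $m+n\geq 1$ appears as some colored arity. This gives the equivalence of the two Koszul properties.
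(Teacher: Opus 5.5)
Your proposal is correct and follows the same route as the paper. $\Psi$ carries free dioperads and presentations to free $2$-colored operads and presentations, so quadratic presentations are preserved. It commutes with the cobar construction by Theorem~\ref{thm::diop->op}, and it both preserves and reflects quasi-isomorphisms, so $\Omega_{\diop}(\calP^{\antish})\to\calP$ is a quasi-isomorphism if and only if its image under $\Psi$ is.

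Your extra steps are details the paper's proof leaves implicit rather than a different argument. These are:
\begin{itemize}
\item showing that $\Psi((R))$ is the operadic ideal generated by the rerootings of $R$;
\item identifying $\Psi^{\op}(\calP^{\antish})$ with the Koszul dual cooperad of $\Psi(\calP)$;
\item checking reflection of quasi-isomorphisms arity by arity, using that every $(m,n)$ with $m+n\geq 1$ occurs;
\item sketching the converse direction for quadraticity.
\end{itemize}
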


\begin{proof}
The functor $\Psi$ sends a free dioperad generated by a $\bS\times\bS^{\op}$-collection $\Upsilon$ to the free colored operad generated by the collection $\Psi(\Upsilon)$. More generally, a presentation of a dioperad by generators and relations is mapped by $\Psi$ to a presentation of a colored operad whose generators and relations are the images under $\Psi$ of the original ones. In particular, $\Psi$ preserves quadratic presentations.
Furthermore, thanks to Theorem~\ref{thm::diop->op}, a morphism of dg-dioperads is a quasi-isomorphism if and only if its image under $\Psi$ is a quasi-isomorphism. Therefore, the canonical morphism defining Koszulness satisfies
$$
	\Omega_{\diop}(\cP^{\antish}) \stackrel{\quis}{\longrightarrow} \cP
	\ \Leftrightarrow\ 
    \Omega_{\mathsf{Operad}}\bigl(\Psi(\cP^{\antish})\bigr)
    =
    \Psi\bigl(\Omega_{\diop}(\cP^{\antish})\bigr)
    \stackrel{\quis}{\longrightarrow}
    \Psi(\cP),
$$
which proves the claim.
\end{proof}

In particular, one can apply the theory of Gr\"obner bases and rewriting systems developed for colored operads in~\cite{colored_Grobner} in order to prove the Koszulness of quadratic dioperads. We briefly recall this theory in the next section. We also state Corollary~\ref{cor::sereis::diop::Koszul} concerning the generating series of Koszul dual dioperads.

Suppose that the dioperad $\cP$ is graded,
$\cP=\bigoplus_{r=0}^{\infty}\cP^{(r)}.$
We then define its graded generating series by
$$
\rchi_{\cP}(x,y;q):=\sum_{m,n} \left(\sum_{r=0}^{\infty} q^{r}\dim \cP(m,n)^{(r)}\right) \frac{x^m}{m!}\frac{y^n}{n!}
= \sum_{m,n} \dim_q \cP(m,n) \frac{x^m}{m!}\frac{y^n}{n!}.
$$
For example, if the dioperad $\cP=\Free(\Upsilon,R)$ is quadratic, then $\cP^{(0)}=\cP(1,1)=\Id$, $\cP^{(1)}=\Upsilon$, and
\[
\cP^{(2)}=\frac{\Upsilon\circ'\Upsilon}{R},
\]
where $\circ'$ denotes the infinitesimal composition. Moreover, in Examples~\S\ref{example::Frobenius}, \S\ref{example::Lie::Bialg}, \S\ref{example::QP} considered here, the total number of inputs and outputs of the generators $\Upsilon$ is fixed. As a result, the parameter $q$ is uniquely determined by the $x$- and $y$-gradings, since the sum of the numbers of inputs and outputs is constant on each graded component.

\begin{corollary}
\label{cor::sereis::diop::Koszul}
If the dioperad $\cP$ is Koszul, then the graded generating series of $\cP$ and its Koszul dual $\cP^{!}$ satisfy the following algebraic identity:
\begin{equation}
\label{eq::generating::Koszul}
\left(\frac{\partial \rchi_{\cP^{!}}(x,y;-q)}{\partial y}; \frac{\partial \rchi_{\cP^{!}}(x,y;-q)}{\partial x}\right) \circ	
\left(\frac{\partial \rchi_{\cP}(x,y;q)}{\partial y}; \frac{\partial \rchi_{\cP}(x,y;q)}{\partial x}\right) = (x;y).
\end{equation}
Here $f\circ g$ denotes the composition of formal diffeomorphisms of a disk at the origin. Namely, 
\begin{multline*}
f\circ g:=(f_1(g_1(x,y),g_2(x,y));f_2(g_1(x,y),g_2(x,y)), \\ \text{ where } f:=(f_1(x,y);f_2(x,y)) \text{ and } g:=(g_1(x,y);g_2(x,y)).
\end{multline*}
\end{corollary}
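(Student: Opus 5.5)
We reduce the statement to the classical functional equation for Koszul colored operads, applied to $\Psi(\cP)$, and then translate back to dioperadic generating series.

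By Corollary~\ref{cor::Psi::Koszul}, $\Psi(\cP)$ is a Koszul $2$-colored operad. Its Koszul dual is $\Psi(\cP^{!})$, up to the standard suspension/sign twist. The equality $\Psi(\cP^{\antish})=\Psi(\cP)^{\antish}$ follows from Theorem~\ref{thm::diop->op}, because $\Psi$ commutes with cofree objects and sends the orthogonal complement of $R$ to the orthogonal complement of $\Psi(R)$.

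For a Koszul $2$-colored operad $\calQ$, consider its pair of exponential generating series $f_{\calQ}=(f^{\strt},f^{\dott})$ in the variables $(x,y)$. Here $x$ counts straight inputs and $y$ counts dotted inputs:
\[
f^{\strt}_{\calQ}(x,y)=\sum \dim_q \calQ^{\strt}(a,b)\frac{x^a}{a!}\frac{y^b}{b!},
\]
and similarly for $f^{\dott}_{\calQ}$. The colored version of the Ginzburg--Kapranov identity states that
\[
f_{\calQ^{!}}(x,y;-q)\circ f_{\calQ}(x,y;q)=(x;y).
\]
It is obtained from the Euler characteristic of the acyclic Koszul complex $\calQ^{\antish}\circ\calQ$, which is quasi-isomorphic to the identity collection. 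The composition is the colored plethysm: an operation with straight output is substituted into each straight slot, and one with dotted output into each dotted slot. We invoke this as the standard colored statement (cf.~\cite{vanderLaan}, \cite{colored_Grobner}), checking only that the sign convention $q\mapsto -q$ matches the weight grading.

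It remains to identify the series of $\Psi(\cP)$. By Proposition~\ref{prp::Psi:diop->op}, $\Psi(\cP)^{\strt}(m,n-1)=\cP(m,n)$. Hence
\[
f^{\strt}_{\Psi(\cP)}=\sum_{m,n}\dim_q\cP(m,n)\frac{x^m}{m!}\frac{y^{n-1}}{(n-1)!}=\frac{\partial\rchi_{\cP}}{\partial y}.
\]
Similarly, $\Psi(\cP)^{\dott}(m-1,n)=\cP(m,n)$ gives $f^{\dott}_{\Psi(\cP)}=\partial\rchi_{\cP}/\partial x$. In both cases the $\bS$-actions on the remaining legs are the unchanged ones, so the factorials match. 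Applying the same computation to $\cP^{!}$ and substituting into the colored identity yields \eqref{eq::generating::Koszul}. Composition of pairs here is exactly composition of maps $(x,y)\mapsto(f_1,f_2)$.

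The main obstacle is bookkeeping in the colored Euler-characteristic identity. We need to check three things:
\begin{itemize}
\item the grading and suspension on $\Psi(\cP)^{\antish}$ yield exactly the factor $-q$, with no extra sign on the dotted variable;
\item the identity component in each color correctly produces $(x;y)$, using $\cP(1,1)^{(0)}=\Id$, which contributes $y^0$ to $\partial_y\rchi$ and $x^0$ to $\partial_x\rchi$;
\item the order of composition (the dual series on the outside) matches the convention of the corollary.
\end{itemize}
Since both series are inverse formal diffeomorphisms, the order of composition is immaterial, which settles the third point.
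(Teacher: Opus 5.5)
Your proposal is correct and follows the paper's own argument. You reduce via Corollary~\ref{cor::Psi::Koszul} to the Koszul $2$-colored operad $\Psi(\cP)$, use the Euler characteristic of its acyclic Koszul complex $\Psi(\cP^{\antish})\circ\Psi(\cP)$, with $q\mapsto -q$ recording the syzygy degree, and identify the two colored series as $\partial\rchi_{\cP}/\partial y$ and $\partial\rchi_{\cP}/\partial x$; your extra bookkeeping (identifying $\Psi(\cP)^{\antish}$ with $\Psi(\cP^{\antish})$, and noting that the weight-zero terms $x$ and $y$ make the composition order harmless) only spells out steps the paper leaves implicit.
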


\begin{proof}
The argument follows the standard proof for operads (see, for example,~\cite[\S7.5]{Loday::Vallette}). The Koszulness of the dioperad $\cP$ is equivalent to the Koszulness of the colored operad $\Psi(\cP)$. In turn, this is equivalent to the acyclicity of the Koszul complex $\cK(\Psi(\cP))$, which, as a graded vector space, is isomorphic to $\Psi(\cP^{\antish})\circ'\Psi(\cP)$. The homological grading is determined by the degree of $\cP^{\antish}$.

It therefore remains to compute the generating series of $\Psi(\cP)$, as is done for colored operads (see, for example,~\cite{colored_Grobner}). The generating series of a colored operad is given by a collection of generating series corresponding to operations with a fixed output color. In the present situation, there are two such series, corresponding to the ``$\strt$''-(straight) and ``$\dott$''-(dotted) colors:
\begin{gather*}
F_{\cP}^{\strt}(x,y)
:= \sum_{m,n} \dim_q \Psi(\cP)^{\strt}(m,n) \frac{x^m}{m!}\frac{y^n}{n!}
= \sum_{m,n}\dim_q\cP(m,n+1)\frac{x^m}{m!}\frac{y^n}{n!}
= \frac{\partial \rchi_{\cP}(x,y;q)}{\partial y}, \\
F_{\cP}^{\dott}(x,y)
:= \sum_{m,n} \dim_q \Psi(\cP)^{\dott}(m,n) \frac{x^m}{m!}\frac{y^n}{n!}
= \sum_{m,n}\dim_q\cP(m+1,n)\frac{x^m}{m!}\frac{y^n}{n!}
= \frac{\partial \rchi_{\cP}(x,y;q)}{\partial x}.
\end{gather*}
Finally, the generating series of the operad $\Psi(\cP^{!})$ coincides with that of the cooperad $\Psi(\cP^{\antish})$, and replacing $q$ by $-q$ accounts for the homological grading, which contributes signs to the Euler characteristic.
\end{proof}

We illustrate this theory with the computation of the generating series for the dioperad of Lie bialgebras (\S\ref{example::Lie::Bialg}).

Notably, one may also define generating series for colored operads that account for the action of the symmetric group (see e.g.~\cite[\S1.6]{colored_Grobner}). The corresponding functional equation is expressed via plethystic substitutions of symmetric functions, a framework that generalizes readily to the dioperadic setting.

\subsection{Colored shuffle operads and Gr\"obner bases for dioperads}
\label{sec::Shuffle::Grobner}

Gr\"obner basis theory originates from the study of monomials, their divisibility properties, and the existence of a monomial ordering compatible with multiplication (see, for example,~\cite{Grobner::book} for a general introduction in the commutative setting).
Such an approach cannot be directly applied to symmetric operads, since there is no monomial ordering compatible with the action of the symmetric group. This difficulty is resolved by the notion of shuffle operads, introduced in~\cite{DK_Grobner}, where the symmetric group action is forgotten, while keeping the set of all generating compositions. In~\cite{colored_Grobner}, this idea was extended to the colored setting by introducing colored shuffle operads.

From a pictorial point of view, the passage from dioperads to colored shuffle operads is straightforward. However, it leads to a significant increase in the number of generators, since one must consider all possible choices of colorings. In what follows, we recall the main definitions from~\cite{colored_Grobner} and~\cite{DK_Grobner}, adapting them to the case of $2$-colored operads.

\begin{remark}
During the preparation of this manuscript, we found a description of PBW bases for dioperads in the doctoral thesis~\cite{PBW_Dioperads}. In our view, the construction presented there contains several gaps that limit its utility. Our primary concern is that in~\cite[\S3.03]{PBW_Dioperads}, the author introduces a \emph{pointed shuffle} and claims it can be used to generate a basis for the free dioperad via tree monomials. However, unlike in the operadic case, the pointed shuffle composition is not associative. Furthermore, the description of dioperadic $(m,n)$-trees does not specify the ordering of the inner vertices. Consequently, the description of the basis for a free dioperad lacks necessary detail and, in its current form, does not appear to be correct.
\end{remark}

\subsubsection{Shuffle (colored) operads}

\begin{definition}
 A \emph{shuffle $2$-colored tree} $T$ is a planar rooted tree with edges oriented from leaves toward the root, satisfying:
    \begin{itemize}[itemsep=0pt,topsep=0pt]
        \item Each edge is assigned one of two specified colors.
        \item The set of leaves of $T$ is equipped with a total (linear) order.
        \item For every internal vertex $v \in T$, the relative order of the minimal labels of its incoming subtrees is compatible with the planar structure.
    \end{itemize}
\end{definition}    
\begin{notation}    
\begin{itemize}[itemsep=2pt,topsep=0pt]
    \item A \emph{shuffle corolla} is a shuffle tree possessing exactly one internal vertex.
    \item For each internal vertex $v$ of a shuffle tree $T$, we denote by $\mathrm{ar}(v)$ the corresponding corolla that preserves the coloring of the inputs and output of $v$. 
    \item For a shuffle tree $T$, we denote by $\mathrm{ar}(T)$ the total arity and coloring of its leaves and root.
\end{itemize}
\end{notation}

We represent shuffle trees in the plane with the root at the bottom and all edges oriented downward. The planar structure is fixed by the local order of minima, which increases from left to right. Consequently, in a shuffle corolla, the leaves are arranged from left to right according to their labels.

The following figure displays three shuffle corollas, followed by three colored planar trees with two internal vertices. The first two are valid shuffle trees; the third is not, as the compatibility condition is violated at one vertex. The indices causing the violation are circled.

$$
\begin{array}{ccc}
\begin{tikzpicture}[scale =1.2]
            \node[invisible] (r) at (0,-.25) {};
			\node[int] (c) at (0,0) {};
            \draw[-] (r) -- (c);
			\node[invisible] at (0,0.25) {}
			edge[dotted] [-] (c);
			\node[invisible] (r) at (-0.25,0.25) {}
			edge [-] (c);
			\node[invisible] at (0.25,0.25) {}
			edge[dotted] [-] (c);
			\node[] at (-0.25,0.4) {$\scriptstyle 1$};
			\node[] at (0,0.4) {$\scriptstyle 2$};
			\node[] at (0.25,0.4) {$\scriptstyle 3$};
\end{tikzpicture}, \ 
\begin{tikzpicture}[scale =1.2]
            \node[invisible] (r) at (0,-.25) {};
			\node[int] (c) at (0,0) {};
            \draw[-,dotted] (r) -- (c);
			\node[invisible] at (0,0.25) {}
			edge[dotted] [-] (c);
			\node[invisible] (r) at (-0.25,0.25) {}
			edge [-] (c);
			\node[invisible] at (0.25,0.25) {}
			edge [-] (c);
			\node[] at (-0.25,0.4) {$\scriptstyle 1$};
			\node[] at (0,0.4) {$\scriptstyle 2$};
			\node[] at (0.25,0.4) {$\scriptstyle 3$};
\end{tikzpicture}, \ 
\begin{tikzpicture}[scale =1.2]
            \node[invisible] (r) at (0,-.25) {};
			\node[int] (c) at (0,0) {};
            \draw[-] (r) -- (c);
			\node[invisible] at (0,0.25) {}
			edge[dotted] [-] (c);
			\node[invisible] (r) at (-0.25,0.25) {}
			edge[dotted] [-] (c);
			\node[invisible] at (0.25,0.25) {}
			edge [-] (c);
			\node[] at (-0.25,0.4) {$\scriptstyle 1$};
			\node[] at (0,0.4) {$\scriptstyle 2$};
			\node[] at (0.25,0.4) {$\scriptstyle 3$};
\end{tikzpicture}; \ &
		\begin{tikzpicture}[scale=1.2,xscale=-1]
            \node[invisible] (b) at (0,-.25) {};
			\node[int] (c) at (0,0) {};
            \draw[-,dotted] (b) -- (c);
			\node[invisible] at (0,0.25) {}
			edge [-] (c);
			\node[int] (r) at (0.25,0.25) {}
			edge [-] (c);
			\node[invisible] at (-0.25,0.25) {}
			edge [-] (c);
			\node[invisible] at (0.15,0.55) {}
			edge [-,dotted] (r);
			\node[invisible] at (0.35,0.55) {}
			edge [-,dotted] (r);
			\node[] at (-0.25,0.4) {$\scriptstyle 3$};
			\node[] at (0,0.4) {$\scriptstyle 2$};
			\node[] at (0.15,0.7) {$\scriptstyle 4$};
			\node[] at (0.35,0.7) {$\scriptstyle 1$};
		\end{tikzpicture}, 
		\begin{tikzpicture}[scale=1.2,xscale=-1]
            \node[invisible] (b) at (0,-.25) {};
			\node[int] (c) at (0,0) {};
            \draw[-] (b) -- (c);
			\node[invisible] at (0,0.25) {}
			edge [-,dotted] (c);
			\node[int] (r) at (0.25,0.25) {}
			edge [-,dotted] (c);
			\node[invisible] at (-0.25,0.25) {}
			edge [-] (c);
			\node[invisible] at (0.15,0.55) {}
			edge [-,dotted] (r);
			\node[invisible] at (0.35,0.55) {}
			edge [-] (r);
			\node[] at (-0.25,0.4) {$\scriptstyle 4$};
			\node[] at (0,0.4) {$\scriptstyle 2$};
			\node[] at (0.15,0.7) {$\scriptstyle 3$};
			\node[] at (0.35,0.7) {$\scriptstyle 1$};
		\end{tikzpicture}; &
		\begin{tikzpicture}[scale=1.2]
            \node[invisible] (b) at (0,-.25) {};
			\node[int] (c) at (0,0) {};
            \draw[-,dotted] (b) -- (c);
			\node[invisible] at (0,0.25) {}
			edge [-] (c);
			\node[int] (r) at (0.25,0.25) {}
			edge [-] (c);
			\node[invisible] at (-0.25,0.25) {}
			edge [-] (c);
			\node[invisible] at (0.15,0.55) {}
			edge [-] (r);
			\node[invisible] at (0.35,0.55) {}
			edge [-] (r);
			\node[] at (-0.25,0.4) {\textcircled{$\scriptstyle 3$}};
			\node[] at (0,0.4) {$\scriptstyle 4$};
			\node[] at (0.15,0.7) {\textcircled{$\scriptstyle 1$}};
			\node[] at (0.35,0.7) {$\scriptstyle 2$};
            \draw[-,red] (-0.3,0.8) -- (0.4,-0.3);
            \draw[-,red] (0.4,0.8) -- (-0.3,-0.3);          
		\end{tikzpicture}. \\
\text{Shuffle corollas} & \text{Shuffle trees} & \text{Non-shuffle tree}
\end{array}
$$

\begin{definition}
A \emph{(colored) shuffle operad} $\calP$ in a monoidal category $\calC$ consists of:
\begin{itemize}[itemsep=2pt, topsep=0pt]
    \item An assignment of an object $\calP(\mathrm{ar}(v))$ in $\calC$ to each colored shuffle corolla $v$.
    \item For each colored shuffle tree $T$, an operadic composition map
    $$
    \circ_{T} : \bigotimes_{v\in T} \calP(\mathrm{ar}(v)) \longrightarrow \calP\bigl(\mathrm{ar}(T)\bigr),
    $$
\end{itemize}
where the tensor product is taken over the internal vertices of $T$.

The maps $\circ_T$ must be associative: for any shuffle subtree $T' \subset T$, performing composition within $T'$ followed by composition in the contracted tree $T/T'$ must coincide with the direct composition over $T$:
$$
\circ_{T} \simeq \circ_{T/T'} \circ \circ_{T'}.
$$
\end{definition}

The defining characteristic of shuffle operads is that free shuffle operads possess a monomial basis and admit an ordering compatible with operadic compositions.

\begin{proposition}(\cite[\S3.3-3.4]{colored_Grobner})
Let $\{\alpha_i\}_{i\in\mathsf{I}}$ be an alphabet of generators with prescribed arities and colors.
Let $\calB_{\shuffle}^{\{\alpha_i\}}(\ar(w))$ be the set of all shuffle trees or arity $\ar(w)$ whose vertices are labeled by elements of the alphabet such that the label at vertex $v$ matches $\mathrm{ar}(v)$. Then 
$$\left\{\calB_{\shuffle}^{\{\alpha_i\}}(\ar(w))\colon w \text{ is a corolla}  \right\}$$ 
forms the basis of the free colored shuffle operad generated by 
$\{\alpha_i\}$. 
\end{proposition}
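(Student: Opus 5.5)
The statement is the colored analogue of the Dotsenko--Khoroshkin basis theorem for free shuffle operads. I would prove it by constructing the free colored shuffle operad explicitly on the span of labeled shuffle trees and checking its universal property.

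\textbf{Step 1: the candidate operad.} For each colored shuffle corolla $w$, let $\mathcal{F}(\ar(w))$ be the vector space with basis $\calB_{\shuffle}^{\{\alpha_i\}}(\ar(w))$.

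\textbf{Step 2: the composition maps.} For a shuffle tree $T$ and labeled trees $\tau_v\in\calB_{\shuffle}(\ar(v))$, $v\in T$, define $\circ_T$ by grafting. At each internal edge, the root of $\tau_{v'}$ is glued to the corresponding input leaf of $\tau_v$; the root color of $\tau_{v'}$ matches that leaf color because $\ar(v')$ is colored consistently with $T$. The leaves of the grafted tree are then relabeled by the global order of leaves of $T$, via the order-preserving bijections for each vertex.

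The key verification is that the result is again a shuffle tree. Take any vertex $u$ of the grafted tree, lying in some $\tau_v$. Each incoming subtree of $u$ is either a subtree of $\tau_v$ or such a subtree with some leaves replaced by grafted trees. Its minimal global label is therefore the minimal global label over the union of the corresponding leaf-subtrees of $T$. Because $T$ is shuffle, the minimum over a leaf $j$ of $v$ is increasing in $j$. So the minimum over a set of leaves of $v$ is achieved at that set's minimal local label. Hence the order of minima at $u$ is the same as in $\tau_v$, and $\tau_v$ is shuffle.

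This is the main obstacle. I would handle it by induction on the number of internal edges of $T$, reducing to the case of a single shuffle grafting $\circ_{i,\sigma}$ with one edge. That case is exactly the uncolored argument in \cite{DK_Grobner}, because colors are only carried along and impose no extra condition beyond matching at the grafted edge.

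\textbf{Step 3: associativity.} This is immediate, since both $\circ_T$ and $\circ_{T/T'}\circ\circ_{T'}$ produce the same grafted labeled tree with the same global leaf labeling.

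\textbf{Step 4: freeness.} Given a colored shuffle operad $\calQ$ and an assignment $\alpha_i\mapsto q_i\in\calQ(\ar(\alpha_i))$, any morphism must send a labeled tree $\tau$ to $\circ_\tau\bigl(\otimes_v q_{\alpha(v)}\bigr)$. This is because every $\tau$ is the $T$-composition of its labeled corollas, with $T=\tau$ as an unlabeled shuffle tree. This proves uniqueness. Defining the map by this formula proves existence, and it respects compositions by the associativity axiom in $\calQ$.

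Finally, the spanning trees are linearly independent by construction, so they form a basis.
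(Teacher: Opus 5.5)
Your proof is correct. The paper gives no argument of its own for this proposition; it quotes it from \cite[\S3.3--3.4]{colored_Grobner} as the colored counterpart of the basis theorem for free shuffle operads in \cite{DK_Grobner}. Your explicit model is the standard self-contained proof of that result: take the span of labeled shuffle trees, use grafting as composition, then check the universal property.

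The one nontrivial point is handled correctly: grafting along a shuffle tree $T$ again yields a shuffle tree, because the minimal leaf label of $T$ above the $j$-th input of $v$ is strictly increasing in $j$. You are also right that colors only impose the matching condition at grafted edges.

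Two minor remarks:
\begin{itemize}
\item The induction on the number of internal edges of $T$ is superfluous, since your minimum argument already treats an arbitrary $T$ at once.
\item In Step~3 you implicitly use that the planar structure of a shuffle tree is determined by its leaf labeling, and that $T'$ and $T/T'$ carry the induced shuffle structures. This is what makes ``the same grafted labeled tree'' unambiguous, and it is worth stating explicitly.
\end{itemize}
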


\begin{definition}
In a free colored shuffle operad $\mathcal{F}ree(\{\alpha_i\})$, a  total order $\leq$ on the tree monomials of $\calB_{\shuffle}^{\{\alpha_i\}}(\ar(w))$ of the same (colored) arity $\ar(w)$ is \emph{admissible} if it is compatible with composition: 
$$ \forall \alpha\leq\alpha' \in \calB_{\shuffle}^{\{\alpha_i\}}(\ar(v)), \ \forall \beta\leq \beta'\in \calB_{\shuffle}^{\{\alpha_i\}}(\ar(w)) \quad \Rightarrow \quad \alpha\circ \alpha' \leq \beta\circ \beta' \in \calB_{\shuffle}^{\{\alpha_i\}}(\ar(v\circ w)).
$$
\end{definition}

It is well known that admissible orderings for (colored) shuffle operads exist (see~\cite[\S4.1]{colored_Grobner}). The most widely used is the \emph{path-lexicographical ordering} introduced by E.\,Hoffbeck in~\cite{Hoffbeck_PBW} (see also~\cite[\S3.2.1]{DK_Grobner}); for our computations, we find the generalizations suggested by V.\,Dotsenko in~\cite{Dotsenko_QP-order} particularly useful. We do not provide the technical details of these orderings here, as we will primarily employ the framework of rewriting systems in most of our examples.

\begin{definition}
Let $\mathcal{M}$ be an operadic ideal in a free colored shuffle operad $\mathcal{F}$. Given an admissible ordering of monomials, a set $G$ of generators of $\mathcal{M}$ is a \emph{Gr\"obner basis} if for every $f \in \mathcal{M}$, the leading term of $f$ is divisible by the leading term of some element in $G$.
\end{definition}

A key result from~\cite{DK_Grobner} that enables the application of Gr\"obner basis theory to symmetric operads is the construction of the forgetful functor

$$\shuffle : \text{(colored) symmetric operads} \longrightarrow \text{(colored) shuffle operads}.$$

This functor forgets the action of the symmetric group but
leaves the spaces of $n$-ary operations unchanged and preserves enough of the composition rules to ensure that the image of a free symmetric operad remains free in the shuffle setting.

\begin{remark}
We intend to apply this theory to $2$-colored operads derived from dioperads. Specifically, we seek a Gr\"obner basis for the shuffle $2$-colored operad ${\shuffle}(\Psi(\calP))$, where $\calP$ is a dioperad in the category of vector spaces. The primary drawback of this approach is the significant increase in the number of generators and defining relations within ${\shuffle}(\Psi(\calP))$.
\end{remark}

For instance, consider $\calP$ as the free dioperad from Example~\ref{ex::diop->op} generated by a single element $\mu$ with two inputs and two outputs:
	\[
\mu := \qpgen{1}{2}{1}{2} = - \qpgen{2}{1}{1}{2} = \qpgen{1}{2}{2}{1} = -\qpgen{2}{1}{2}{1}.
	\] 
The corresponding colored shuffle operad contains the following six generators, which are distinguished by the permutation of their colorings (which we also refer to as their arity):

$$
\begin{array}{cccccc}
\mu_+^{1}:=
\begin{tikzpicture}[scale =1.2]
            \node[invisible] (r) at (0,-.25) {};
			\node[int] (c) at (0,0) {};
            \draw[-] (r) -- (c);
			\node[invisible] at (0,0.25) {}
			edge [-] (c);
			\node[invisible] (r) at (-0.25,0.25) {}
			edge[dotted] [-] (c);
			\node[invisible] at (0.25,0.25) {}
			edge [-] (c);
			\node[] at (-0.25,0.4) {$\scriptstyle 1$};
			\node[] at (0,0.4) {$\scriptstyle 2$};
			\node[] at (0.25,0.4) {$\scriptstyle 3$};
\end{tikzpicture}; \ &
\mu_+^2:=
\begin{tikzpicture}[scale =1.2]
            \node[invisible] (r) at (0,-.25) {};
			\node[int] (c) at (0,0) {};
            \draw[-] (r) -- (c);
			\node[invisible] at (0,0.25) {}
			edge[dotted] [-] (c);
			\node[invisible] (r) at (-0.25,0.25) {}
			edge [-] (c);
			\node[invisible] at (0.25,0.25) {}
			edge [-] (c);
			\node[] at (-0.25,0.4) {$\scriptstyle 1$};
			\node[] at (0,0.4) {$\scriptstyle 2$};
			\node[] at (0.25,0.4) {$\scriptstyle 3$};
\end{tikzpicture}, \ &
\mu_+^{3}:=
\begin{tikzpicture}[scale =1.2]
            \node[invisible] (r) at (0,-.25) {};
			\node[int] (c) at (0,0) {};
            \draw[-] (r) -- (c);
			\node[invisible] at (0,0.25) {}
			edge [-] (c);
			\node[invisible] (r) at (-0.25,0.25) {}
			edge [-] (c);
			\node[invisible] at (0.25,0.25) {}
			edge[dotted] [-] (c);
			\node[] at (-0.25,0.4) {$\scriptstyle 1$};
			\node[] at (0,0.4) {$\scriptstyle 2$};
			\node[] at (0.25,0.4) {$\scriptstyle 3$};
\end{tikzpicture}, \ &
\mu_{-}^1:=
\begin{tikzpicture}[scale =1.2]
            \node[invisible] (r) at (0,-.25) {};
			\node[int] (c) at (0,0) {};
            \draw[-,dotted] (r) -- (c);
			\node[invisible] at (0,0.25) {}
			edge[dotted] [-] (c);
			\node[invisible] (r) at (-0.25,0.25) {}
			edge [-] (c);
			\node[invisible] at (0.25,0.25) {}
			edge[dotted] [-] (c);
			\node[] at (-0.25,0.4) {$\scriptstyle 1$};
			\node[] at (0,0.4) {$\scriptstyle 2$};
			\node[] at (0.25,0.4) {$\scriptstyle 3$};
\end{tikzpicture}, \ &
\mu_{-}^{2}:=
\begin{tikzpicture}[scale =1.2]
            \node[invisible] (r) at (0,-.25) {};
			\node[int] (c) at (0,0) {};
            \draw[-,dotted] (r) -- (c);
			\node[invisible] at (0,0.25) {}
			edge [-] (c);
			\node[invisible] (r) at (-0.25,0.25) {}
			edge[dotted] [-] (c);
			\node[invisible] at (0.25,0.25) {}
			edge[dotted] [-] (c);
			\node[] at (-0.25,0.4) {$\scriptstyle 1$};
			\node[] at (0,0.4) {$\scriptstyle 2$};
			\node[] at (0.25,0.4) {$\scriptstyle 3$};
\end{tikzpicture}, \ &
\mu_{-}^{3}:=
\begin{tikzpicture}[scale =1.2]
            \node[invisible] (r) at (0,-.25) {};
			\node[int] (c) at (0,0) {};
            \draw[-,dotted] (r) -- (c);
			\node[invisible] at (0,0.25) {}
			edge[dotted] [-] (c);
			\node[invisible] (r) at (-0.25,0.25) {}
			edge[dotted] [-] (c);
			\node[invisible] at (0.25,0.25) {}
			edge [-] (c);
			\node[] at (-0.25,0.4) {$\scriptstyle 1$};
			\node[] at (0,0.4) {$\scriptstyle 2$};
			\node[] at (0.25,0.4) {$\scriptstyle 3$};
\end{tikzpicture}. 
\end{array}
$$

\subsubsection{Rewriting systems for Shuffle operads}
\label{sec::rewriting::systems}

As observed, the functor $\Psi$ significantly increases the number of generators, occasionally making the standard Gr\"obner bases machinery too restrictive. Consequently, we propose a generalization via the notion of a \emph{rewriting system}. Unlike Gr\"obner bases, which require a global total ordering on all monomials, a rewriting system only requires a specific choice of a \emph{leading term} for each relation to define a directed reduction.

For the general theory of rewriting systems, we refer to~\cite{rewriting}, and for their adaptation to Gr\"obner bases for algebras and operads, we refer to~\cite[\S2.6]{Bremner_Dotsenko}. Below, we provide the definitions tailored to shuffle operads.

\begin{definition}
 A \emph{rewriting system} $S$ for a shuffle operad $\mathcal{P} = \mathcal{F}(\Upsilon) / J$, generated by a shuffle collection $\Upsilon$ subject to an ideal of relations $J$, is a set of rules $S = \{ (\tau_i, f_i) \}_{i \in I}$ where:
\begin{itemize}[itemsep=0pt,topsep=0pt]
    \item $\tau_i$ is a \emph{shuffle tree monomial} (designated as the leading monomial).
    \item $f_i \in \mathcal{F}(\Upsilon)$ is a linear combination of shuffle tree monomials.
\end{itemize}
Each pair defines a reduction rule $\tau_i \longrightarrow f_i$, such that the relations $\{ \tau_i - f_i \}_{i \in I}$ generate the ideal $J$.
\end{definition}

A shuffle tree monomial $T$ is called \emph{reducible} if it contains a subtree isomorphic to some $\tau_i$. Specifically, if $T = a \circ \tau_i \circ (b_1, \dots, b_k)$, where $\circ$ denotes operadic composition, the \emph{reduction step} is defined as:
\[ T \longrightarrow a \circ f_i \circ (b_1, \dots, b_k). \]
This operation extends linearly to the free shuffle operad $\mathcal{F}(\Upsilon)$. We write $g \to_S h$ if $h$ is obtained from $g$ by a finite sequence of such reductions.

\begin{definition}
A shuffle tree monomial is called $S$-\emph{irreducible} if it does not contain any $\tau_i$ as a subtree. The vector space of \emph{normal forms} is the subspace spanned by the set of all irreducible shuffle tree monomials:
\[ \mathcal{B}_{irr}^S = \{ T \in \text{Shuffle Tree Monomials} \colon \forall i \in I, \, \tau_i \text{ is not a subtree of } T \}. \]
\end{definition}

\begin{definition}
    The system $S$ is called \emph{terminating} if there is no infinite sequence of reductions $m \to_S m_1 \to_S m_2 \to_S \dots$ for any shuffle tree monomial $m$. 
\end{definition}    

In the shuffle operad setting, assuming termination, we define the following:
\begin{definition}
\begin{itemize}[itemsep=0pt, topsep=0pt]
\item \emph{Ambiguities} (overlaps) occur when two leading trees $\tau_i$ and $\tau_j$ are realized as subtrees of a shuffle tree $m$ such that: (1) they share at least one common internal vertex, and (2) all internal vertices of $m$ belong to the union of these two subtrees.
\item
For each such ambiguity, we form the \emph{$S$-polynomial} (the difference of possible reductions):
    \[ S(i, j) = (\text{reduction of } m \text{ using rule } i) - (\text{reduction of } m \text{ using rule } j). \]
\item The system is \emph{confluent} if for every ambiguity, the corresponding $S$-polynomial reduces to zero:
    \[ S(i, j) \rightarrow_S \ldots \rightarrow_S 0. \]
\end{itemize}    
\end{definition}

\begin{definition}
    A terminating and confluent rewriting system is called \emph{convergent}.
\end{definition}

The following theorem is the analogue of Bergman's Diamond Lemma for shuffle operads:

\begin{theorem}
\label{thm::rewrite::system}
A terminating rewriting system $S$ is confluent if and only if the set $\mathcal{B}_{irr}^S$ forms a vector space \emph{basis} for the shuffle operad $\mathcal{P}$.

In this case, every element in $\mathcal{P}$ possesses a \emph{unique normal form} in $\mathsf{Span}(\mathcal{B}_{irr}^S)$.
\end{theorem}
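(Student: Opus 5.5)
The argument is the standard Diamond Lemma argument, transported to shuffle tree monomials. The underlying fact is that the free shuffle operad $\mathcal F(\Upsilon)$ has the basis of all shuffle tree monomials (the Proposition quoted from \cite{colored_Grobner}). Let $\mathcal{B}_{irr}^S$ be the irreducible monomials. The claim then reduces to the statement
\[
\mathcal F(\Upsilon) = \mathsf{Span}(\mathcal{B}_{irr}^S)\oplus J,
\]
which should hold exactly when $S$ is confluent.

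\textbf{Spanning.} This part uses only termination. Every monomial either is irreducible or reduces to something else, and each reduction step changes an element only by a term $a\circ(\tau_i-f_i)\circ(b_1,\dots,b_k)\in J$. The reduction relation on monomials is terminating, so Noetherian induction along it shows that every monomial, and hence every element, is congruent modulo $J$ to a combination of irreducible monomials. Consequently $\mathcal{B}_{irr}^S$ spans $\mathcal P$.

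\textbf{Confluence implies independence.} I would argue as in Bergman's proof. Call an element \emph{reduction-unique} if all of its terminal reductions coincide, and write $r_S$ for the resulting map. First I would prove that every monomial is reduction-unique, by Noetherian induction.

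Take two one-step reductions of a monomial $m$, at occurrences of $\tau_i$ and $\tau_j$, and split into cases.
\begin{itemize}
\item If the two occurrences share no internal vertex, the reductions commute. Applying the other rule to each result gives a common descendant.
\item If they overlap, restrict to the subtree spanned by their union; this is exactly an ambiguity. Confluence says the $S$-polynomial reduces to $0$, and grafting back the context $a\circ(-)\circ(b_1,\dots,b_k)$ gives a common descendant. Here one must check that shuffle composition with a fixed context is compatible with reductions, which it is because the leading monomials are defined as subtrees and the reduction is defined by substitution.
\end{itemize}
Both results are reachable from $m$, so by the induction hypothesis they have unique normal forms; these must then agree, and so $m$ is reduction-unique.

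Next, $r_S$ extends linearly. The subtle point in Bergman's argument is that a sum of reduction-unique elements is again reduction-unique; I would reproduce his lemma, which again uses Noetherian induction. Then $r_S$ vanishes on every generator $a\circ(\tau_i-f_i)\circ(b)$ of $J$, hence on all of $J$, and is the identity on $\mathsf{Span}(\mathcal{B}_{irr}^S)$. So $J\cap\mathsf{Span}(\mathcal{B}_{irr}^S)=0$, and the irreducible monomials form a basis of $\mathcal P$, with unique normal form $r_S(g)$.

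\textbf{Basis implies confluence.} Suppose $\mathcal{B}_{irr}^S$ is a basis. For any ambiguity, the $S$-polynomial lies in $J$, and by termination it reduces to some combination of irreducibles. That combination still lies in $J$, so linear independence of the irreducibles forces it to be $0$. Hence $S$ is confluent.

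\textbf{Main obstacle.} The delicate part is the termination bookkeeping in the confluence direction. There is no global admissible order here, so every induction must be Noetherian induction along the reduction relation itself, and Bergman's lemma that sums of reduction-unique elements are reduction-unique has to be carried out in that setting. The shuffle-specific points are minor by comparison: non-overlapping subtree occurrences stay disjoint after one of them is replaced, and composition with a context preserves monomials, which follows from the monomial-basis Proposition.
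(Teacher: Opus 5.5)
Your proposal is correct and follows the same route as the paper. The paper gives no separate proof; it presents the statement as the shuffle-operad analogue of Bergman's Diamond Lemma, citing the rewriting literature and Bremner--Dotsenko, and your sketch is a faithful transcription of Bergman's argument (spanning from termination, reduction-uniqueness of monomials by Noetherian induction together with the sum lemma, and the converse via linear independence of the irreducibles). The one point to make explicit is that your Noetherian induction needs well-foundedness of the one-step reduction relation on monomials: either take that as the meaning of ``terminating'', or derive it from the paper's formulation, which is phrased in terms of reduction sequences of linear combinations.
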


It is important to note that the rewriting system $\{(\mathrm{lm}(f), f) \colon f \in G\}$ associated with a Gr\"obner basis $G$ is always convergent (i.e. terminating and confluent). However, there exist rewriting systems that do not originate from a Gr\"obner basis.

\subsection{Koszulness and inclusion-exclusion resolution}
\label{sec::Anick}

One of the most prominent homological applications of noncommutative Gr\"obner bases is the \emph{Anick resolution} of the trivial module. This construction is based on deforming the differential in the minimal resolutions of monomial algebras, as introduced in~\cite{Anick} (for a detailed combinatorial treatment of the Anick resolution and Anick chains, we recommend the exposition in~\cite[\S 3]{ufnarovski}). 

Unfortunately, a similar description for the minimal resolution of a shuffle operad with monomial relations is currently unknown. However, in~\cite[\S2.1]{DK_Quillen}, we defined a dg-operad $\calI^{\udot}_{\calQ}$, termed the \emph{"inclusion-exclusion" operad}, which provides a quasi-free resolution of a monomial shuffle operad; this resolution proves to be minimal in many specific cases. We recall this construction below, as we are interested in its applications (see \S\ref{example::Lie::Triang}).

\begin{definition}
\label{def::Anick::chains}
The differential-graded \emph{inclusion-exclusion operad} $(\calI_{\calQ}^{\udot},d_{\calI})$ associated with a shuffle operad $\calQ=\calF(E;R)$ -- generated by the set $E$ subject to an ideal of relations generated by the set of monomials $R$ -- is the dg-operad whose monomial basis is given by:
$$\left\{
\begin{array}{c} m\otimes g_1\wedge\ldots\wedge g_k \\
\deg(m\otimes g_1\wedge\ldots\wedge g_k) = k
\end{array}
\ \left| \
\begin{array}{rcl}
m &- & \text{a shuffle monomial in }\calF(E),\\
g_i &- & 
\begin{array}{l}
{ \text{a divisor of } m \text{ isomorphic to }}\\
\text{a monomial relation in } R
\end{array}
\end{array}
\right.
\right\}$$ 
The differential in $\calI_{\calQ}^{\udot}$ omits one of the divisors while leaving the underlying monomial $m$ unchanged:
$$
d_{\calI}(m\otimes g_1\wedge\ldots\wedge g_k):= \sum_{i=1}^{k} (-1)^{i-1} m\otimes g_1\wedge\ldots\wedge \widehat{g_i} \wedge\ldots\wedge g_k
$$
The generators of the free shuffle operad $\calI_{\calQ}^{\udot}$ are monomials with divisors that cannot be factored as a product of two such monomials; in particular, the divisors $\{g_{\ldot}\}$ must form a non-split covering of the underlying operadic tree.
\end{definition}

Starting from a shuffle operad $\calP$ with a chosen Gr\"obner basis (or a convergent rewriting system), we consider the inclusion-exclusion quasi-free operad $\calI^{\udot}_{\gr\calP}$ associated with the corresponding monomial shuffle operad $\gr\calP$. By deforming the differential, one obtains a resolution of $\calP$, denoted by $\calI_{\calP}^{\udot}$ (see~\cite{DK_Quillen} for details). 

\begin{corollary}
\label{cor::Grobner->Koszul}
    If the $2$-colored operad $\Psi(\calD)$ associated with a dioperad $\calD$ admits a \emph{quadratic convergent rewriting system}, then:
\begin{itemize}[itemsep=0pt,topsep=0pt]
    \item the shuffle operad $\Psi(\calD^{!})$ associated with the quadratic dual dioperad $\calD^{!}$ also admits a quadratic convergent rewriting system;
    \item the dioperads $\calD$ and $\calD^{!}$ are Koszul.
\end{itemize}     
\end{corollary}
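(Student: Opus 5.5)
The plan is to reduce everything to the colored operadic setting via Corollary~\ref{cor::Psi::Koszul}, and then invoke the standard operadic facts relating quadratic Gr\"obner bases (or convergent rewriting systems) to Koszulness and to Koszul duality.

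\textbf{Step 1: Koszulness of $\calD$.} Let $S$ be the given quadratic convergent rewriting system for ${\shuffle}(\Psi(\calD))$, and let $\gr\Psi(\calD)$ be the associated monomial shuffle operad, with relations the leading quadratic monomials $\tau_i$. By Theorem~\ref{thm::rewrite::system}, the $S$-irreducible monomials form a basis of $\Psi(\calD)$. Hence $\Psi(\calD)$ and $\gr\Psi(\calD)$ have the same dimensions in every colored arity, i.e.\ $\Psi(\calD)$ is a flat (PBW-type) deformation of its monomial replacement.

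A quadratic monomial shuffle operad is Koszul. This is the colored analogue of the statement in~\cite{DK_Grobner}: its Koszul dual is again monomial, generated by the complementary quadratic monomials. Alternatively, one can read it off from the inclusion--exclusion resolution $\calI^{\udot}_{\gr\Psi(\calD)}$ of Definition~\ref{def::Anick::chains}, which for quadratic divisors is concentrated on the diagonal.

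Next I would filter the bar complex of $\Psi(\calD)$ by the monomial order, or more precisely by the reduction preorder coming from termination. The associated graded of this filtration is the bar complex of $\gr\Psi(\calD)$. Since the latter has homology only on the diagonal, a spectral sequence argument gives the same for $\Psi(\calD)$. Equivalently, deforming the differential of $\calI^{\udot}_{\gr\Psi(\calD)}$ gives a diagonal resolution of $\Psi(\calD)$. So $\Psi(\calD)$ is Koszul, and Corollary~\ref{cor::Psi::Koszul} transfers this to $\calD$.

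\textbf{Step 2: the dual.} Koszulness is symmetric under quadratic duality, so $\calD^{!}$ is Koszul. Moreover $\Psi(\calD^{!})=\Psi(\calD)^{!}$: $\Psi$ is compatible with the cofree and cobar constructions (Theorem~\ref{thm::diop->op}), and at the level of generators and quadratic relations $\Psi$ just recolors trees bijectively. The only point to check here is that the pairing between $\Upsilon\circ'\Upsilon$ and its dual is transported by $\Psi$, so that $\Psi(R^{\perp})=\Psi(R)^{\perp}$. This holds because quadratic trees correspond bijectively under rerooting.

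To get the rewriting system for $\Psi(\calD^{!})$, I would choose as leading terms of $R^{\perp}$ the quadratic monomials that are \emph{not} leading terms of $S$. The normal forms of the dual monomial operad are then the complementary quadratic monomials. Using the Koszul identity of generating series (Corollary~\ref{cor::sereis::diop::Koszul}, in the colored form from its proof), applied both to the pair $\Psi(\calD),\Psi(\calD^{!})$ and to the pair of monomial operads, the irreducible monomials for the dual system have exactly the dimension of $\Psi(\calD^{!})$. A terminating system whose irreducibles span and have the right count gives a basis, so Theorem~\ref{thm::rewrite::system} yields confluence.

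\textbf{Main obstacle.} The delicate step is termination for the dual system. A genuine Gr\"obner basis uses the reversed admissible order, but a mere rewriting system has no global order to reverse. I would try to replace the rewriting system by an admissible order refining it, for instance a path-lexicographic order in the style of~\cite{Dotsenko_QP-order} with a weighting chosen on generators. Failing that, I would work only with the filtration argument of Step 1, which needs just a well-founded preorder compatible with composition. From that I would deduce Koszulness of $\calD^{!}$ directly, and obtain its PBW basis by the dimension count.
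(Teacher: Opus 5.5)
Your outline follows the paper's route. You pass to the quadratic monomial operad $\calQ$ of leading terms and get Koszulness of $\Psi(\calD)$ by deforming the inclusion--exclusion resolution. Your bar-complex filtration also works with termination alone: in fixed arity and weight the differential never raises the underlying grafted monomial in the reduction order, so any linear extension of that finite order gives a filtration whose associated graded is the bar complex of $\calQ$. You then declare the complementary quadratic monomials leading for $\Psi(\calD^{!})$ and get confluence from the basis property. Counting $\dim\Psi(\calD^{!})$ through the functional equation, instead of reading it off the generators of the minimal resolution, and checking $\Psi(\calD^{!})=\Psi(\calD)^{!}$, are harmless variations.

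The genuine gap is the one you flag yourself: termination of the dual system is never proved. So the first bullet is not established, and your confluence step depends on it too, since ``the irreducibles span'' is a consequence of termination. Neither fallback repairs this. A terminating rewriting system need not be compatible with any admissible ordering; this is precisely why the paper works with rewriting systems rather than Gr\"obner bases. The filtration route only reproves Koszulness of $\calD^{!}$, which you already had by symmetry, and a dimension count alone yields neither spanning nor independence of the complementary monomials. (The paper's own proof simply calls the dual system ``clearly terminating''.)

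The missing observation is that, for quadratic systems, termination passes to the dual by reversing arrows.
\begin{itemize}
\item Let $L$ be the set of leading quadratic monomials of $S$ and $N$ the remaining quadratic monomials.
\item First make $S$ reduced by replacing each right-hand side $f_\tau$ with its normal form. This keeps termination, because every new one-step reduction is a chain of old ones performed in the same context.
\item Then $R$ is spanned by the elements $\tau-f_\tau$ with $\tau\in L$ and $f_\tau\in\mathrm{span}(N)$. The duality pairing is diagonal up to sign on tree monomials, so by a dimension count $R^{\perp}$ is spanned by elements $n^{\vee}-f^{!}_{n}$, $n\in N$. Here the coefficient of $\tau^{\vee}$ in $f^{!}_{n}$ is $\pm$ the coefficient of $n$ in $f_{\tau}$.
\item Replacing a quadratic divisor of a shuffle tree monomial by another quadratic monomial of the same colored arity does not change its context $C$. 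Hence $C[\tau]\to C[n]$ is a one-step reduction of $S$ if and only if $C[n]^{\vee}\to C[\tau]^{\vee}$ is a one-step reduction of the dual system.
\item Tree monomials of fixed colored arity and weight form a finite set, because the relations are homogeneous. On this set the reduction graph of the dual system is the reverse of that of $S$, and a finite directed graph is acyclic if and only if its reverse is.
\end{itemize}
So the dual system terminates. Combined with your dimension count, its irreducible monomials span and hence form a basis of $\Psi(\calD^{!})$, and Theorem~\ref{thm::rewrite::system} gives confluence.
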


\begin{proof}
Consider a (colored) shuffle operad $\calQ$ with quadratic monomial relations. Its quadratic dual shuffle operad $\calQ^{!}$ possesses complementary quadratic monomial relations (i.e., a quadratic monomial is zero in $\calQ^{!}$ if and only if it is non-zero in $\calQ$). Furthermore, there is a one-to-one correspondence between the monomial generators of the inclusion-exclusion operad $\calI_{\calQ}^{\udot}$ and the non-zero shuffle monomials in $\calQ^!$. For grading reasons, this implies that $\calQ$ is Koszul and that the inclusion-exclusion operad $\calI_{\calQ}^{\udot}$ is indeed the minimal resolution of $\calQ$.

Now, suppose that the (colored) shuffle operad $\calP$ admits a quadratic convergent rewriting system such that the associated monomial quadratic shuffle operad is isomorphic to $\calQ$. By similar degree considerations (where the cohomology grading coincides with the syzygy grading), the corresponding inclusion-exclusion resolution $\calI^{\udot}_{\calP}$ with the deformed differential remains minimal. 
Consequently, $\calP$ is Koszul, and the monomials of $\calQ^{!}$ define a basis for $\calP^{!}$. Thus, for any quadratic monomial $g \in \calF(E)$ that vanishes in $\calQ^{!}$, one can assign a rewriting rule $g \mapsto f$, where $f$ is the linear combination of non-zero monomials in $\calQ^{!}$ representing the class of $g$ in $\calP^{!}$. The resulting rewriting system is clearly terminating and confluent because the monomials from $\calQ^{!}$ constitute a basis.
\end{proof}

We refer to \S\ref{example::Lie::Triang} for an explicit example of the Inclusion-Exclusion resolution applied to a dioperad with non-quadratic relations.

\section{Coloring of inputs/outputs in a cyclic operad}
\label{sec::coloring::cyclic}

\subsection{The pictorial maps $\Theta$ with examples $\Theta(\Lie)$ and $\Theta(\Ass)$}
\label{sec::Theta}
In this section, we suggest more pictorial maps governed by the coloring of inputs and outputs. As an application, we provide a way of producing rewriting systems for certain dioperads.

\begin{definition}
	A \emph{cyclic operad} $\calP$ consists of
	\begin{itemize}[itemsep=0pt, topsep=0pt]
		\item a collection of $m$-ary operations $\calP(m+1)$ with $m\geq 1$ equipped with $\bS_{m+1}$ action;
		\item infinitesimal composition rules: $\compos{i}{j} : \calP(m+1) \otimes \calP(n+1) \rightarrow \calP(m+n)$ that connects $i$'th and $j$'th inputs correspondingly.  
	\end{itemize}
such that composition rules are \emph{associative} and $\bS$-equivariant.
\end{definition}
In particular, after fixing an output in $n$-ary operations $\calP$ became an ordinary symmetric operad.

\begin{definition}
\begin{itemize}[itemsep=0pt, topsep=0pt]
    \item 
We call the subset $\mathfrak{c}\in \bZ_{\geq 0}\times\bZ_{\geq 0}$ to be the \emph{coloring rule} if it is closed under the following operation:
$$
\text{ if }(m,n), (m',n') \in \mathfrak{c} \ \text{ then }  \ (m+m'-1,n+n'-1)\in \mathfrak{c}.
$$
\item 
The $\mathfrak{c}$-coloring of the inputs/outputs by the rule:
$$\Theta_{\mathfrak{c}}(\calP)(m,n):=\begin{cases}
    \calP(m+n), \text{ if } (m,n)\in\mathfrak{c},\\
    0, \text{ otherwise. }
\end{cases}$$
defines an exact functor from the category of \textbf{Cyclic operads} to the category of \textbf{dioperads}. Where the composition rules in a dioperad coincide with the restriction of the composition rule in the cyclic operad $\calP$.
\end{itemize}
\end{definition}
The examples of the coloring rules include:
$$ \bZ_{>0}\times \{1\}; \quad \bZ_{>0}\times \bZ_{>0}; \quad \bZ_{\geq 0}\times \bZ_{>0}; \quad \{(n,n)\colon n\in\bZ_{>0}\}; \quad \{(k n+1, ln+1) \colon n\in\bZ_{>0}\}. $$
In particular,
\begin{itemize}[itemsep=0pt,topsep=0pt]
   \item The coloring with respect to the $\mathfrak{c}=\bZ_{>0}\times \{1\}$ defines an ordinary symmetric operad by forgetting the cyclic structure;
   \item The coloring  $\Theta_{\bZ_{>0}\times\bZ_{>0}}$ applied to the commutative operad $\Com$ gives the Frobenius dioperad (see \S\ref{example::Frobenius});
   \item The coloring $\Theta_{\mathfrak{c}}$ with respect to the set $\bZ_{\geq 0}\times \bZ_{>0}$ applied to the same operad $\Com$ of commutative algebra gives the dioperad that is Koszul dual to the dioperad of \emph{quasi-Lie bialgebras} and to the coloring $\bZ_{\geq 0}\times\bZ_{\geq 0}$ is isomorphic to the dioperad Koszul dual to the dioperad of \emph{pseudo Lie bialgebras} (see \S\ref{sec::PseudoLie}); 
   \item the coloring with an equal number of inputs/outputs of the same commutative operad provides the dioperad Koszul dual to the dioperad of quadratic Poisson structures (see \S\ref{example::QP} for details).
\end{itemize}

Note that the functor $\Theta_{\mathfrak{c}}$ may significantly alter the sets of generators and relations. However, Theorem~\ref{thm::cycl->diop} demonstrates that, under certain assumptions, this functor enables the construction of a Gr\"obner basis for the image and facilitates proofs of Koszulness for various natural dioperads.

\begin{example}
\label{ex::Lie::dioperad}
    The dioperad $\Theta_{\bZ_{>0}\times\bZ_{>0}}(\Lie)$ is generated by a pair of skew-symmetric generators (Lie bracket and Lie cobracket):
	$$
	\begin{array}{c}
		{
			\Theta_{\bZ_{>0}\times\bZ_{>0}}(\Lie)(2,1):=
\mbox{span}\left\langle
			\liePic{1}{2}
			=-
			\liePic{2}{1}
			\right\rangle
		},\\
        {
			\Theta_{\bZ_{>0}\times\bZ_{>0}}(\Lie)(1,2):=
			\mbox{span}\left\langle
			\coliePic{1}{2} = - 
            \coliePic{2}{1}
			\right\rangle
		}
	\end{array}
	$$
	by the $\bS\times\bS^{\op}$-invariant ideal generated by the following linear combinations
	\begin{equation*}
		\left\{
		\begin{array}{ccc}
			\LLiePic{1}{2}{3} + 	\LLiePic{2}{3}{1} + \LLiePic{3}{1}{2}; & \quad &
			\coLLiePic{1}{2}{3} + 	\coLLiePic{2}{3}{1} + \coLLiePic{3}{1}{2}; \\ 
			\begin{tikzpicture}[x=1mm,y=1mm]
				\node[draw,circle,inner sep=1pt,fill=white] (A) at (0,3) {};
				\draw (0,2.47) -- (0,0.12);                  
				\draw (0.5,3.5) -- (2.2,5.2);                
				\draw (-0.48,3.48) -- (-2.2,5.2);            
				\node[draw,circle,inner sep=1pt,fill=white] (B) at (0,-0.8) {};
				\draw (-0.39,-1.2) -- (-2.2,-3.5);           
				\draw (0.39,-1.2) -- (2.2,-3.5);             
				\node at (2.8,5.7) {\scriptsize $2$};
				\node at (-2.8,5.7) {\scriptsize $1$};
				\node at (-2.7,-5.2) {\scriptsize $1$};
				\node at (2.7,-5.2) {\scriptsize $2$};
			\end{tikzpicture}
			- \LiecoLiePic{1}{2}{1}{2}   
            +\LiecoLiePic{2}{1}{1}{2}; & \quad & 
            \LiecoLiePic{1}{2}{1}{2}  
           - \LiecoLiePicT{2}{1}{2}{1}
		\end{array}
		\right\}.
	\end{equation*}    
\end{example}

\begin{example}
\label{ex::Assoc::dioperad}
The dioperad $\Theta_{\bZ_{>0}\times\bZ_{>0}}(\Ass)$ assigned to the cyclic operad of associative algebras is generated by a pair of nonsymmetric multiplication and comultiplication:
	$$
	\begin{array}{c}
		{
			\Theta_{\bZ_{>0}\times\bZ_{>0}}(\Ass)(2,1):=
\mbox{span}\left\langle
			\liePic{1}{2},
			\liePic{2}{1}
			\right\rangle
		},
\quad
        {
			\Theta_{\bZ_{>0}\times\bZ_{>0}}(\Ass)(1,2):=
			\mbox{span}\left\langle
			\coliePic{1}{2}, 
            \coliePic{2}{1}
			\right\rangle
		}
	\end{array}
$$
Subject to the following list of relations (and all their symmetries):
	\begin{equation*}
		\label{eq::Theta:Lie}
		\left\{
		\begin{array}{ccc}
			\LLiePic{1}{2}{3} = \LLiePicReverse{1}{2}{3}; & 
			\coLLiePic{1}{2}{3} = 
            \coLLiePicReverse{1}{2}{3}; 
            &
			\begin{tikzpicture}[x=1mm,y=1mm]
				\node[draw,circle,inner sep=1pt,fill=white] (A) at (0,3) {};
				\draw (0,2.47) -- (0,0.12);                  
				\draw (0.5,3.5) -- (2.2,5.2);                
				\draw (-0.48,3.48) -- (-2.2,5.2);            
				\node[draw,circle,inner sep=1pt,fill=white] (B) at (0,-0.8) {};
				\draw (-0.39,-1.2) -- (-2.2,-3.5);           
				\draw (0.39,-1.2) -- (2.2,-3.5);             
				\node at (2.8,5.7) {\scriptsize $2$};
				\node at (-2.8,5.7) {\scriptsize $1$};
				\node at (-2.7,-5.2) {\scriptsize $1$};
				\node at (2.7,-5.2) {\scriptsize $2$};
			\end{tikzpicture}
			= \LiecoLiePic{1}{2}{1}{2}   
            = \LiecoLiePicT{2}{1}{2}{1}
		\end{array}
		\right\}.
	\end{equation*}    
\end{example}

\subsection{Defining relations with Gr\"obner bases for dioperads $\Theta_{\mathfrak{c}}(\calP)$}
\label{sec::Theta::Grobner}

The results in this section can be readily adapted to various other settings; however, to avoid unnecessary technical complications, we restrict our attention to the specific case $\mathfrak{c}:= \bZ_{>0} \times \bZ_{>0}$. In this context,
to each cyclic generator $\gamma \in \calP(n+1)$, we assign $n$ generators of different arities $\gamma_{i,j} \in \Theta_{\mathfrak{c}}(\calP)(i,j)$, where $i+j=n+1$ and $i,j > 0$. 
In particular,
one must exclude all operations of arities $(n+1,0)$ and $(0,n+1)$. 

\begin{proposition}
\label{prp::cyclic::coloring}
    Suppose that the cyclic operad $\calP$ admits a presentation $\calF(E;R)$ as a symmetric operad. In particular, assume that $E$ and $R$ possess cyclic symmetry. Then, the dioperad $\Theta_{\mathfrak{c}}(\calP)$ is generated by $\Theta_{\mathfrak{c}}(E)$ -- obtained by coloring the generators as inputs/outputs -- subject to the union of the relations $\Theta_{\mathfrak{c}}(R)$ (the set of all possible colorings of inputs, outputs, and internal edges of the relations in $R$) and the following quadratic relations, which admit the pictorial description below:
\begin{equation}
    \label{eq::input/output:rel}
    \forall \alpha,\beta \in E \qquad
    \begin{tikzpicture}[scale = 0.3]
        \node[ext] (v) at (0,0) {$\scriptscriptstyle \alpha$};
        \node (ov1) at (-1,-2) {};
        \node (ov2) at (0,-2) {$\scriptstyle \ldots$};
        \node (ov3) at (1,-2) {};
        \node (iv0) at (-2,2) {};
        \node (iv1) at (-1,2) {$\scriptstyle \ldots$};        
        \node (iv2) at (0,2.2) {};
        \draw (v) -- (ov1);
        \draw (v) -- (ov3);
        \draw (v) -- (iv0);
        \draw (v) -- (iv2);
        \node[ext] (w) at (4,4)  {$\scriptscriptstyle \beta$};
        \node (ow1) at (3,2) {};
        \node (ow2) at (4,2) {$\scriptstyle \ldots$};
        \node (ow3) at (5,2) {};
        \node (iw0) at (3.5,6) {};
        \node (iw1) at (5,6) {$\scriptstyle \ldots$};        
        \node (iw2) at (6,6) {};
        \node (iw3) at (4,6) {};
        \draw (w) -- (ow1);
        \draw (w) -- (ow3);
        \draw (w) -- (iw0);
        \draw (w) -- (iw2);
        \draw (w) -- (iw3);
        \draw[->] (w) -- (v);
    \end{tikzpicture}
    =
    \begin{tikzpicture}[scale = 0.3]
        \node[ext] (v) at (0,4) {$\scriptscriptstyle \alpha$};
        \node (ov1) at (-1,4-2) {};
        \node (ov2) at (0,4-2) {$\scriptstyle \ldots$};
        \node (ov3) at (1,4-2) {};
        \node (iv0) at (-2,4+2) {};
        \node (iv1) at (-1,4+2) {$\scriptstyle \ldots$};        
        \node (iv2) at (0,4+2.2) {};
        \draw (v) -- (ov1);
        \draw (v) -- (ov3);
        \draw (v) -- (iv0);
        \draw (v) -- (iv2);
        \node[ext] (w) at (4,4-4)  {$\scriptscriptstyle \beta$};
        \node (ow1) at (3,2-4) {};
        \node (ow2) at (4,2-4) {$\scriptstyle \ldots$};
        \node (ow3) at (5,2-4) {};
        \node (iw0) at (3.5,6-4) {};
        \node (iw1) at (5,6-4) {$\scriptstyle \ldots$};        
        \node (iw2) at (6,6-4) {};
        \node (iw3) at (4,6-4) {};
        \draw (w) -- (ow1);
        \draw (w) -- (ow3);
        \draw (w) -- (iw0);
        \draw (w) -- (iw2);
        \draw (w) -- (iw3);
        \draw[->] (v) -- (w);
    \end{tikzpicture}    
\end{equation}        
\end{proposition}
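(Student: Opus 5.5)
We construct a comparison map and show it is an isomorphism. Let $\calD := \Free_{\diop}(\Theta_{\mathfrak{c}}(E))/(\Theta_{\mathfrak{c}}(R) \cup Q)$, where $Q$ denotes the quadratic relations~\eqref{eq::input/output:rel}. The plan has three steps.

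\textbf{Step 1: the map $\pi : \calD \to \Theta_{\mathfrak{c}}(\calP)$ exists.} Each colored generator $\gamma_{i,j}$ is sent to the underlying element $\gamma \in \calP(i+j)$. Relations in $\Theta_{\mathfrak{c}}(R)$ are colorings of relations valid in $\calP$, so they map to zero. Both sides of a relation in $Q$ are trees with the same two vertices and the same underlying undirected edge; only the orientation of the internal edge differs. Since the cyclic composition $\compos{i}{j}$ ignores orientation, both sides map to the same element, and $\pi$ is well defined.

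\textbf{Step 2: $\pi$ is surjective.} Any element of $\calP(m+n)$ with $m,n>0$ is a sum of trees built from $E$. Choose $m$ legs to be inputs and $n$ to be outputs. We must orient each internal edge so that every vertex receives a valid coloring, namely at least one input and at least one output.

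To do this, root the tree at a vertex $v_0$ that has an external output leg, and orient every internal edge toward $v_0$. Each vertex $v \neq v_0$ then has an outgoing internal edge. Each such $v$ also needs an incoming edge. Any leaf vertex $v \neq v_0$ of the tree carries only external legs apart from its single internal edge. If all of those legs are outputs, we instead reverse the edge from $v$ toward its parent.

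Doing this greedily can break other vertices, so I would argue by induction on the number of vertices. Cut the tree at an internal edge $e$. Both halves carry external legs, and we choose the color of $e$ to make each half a valid colored tree; this is possible whenever each half has at least one leg of each color or we can supply the missing color via $e$. The degenerate cases, where a whole half is monochromatic, would be handled by choosing $e$ adjacent to a vertex with a mixed coloring, which must exist because $m,n>0$.

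\textbf{Step 3: $\pi$ is injective. This is the main obstacle.} First, using $Q$, any two valid orientations of the same underlying tree with the same leg coloring are equal in $\calD$. The relations in $Q$ flip a single internal edge while keeping both endpoint vertices valid, so it suffices to show that the graph of valid orientations, with single flips as edges, is connected. I would prove this by induction, reducing any orientation to the canonical one rooted at $v_0$, flipping edges from the leaves inward.

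Given this, $\calD(m,n)$ is spanned by classes of uncolored trees $T$ with the legs colored. We then define an inverse $\calP(m+n) \to \calD(m,n)$ by choosing any valid orientation of $T$. It is well defined on $\Free(E)$ by the connectivity just established. It descends modulo $R$ because each relation of $R$ inserted in a subtree can, after reorienting with $Q$, be realized as a colored relation in $\Theta_{\mathfrak{c}}(R)$, with the edges incident to the subtree oriented compatibly.

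The delicate point is connectivity of the flip graph when intermediate orientations are forced to pass through invalid vertex colorings, for instance with binary generators $\gamma \in \calP(3)$. Here I would try first to exploit the freedom to flip an edge at a vertex of higher valence, and to order the flips so that no vertex ever becomes a pure source or a pure sink.
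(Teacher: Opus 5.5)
Your architecture is the same as the paper's. The comparison map is well defined because cyclic composition forgets orientation, and the colored generators span. The key claim is that any two admissible orientations of the internal edges of a fixed monomial are joined by single admissible flips, i.e. by your relations $Q$. The paper then concludes by a dimension bound and you by an explicit inverse, which amounts to the same thing.

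The gap is that this connectivity lemma, the only non-formal point of the proposition, is left unproved. The route you sketch for it also fails as stated: the orientation ``rooted at $v_0$'' that you want to flip towards is in general not admissible. As you noticed in Step~2, a leaf vertex all of whose legs are outputs becomes a pure source there, so that orientation cannot be the endpoint of a chain of admissible flips. Your worry about binary generators is misdirected as well.

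The missing idea is a short local argument. Let $O\neq O'$ be admissible and let $D\neq\emptyset$ be the set of internal edges on which they differ. It suffices to find $e\in D$ whose flip in $O$ stays admissible, and then induct on $|D|$. An edge $x\to y$ can be flipped iff $\mathrm{out}_O(x)\geq 2$ and $\mathrm{in}_O(y)\geq 2$.

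Suppose $e=(x\to y)\in D$ is blocked because $\mathrm{in}_O(y)=1$. Since $y$ needs an incoming edge in $O'$ and legs never change, there is some $f=(y\to z)\in D$. As $y$ has valence $\geq 3$, $\mathrm{out}_O(y)\geq 2$, so $f$ can only be blocked at its head, and you iterate. This gives a non-backtracking directed walk inside $D$. In a finite tree such a walk must stop, and it can only stop at a flippable edge. The case $\mathrm{out}_O(x)=1$ is symmetric: walk backwards.

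Valence $\geq 3$ is used exactly here, and that is the real obstruction. Generators in $\calP(3)$ cause no trouble. A generator in $\calP(2)$ sitting between two internal edges, however, has exactly two admissible local orientations, differing on both edges. There the flip graph is genuinely disconnected, so the proposition tacitly needs $E$ concentrated in cyclic arities $\geq 3$.

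With the lemma in place, your Step~3 goes through. For the descent modulo $R$, contract the slot of the relation to a single vertex and use your Step~2 existence argument to give its boundary both colors. In Step~2 itself no special choice of the cut edge is needed. Orient it out of a half whose legs are all inputs, or into a half whose legs are all outputs; either choice always works.
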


\begin{proof}
Directly from the definitions of the pictorial functors $\Theta_{\mathfrak{c}}$ and $\Psi$, it follows that any element in the $2$-colored operad $\Psi(\Theta_{\mathfrak{c}}(\calP))$ can be represented as a linear combination of operadic trees underlying $\calP$, equipped with a coloring of all edges in one of two colors. This coloring is subject to a mild constraint: no vertex exists where all incoming edges are of one color and the outgoing edge is of a different color.
In particular, $\Theta_{\mathfrak{c}}(\calP)$ is clearly generated by $\Theta_{\mathfrak{c}}(E)$, since each corolla is obtained by assigning an appropriate coloring to the adjacent edges -- effectively determining which are inputs and which are outputs. Relation~\eqref{eq::input/output:rel} implies that the coloring of internal edges in the operadic tree is immaterial.  
This ensures that all aforementioned relations are satisfied in $\Theta_{\mathfrak{c}}(\calP)$.

Conversely, one can easily show by induction on the size of a coloured monomial $M$ that any two different admissible colorings of internal edges of $M$ can be connected by a sequence of changing the color in one edge. 
Therefore, the size of the dioperad generated by $\Theta_{\mathfrak{c}}(E)$, subject to the coloring of relations $\Theta_{\mathfrak{c}}(R)$ and the invariance of internal colors, cannot exceed the size of the dioperad $\Theta_{\mathfrak{c}}(\calP)$, which concludes the proof.
\end{proof}

In particular, we observe that the functor $\Theta_{\mathfrak{c}}$ maps quadratic cyclic operads to quadratic dioperads. It is not currently known whether $\Theta_{\mathfrak{c}}$ maps Koszul cyclic operads to Koszul dioperads, though we suspect this is not generally the case. However, we provide below sufficient conditions on the Gr\"obner basis of a cyclic operad $\calP$ that ensure the Koszulness of the corresponding dioperad $\Theta_{\mathfrak{c}}(\calP)$. To this end, we must analyze the effect of the composition of functors $\Psi \circ \Theta_{\mathfrak{c}}$ on shuffle monomials.

Indeed, suppose we choose a set of shuffle generators $\{\gamma_i : i \in \mathsf{I}\}$ for the free cyclic operad $\calF$ generated by a cyclic collection $E$. In other words, $\{\gamma_i : i \in \mathsf{I}\}$ defines a basis for the $\bS$-collection $E$. Then the $2$-colored shuffle operad $\shuffle(\Psi(\Theta_{\mathfrak{c}}(\calF)))$ is generated by the following set:
\begin{equation}
\label{eq::colored::generators}
\{\gamma_{i,\bar{c}} : i \in \mathsf{I}, \ \bar{c} \in \bF_2^{m_i+1} \setminus \{(0;1,\ldots,1),(1;0,\ldots,0)\}, \text{ where } \ar(\gamma_i)=m_i\}.
\end{equation}
Specifically, to each generator $\gamma_i$ of cyclic arity $m_i+1$, we assign a $(2^{m_i+1}-2)$-tuple of $m_i$-ary generators in the $2$-colored shuffle operad. The vector $\bar{c}$ denotes the coloring of the inputs and outputs: $c_0=0$ if the output is a straight edge, while for $j=1,\ldots,m$, the $j$-th coordinate $c_j$ equals $0$ (resp. $1$) if the $j$-th input of $\gamma_{i,\bar{c}}$ is colored as a straight (resp. dotted) edge.

Furthermore, the quadratic relation in Equation~\eqref{eq::input/output:rel} leads to the following set of quadratic equations in the shuffle operad:
\begin{equation}
\label{eq::rewrite::recolor}
    \gamma_{i,(\dots,1,\dots)} \circ_a \gamma_{j(1;\dots)} = \gamma_{i,(\dots,0,\dots)} \circ_a \gamma_{j,(0;\dots)}.
\end{equation}
That we call the \emph{"recoloring relation"} because they change the color of an inner edge in the shuffle colored operad $\shuffle(\Psi(\Theta_{\mathfrak{c}}(\calF)))$.

The corresponding rewriting rule, when applied to any colored shuffle monomial, does not change the underlying uncolored monomial. Thus, to any colored shuffle monomial $T$ in the free $2$-colored shuffle operad generated by~\eqref{eq::colored::generators}, we assign the set:
$$[T]_{\mathfrak{c}} := \{ T' \in \calF(\gamma_{i,\bar{c}}) : \shape(T') = \shape(T) \},$$
where $\shape(T)$ denotes the underlying shuffle monomial obtained by forgetting the colors of all internal edges while preserving the colors of the inputs and outputs. Let us define a partial order $\prec$ on this set by declaring the covering relations to be given by recoloring a single dotted internal edge to a straight one.
As illustrated in the following diagrammatic Example~\eqref{ex::shape::monomial}, this poset may lack a supremum, and instances without an infimum also exist.
\begin{equation}
\label{ex::shape::monomial}
\left[
\begin{tikzpicture}[scale=0.2,baseline=(current bounding box.center)]
    \node[ext] (v0) at (0,0) {};
    \node[ext] (v1) at (-2,2) {};
    \node[ext] (v2) at (2,2) {};
    \node (l0) at (0,-2) {};
    \node (l1) at (-4,4.2) {};
    \node (l2) at (-0.3,4.2) {};
    \node (l3) at (0.3,4.2) {};
    \node (l4) at (4,4.2) {};
    \draw[dotted] (l0) -- (v0);
    \draw[dotted] (v0) -- (v1);
    \draw[dotted] (v0) -- (v2);
    \draw[dotted] (v1) -- (l1);
    \draw (v1) -- (l2);
    \draw (v2) -- (l3);
    \draw[dotted] (v2) -- (l4);
\end{tikzpicture}   
\right]_{\mathfrak{c}}:=
\left\{
\begin{tikzpicture}[scale=0.2, baseline=(current bounding box.center)]
    \node[ext] (v0) at (0,0) {};
    \node[ext] (v1) at (-2,2) {};
    \node[ext] (v2) at (2,2) {};
    \node (l0) at (0,-2) {};
    \node (l1) at (-4,4.2) {};
    \node (l2) at (-0.3,4.2) {};
    \node (l3) at (0.3,4.2) {};
    \node (l4) at (4,4.2) {};
    \draw[dotted] (l0) -- (v0);
    \draw (v0) -- (v1);
    \draw[dotted] (v0) -- (v2);
    \draw[dotted] (v1) -- (l1);
    \draw (v1) -- (l2);
    \draw (v2) -- (l3);
    \draw[dotted] (v2) -- (l4);
\end{tikzpicture}   
\succ
\begin{tikzpicture}[scale=0.2, baseline=(current bounding box.center)]
    \node[ext] (v0) at (0,0) {};
    \node[ext] (v1) at (-2,2) {};
    \node[ext] (v2) at (2,2) {};
    \node (l0) at (0,-2) {};
    \node (l1) at (-4,4.2) {};
    \node (l2) at (-0.3,4.2) {};
    \node (l3) at (0.3,4.2) {};
    \node (l4) at (4,4.2) {};
    \draw[dotted] (l0) -- (v0);
    \draw[dotted] (v0) -- (v1);
    \draw[dotted] (v0) -- (v2);
    \draw[dotted] (v1) -- (l1);
    \draw (v1) -- (l2);
    \draw (v2) -- (l3);
    \draw[dotted] (v2) -- (l4);
\end{tikzpicture}   
\prec
\begin{tikzpicture}[scale=0.2, baseline=(current bounding box.center)]
    \node[ext] (v0) at (0,0) {};
    \node[ext] (v1) at (-2,2) {};
    \node[ext] (v2) at (2,2) {};
    \node (l0) at (0,-2) {};
    \node (l1) at (-4,4.2) {};
    \node (l2) at (-0.3,4.2) {};
    \node (l3) at (0.3,4.2) {};
    \node (l4) at (4,4.2) {};
    \draw[dotted] (l0) -- (v0);
    \draw[dotted] (v0) -- (v1);
    \draw (v0) -- (v2);
    \draw[dotted] (v1) -- (l1);
    \draw (v1) -- (l2);
    \draw (v2) -- (l3);
    \draw[dotted] (v2) -- (l4);
\end{tikzpicture}   
\right\}.
\end{equation}
 However, such cases do not occur for \emph{caterpillar} monomials, which we define as follows:
\begin{definition}
\label{def::caterpilar}
\begin{itemize}[itemsep=0pt,topsep=0pt]
    \item An operadic tree is called a \emph{caterpillar tree} if it contains no vertex with more than one incoming internal edge. Equivalently, the subtree spanned by the internal vertices is a \emph{path} (also referred to as the \emph{spine} or \emph{trunk}), and all leaves are attached directly to this path. Pictorially, the following subtree is forbidden:
    \[
    \begin{tikzpicture}[scale=0.25, baseline=-0.5ex]
      \node[ext] (v0) at (0,0) {};
      \node[ext] (v1) at (-2,1) {};
      \node[ext] (v2) at (2,1) {};
      \node (l0) at (0,-1) {};
      \node (l1) at (-4,2.5) {};
      \node (l2) at (-3,2.5) {};
      \node (l3) at (-1,2.5) {};
      \node (l4) at (0,2) {};
      \node (l5) at (1,2.5) {};
      \node (l6) at (3,2.5) {};
      \draw (l0) -- (v0);
      \draw[very thick, red] (v0) -- (v1);
      \draw[very thick, red] (v0) -- (v2);
      \draw (v0) -- (l4);
      \draw (v1) -- (l1);
      \draw (v1) -- (l2);
      \draw (v1) -- (l3);
      \draw (v2) -- (l5);
      \draw (v2) -- (l6);
      \node (a1) at (-4, -1) {};
      \node (a2) at (-4, 3) {};
      \node (b1) at (4, -1) {};
      \node (b2) at (4, 3) {};  
      \draw[thick, gray] (a1) -- (b2);
      \draw[thick, gray] (a2) -- (b1);
    \end{tikzpicture}
    \]
    \item A shuffle tree monomial $T$ is called a \emph{caterpillar monomial} if its underlying operadic tree is a caterpillar tree.
\end{itemize}
\end{definition}

\begin{lemma}
\label{lem::caterpillar::coloring}
For any coloring of the inputs and outputs of a caterpillar shuffle monomial $T$, the poset $([T]_{\mathfrak{c}}, \prec)$ of all colored shuffle monomials of a given shape possesses a unique supremum $T^{\max}$. This supremum is characterized by having the maximal possible number of internal edges colored as straight.
\end{lemma}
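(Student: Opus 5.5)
The plan is to turn the admissibility condition on colorings into a system of two-variable implications along the spine of the caterpillar, and then prove two things: the admissible colorings are closed under taking the pointwise ``straight-union'', and every non-maximal coloring can be raised by recoloring a single edge.

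\textbf{Setup.} Let $v_1,\dots,v_k$ be the internal vertices of the caterpillar $T$ along the spine, with $v_1$ adjacent to the root. Let $e_i$ be the internal edge joining $v_{i+1}$ (above) to $v_i$ (below). As recalled in the proof of Proposition~\ref{prp::cyclic::coloring}, a coloring of the internal edges is admissible, i.e.\ lies in $[T]_{\mathfrak{c}}$, if and only if at every vertex the color of the outgoing edge also appears among its incoming edges. Since $T$ is a caterpillar, the vertex $v_i$ has incoming edges consisting of its leaves (whose colors are fixed) and at most one internal edge $e_i$; its outgoing edge is $e_{i-1}$, or the root when $i=1$. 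Writing $s_j$ for the statement ``$e_j$ is straight'', the condition at $v_i$ reads as follows.
\begin{itemize}[itemsep=0pt,topsep=0pt]
\item If the leaves of $v_i$ carry both colors, there is no constraint.
\item If all leaves of $v_i$ are dotted, the condition is $s_{i-1}\Rightarrow s_i$.
\item If all leaves of $v_i$ are straight, the condition is $s_i\Rightarrow s_{i-1}$.
\end{itemize}
At the two ends of the spine the condition degenerates into a fixed value of $e_1$ or of $e_{k-1}$: at $v_1$ because the root color is given, and at $v_k$ because it has no internal input. This gives a list of unary constraints and binary implications between consecutive edges.

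\textbf{Existence of the largest element.} Constraints of these forms are preserved by the pointwise ``or'' of straightness. For example, if $c$ and $c'$ both satisfy $s_{i-1}\Rightarrow s_i$ and $c\vee c'$ makes $e_{i-1}$ straight, then one of $c,c'$ does, hence makes $e_i$ straight. The other forms are checked the same way. Since $[T]_{\mathfrak{c}}$ is finite and contains $T$, the join of all its elements is admissible. Call it $T^{\max}$. By construction it has every internal edge straight that is straight in some admissible coloring, so it is the unique coloring with the maximal number of straight internal edges.

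\textbf{Supremum for $\prec$, the main obstacle.} The order $\prec$ is generated by single-edge recolorings from dotted to straight inside $[T]_{\mathfrak{c}}$. Being the pointwise maximum is therefore not yet enough: I must show that every $c\in[T]_{\mathfrak{c}}$ is connected to $T^{\max}$ by a chain of such steps. It suffices to show the following: if $c\neq T^{\max}$, then some edge in
\[
D=\{\,j : e_j \text{ is dotted in } c \text{ and straight in } T^{\max}\,\}
\]
can be recolored alone while keeping admissibility. Recoloring $e_x$ to straight can only break an implication of the form $s_x\Rightarrow s_y$, with $y=x\pm1$. Each such $y$ is straight in $T^{\max}$, so either it is already straight in $c$ or it lies in $D$.

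Consider the directed graph on $D$ with arrows $x\to y$ given by these implications. It is acyclic: an arrow $e_i\to e_{i+1}$ comes from $v_{i+1}$ having only dotted leaves, while an arrow $e_{i+1}\to e_i$ comes from $v_{i+1}$ having only straight leaves, and these cannot both hold. A spine is a path, so this graph has no cycles at all. Take a sink $x$ of this graph and recolor $e_x$. The result is admissible and strictly larger than $c$, and induction on $|D|$ finishes the argument.

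The delicate point is this acyclicity. It is exactly where the caterpillar hypothesis enters: a vertex with two internal inputs yields a disjunctive constraint rather than an implication, and Example~\eqref{ex::shape::monomial} shows that the lattice argument then fails.
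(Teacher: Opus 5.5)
Your proof is correct, and it takes a different route from the paper's.

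\textbf{The paper's route.} The paper argues locally and greedily. It first cuts off a forced edge at the root and inducts. It then picks, among non-root spine vertices with a straight input and a dotted output, the one closest to the root. Minimality shows that this vertex's outgoing edge can be recolored, and iterating is asserted to end at a unique $T^{\max}$ in which all ``flexible'' edges are straight.

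\textbf{Your route.} You separate the two claims.
\begin{enumerate}
\item You encode admissibility as unary constraints plus two-variable implications along the spine. These are preserved by the pointwise union of straight edges, so $T^{\max}$ exists as the join of all admissible colorings. The paper takes this for granted when it speaks of making all flexible edges straight simultaneously.
\item You show reachability in $\prec$ by choosing a sink of the implication graph on $D$. This graph is acyclic because two consecutive spine edges are constrained only by the single vertex between them.
\end{enumerate}
Working inside $D$ absorbs forced edges automatically. You therefore need neither the cutting and induction step at the root nor the paper's unstated step that a coloring with no ``blocked'' vertex is already maximal. The paper's version reads more like an explicit recoloring recipe. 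Yours is more rigorous and shows exactly where the caterpillar hypothesis is used.

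\textbf{One caveat.} Your step ``these cannot both hold'' uses that $v_{i+1}$ carries at least one leaf. For a unary spine vertex, both implications hold vacuously, giving $s_i\Leftrightarrow s_{i+1}$. The lemma can then genuinely fail, since two edges would have to be recolored at once. The paper's proof makes the same tacit assumption, namely that $v'$ has a dotted leaf. So this is a hypothesis (no unary generators) that should be stated, not a defect of your approach.
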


\begin{proof}
Recall that for a caterpillar monomial, the set of internal edges forms a spine (a central path). For a given coloring of the leaves (inputs and outputs), certain internal edges may have forced colors, while others may admit different colorings. We claim that there exists a unique maximal monomial $T^{\max}$ in which all such "flexible" internal edges are colored as straight.

Consider any monomial $T \in [T]_{\mathfrak{c}}$. 
If the color of the edge adjacent to the root is fixed, we can cut this edge and proceed by induction on the size of the caterpillar monomial. So, without loss of generality, we assume that the first edge is flexible. This may happen whenever the color of the output coincides with the color of one of the non-spine inputs of the root vertex.
If $T$ is not $T^{\max}$, there must exist at least one internal edge $e$ colored as dotted that can be recolored as straight without violating the consistency relations at the vertices. To see this, consider the set of nonrooted vertices where one of the inputs is colored as straight but the output is colored as dotted. Among these, let $v$ be a vertex that is closest to the root, and let $v'$ be the adjacent vertex immediately closer to the root. By the minimality of $v$, the vertex $v'$ must either have a straight output, or all of its inputs must be dotted, or $v'$ is a root (otherwise, $v'$ would have been chosen instead of $v$). 
Consequently, the configuration allows for the internal edge connecting $v$ and $v'$ to be changed from dotted to straight. By iteratively applying this recoloring process, we move upward in the poset $([T]_{\mathfrak{c}}, \prec)$ until all possible internal edges are straight. This process terminates at a unique element $T^{\max}$, which serves as the supremum of the poset.
\end{proof}

\begin{definition}
A convergent rewriting system $S$ for a (colored) shuffle operad is called \emph{caterpillar} if every irreducible shuffle monomial in $\calB_{irr}^{S}$ is a caterpillar monomial.
\end{definition}

\begin{example}
\begin{itemize}[itemsep=0pt,topsep=0pt]
    \item The standard quadratic Gr\"obner basis of the commutative operad $\Com$ (with respect to the path-lexicographical ordering) defines a caterpillar rewriting system. This is because all irreducible monomials are \emph{right combs}, where the internal vertices grow exclusively to the right.
    \item The quadratic Gr\"obner basis of the Lie operad $\Lie$ (with respect to the reverse path-lexicographical ordering) is caterpillar, as its normal forms consist of \emph{left combs}, where the internal vertices grow exclusively to the left.
    \item There exists a quadratic Gr\"obner basis for the associative operad $\mathsf{Assoc}$ such that all normal forms are left-normalized combs (see e.g.~\cite[Example 2.14]{Khor::Univ::Envelope}) and thus $\mathsf{Assoc}$ admits a quadratic caterpillar rewriting system.
    \item Following results on generating series (see~\cite[Example 5.11]{Khor::Univ::Envelope}), one can show that the Koszul-self dual cyclic operad $\mathsf{Pois}$ of Poisson algebras does not admit a (quadratic) caterpillar rewriting system.
\end{itemize}
\end{example}

Suppose that the shuffle operad $\shuffle(\calP)$ associated with a given cyclic operad $\calP$ admits a caterpillar rewriting system. That is, we are given a set of generators $\{\gamma_i : i \in \mathsf{I}\}$ of arity $m_i$ and a terminating, confluent rewriting system $S = \{ (\tau_j, f_j) \}_{j \in \mathsf{J}}$ such that all normal forms are caterpillar. In particular, each $f_j$ is spanned by caterpillar monomials.

Then, by Lemma~\ref{lem::caterpillar::coloring}, for each $j \in \mathsf{J}$, any coloring $\bar{c}$ of the inputs/outputs of $\tau_j$, and any coloring $\bar{b}$ of its internal edges, we can assign the following rewriting rule:
\begin{equation}
\label{eq::rewrite::caterpillar}
(\tau_{j})_{\bar{c},\bar{b}} \rightarrow [f_j]_{\bar{c}}^{\max}.
\end{equation}
In other words, we rewrite every colored shuffle monomial $(\tau_j)_{\bar{c},\bar{b}}$ as a sum of caterpillar monomials possessing the maximal number of straight internal edges. We denote the union of the "recoloring" rules from~\eqref{eq::rewrite::recolor} and the rules defined in~\eqref{eq::rewrite::caterpillar} as $\Theta_{\mathfrak{c}}(S)$.

We are now ready to state the main result of this section.

\begin{theorem}
\label{thm::cycl->diop}
Suppose that a convergent rewriting system $S$ for the shuffle operad associated with a cyclic operad $\calP$ is caterpillar. Then $\Theta_{\mathfrak{c}}(S)$ is a convergent caterpillar rewriting system for the dioperad $\Theta_{\mathfrak{c}}(\calP)$.
\end{theorem}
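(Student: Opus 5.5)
Rather than checking confluence of $\Theta_{\mathfrak{c}}(S)$ by hand, I will use the Diamond Lemma, Theorem~\ref{thm::rewrite::system}: show that $\Theta_{\mathfrak{c}}(S)$ generates the right ideal, that it terminates, and that its irreducible monomials are linearly independent in $\shuffle(\Psi(\Theta_{\mathfrak{c}}(\calP)))$. That they also span follows from termination.

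\textbf{Irreducible monomials.} First I identify them. Suppose a colored shuffle monomial $T$ is irreducible for the recoloring rules~\eqref{eq::rewrite::recolor}, oriented from dotted to straight. Then no internal edge can be switched from dotted to straight. The rules~\eqref{eq::rewrite::caterpillar} have as leading terms \emph{all} colorings of every $\tau_j$. So $T$ is also irreducible for them exactly when $\shape(T)$, with internal colors forgotten, is $S$-irreducible. Such a shape is therefore a caterpillar monomial. By Lemma~\ref{lem::caterpillar::coloring}, a colored caterpillar monomial on which no single dotted-to-straight move applies must be the supremum $T^{\max}$. The proof of the lemma produces such a move whenever $T\neq T^{\max}$, and I will check that this move is indeed a recoloring rule.

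Hence $\calB_{irr}^{\Theta_{\mathfrak{c}}(S)}$ is in bijection with the pairs (an $S$-normal form of $\calP$, an admissible coloring of its leaves and root). By the description in the proof of Proposition~\ref{prp::cyclic::coloring}, $\Psi(\Theta_{\mathfrak{c}}(\calP))$ in a given colored arity is a copy of $\calP$ of the corresponding cyclic arity. Since $\calB_{irr}^S$ is a basis of $\calP$ by Theorem~\ref{thm::rewrite::system}, the count of irreducible monomials equals the dimension. Together with spanning, this gives independence, so the Diamond Lemma yields confluence. The same description shows all normal forms are caterpillar. The ideal generated by $\Theta_{\mathfrak{c}}(S)$ is the correct one by Proposition~\ref{prp::cyclic::coloring}. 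Indeed, the recolorings realize~\eqref{eq::input/output:rel}. Modulo recolorings, $(\tau_j)_{\bar c,\bar b}-[f_j]^{\max}_{\bar c}$ is a coloring of $\tau_j-f_j$, and these generate the ideal $\Theta_{\mathfrak{c}}(R)$.

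\textbf{Termination.} This is the main obstacle. I will use the pair (underlying uncolored monomial, number of dotted internal edges). Recoloring keeps the shape and strictly lowers the number of dotted internal edges. A rule~\eqref{eq::rewrite::caterpillar} replaces a subtree by terms whose underlying uncolored monomials are reductions by $S$, possibly recolored elsewhere. On shapes I use the terminating relation $\to_S$, which is well-founded. The pair then decreases lexicographically, provided every reduction step on colored monomials maps to a genuine step $\to_S$ on shapes.

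The subtle point is that $[f_j]^{\max}_{\bar c}$ must make sense inside an ambient tree. The coloring of the boundary of the subtree $\tau_j$ is fixed by the surrounding monomial, and that boundary coloring is exactly $\bar c$. Lemma~\ref{lem::caterpillar::coloring} only needs the boundary coloring of each caterpillar term of $f_j$, so the substitution is well-defined. I also need every term of $f_j$ to admit at least one admissible internal coloring with boundary $\bar c$, so that $[f_j]_{\bar c}$ is nonempty. I will argue this from cyclicity: the straight-edge paths from the root reach some straight input, or the root is dotted. If that argument fails for some term, I would instead treat that term as vanishing in the dioperad, consistent with the exclusion of arities $(n+1,0)$ and $(0,n+1)$.

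Once termination holds, the dimension count above completes the proof.
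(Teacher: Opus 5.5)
Your proof is correct, but it establishes confluence by a different route than the paper.

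\textbf{Termination.} This is essentially the paper's argument: recolourings raise the number of straight internal edges, and caterpillar rules project to $S$-steps on shapes. You make it rigorous with the lexicographic well-founded order on pairs (shape under $\to_S$, number of dotted internal edges).

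\textbf{Confluence.} The paper argues locally. After recolouring to maximal colourings, every ambiguity of $\Theta_{\mathfrak c}(S)$ is claimed to reduce to an ambiguity of $S$, so confluence is inherited from $S$. You instead use the basis form of Theorem~\ref{thm::rewrite::system}. By Lemma~\ref{lem::caterpillar::coloring}, the irreducible monomials are exactly the $T^{\max}$ over $S$-normal shapes with admissible boundary colouring, and you count them against $\dim\Psi(\Theta_{\mathfrak c}(\calP))(\bar c)=\dim\calP(N)$.

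\textbf{What each route buys.} Yours avoids analysing the mixed overlaps between recolouring rules and coloured $\tau_j$-rules, which the paper's brief argument passes over. It also gives the equality of ideals for free. Once each rule is known to vanish in $\Psi(\Theta_{\mathfrak c}(\calP))$, spanning by irreducibles forces the ideal generated by $\Theta_{\mathfrak c}(S)$ to be the whole kernel, so Proposition~\ref{prp::cyclic::coloring} is not needed. The price is reliance on global input: that $\calB_{irr}^S$ is a basis of $\calP$, the arity-wise identification with $\calP$, and, as you phrased it, finite-dimensionality. You can drop the counting altogether. The colour-forgetting morphism $\Psi(\Theta_{\mathfrak c}(\calP))\to\calP$ is injective on each admissible coloured arity and sends $T^{\max}$ to $\shape(T)$, so your irreducibles map to the linearly independent $S$-normal forms.

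\textbf{Two details to settle.}
\begin{itemize}
\item Nonemptiness of $[T]_{\bar c}$ is immediate. If the root is straight, colour an internal edge straight iff the subtree above it contains a straight leaf; dually if the root is dotted. Every vertex is then admissible. So your fallback of ``treating the term as vanishing'' is unnecessary, and it would in fact break your count, since such a term is a nonzero element of $\calP$.
\item The single-flip property you borrow from Lemma~\ref{lem::caterpillar::coloring} needs every non-top spine vertex to carry a leaf, e.g.\ no unary generators. The paper also assumes this tacitly.
\end{itemize}
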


\begin{proof}
First, by Proposition~\ref{prp::cyclic::coloring}, the set of generators described in~\eqref{eq::colored::generators} generates the shuffle operad $\shuffle(\Psi(\Theta_{\mathfrak{c}}(\calF(E))))$. Second, it is straightforward to verify that these rewriting rules generate the ideal of relations for the dioperad.
Third, to prove termination, we observe that each recoloring rule~\eqref{eq::rewrite::recolor} increases the number of straight internal edges. Since the number of internal edges is finite for any given monomial, the recoloring process must terminate. The overall termination of $\Theta_{\mathfrak{c}}(S)$ then follows from the termination of the underlying system $S$.
Fourth, similarly, for confluence, it suffices to notice that after applying a sufficient number of recoloring rules~\eqref{eq::rewrite::recolor}, we only need to verify confluence for caterpillar monomials with the maximal number of straight edges. This confluence is equivalent to the confluence of the original rewriting system $S$.
\end{proof}

\begin{corollary}
\label{cor::coloring::Koszul}
If the shuffle operad associated with a cyclic operad $\calP$ admits a quadratic Gr\"obner basis with caterpillar normal forms (or, equivalently, a quadratic terminating and confluent caterpillar rewriting system), then the corresponding $2$-colored shuffle operad $\shuffle(\Psi(\Theta_{\mathfrak{c}}(\calP)))$ also admits a quadratic terminating and confluent rewriting system. Consequently, the dioperad $\Theta_{\mathfrak{c}}(\calP)$ is Koszul.
\end{corollary}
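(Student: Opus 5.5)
The plan is to derive the corollary from Theorem~\ref{thm::cycl->diop} together with Corollary~\ref{cor::Grobner->Koszul}. There are three steps.

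\textbf{Step 1: from a Gr\"obner basis to a rewriting system.} Suppose $G$ is a quadratic Gr\"obner basis of $\shuffle(\calP)$ whose normal forms are caterpillar. The associated system $S=\{(\mathrm{lm}(f),f)\colon f\in G\}$ is convergent, as noted after Theorem~\ref{thm::rewrite::system}. Its irreducible monomials are exactly the normal forms, so $S$ is a quadratic caterpillar convergent rewriting system. This justifies the parenthetical ``equivalently'' in the direction we need.

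\textbf{Step 2: apply Theorem~\ref{thm::cycl->diop}.} The theorem shows that $\Theta_{\mathfrak{c}}(S)$ is a convergent rewriting system for $\shuffle(\Psi(\Theta_{\mathfrak{c}}(\calP)))$. We then check quadraticity. The recoloring rules~\eqref{eq::rewrite::recolor} are quadratic by construction. Each rule~\eqref{eq::rewrite::caterpillar} has the form $(\tau_j)_{\bar c,\bar b}\to[f_j]^{\max}_{\bar c}$. Its leading term is a colored version of the quadratic monomial $\tau_j$. Its right-hand side is a combination of recolorings of the quadratic monomials occurring in $f_j$. Recoloring preserves shape, so it preserves the number of vertices, and both sides are quadratic.

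One subtlety needs checking here. Some colorings $\bar c,\bar b$ may admit no admissible coloring for a given term of $f_j$, because of the excluded colorings $(0;1,\ldots,1)$ and $(1;0,\ldots,0)$ in~\eqref{eq::colored::generators}. For such terms we must either show that they vanish in the dioperad or show that the set $[\cdot]_{\mathfrak{c}}$ is nonempty. I expect this to be handled implicitly by Lemma~\ref{lem::caterpillar::coloring} and Theorem~\ref{thm::cycl->diop}, and I will simply cite them.

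\textbf{Step 3: conclude Koszulness.} Since $\Psi(\Theta_{\mathfrak{c}}(\calP))$ admits a quadratic convergent rewriting system, Corollary~\ref{cor::Grobner->Koszul} shows that $\Theta_{\mathfrak{c}}(\calP)$ is Koszul, and so is its dual. Equivalently, Corollary~\ref{cor::Psi::Koszul} transfers Koszulness of the colored operad back to the dioperad.

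The main obstacle is the quadraticity and well-definedness check in Step~2. Everything else is direct citation.
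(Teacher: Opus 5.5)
Your proposal is correct and follows the paper's own argument. Theorem~\ref{thm::cycl->diop} supplies the convergent (and, as you check, quadratic) rewriting system $\Theta_{\mathfrak{c}}(S)$ for $\shuffle(\Psi(\Theta_{\mathfrak{c}}(\calP)))$, and Corollary~\ref{cor::Grobner->Koszul} then gives Koszulness. The subtlety you flag is harmless for $\mathfrak{c}=\bZ_{>0}\times\bZ_{>0}$: if the root has color $r$, some leaf must also have color $r$, and coloring each internal edge $r$ exactly when the subtree above it contains a leaf of color $r$ is admissible at every vertex, so every term of $f_j$ admits a coloring compatible with $\bar c$ and $[f_j]^{\max}_{\bar c}$ is well defined.
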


\begin{proof}
By Theorem~\ref{thm::cycl->diop}, the $2$-colored operad $\Psi(\Theta_{\mathfrak{c}}(\calP))$ admits a terminating and confluent rewriting system that governs the quadratic relations, provided the cyclic operad possesses a quadratic Gr\"obner basis. The Koszulness of the dioperad $\Theta_{\mathfrak{c}}(\calP)$ then follows directly from Corollary~\ref{cor::Grobner->Koszul}.
\end{proof}

\begin{corollary}
    The dioperads $\Theta_{\mathfrak{c}}(\Lie)$ and $\Theta_{\mathfrak{c}}(\Ass)$ described in Examples~\ref{ex::Lie::dioperad} and~\ref{ex::Assoc::dioperad} admits a quadratic convergent rewriting systems and, therefore, are Koszul dioperads. 
\end{corollary}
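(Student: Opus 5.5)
The plan is to reduce the corollary to Corollary~\ref{cor::coloring::Koszul}, applied with the coloring rule $\mathfrak{c}=\bZ_{>0}\times\bZ_{>0}$. For this we need, for $\calP=\Lie$ and $\calP=\Ass$, three things: the operad is cyclic; its relations have cyclic symmetry, so that Proposition~\ref{prp::cyclic::coloring} applies; and the associated shuffle operad admits a quadratic Gröbner basis whose normal forms are caterpillar monomials.

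\textbf{Step 1: presentations.} I would verify that the presentations in Examples~\ref{ex::Lie::dioperad} and~\ref{ex::Assoc::dioperad} coincide with the ones produced by Proposition~\ref{prp::cyclic::coloring}.
\begin{itemize}
\item The generators $\Theta_{\mathfrak{c}}(E)$ are the colorings of the binary bracket (resp.\ product) as a $(2,1)$- or $(1,2)$-operation; the colorings $(3,0)$ and $(0,3)$ are excluded.
\item The relations $\Theta_{\mathfrak{c}}(R)$ are the colorings of the Jacobi (resp.\ associativity) relation. With both inputs and outputs present, these are exactly the Jacobi and co-Jacobi relations, together with the mixed compatibility relation (resp.\ the associativity and coassociativity relations and the Frobenius-type mixed relations).
\item The recoloring relations~\eqref{eq::input/output:rel} supply the remaining mixed quadratic relation between the bracket and the cobracket.
\end{itemize}
This step is bookkeeping of colorings of trees with two vertices and three external legs plus an internal edge.

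\textbf{Step 2: caterpillar Gröbner bases.} I would invoke the examples listed just before Theorem~\ref{thm::cycl->diop}.
\begin{itemize}
\item For $\Lie$, the reverse path-lexicographical ordering gives a quadratic Gröbner basis whose normal forms are left combs.
\item For $\Ass$, \cite[Example 2.14]{Khor::Univ::Envelope} gives a quadratic Gröbner basis with left-normalized combs as normal forms.
\end{itemize}
Combs contain no vertex with two incoming internal edges, so they are caterpillar monomials in the sense of Definition~\ref{def::caterpilar}. Hence both operads admit quadratic convergent caterpillar rewriting systems.

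\textbf{Step 3: conclusion.} Theorem~\ref{thm::cycl->diop} then says that $\Theta_{\mathfrak{c}}(S)$ is a quadratic convergent caterpillar rewriting system for $\shuffle(\Psi(\Theta_{\mathfrak{c}}(\calP)))$. Quadraticity holds because the recoloring rules~\eqref{eq::rewrite::recolor} are quadratic and the rules~\eqref{eq::rewrite::caterpillar} have the same weight as $S$. Corollaries~\ref{cor::coloring::Koszul} and~\ref{cor::Grobner->Koszul} then give Koszulness, and Corollary~\ref{cor::Psi::Koszul} transfers Koszulness from $\Psi(\Theta_{\mathfrak{c}}(\calP))$ back to the dioperad.

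\textbf{Main obstacle.} I expect the delicate point to be a hypothesis mismatch. Theorem~\ref{thm::cycl->diop} uses a Gröbner basis of the \emph{shuffle} operad obtained from $\calP$ by fixing one leg as the output. The dioperadic recoloring, however, needs the rules to be available for every choice of root, i.e.\ compatible with the cyclic structure.

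To handle this, I would argue as follows. The shape of a colored monomial is an uncolored shuffle tree in $\shuffle(\calP)$ regardless of which leg plays the global root: the root color only affects the admissible colorings, not the underlying monomial. So the rules $(\tau_j)_{\bar c,\bar b}\to[f_j]^{\max}_{\bar c}$ are defined for all boundary colorings.

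For the normal forms, the only thing to check is that the supremum $[f_j]^{\max}_{\bar c}$ from Lemma~\ref{lem::caterpillar::coloring} exists for each term. This holds because left combs are caterpillars for any coloring of the leaves.
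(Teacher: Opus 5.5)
Your proposal is correct and follows the paper's route. The corollary is an immediate application of Corollary~\ref{cor::coloring::Koszul}, via Theorem~\ref{thm::cycl->diop} and Corollary~\ref{cor::Grobner->Koszul}, to the quadratic caterpillar Gr\"obner bases of $\Lie$ (left combs) and $\Ass$ (left-normalized combs) listed just before Theorem~\ref{thm::cycl->diop}; your Step~1 and your root-choice discussion are extra bookkeeping that the paper builds into its choice of colored generators~\eqref{eq::colored::generators}, and they are consistent with it.
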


\begin{remark}
It is worth noting that the statements of Proposition~\ref{prp::cyclic::coloring} and Theorem~\ref{thm::cycl->diop} can be readily generalized to the various cases of $\mathfrak{c}$ discussed above. In particular, for most standard dioperadic colorings $\mathfrak{c}$, the Koszulness of the dioperad $\Theta_{\mathfrak{c}}(\calP)$ is ensured by the existence of a quadratic (caterpillar) Gr\"obner basis for the underlying cyclic operad $\calP$. 
\end{remark}

In the subsequent section, we discuss various applications of this result. Our examples include the dioperads of Frobenius algebras (\S\ref{example::Frobenius}), Lie bialgebras (\S\ref{example::Lie::Bialg}), quasi-Lie bialgebras, and pseudo-Lie bialgebras (\S\ref{sec::PseudoLie}). Notably, this framework provides a straightforward proof for the Koszulness of the dioperad of quadratic Poisson structures.

\section{Examples}
\label{sec::examples}

In this section, we describe Gr\"obner bases, convergent rewriting systems, and Hilbert series for several classes of dioperads that arise naturally in various areas of mathematics.

\subsection{The Frobenius dioperad $\Frob$}
\label{example::Frobenius}
Recall that a \emph{non-unital Frobenius algebra} is a triple $(V,\cdot,\Delta)$, where $\cdot$ defines a commutative associative multiplication and $\Delta$ a cocommutative coassociative comultiplication, such that for all $a, b \in V$:
$$
\Delta(a \cdot b) = a^{(1)} \otimes (a^{(2)} \cdot b) = (a \cdot b^{(1)}) \otimes b^{(2)}.
$$
Here, we employ Sweedler's notation $\Delta(a) := a^{(1)} \otimes a^{(2)}$ for the comultiplication map.

The corresponding dioperad (properad) of Frobenius algebras $\Frob$ is generated by a symmetric multiplication of arity $(2,1)$ and a symmetric comultiplication of arity $(1,2)$. These are depicted as trees with black vertices:
$$
\multPic{1}{2} = \multPic{2}{1}; \qquad \comultPic{1}{2} = \comultPic{2}{1}.
$$
The quadratic relations imply that the space of quadratic compositions is one-dimensional for all arities where a composition is possible:
$$
\left\{
\begin{array}{c}
    \mLiePic{1}{2}{3} = \mLiePic{2}{3}{1} = \mLiePic{3}{1}{2}; \qquad  
    \mcoLLiePic{1}{2}{3} = \mcoLLiePic{2}{3}{1} = \mcoLLiePic{3}{1}{2};  
    \\
    \mLiecoLiePic{1}{2}{1}{2} = \mLiecoLiePic{1}{2}{2}{1} = \mLiecoLiePic{2}{1}{1}{2} = \mLiecoLiePic{2}{1}{2}{1} = 
    \begin{tikzpicture}[x=1mm,y=1mm, baseline=-0.5ex]
        \node[draw,circle,inner sep=1pt,fill=black] (A) at (0,3) {};
        \draw (0,2.47) -- (0,0.12);                  
        \draw (0.5,3.5) -- (2.2,5.2);                
        \draw (-0.48,3.48) -- (-2.2,5.2);            
        \node[draw,circle,inner sep=1pt,fill=black] (B) at (0,-0.8) {};
        \draw (-0.39,-1.2) -- (-2.2,-3.5);           
        \draw (0.39,-1.2) -- (2.2,-3.5);             
        \node at (2.8,5.7) {\scriptsize $2$};
        \node at (-2.8,5.7) {\scriptsize $1$};
        \node at (-2.7,-5.2) {\scriptsize $1$};
        \node at (2.7,-5.2) {\scriptsize $2$};
    \end{tikzpicture}
\end{array}
\right\}
$$
Consequently, it follows that $\dim \Frob(m,n)=1$ for all $m,n \geq 1$.

\begin{remark}
The Frobenius dioperad is isomorphic to the coloring $\Theta_{\mathfrak{c}}(\Com)$ for $\mathfrak{c} := \bZ_{>0} \times \bZ_{>0}$.
In particular, since the commutative operad admits a quadratic Gr\"obner basis with caterpillar normal forms, the Frobenius dioperad likewise admits a quadratic, convergent rewriting system. Below, we explain that this rewriting system originates from an appropriate quadratic Gr\"obner basis.
\end{remark}

The $2$-colored shuffle operad $\shuffle(\Psi(\Frob))$ possesses six binary generators with distinct input/output colorings, which we order as follows:
$$
\begin{tikzpicture}[scale=0.15pt, baseline=-0.5ex]
    \draw (0,-0.55) -- (0,-2.5);
    \draw (0.5,0.5) -- (2.2,2.2);
    \draw (-0.48,0.48) -- (-2.2,2.2);
    \node[circle,draw,inner sep=1.5pt] (A) at (0,0)   {$\scriptscriptstyle \alpha$};
    \node at (-2.7,2.8) {$\scriptscriptstyle 1$};
    \node at (2.7,2.8) {$\scriptscriptstyle 2$};
\end{tikzpicture}
<  
\begin{tikzpicture}[scale=0.15pt, baseline=-0.5ex]
    \draw[dotted] (0,-0.55) -- (0,-2.5);
    \draw (0.5,0.5) -- (2.2,2.2);
    \draw[dotted] (-0.48,0.48) -- (-2.2,2.2);
    \node[circle,draw,inner sep=1.5pt] (A) at (0,0)   {$\scriptscriptstyle \beta$};
    \node at (-2.7,2.8) {$\scriptscriptstyle 1$};
    \node at (2.7,2.8) {$\scriptscriptstyle 2$};        
\end{tikzpicture}
<
\begin{tikzpicture}[scale=0.15pt, baseline=-0.5ex]
    \draw (0,-0.55) -- (0,-2.5);
    \draw (0.5,0.5) -- (2.2,2.2);
    \draw[dotted] (-0.48,0.48) -- (-2.2,2.2);
    \node[circle,draw,inner sep=1.5pt] (A) at (0,0)   {$\scriptscriptstyle \gamma$};
    \node at (-2.7,2.8) {$\scriptscriptstyle 1$};
    \node at (2.7,2.8) {$\scriptscriptstyle 2$};
\end{tikzpicture}
<
\begin{tikzpicture}[scale=0.15pt, baseline=-0.5ex]
    \draw[dotted] (0,-0.55) -- (0,-2.5);
    \draw[dotted] (0.5,0.5) -- (2.2,2.2);
    \draw (-0.48,0.48) -- (-2.2,2.2);
    \node[circle,draw,inner sep=1.5pt] (A) at (0,0)   {$\scriptscriptstyle \delta$};
    \node at (-2.7,2.8) {$\scriptscriptstyle 1$};
    \node at (2.7,2.8) {$\scriptscriptstyle 2$};
\end{tikzpicture}
< 
\begin{tikzpicture}[scale=0.15pt, baseline=-0.5ex]
    \draw (0,-0.55) -- (0,-2.5);
    \draw[dotted] (0.5,0.5) -- (2.2,2.2);
    \draw (-0.48,0.48) -- (-2.2,2.2);
    \node[circle,draw,inner sep=1.5pt] (A) at (0,0)   {$\scriptscriptstyle \varepsilon$};
    \node at (-2.7,2.8) {$\scriptscriptstyle 1$};
    \node at (2.7,2.8) {$\scriptscriptstyle 2$};
\end{tikzpicture}
<
\begin{tikzpicture}[scale=0.15pt, baseline=-0.5ex]
    \draw[dotted] (0,-0.55) -- (0,-2.5);
    \draw[dotted] (0.5,0.5) -- (2.2,2.2);
    \draw[dotted] (-0.48,0.48) -- (-2.2,2.2);
    \node[circle,draw,inner sep=1.5pt] (A) at (0,0)   {$\scriptscriptstyle \zeta$};
    \node at (-2.7,2.8) {$\scriptscriptstyle 1$};
    \node at (2.7,2.8) {$\scriptscriptstyle 2$};
\end{tikzpicture}
$$
The relations reflect the property that a composition of two operations, if it exists, is uniquely determined by the coloring of the inputs and outputs. For the path-lexicographical ordering, the set of normal forms of degree $2$ is similarly determined by the input/output coloring. These normal forms consist of right combs (right-growing trees), where internal edges are straight whenever such a coloring is admissible:
\begin{multline}
\label{eq::Com::normal::forms}
    \begin{tikzpicture}[x=1.5mm,y=1.5mm, xscale=-1, baseline=-0.5ex]	
        \node[draw,circle,inner sep=1pt] (A) at (0,0) {$\scriptscriptstyle \alpha$};
        \draw (0,-0.49) -- (0,-3.0);
        \draw (0.49,0.49) -- (1.9,1.9);
        \draw (-0.5,0.5) -- (-1.9,1.9);	
        \node[draw,circle,inner sep=1pt] (B) at (-2.3,2.3) {$\scriptscriptstyle \alpha$};
        \draw (-1.8,2.8) -- (0,4.9);
        \draw (-2.8,2.9) -- (-4.6,4.9);	
        \node at (2.7,2.3) {\scriptsize $1$};
        \node at (0.4,5.3) {\scriptsize $2$};
        \node at (-5.1,5.3) {\scriptsize $3$};
    \end{tikzpicture}, \
    \begin{tikzpicture}[x=1.5mm,y=1.5mm, xscale=-1, baseline=-0.5ex]	
        \node[draw,circle,inner sep=1pt] (A) at (0,0) {$\scriptscriptstyle \alpha$};
        \draw (0,-0.49) -- (0,-3.0);
        \draw (0.49,0.49) -- (1.9,1.9);
        \draw (-0.5,0.5) -- (-1.9,1.9);	
        \node[draw,circle,inner sep=1pt] (B) at (-2.3,2.3) {$\scriptscriptstyle \epsilon$};
        \draw (-1.8,2.8) -- (0,4.9);
        \draw[dotted] (-2.8,2.9) -- (-4.6,4.9);	
        \node at (2.7,2.3) {\scriptsize $1$};
        \node at (0.4,5.3) {\scriptsize $2$};
        \node at (-5.1,5.3) {\scriptsize $3$};
    \end{tikzpicture}, \
    \begin{tikzpicture}[x=1.5mm,y=1.5mm, xscale=-1, baseline=-0.5ex]	
        \node[draw,circle,inner sep=1pt] (A) at (0,0) {$\scriptscriptstyle \alpha$};
        \draw (0,-0.49) -- (0,-3.0);
        \draw (0.49,0.49) -- (1.9,1.9);
        \draw (-0.5,0.5) -- (-1.9,1.9);	
        \node[draw,circle,inner sep=1pt] (B) at (-2.3,2.3) {$\scriptscriptstyle \gamma$};
        \draw[dotted] (-1.8,2.8) -- (0,4.9);
        \draw (-2.8,2.9) -- (-4.6,4.9);	
        \node at (2.7,2.3) {\scriptsize $1$};
        \node at (0.4,5.3) {\scriptsize $2$};
        \node at (-5.1,5.3) {\scriptsize $3$};
    \end{tikzpicture}, \
    \begin{tikzpicture}[x=1.5mm,y=1.5mm, xscale=-1, baseline=-0.5ex]	
        \node[draw,circle,inner sep=1pt] (A) at (0,0) {$\scriptscriptstyle \gamma$};
        \draw (0,-0.49) -- (0,-3.0);
        \draw[dotted] (0.49,0.49) -- (1.9,1.9);
        \draw (-0.5,0.5) -- (-1.9,1.9);	
        \node[draw,circle,inner sep=1pt] (B) at (-2.3,2.3) {$\scriptscriptstyle \alpha$};
        \draw (-1.8,2.8) -- (0,4.9);
        \draw (-2.8,2.9) -- (-4.6,4.9);	
        \node at (2.7,2.3) {\scriptsize $1$};
        \node at (0.4,5.3) {\scriptsize $2$};
        \node at (-5.1,5.3) {\scriptsize $3$};
    \end{tikzpicture}, \
    \begin{tikzpicture}[x=1.5mm,y=1.5mm, xscale=-1, baseline=-0.5ex]	
        \node[draw,circle,inner sep=1pt] (A) at (0,0) {$\scriptscriptstyle \gamma$};
        \draw (0,-0.49) -- (0,-3.0);
        \draw[dotted] (0.49,0.49) -- (1.9,1.9);
        \draw (-0.5,0.5) -- (-1.9,1.9);	
        \node[draw,circle,inner sep=1pt] (B) at (-2.3,2.3) {$\scriptscriptstyle \varepsilon$};
        \draw (-1.8,2.8) -- (0,4.9);
        \draw[dotted] (-2.8,2.9) -- (-4.6,4.9);	
        \node at (2.7,2.3) {\scriptsize $1$};
        \node at (0.4,5.3) {\scriptsize $2$};
        \node at (-5.1,5.3) {\scriptsize $3$};
    \end{tikzpicture}, \
    \begin{tikzpicture}[x=1.5mm,y=1.5mm, xscale=-1, baseline=-0.5ex]	
        \node[draw,circle,inner sep=1pt] (A) at (0,0) {$\scriptscriptstyle \gamma$};
        \draw (0,-0.49) -- (0,-3.0);
        \draw[dotted] (0.49,0.49) -- (1.9,1.9);
        \draw (-0.5,0.5) -- (-1.9,1.9);	
        \node[draw,circle,inner sep=1pt] (B) at (-2.3,2.3) {$\scriptscriptstyle \gamma$};
        \draw[dotted] (-1.8,2.8) -- (0,4.9);
        \draw (-2.8,2.9) -- (-4.6,4.9);	
        \node at (2.7,2.3) {\scriptsize $1$};
        \node at (0.4,5.3) {\scriptsize $2$};
        \node at (-5.1,5.3) {\scriptsize $3$};
    \end{tikzpicture}, \ 
    \begin{tikzpicture}[x=1.5mm,y=1.5mm, xscale=-1, baseline=-0.5ex]	
        \node[draw,circle,inner sep=1pt] (A) at (0,0) {$\scriptscriptstyle \varepsilon$};
        \draw (0,-0.49) -- (0,-3.0);
        \draw (0.49,0.49) -- (1.9,1.9);
        \draw[dotted] (-0.5,0.5) -- (-1.9,1.9);	
        \node[draw,circle,inner sep=1pt] (B) at (-2.3,2.3) {$\scriptscriptstyle \zeta$};
        \draw[dotted] (-1.8,2.8) -- (0,4.9);
        \draw[dotted] (-2.8,2.9) -- (-4.6,4.9);	
        \node at (2.7,2.3) {\scriptsize $1$};
        \node at (0.4,5.3) {\scriptsize $2$};
        \node at (-5.1,5.3) {\scriptsize $3$};
    \end{tikzpicture}; \
    \qquad \\ \quad
    \begin{tikzpicture}[x=1.5mm,y=1.5mm, xscale=-1, baseline=-0.5ex]	
        \node[draw,circle,inner sep=1pt] (A) at (0,0) {$\scriptscriptstyle \delta$};
        \draw[dotted] (0,-0.49) -- (0,-3.0);
        \draw (0.49,0.49) -- (1.9,1.9);
        \draw[dotted] (-0.5,0.5) -- (-1.9,1.9);	
        \node[draw,circle,inner sep=1pt] (B) at (-2.3,2.3) {$\scriptscriptstyle \delta$};
        \draw (-1.8,2.8) -- (0,4.9);
        \draw[dotted] (-2.8,2.9) -- (-4.6,4.9);	
        \node at (2.7,2.3) {\scriptsize $1$};
        \node at (0.4,5.3) {\scriptsize $2$};
        \node at (-5.1,5.3) {\scriptsize $3$};
    \end{tikzpicture}, \
    \begin{tikzpicture}[x=1.5mm,y=1.5mm, xscale=-1, baseline=-0.5ex]	
        \node[draw,circle,inner sep=1pt] (A) at (0,0) {$\scriptscriptstyle \delta$};
        \draw[dotted] (0,-0.49) -- (0,-3.0);
        \draw (0.49,0.49) -- (1.9,1.9);
        \draw[dotted] (-0.5,0.5) -- (-1.9,1.9);	
        \node[draw,circle,inner sep=1pt] (B) at (-2.3,2.3) {$\scriptscriptstyle \beta$};
        \draw[dotted] (-1.8,2.8) -- (0,4.9);
        \draw (-2.8,2.9) -- (-4.6,4.9);	
        \node at (2.7,2.3) {\scriptsize $1$};
        \node at (0.4,5.3) {\scriptsize $2$};
        \node at (-5.1,5.3) {\scriptsize $3$};
    \end{tikzpicture}, \
    \begin{tikzpicture}[x=1.5mm,y=1.5mm, xscale=-1, baseline=-0.5ex]	
        \node[draw,circle,inner sep=1pt] (A) at (0,0) {$\scriptscriptstyle \beta$};
        \draw[dotted] (0,-0.49) -- (0,-3.0);
        \draw[dotted] (0.49,0.49) -- (1.9,1.9);
        \draw (-0.5,0.5) -- (-1.9,1.9);	
        \node[draw,circle,inner sep=1pt] (B) at (-2.3,2.3) {$\scriptscriptstyle \alpha$};
        \draw (-1.8,2.8) -- (0,4.9);
        \draw (-2.8,2.9) -- (-4.6,4.9);	
        \node at (2.7,2.3) {\scriptsize $1$};
        \node at (0.4,5.3) {\scriptsize $2$};
        \node at (-5.1,5.3) {\scriptsize $3$};
    \end{tikzpicture}, \
    \begin{tikzpicture}[x=1.5mm,y=1.5mm, xscale=-1, baseline=-0.5ex]	
        \node[draw,circle,inner sep=1pt] (A) at (0,0) {$\scriptscriptstyle \delta$};
        \draw[dotted] (0,-0.49) -- (0,-3.0);
        \draw (0.49,0.49) -- (1.9,1.9);
        \draw[dotted] (-0.5,0.5) -- (-1.9,1.9);	
        \node[draw,circle,inner sep=1pt] (B) at (-2.3,2.3) {$\scriptscriptstyle \zeta$};
        \draw[dotted] (-1.8,2.8) -- (0,4.9);
        \draw[dotted] (-2.8,2.9) -- (-4.6,4.9);	
        \node at (2.7,2.3) {\scriptsize $1$};
        \node at (0.4,5.3) {\scriptsize $2$};
        \node at (-5.1,5.3) {\scriptsize $3$};
    \end{tikzpicture}, \
    \begin{tikzpicture}[x=1.5mm,y=1.5mm, xscale=-1, baseline=-0.5ex]	
        \node[draw,circle,inner sep=1pt] (A) at (0,0) {$\scriptscriptstyle \beta$};
        \draw[dotted] (0,-0.49) -- (0,-3.0);
        \draw[dotted] (0.49,0.49) -- (1.9,1.9);
        \draw (-0.5,0.5) -- (-1.9,1.9);	
        \node[draw,circle,inner sep=1pt] (B) at (-2.3,2.3) {$\scriptscriptstyle \varepsilon$};
        \draw (-1.8,2.8) -- (0,4.9);
        \draw[dotted] (-2.8,2.9) -- (-4.6,4.9);	
        \node at (2.7,2.3) {\scriptsize $1$};
        \node at (0.4,5.3) {\scriptsize $2$};
        \node at (-5.1,5.3) {\scriptsize $3$};
    \end{tikzpicture}, \
    \begin{tikzpicture}[x=1.5mm,y=1.5mm, xscale=-1, baseline=-0.5ex]	
        \node[draw,circle,inner sep=1pt] (A) at (0,0) {$\scriptscriptstyle \beta$};
        \draw[dotted] (0,-0.49) -- (0,-3.0);
        \draw[dotted] (0.49,0.49) -- (1.9,1.9);
        \draw (-0.5,0.5) -- (-1.9,1.9);	
        \node[draw,circle,inner sep=1pt] (B) at (-2.3,2.3) {$\scriptscriptstyle \gamma$};
        \draw[dotted] (-1.8,2.8) -- (0,4.9);
        \draw (-2.8,2.9) -- (-4.6,4.9);	
        \node at (2.7,2.3) {\scriptsize $1$};
        \node at (0.4,5.3) {\scriptsize $2$};
        \node at (-5.1,5.3) {\scriptsize $3$};
    \end{tikzpicture}, \      
    \begin{tikzpicture}[x=1.5mm,y=1.5mm, xscale=-1, baseline=-0.5ex]	
        \node[draw,circle,inner sep=1pt] (A) at (0,0) {$\scriptscriptstyle \zeta$};
        \draw[dotted] (0,-0.49) -- (0,-3.0);
        \draw[dotted] (0.49,0.49) -- (1.9,1.9);
        \draw[dotted] (-0.5,0.5) -- (-1.9,1.9);	
        \node[draw,circle,inner sep=1pt] (B) at (-2.3,2.3) {$\scriptscriptstyle \zeta$};
        \draw[dotted] (-1.8,2.8) -- (0,4.9);
        \draw[dotted] (-2.8,2.9) -- (-4.6,4.9);	
        \node at (2.7,2.3) {\scriptsize $1$};
        \node at (0.4,5.3) {\scriptsize $2$};
        \node at (-5.1,5.3) {\scriptsize $3$};
    \end{tikzpicture}.
\end{multline}

Note that while there are $8$ possible ways to color the inputs and output of a binary corolla, and $16$ for a corolla with $3$ inputs, we observe only $6$ binary generators and $14$ quadratic compositions. The "forbidden" operations correspond to cases where all inputs share one color, but the output is colored differently. (A more comprehensive discussion of the remaining generators and operations is provided in \S\ref{sec::PseudoLie}, within the context of quasi- and pseudo-Lie bialgebras.)

It follows that for any permitted coloring, there exists exactly one normal form (a tree monomial that is not divisible by the leading term of any relation). This normal form is a right-comb, in which all internal edges are colored straight whenever possible. This normal form is non-zero as it represents the unique non-zero operation in $\Frob(m,n)$.

\subsection{The dioperad $\LieBi$ of Lie bialgebras}
\label{example::Lie::Bialg}
Recall that a \textit{Lie-bialgebra} is a vector space $V$ together with two operations 
$$[\,,]:V\wedge V\rightarrow V \text{ and } \delta:V\rightarrow V\wedge V.$$ 
Where $[\,,]$ is a Lie-bracket, $\delta$ is a Lie co-bracket  and the operations satisfy the relation
	\begin{equation*}
		\delta([a,b])=(\ad_a\otimes 1 + 1\otimes\ad_a)\delta(b)-(\ad_b\otimes 1 + 1\otimes\ad_b)\delta(a)
	\end{equation*}
	for any $a,b\in V$ and $\ad_a(b)=[a,b]$.
    
The corresponding dioperad $\LB$ of Lie bialgebras has the following pictorial description.
 $\LB$ is generated by an  $\bS\times\bS^{\op}$-bimodule $E=\{E(m,n)\}_{m,n\geq 1}$ with
	all $E(m,n)=0$ except
	$$
	\begin{array}{c}
		{
			E(2,1):=
\mbox{span}\left\langle
			\liePic{1}{2}
			=(-1)
			\liePic{2}{1}
			\right\rangle
		},
\quad
        {
			E(1,2):=
			\mbox{span}\left\langle
			\coliePic{1}{2} = (-1)
            \coliePic{2}{1}
			\right\rangle
		}
	\end{array}
	$$
	by the ideal generated by the following relations
	\begin{equation}
		\label{R for LieB}
		\left\{
		\begin{array}{c}
			\LLiePic{1}{2}{3} + 	\LLiePic{2}{3}{1} + \LLiePic{3}{1}{2} \quad = \quad
			\coLLiePic{1}{2}{3} + 	\coLLiePic{2}{3}{1} + \coLLiePic{3}{1}{2} = 0; \\ 
			-
			\begin{tikzpicture}[x=1mm,y=1mm]
				\node[draw,circle,inner sep=1pt,fill=white] (A) at (0,3) {};
				\draw (0,2.47) -- (0,0.12);                  
				\draw (0.5,3.5) -- (2.2,5.2);                
				\draw (-0.48,3.48) -- (-2.2,5.2);            
				\node[draw,circle,inner sep=1pt,fill=white] (B) at (0,-0.8) {};
				\draw (-0.39,-1.2) -- (-2.2,-3.5);           
				\draw (0.39,-1.2) -- (2.2,-3.5);             
				\node at (2.8,5.7) {\scriptsize $2$};
				\node at (-2.8,5.7) {\scriptsize $1$};
				\node at (-2.7,-5.2) {\scriptsize $1$};
				\node at (2.7,-5.2) {\scriptsize $2$};
			\end{tikzpicture}
			- \LiecoLiePic{1}{2}{1}{2} - 
            \LiecoLiePic{2}{1}{1}{2} + 
            \LiecoLiePic{2}{1}{2}{1} - 
            \LiecoLiePic{1}{2}{2}{1} = 0
		\end{array}
		\right\}
	\end{equation}

The dioperad $\LieBi$ is Koszul dual to the dioperad $\Frob$ of Frobenius algebras (see e.g.~\cite{Markl_Voronov}).
This follows that $\LieBi$ admits a quadratic Gr\"obner basis with respect to the opposite ordering of monomials.
In particular, the quadratic right-comb monomials~\eqref{eq::Com::normal::forms} assemble the set of leading terms of this quadratic Gr\"obner basis.

\begin{proposition}
\label{prp::Liebi::genseries}
The generating series of the dioperad of Lie bialgebras is given by:
$$
\rchi_{\LieBi}(u,v):= \sum_{m,n\geq 1} \dim\LieBi(m,n) \frac{u^m}{m!} \frac{v^n}{n!} = 
-\int \ln\left(\frac{1-u+v+ \sqrt{1-2(u+v)+(u-v)^2}}{2}\right) \mathrm{d}v.
$$ 
In particular, the dimensions of the spaces $\LieBi(m,n)$ are:
$$
\dim\LieBi(m,n) = (n+m-2)! \binom{n+m-2}{m-1} = \frac{(m+n-2)!^2}{(m-1)!(n-1)!}.
$$
\end{proposition}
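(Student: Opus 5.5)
We will use Corollary~\ref{cor::sereis::diop::Koszul} with $\cP=\LieBi$ and $\cP^{!}=\Frob$. Koszulness of $\LieBi$ holds either by the cited literature or by Corollary~\ref{cor::coloring::Koszul}: the Frobenius dioperad is $\Theta_{\mathfrak{c}}(\Com)$, so $\Frob$, and hence its Koszul dual $\LieBi$, is Koszul.

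Both dioperads are generated by binary generators of total arity $3$. An element of $\cP(m,n)$ therefore has weight $r=m+n-2$, so $q$ is absorbed into the variables. After the substitution $x\mapsto -x$, $y\mapsto -y$ and an overall sign, the functional equation becomes an ordinary compositional inversion.

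Since $\dim\Frob(m,n)=1$ for $m,n\ge1$, we have
\[
\rchi_{\Frob}(x,y)=(e^x-1)(e^y-1),\qquad
\partial_y\rchi_{\Frob}=(e^x-1)e^y,\qquad
\partial_x\rchi_{\Frob}=e^x(e^y-1).
\]
With the sign twist $(-1)^{m+n}$ coming from $q\mapsto -q$, the Frobenius pair becomes
\[
f(x,y)=\bigl((1-e^{-x})e^{-y},\;e^{-x}(1-e^{-y})\bigr)
\]
up to global signs that I would fix carefully. The identity~\eqref{eq::generating::Koszul} then says that $g=(\partial_v\rchi_{\LieBi},\partial_u\rchi_{\LieBi})$ is the compositional inverse of this map.

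Inverting means solving $a=(1-X)Y$, $b=X(1-Y)$ for $X=e^{-x}$, $Y=e^{-y}$. Then $a-b=Y-X$ and $XY=Y-a$, which gives a quadratic equation. Its discriminant should be $1-2(a+b)+(a-b)^2$, and its root is
\[
Y=\tfrac{1}{2}\bigl(1+a-b\pm\sqrt{1-2(a+b)+(a-b)^2}\bigr).
\]
Taking logarithms, $-\partial_v\rchi_{\LieBi}$ equals $\ln$ of one of the roots. After matching variables and signs this yields $\partial_v\rchi_{\LieBi}=-\ln\bigl(\tfrac{1-u+v+\sqrt{\cdots}}{2}\bigr)$. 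The branch is the one with value $1$ at the origin, so that there is no constant term. Integrating in $v$, with the constant fixed by requiring no pure-$u$ terms (i.e.\ $\dim\LieBi(m,0)=0$), gives the stated formula. The companion equation for $\partial_u\rchi$ serves as a consistency check.

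For the coefficients, I would not expand the logarithm directly. Instead I would verify the closed form $N(m,n)=\frac{(m+n-2)!^2}{(m-1)!(n-1)!}$ as follows. Set
\[
\Phi=\sum_{m\ge1,\,n\ge0} N(m,n+1)\frac{u^m v^n}{m!\,n!}.
\]
This is a Narayana-type series: $\frac{N(m,n+1)}{m!\,n!}=\frac{(m+n-1)!}{m!\,n!}\binom{m+n-1}{n}$, which is a Delannoy/Narayana-like coefficient. I would recognize $\partial_u\Phi$ or $\partial_u\partial_v\Phi$ as the algebraic function $1/\sqrt{1-2(u+v)+(u-v)^2}$ or its relative; the generating function of the Narayana numbers has exactly this square root. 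Alternatively I would apply Lagrange inversion in two variables to the map $f$ to extract coefficients.

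The main obstacle is the bookkeeping: the signs from $q\mapsto-q$, which coordinate pairs with which, and choosing the correct branch of the square root. The coefficient extraction via Narayana numbers is routine once the series is correctly identified. As a sanity check, $N(1,n)=(n-1)!$, which matches $\dim\LieBi(1,n)=\dim\Lie(n)$, and $N(2,2)=4$.
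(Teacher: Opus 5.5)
You follow the paper in the first half: Koszul duality with $\Frob$, absorbing $q$ through the weight $m+n-2$, and inverting $(u,v)=\bigl((1-e^{-x})e^{-y},\,e^{-x}(1-e^{-y})\bigr)$ via a quadratic with discriminant $D=1-2(u+v)+(u-v)^2$. You also take the branch equal to $1$ at the origin, as the paper does.

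There is a small labeling slip. The root you wrote is $Y=e^{-y}$, and it gives $\partial_u\rchi_{\LieBi}$. What you need is $X=Y-(u-v)=\tfrac12(1-u+v+\sqrt{D})$, the root of $X^2-(1-u+v)X+v=0$.

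You genuinely depart from the paper in the coefficient extraction. The paper sets $z=1-e^{-x}$, notes $z=u\bigl(1-\tfrac{v}{1-z}\bigr)^{-1}$, and applies one-variable Lagrange inversion with $v$ as a parameter. It works directly from the implicit system.

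Your guess is exactly right and gives a shorter route. A direct computation shows $\partial_u X=-X/\sqrt{D}$, hence
\[
\partial_u\partial_v\rchi_{\LieBi}(u,v)=\frac{1}{\sqrt{(1-u-v)^2-4uv}}=\sum_{a,b\geq 0}\binom{a+b}{a}^{2}u^{a}v^{b},
\]
where the last equality is the classical Legendre-polynomial identity. Reading off the coefficient of $u^{m-1}v^{n-1}$ gives $\dim\LieBi(m,n)=(m-1)!\,(n-1)!\binom{m+n-2}{m-1}^2$ at once. The integration constant in $u$ is fixed by $\partial_v\rchi_{\LieBi}(0,v)=0=-\ln X(0,v)$.

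The coefficients are squared binomials, not Narayana numbers. The Narayana series $G=\tfrac12(1-u-v-\sqrt{D})$ only shares the radical; in fact $X=1-u-G$.

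Your route replaces Lagrange inversion by one differentiation plus a standard identity, and it makes the square-of-a-binomial shape of the answer transparent. The paper's route needs neither the closed form nor that identity. The two-variable Lagrange inversion you mention as a fallback is not needed.
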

\begin{proof}
The dioperad of Lie bialgebras is Koszul, and its Koszul dual is the dioperad of Frobenius algebras $\Frob$. The generating series for $\Frob$ is particularly simple because for every $m \geq 1$ and $n \geq 1$, there exists exactly one operation (up to a scalar) with $m$ inputs and $n$ outputs. Consequently, the generating series is:
$$
\rchi_{\Frob}(x,y):= \sum_{m,n\geq 1} \dim\Frob(m,n) \frac{x^m}{m!} \frac{y^n}{n!} = \sum_{m,n\geq 1} \frac{x^m}{m!} \frac{y^n}{n!} = (e^x-1)(e^y-1). 
$$
The generating series of the associated colored operad $\Psi(\Frob)$ consists of two components:
$$
\overline{\rchi_{\Psi(\Frob)}} = \left(\frac{\partial \rchi_{\Frob}(x,y)}{\partial y}, \frac{\partial \rchi_{\Frob}(x,y)}{\partial x} \right) = \left(e^{x+y}-e^y, e^{x+y}-e^x\right).
$$
Since all generators of $\Psi(\Frob)$ are binary, the $q$-grading is uniquely determined by the arity of the operations. Thus, we have the following functional equation for the generating series of the Koszul dual:
$$
\overline{\rchi_{\Psi(\LieBi)}(-x,-y)} \circ \overline{\rchi_{\Psi(\Frob)}(-x,-y)} = (x,y).
$$
Setting $u = \frac{\partial \rchi_{\LieBi}}{\partial v}$ and $v = \frac{\partial \rchi_{\LieBi}}{\partial u}$, we solve the functional equation system:
$$
(u,v) = - \overline{\rchi_{\Psi(\Frob)}(-x,-y)} \ \Leftrightarrow \
\begin{cases}
    u = e^{-y} - e^{-x-y} \\
    v = e^{-x} - e^{-x-y}
\end{cases}
\ \Leftrightarrow \ 
\begin{cases}
    u-v = e^{-y} - e^{-x} \\
    v = e^{-x} - e^{-x}e^{-y}
\end{cases}
$$
Substituting $e^{-y} = u-v + e^{-x}$ into the second equation:
$$
v = e^{-x} - e^{-x}(u-v + e^{-x}) \implies e^{-2x} - e^{-x}(1-u+v) + v = 0.
$$
By the quadratic formula:
$$
e^{-x} = \frac{1-u+v \pm \sqrt{(1-u+v)^2 - 4v}}{2}.
$$
Since we are working with power series where $x \to 0$ as $u, v \to 0$, we must have $e^{-x} \to 1$. Only the positive root satisfies this boundary condition. Thus, we obtain:
$$
x = -\ln\left(\frac{1- u+v + \sqrt{1-2(u+v)+(u-v)^2}}{2}\right).
$$
From the functional equation, we identify $x = \frac{\partial\rchi_{\LieBi}(u,v)}{\partial v}$.

Let $z = 1 - e^{-x}$. Then we have the following equality:
$$
\frac{z}{u}= \frac{1-e^{-x}}{u} = e^{y} = \frac{e^{-x}}{e^{-x-y}} = \frac{1-z}{1-v-z} = \Bigl(1- \frac{v}{1-z}\Bigr)^{-1}.
$$
Let $H(z) := -\ln(1-z) =x$. We apply the Lagrange Inversion Theorem (LIT) with respect to $z$, treating $v$ as a parametr:
\begin{multline*}
[u^m] \frac{\partial\rchi_{\LieBi}(u,v)}{\partial v} = [u^m] x(u,v) = [u^m] H(z) \stackrel{\text{LIT}}{=} 
\\ \stackrel{\text{LIT}}{=}
\frac{1}{m} [z^{m-1}] \left( H'(z) \Bigl(\frac{z}{u}\Bigr)^m \right)  
= \frac{1}{m} [z^{m-1}] \left( \frac{1}{1-z} \cdot \left( 1- \frac{v}{1-z} \right)^{-m} \right).
\end{multline*}
Now we can recall that $v$ is also a formal variable and compute the coefficient near $v^n$:
\begin{multline*}
 [v^n][u^m] x(u,v) = [v^n] \left( \frac{1}{m} [z^{m-1}]  \left( \frac{1}{1-z} \cdot \left( 1- \frac{v}{1-z} \right)^{-m} \right) \right) = \\
= \frac{1}{m}[z^{m-1}]\Bigl(\frac{1}{1-z} \cdot 
 [v^n]\left( 1- \frac{v}{1-z} \right)^{-m} 
 \Bigr) 
 = \frac{1}{m}[z^{m-1}]\left(\frac{1}{1-z} \cdot \left( (1-z)^{-n} \binom{n+m-1}{m-1} \right) \right) = 
\\
 = \frac{1}{m} \binom{n+m-1}{m} [z^{m-1}](1-z)^{-n-1} 
 = \frac{1}{m} \binom{n+m-1}{m-1} \binom{n+m-1}{m-1} = \frac{(n+m-1)! \binom{n+m-1}{m-1}}{m!n!}
\end{multline*}
Consequently, we have:
$$
\dim \LieBi(m,n) = \dim \Psi(\LieBi)^{\strt}(m,n-1) = 
(m+n-2)! \binom{m+n-2}{m-1} =  \frac{(m+n-2)!^2}{(m-1)!(n-1)!}.
$$
We refer to~\cite[\S6.2]{Stanley} for details on the Lagrange Inversion Theorem.
\end{proof}

\subsection{Quasi-Lie bialgebras and Pseudo-Lie bialgebras}
\label{sec::PseudoLie}
While considering different algebraic structures related to quantum groups, Drinfeld also defined different generalizations of the notion of Lie bialgebras by adding extra structures to them (see e.g.~\cite{Drinfeld_Quasi}).
Let us recall some of them
\begin{definition}
\begin{itemize}[itemsep=0pt, topsep=0pt]
\item A \emph{pseudo-Lie bialgebra} is a 5-tuple 
 $(V;[-,-],\delta,\phi,\eta)$ where
$$
\delta\in\mathrm{Hom}(V,\Lambda^2 V), \  [-,-]\in\mathrm{Hom}(\Lambda^2 V,V) \ \text{ and } \phi\in\mathrm{Hom}(\Bbbk,\Lambda^3 V), \   \eta\in\mathrm{Hom}(\Lambda^3 V,\Bbbk)$$ 
such that 
	\begin{gather*}
		\delta[a,b]=(\ad_a\otimes 1 + 1\otimes\ad_a)\delta(b)-(\ad_b\otimes 1 + 1\otimes\ad_b)\delta(a) + \phi^{(1)}\otimes\phi^{(2)} (\eta(\phi^{(3)}\otimes a\otimes b));\\
		\dfrac{1}{2}\mathrm{Alt}_3(\delta\otimes\mathrm{id})(\delta(a))=[a\otimes1\otimes1+1\otimes1\otimes1+1\otimes1\otimes a,\phi];\\
        \begin{array}{rl}
		[[a,b],c]+[[c,a],b]+ [[b,c],a] & =   \\ =\eta(a,b,\delta^{(1)}(c))\delta^{(2)}(c)+& \eta(c,a,\delta^{(1)}(b))\delta^{(2)}(b)+\eta(b,c,\delta^{(1)}(a))\delta^{(2)}(a);
        \end{array}
        \\
\mathrm{Alt}_4(\delta\otimes\mathrm{id}\otimes\mathrm{id})(\phi) = 0;\\
	\begin{array}{rl}	\eta([x,y],z,w)+\eta([x,z],y,w)& +\eta([x,w],y,z) + \\
		 \ \ \ \qquad +\eta([y,z],x,w) & +\eta([y,w],x,z)+\eta([z,w],x,y) = 0.
    \end{array}     
	\end{gather*}
	where $\mathrm{Alt}_k:V^{\otimes k}\rightarrow V^{\otimes k}$ denotes the operator $\sum_{\sigma\in\mathbb{S}_k}\mathrm{sgn}(\sigma)\sigma$.
\item    A \emph{quasi-Lie bialgebra} is a pseudo-Lie algebra where $\eta=0$.
\item When $\eta=0$ and $\phi=0$, the definition reduces to that of a Lie bi-algebra $(V,[\ttt,\ttt],\delta)$.
\end{itemize}
\end{definition}

The corresponding dioperad $\mathcal{PL}ieb_n$ is generated by the following skew-symmetric operations:
      \begin{equation*}
   	E(m,n)=
   	\begin{cases}
   		\begin{tikzpicture}[baseline=-0.3cm, scale =0.6]
   			\node[invisible] (c) at (0,0) {};
   			\node[invisible] at (0,-0.4) {}
   			edge [-] (c);
   			\node[invisible] at (0.4,-0.4) {}
   			edge [-] (c);
   			\node[invisible] at (-0.4,-0.4) {}
   			edge [-] (c);
   		\end{tikzpicture}
   		& (m,n)=(0,3)\\
   		\begin{tikzpicture}[baseline=-0.1cm,scale =0.6]
   			\node[invisible] (c) at (0,0) {};
   			\node[invisible] at (0,0.4) {}
   			edge [-] (c);
   			\node[invisible] at (0.4,-0.4) {}
   			edge [-] (c);
   			\node[invisible] at (-0.4,-0.4) {}
   			edge [-] (c);
   		\end{tikzpicture}
   		& (m,n)=(1,2)\\
   		\begin{tikzpicture}[baseline=0.1cm, scale = 0.5]
   			\node[invisible] (c) at (0,0) {};
   			\node[invisible] at (0,-0.4) {}
   			edge [-] (c);
   			\node[invisible] at (0.4,0.4) {}
   			edge [-] (c);
   			\node[invisible] at (-0.4,0.4) {}
   			edge [-] (c);
   		\end{tikzpicture}
   		& (m,n)=(2,1)\\
   		\begin{tikzpicture}[baseline=0.1cm, scale =0.7]
   			\node[invisible] (c) at (0,0) {};
   			\node[invisible] at (0,0.4) {}
   			edge [-] (c);
   			\node[invisible] at (0.4,0.4) {}
   			edge [-] (c);
   			\node[invisible] at (-0.4,0.4) {}
   			edge [-] (c);
   		\end{tikzpicture}
   		& (m,n)=(3,0)
   	\end{cases}
   \end{equation*}
   modulo the ideal generated by
   \begin{equation*}
   	\mathcal{R}(m,n)=
   	\begin{cases}
   		\begin{tikzpicture}[baseline=-0.5cm]
   			\node[invisible] (c) at (0,0) {};
   			\node[invisible] at (0,-0.25) {}
   			edge [-] (c);
   			\node[invisible] at (0.25,-0.25) {}
   			edge [-] (c);
   			\node[invisible] (l) at (-0.25,-0.25) {}
   			edge [-] (c);
   			\node[invisible] at (-0.15,-0.55) {}
   			edge [-] (l);
   			\node[invisible] at (-0.35,-0.55) {}
   			edge [-] (l);
   			\node[] at (-0.35,-0.7) {$\scriptstyle 1$};
   			\node[] at (-0.15,-0.7) {$\scriptstyle 2$};
   			\node[] at (0,-0.4) {$\scriptstyle 3$};
   			\node[] at (0.25,-0.4) {$\scriptstyle 4$};
   		\end{tikzpicture}
   		+
   		\begin{tikzpicture}[baseline=-0.5cm]
   			\node[invisible] (c) at (0,0) {};
   			\node[invisible] at (0,-0.25) {}
   			edge [-] (c);
   			\node[invisible] at (0.25,-0.25) {}
   			edge [-] (c);
   			\node[invisible] (l) at (-0.25,-0.25) {}
   			edge [-] (c);
   			\node[invisible] at (-0.15,-0.55) {}
   			edge [-] (l);
   			\node[invisible] at (-0.35,-0.55) {}
   			edge [-] (l);
   			\node[] at (-0.35,-0.7) {$\scriptstyle 1$};
   			\node[] at (-0.15,-0.7) {$\scriptstyle 3$};
   			\node[] at (0,-0.4) {$\scriptstyle 2$};
   			\node[] at (0.25,-0.4) {$\scriptstyle 4$};
   		\end{tikzpicture}
   		+
   		\begin{tikzpicture}[baseline=-0.5cm]
   			\node[invisible] (c) at (0,0) {};
   			\node[invisible] at (0,-0.25) {}
   			edge [-] (c);
   			\node[invisible] at (0.25,-0.25) {}
   			edge [-] (c);
   			\node[invisible] (l) at (-0.25,-0.25) {}
   			edge [-] (c);
   			\node[invisible] at (-0.15,-0.55) {}
   			edge [-] (l);
   			\node[invisible] at (-0.35,-0.55) {}
   			edge [-] (l);
   			\node[] at (-0.35,-0.7) {$\scriptstyle 1$};
   			\node[] at (-0.15,-0.7) {$\scriptstyle 4$};
   			\node[] at (0,-0.4) {$\scriptstyle 2$};
   			\node[] at (0.25,-0.4) {$\scriptstyle 3$};
   		\end{tikzpicture}
   		+
   		\begin{tikzpicture}[baseline=-0.5cm]
   			\node[invisible] (c) at (0,0) {};
   			\node[invisible] at (0,-0.25) {}
   			edge [-] (c);
   			\node[invisible] at (0.25,-0.25) {}
   			edge [-] (c);
   			\node[invisible] (l) at (-0.25,-0.25) {}
   			edge [-] (c);
   			\node[invisible] at (-0.15,-0.55) {}
   			edge [-] (l);
   			\node[invisible] at (-0.35,-0.55) {}
   			edge [-] (l);
   			\node[] at (-0.35,-0.7) {$\scriptstyle 2$};
   			\node[] at (-0.15,-0.7) {$\scriptstyle 3$};
   			\node[] at (0,-0.4) {$\scriptstyle 1$};
   			\node[] at (0.25,-0.4) {$\scriptstyle 4$};
   		\end{tikzpicture}
   		+
   		\begin{tikzpicture}[baseline=-0.5cm]
   			\node[invisible] (c) at (0,0) {};
   			\node[invisible] at (0,-0.25) {}
   			edge [-] (c);
   			\node[invisible] at (0.25,-0.25) {}
   			edge [-] (c);
   			\node[invisible] (l) at (-0.25,-0.25) {}
   			edge [-] (c);
   			\node[invisible] at (-0.15,-0.55) {}
   			edge [-] (l);
   			\node[invisible] at (-0.35,-0.55) {}
   			edge [-] (l);
   			\node[] at (-0.35,-0.7) {$\scriptstyle 2$};
   			\node[] at (-0.15,-0.7) {$\scriptstyle 4$};
   			\node[] at (0,-0.4) {$\scriptstyle 1$};
   			\node[] at (0.25,-0.4) {$\scriptstyle 3$};
   		\end{tikzpicture}
   		+
   		\begin{tikzpicture}[baseline=-0.5cm]
   			\node[invisible] (c) at (0,0) {};
   			\node[invisible] at (0,-0.25) {}
   			edge [-] (c);
   			\node[invisible] at (0.25,-0.25) {}
   			edge [-] (c);
   			\node[invisible] (l) at (-0.25,-0.25) {}
   			edge [-] (c);
   			\node[invisible] at (-0.15,-0.55) {}
   			edge [-] (l);
   			\node[invisible] at (-0.35,-0.55) {}
   			edge [-] (l);
   			\node[] at (-0.35,-0.7) {$\scriptstyle 3$};
   			\node[] at (-0.15,-0.7) {$\scriptstyle 4$};
   			\node[] at (0,-0.4) {$\scriptstyle 1$};
   			\node[] at (0.25,-0.4) {$\scriptstyle 2$};
   		\end{tikzpicture},
   		& (m,n)=(0,4)\\
   		\begin{tikzpicture}[baseline=-0.4cm]
   			\node[invisible] (c) at (0,0) {};
   			\node[invisible] at (0,0.25) {}
   			edge [-] (c);
   			\node[invisible] at (0.25,-0.25) {}
   			edge [-] (c);
   			\node[invisible] (l) at (-0.25,-0.25) {}
   			edge [-] (c);
   			\node[invisible] at (-0.5,-0.5) {}
   			edge [-] (l);
   			\node[invisible] at (0,-0.5) {}
   			edge [-] (l);
   			\node[] at (-0.5,-0.65) {$\scriptstyle 1$};
   			\node[] at (0,-0.65) {$\scriptstyle 2$};
   			\node[] at (0.25,-0.4) {$\scriptstyle 3$};
   		\end{tikzpicture}
   		+
   		\begin{tikzpicture}[baseline=-0.4cm]
   			\node[invisible] (c) at (0,0) {};
   			\node[invisible] at (0,0.25) {}
   			edge [-] (c);
   			\node[invisible] at (0.25,-0.25) {}
   			edge [-] (c);
   			\node[invisible] (l) at (-0.25,-0.25) {}
   			edge [-] (c);
   			\node[invisible] at (-0.5,-0.5) {}
   			edge [-] (l);
   			\node[invisible] at (0,-0.5) {}
   			edge [-] (l);
   			\node[] at (-0.5,-0.65) {$\scriptstyle 2$};
   			\node[] at (0,-0.65) {$\scriptstyle 3$};
   			\node[] at (0.25,-0.4) {$\scriptstyle 1$};
   		\end{tikzpicture}
   		+
   		\begin{tikzpicture}[baseline=-0.4cm]
   			\node[invisible] (c) at (0,0) {};
   			\node[invisible] at (0,0.25) {}
   			edge [-] (c);
   			\node[invisible] at (0.25,-0.25) {}
   			edge [-] (c);
   			\node[invisible] (l) at (-0.25,-0.25) {}
   			edge [-] (c);
   			\node[invisible] at (-0.5,-0.5) {}
   			edge [-] (l);
   			\node[invisible] at (0,-0.5) {}
   			edge [-] (l);
   			\node[] at (-0.5,-0.65) {$\scriptstyle 3$};
   			\node[] at (0,-0.65) {$\scriptstyle 1$};
   			\node[] at (0.25,-0.4) {$\scriptstyle 3$};
   		\end{tikzpicture}
   		+
   		\begin{tikzpicture}[baseline=-0.4cm]
   			\node[invisible] (r) at (0,0) {};
   			\node[invisible] at (0.25,-0.25) {}
   			edge [-] (r);
   			\node[invisible] at (0,-0.25) {}
   			edge [-] (r);
   			\node[invisible] (l) at (-0.25,-0.25) {}
   			edge [-] (r);
   			\node[invisible] at (-0.5,0) {}
   			edge [-] (l);
   			\node[invisible] at (-0.25,-0.5) {}
   			edge [-] (l);
   			\node[] at (-0.25,-0.65) {$\scriptstyle 1$};
   			\node[] at (0,-0.4) {$\scriptstyle 2$};
   			\node[] at (0.25,-0.4) {$\scriptstyle 3$};
   		\end{tikzpicture}
   		+
   		\begin{tikzpicture}[baseline=-0.4cm]
   			\node[invisible] (r) at (0,0) {};
   			\node[invisible] at (0.25,-0.25) {}
   			edge [-] (r);
   			\node[invisible] at (0,-0.25) {}
   			edge [-] (r);
   			\node[invisible] (l) at (-0.25,-0.25) {}
   			edge [-] (r);
   			\node[invisible] at (-0.5,0) {}
   			edge [-] (l);
   			\node[invisible] at (-0.25,-0.5) {}
   			edge [-] (l);
   			\node[] at (-0.25,-0.65) {$\scriptstyle 2$};
   			\node[] at (0,-0.4) {$\scriptstyle 3$};
   			\node[] at (0.25,-0.4) {$\scriptstyle 1$};
   		\end{tikzpicture}
   		+
   		\begin{tikzpicture}[baseline=-0.4cm]
   			\node[invisible] (r) at (0,0) {};
   			\node[invisible] at (0.25,-0.25) {}
   			edge [-] (r);
   			\node[invisible] at (0,-0.25) {}
   			edge [-] (r);
   			\node[invisible] (l) at (-0.25,-0.25) {}
   			edge [-] (r);
   			\node[invisible] at (-0.5,0) {}
   			edge [-] (l);
   			\node[invisible] at (-0.25,-0.5) {}
   			edge [-] (l);
   			\node[] at (-0.25,-0.65) {$\scriptstyle 3$};
   			\node[] at (0,-0.4) {$\scriptstyle 1$};
   			\node[] at (0.25,-0.4) {$\scriptstyle 2$};
   		\end{tikzpicture},
   		& (m,n)=(1,3)\\
   		\begin{tikzpicture}[baseline=-0.25cm]
   			\node[invisible] (t) at (0,0) {};
   			\node[invisible] at (-0.25,0.25) {}
   			edge [-] (t);
   			\node[invisible] at (0.25,0.25) {}
   			edge [-] (t);
   			\node[invisible] (b) at (0,-0.25) {}
   			edge [-] (t);
   			\node[invisible] at (-0.25,-0.5) {}
   			edge [-] (b);
   			\node[invisible] at (0.25,-0.5) {}
   			edge [-] (b);
   			\node[] at (-0.25,0.4) {$\scriptstyle 1$};
   			\node[] at (0.25,0.4) {$\scriptstyle 2$};
   			\node[] at (-0.25,-0.65) {$\scriptstyle 1$};
   			\node[] at (0.25,-0.65) {$\scriptstyle 2$};
   		\end{tikzpicture}
   		+
   		\begin{tikzpicture}[baseline=0.cm]
   			\node[invisible] (l) at (0,0) {};
   			\node[invisible] at (-0.25,0.25) {}
   			edge [-] (l);
   			\node[invisible] at (0,-0.25) {}
   			edge [-] (l);
   			\node[invisible] (r) at (0.25,0.25) {}
   			edge [-] (l);
   			\node[invisible] at (0.25,0.5) {}
   			edge [-] (r);
   			\node[invisible] at (0.5,0) {}
   			edge [-] (r);
   			\node[] at (-0.25,0.4) {$\scriptstyle 1$};
   			\node[] at (0.25,0.65) {$\scriptstyle 2$};
   			\node[] at (0,-0.4) {$\scriptstyle 1$};
   			\node[] at (0.5,-0.15) {$\scriptstyle 2$};
   		\end{tikzpicture}
   		+
   		\begin{tikzpicture}[baseline=0.cm]
   			\node[invisible] (l) at (0,0) {};
   			\node[invisible] at (-0.25,0.25) {}
   			edge [-] (l);
   			\node[invisible] at (0,-0.25) {}
   			edge [-] (l);
   			\node[invisible] (r) at (0.25,0.25) {}
   			edge [-] (l);
   			\node[invisible] at (0.25,0.5) {}
   			edge [-] (r);
   			\node[invisible] at (0.5,0) {}
   			edge [-] (r);
   			\node[] at (-0.25,0.4) {$\scriptstyle 2$};
   			\node[] at (0.25,0.65) {$\scriptstyle 1$};
   			\node[] at (0,-0.4) {$\scriptstyle 1$};
   			\node[] at (0.5,-0.15) {$\scriptstyle 2$};
   		\end{tikzpicture}
   		+
   		\begin{tikzpicture}[baseline=0.cm]
   			\node[invisible] (l) at (0,0) {};
   			\node[invisible] at (-0.25,0.25) {}
   			edge [-] (l);
   			\node[invisible] at (0,-0.25) {}
   			edge [-] (l);
   			\node[invisible] (r) at (0.25,0.25) {}
   			edge [-] (l);
   			\node[invisible] at (0.25,0.5) {}
   			edge [-] (r);
   			\node[invisible] at (0.5,0) {}
   			edge [-] (r);
   			\node[] at (-0.25,0.4) {$\scriptstyle 1$};
   			\node[] at (0.25,0.65) {$\scriptstyle 2$};
   			\node[] at (0,-0.4) {$\scriptstyle 2$};
   			\node[] at (0.5,-0.15) {$\scriptstyle 1$};
   		\end{tikzpicture}
   		+
   		\begin{tikzpicture}[baseline=0.cm]
   			\node[invisible] (l) at (0,0) {};
   			\node[invisible] at (-0.25,0.25) {}
   			edge [-] (l);
   			\node[invisible] at (0,-0.25) {}
   			edge [-] (l);
   			\node[invisible] (r) at (0.25,0.25) {}
   			edge [-] (l);
   			\node[invisible] at (0.25,0.5) {}
   			edge [-] (r);
   			\node[invisible] at (0.5,0) {}
   			edge [-] (r);
   			\node[] at (-0.25,0.4) {$\scriptstyle 2$};
   			\node[] at (0.25,0.65) {$\scriptstyle 1$};
   			\node[] at (0,-0.4) {$\scriptstyle 2$};
   			\node[] at (0.5,-0.15) {$\scriptstyle 1$};
   		\end{tikzpicture}
   		+
   		\begin{tikzpicture}[baseline=0.cm]
   			\node[invisible] (l) at (0,0) {};
   			\node[invisible] at (-0.25,0.25) {}
   			edge [-] (l);
   			\node[invisible] at (0,0.25) {}
   			edge [-] (l);
   			\node[invisible] (r) at (0.25,0.25) {}
   			edge [-] (l);
   			\node[invisible] at (0.25,0) {}
   			edge [-] (r);
   			\node[invisible] at (0.5,0) {}
   			edge [-] (r);
   			\node[] at (-0.25,0.4) {$\scriptstyle 1$};
   			\node[] at (0,0.4) {$\scriptstyle 2$};
   			\node[] at (0.25,-0.15) {$\scriptstyle 1$};
   			\node[] at (0.5,-0.15) {$\scriptstyle 2$};
   		\end{tikzpicture},
   		& (m,n)=(2,2)\\
   		\begin{tikzpicture}[baseline=0.3cm]
   			\node[invisible] (c) at (0,0) {};
   			\node[invisible] at (0,-0.25) {}
   			edge [-] (c);
   			\node[invisible] (r) at (0.25,0.25) {}
   			edge [-] (c);
   			\node[invisible] (l) at (-0.25,0.25) {}
   			edge [-] (c);
   			\node[invisible] at (0.5,0.5) {}
   			edge [-] (r);
   			\node[invisible] at (0,0.5) {}
   			edge [-] (r);
   			\node[] at (-0.25,0.4) {$\scriptstyle 1$}; 
   			\node[] at (0,0.65) {$\scriptstyle 2$};
   			\node[] at (0.5,0.65) {$\scriptstyle 3$};
   		\end{tikzpicture}
   		+
   		\begin{tikzpicture}[baseline=0.3cm]
   			\node[invisible] (c) at (0,0) {};
   			\node[invisible] at (0,-0.25) {}
   			edge [-] (c);
   			\node[invisible] (r) at (0.25,0.25) {}
   			edge [-] (c);
   			\node[invisible] (l) at (-0.25,0.25) {}
   			edge [-] (c);
   			\node[invisible] at (0.5,0.5) {}
   			edge [-] (r);
   			\node[invisible] at (0,0.5) {}
   			edge [-] (r);
   			\node[] at (-0.25,0.4) {$\scriptstyle 2$}; 
   			\node[] at (0,0.65) {$\scriptstyle 3$};
   			\node[] at (0.5,0.65) {$\scriptstyle 1$};
   		\end{tikzpicture}
   		+
   		\begin{tikzpicture}[baseline=0.3cm]
   			\node[invisible] (c) at (0,0) {};
   			\node[invisible] at (0,-0.25) {}
   			edge [-] (c);
   			\node[invisible] (r) at (0.25,0.25) {}
   			edge [-] (c);
   			\node[invisible] (l) at (-0.25,0.25) {}
   			edge [-] (c);
   			\node[invisible] at (0.5,0.5) {}
   			edge [-] (r);
   			\node[invisible] at (0,0.5) {}
   			edge [-] (r);
   			\node[] at (-0.25,0.4) {$\scriptstyle 3$}; 
   			\node[] at (0,0.65) {$\scriptstyle 1$};
   			\node[] at (0.5,0.65) {$\scriptstyle 2$};
   		\end{tikzpicture}
   		+
   		\begin{tikzpicture}[baseline=0.3cm]
   			\node[invisible] (l) at (0,0) {};
   			\node[invisible] at (-0.25,0.25) {}
   			edge [-] (l);
   			\node[invisible] at (0,0.25) {}
   			edge [-] (l);
   			\node[invisible] (r) at (0.25,0.25) {}
   			edge [-] (l);
   			\node[invisible] at (0.5,0) {}
   			edge [-] (r);
   			\node[invisible] at (0.25,0.5) {}
   			edge [-] (r);
   			\node[] at (-0.25,0.4) {$\scriptstyle 1$};
   			\node[] at (0,0.4) {$\scriptstyle 2$};
   			\node[] at (0.25,0.65) {$\scriptstyle 3$};
   		\end{tikzpicture}
   		+
   		\begin{tikzpicture}[baseline=0.3cm]
   			\node[invisible] (l) at (0,0) {};
   			\node[invisible] at (-0.25,0.25) {}
   			edge [-] (l);
   			\node[invisible] at (0,0.25) {}
   			edge [-] (l);
   			\node[invisible] (r) at (0.25,0.25) {}
   			edge [-] (l);
   			\node[invisible] at (0.5,0) {}
   			edge [-] (r);
   			\node[invisible] at (0.25,0.5) {}
   			edge [-] (r);
   			\node[] at (-0.25,0.4) {$\scriptstyle 2$};
   			\node[] at (0,0.4) {$\scriptstyle 3$};
   			\node[] at (0.25,0.65) {$\scriptstyle 1$};
   		\end{tikzpicture}
   		+
   		\begin{tikzpicture}[baseline=0.3cm]
   			\node[invisible] (l) at (0,0) {};
   			\node[invisible] at (-0.25,0.25) {}
   			edge [-] (l);
   			\node[invisible] at (0,0.25) {}
   			edge [-] (l);
   			\node[invisible] (r) at (0.25,0.25) {}
   			edge [-] (l);
   			\node[invisible] at (0.5,0) {}
   			edge [-] (r);
   			\node[invisible] at (0.25,0.5) {}
   			edge [-] (r);
   			\node[] at (-0.25,0.4) {$\scriptstyle 3$};
   			\node[] at (0,0.4) {$\scriptstyle 1$};
   			\node[] at (0.25,0.65) {$\scriptstyle 2$};
   		\end{tikzpicture},
   		& (m,n)=(3,1)\\
   		\begin{tikzpicture}[baseline=0.3cm]
   			\node[invisible] (c) at (0,0) {};
   			\node[invisible] at (0,0.25) {}
   			edge [-] (c);
   			\node[invisible] (r) at (0.25,0.25) {}
   			edge [-] (c);
   			\node[invisible] at (-0.25,0.25) {}
   			edge [-] (c);
   			\node[invisible] at (0.15,0.55) {}
   			edge [-] (r);
   			\node[invisible] at (0.35,0.55) {}
   			edge [-] (r);
   			\node[] at (-0.25,0.4) {$\scriptstyle 1$};
   			\node[] at (0,0.4) {$\scriptstyle 2$};
   			\node[] at (0.15,0.7) {$\scriptstyle 3$};
   			\node[] at (0.35,0.7) {$\scriptstyle 4$};
   		\end{tikzpicture}
   		+
   		\begin{tikzpicture}[baseline=0.3cm]
   			\node[invisible] (c) at (0,0) {};
   			\node[invisible] at (0,0.25) {}
   			edge [-] (c);
   			\node[invisible] (r) at (0.25,0.25) {}
   			edge [-] (c);
   			\node[invisible] at (-0.25,0.25) {}
   			edge [-] (c);
   			\node[invisible] at (0.15,0.55) {}
   			edge [-] (r);
   			\node[invisible] at (0.35,0.55) {}
   			edge [-] (r);
   			\node[] at (-0.25,0.4) {$\scriptstyle 1$};
   			\node[] at (0,0.4) {$\scriptstyle 3$};
   			\node[] at (0.15,0.7) {$\scriptstyle 2$};
   			\node[] at (0.35,0.7) {$\scriptstyle 4$};
   		\end{tikzpicture}
   		+
   		\begin{tikzpicture}[baseline=0.3cm]
   			\node[invisible] (c) at (0,0) {};
   			\node[invisible] at (0,0.25) {}
   			edge [-] (c);
   			\node[invisible] (r) at (0.25,0.25) {}
   			edge [-] (c);
   			\node[invisible] at (-0.25,0.25) {}
   			edge [-] (c);
   			\node[invisible] at (0.15,0.55) {}
   			edge [-] (r);
   			\node[invisible] at (0.35,0.55) {}
   			edge [-] (r);
   			\node[] at (-0.25,0.4) {$\scriptstyle 1$};
   			\node[] at (0,0.4) {$\scriptstyle 4$};
   			\node[] at (0.15,0.7) {$\scriptstyle 2$};
   			\node[] at (0.35,0.7) {$\scriptstyle 3$};
   		\end{tikzpicture}
   		+
   		\begin{tikzpicture}[baseline=0.3cm]
   			\node[invisible] (c) at (0,0) {};
   			\node[invisible] at (0,0.25) {}
   			edge [-] (c);
   			\node[invisible] (r) at (0.25,0.25) {}
   			edge [-] (c);
   			\node[invisible] at (-0.25,0.25) {}
   			edge [-] (c);
   			\node[invisible] at (0.15,0.55) {}
   			edge [-] (r);
   			\node[invisible] at (0.35,0.55) {}
   			edge [-] (r);
   			\node[] at (-0.25,0.4) {$\scriptstyle 2$};
   			\node[] at (0,0.4) {$\scriptstyle 3$};
   			\node[] at (0.15,0.7) {$\scriptstyle 1$};
   			\node[] at (0.35,0.7) {$\scriptstyle 4$};
   		\end{tikzpicture}
   		+
   		\begin{tikzpicture}[baseline=0.3cm]
   			\node[invisible] (c) at (0,0) {};
   			\node[invisible] at (0,0.25) {}
   			edge [-] (c);
   			\node[invisible] (r) at (0.25,0.25) {}
   			edge [-] (c);
   			\node[invisible] at (-0.25,0.25) {}
   			edge [-] (c);
   			\node[invisible] at (0.15,0.55) {}
   			edge [-] (r);
   			\node[invisible] at (0.35,0.55) {}
   			edge [-] (r);
   			\node[] at (-0.25,0.4) {$\scriptstyle 2$};
   			\node[] at (0,0.4) {$\scriptstyle 4$};
   			\node[] at (0.15,0.7) {$\scriptstyle 1$};
   			\node[] at (0.35,0.7) {$\scriptstyle 3$};
   		\end{tikzpicture}
   		+
   		\begin{tikzpicture}[baseline=0.3cm]
   			\node[invisible] (c) at (0,0) {};
   			\node[invisible] at (0,0.25) {}
   			edge [-] (c);
   			\node[invisible] (r) at (0.25,0.25) {}
   			edge [-] (c);
   			\node[invisible] at (-0.25,0.25) {}
   			edge [-] (c);
   			\node[invisible] at (0.15,0.55) {}
   			edge [-] (r);
   			\node[invisible] at (0.35,0.55) {}
   			edge [-] (r);
   			\node[] at (-0.25,0.4) {$\scriptstyle 3$};
   			\node[] at (0,0.4) {$\scriptstyle 4$};
   			\node[] at (0.15,0.7) {$\scriptstyle 1$};
   			\node[] at (0.35,0.7) {$\scriptstyle 2$};
   		\end{tikzpicture},
   		& (m,n)=(4,0)
   	\end{cases}
   \end{equation*}

Similarly, one may define the dioperad (or properad) $\mathcal{QL}ieb$ of Quasi-Lie bialgebras. The Koszulness of the corresponding properad (and consequently the dioperads) was established in~\cite{Granacker}, following the methodology used for the Koszul property of the properad of Lie bialgebras. In particular, it can be shown that each space of $(m,n)$-ary operations in the Koszul-dual dioperad is one-dimensional whenever it is non-empty.

Let us adapt the framework developed in this paper to these examples.

\begin{proposition}
\begin{itemize}[itemsep=0pt, topsep=0pt]
\item The following isomorphisms of dioperads hold:
\begin{equation}
\label{eq::pseudolie::coloring}
\mathcal{PL}ieb^{!} \simeq \Theta_{\bZ_{\geq 0}\times\bZ_{\geq 0}}(\Com), \quad \mathcal{QL}ieb^{!} = \Theta_{\bZ_{\geq 0}\times\bZ_{>0}}(\Com). 
\end{equation}
In particular, both dioperads admit a quadratic, convergent, caterpillar rewriting system.
\item Specifically, the $2$-colored shuffle operad $\Psi(\mathcal{PL}ieb)$ is generated by eight binary generators (one for each possible coloring), subject to sixteen quadratic relations (one for each possible input/output coloring). 
\item The rewriting system $S:= (\tau_i \to f_i)$, where $\tau_i$ is a right-growing shuffle ternary tree with an internal edge colored straight, is both terminating and confluent.
\end{itemize}
\end{proposition}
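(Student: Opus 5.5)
The plan is to prove the isomorphisms~\eqref{eq::pseudolie::coloring} first and then read off the rewriting statements from Theorem~\ref{thm::cycl->diop} and Corollary~\ref{cor::Grobner->Koszul}.

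\textbf{The isomorphisms.} The generators of $\mathcal{PL}ieb$ are skew-symmetric corollas of total valence three, of every input/output type $(m,n)$ with $m+n=3$. The quadratic Koszul dual of such a collection is spanned by symmetric corollas, one for each type $(m,n)$ with $m+n=3$. These are exactly the colorings $\Theta_{\bZ_{\geq 0}\times\bZ_{\geq 0}}(E)$ of the single cyclic generator of $\Com$.

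Next I compute the dual relations arity by arity. In each total valence-four arity $(m,n)$ the relation space $\mathcal{R}(m,n)$ is a single symmetrized sum of all quadratic trees of that arity, with every internal-edge coloring. Its annihilator is therefore the set of differences of any two such trees. This is precisely the set of relations $\Theta_{\mathfrak c}(R_{\Com})$ together with the recoloring relations~\eqref{eq::input/output:rel}.

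By Proposition~\ref{prp::cyclic::coloring}, extended to the colorings $\bZ_{\geq0}\times\bZ_{\geq 0}$ and $\bZ_{\geq 0}\times\bZ_{>0}$ as in the remark following Corollary~\ref{cor::coloring::Koszul}, this presents $\Theta_{\mathfrak c}(\Com)$. The case $\eta=0$ removes the $(3,0)$ generator and its relations and gives $\mathcal{QL}ieb^{!}$. The main care needed here is signs: the suspension conventions must turn the skew-symmetric generators into symmetric ones in the dual. This is checked on the $(2,2)$ component, matching the Lie bialgebra case of \S\ref{example::Lie::Bialg}.

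\textbf{Counting generators and relations.} For the coloring $\bZ_{\geq 0}\times\bZ_{\geq 0}$, no corolla type is forbidden. Hence $\Psi$ turns a cyclic binary generator into one shuffle generator for each of the $2^3=8$ colorings of its two inputs and output. The corresponding dioperadic relations give, in each of the $2^4=16$ colorings of a ternary corolla, a one-dimensional quotient: every quadratic operation of that arity equals every other. So there are sixteen relation-classes, one per coloring of the ternary corolla.

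\textbf{The rewriting system.} I then apply the caterpillar theorem. The Gr\"obner basis of $\Com$ has right combs as normal forms, and these are caterpillars. So Theorem~\ref{thm::cycl->diop} yields a convergent caterpillar system $\Theta_{\mathfrak c}(S)$ whose normal forms are right combs with the maximal number of straight internal edges.

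Lemma~\ref{lem::caterpillar::coloring} needs slight care in this case. With all $8$ colorings allowed, every internal edge of a caterpillar can be colored straight, since no vertex constraint is ever violated. The supremum is then simply the all-straight coloring, and the lemma goes through, in fact more easily than before.

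The leading terms of $\Theta_{\mathfrak c}(S)$ can then be taken as all non-normal quadratic monomials, rewritten to the unique normal form in each of the 16 arities. Termination follows from the count of straight internal edges together with the termination of $S$. Confluence reduces, as in the proof of Theorem~\ref{thm::cycl->diop}, to the confluence of $S$. Equivalently, by Theorem~\ref{thm::rewrite::system}, it reduces to the observation that each $\Theta_{\mathfrak c}(\Com)(m,n)$ is one-dimensional, as is the span of right combs with all-straight spine of each colored arity.

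For the bullet as stated, in $\Psi(\mathcal{PL}ieb)$ itself, I would instead apply Corollary~\ref{cor::Grobner->Koszul} to transfer a quadratic convergent system from the dual to $\mathcal{PL}ieb$. The expected main obstacle is matching the stated leading terms there: one must check that the right-growing trees with straight internal edge, as the complementary monomial set, give consistent counts $\dim\mathcal{PL}ieb(m,n)$ via the series identity of Corollary~\ref{cor::sereis::diop::Koszul}.
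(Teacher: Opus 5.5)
Your argument is correct. It has the same skeleton as the paper's proof. You identify the duals with colorings of $\Com$ by comparing generators and relations. You then produce a quadratic convergent system on the dual side. Finally you transfer it to $\Psi(\mathcal{PL}ieb)$ by Koszul duality.

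What differs is the middle step. The paper, as in the $\LieBi$ case, checks directly that the dual shuffle operad has a quadratic Gr\"obner basis with right-comb normal forms, as for $\Frob$. It then passes to the opposite ordering, so the system on the $\mathcal{PL}ieb$ side is an honest Gr\"obner basis and terminates automatically.

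You instead get the dual system from Theorem~\ref{thm::cycl->diop}. More economically, you get it from Theorem~\ref{thm::rewrite::system}: every colored arity of $\Psi(\Theta_{\bZ_{\geq 0}\times\bZ_{\geq 0}}(\Com))$ is one-dimensional and contains exactly one irreducible monomial, so no $S$-polynomials need to be computed. The price is that termination of the transferred rules rests on the bare assertion in Corollary~\ref{cor::Grobner->Koszul}. You can remove this dependence. Refine your termination measure (shape in the admissible order for $\Com$, then number of dotted edges) to an admissible ordering and take its opposite.

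Two corrections.

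First, the ``main obstacle'' you name at the end is not one. By the proof of Corollary~\ref{cor::Grobner->Koszul}, the transferred leading terms are exactly the dual's quadratic normal forms. These are the sixteen right combs with straight internal edge, one for each coloring. In each coloring the relation space of $\Psi(\mathcal{PL}ieb)$ is one-dimensional: six monomials against a five-dimensional annihilator. A series identity is only a necessary condition, so it could not certify this in any case.

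Second, your all-straight simplification covers only $\mathcal{PL}ieb^{!}$. In $\Theta_{\bZ_{\geq 0}\times\bZ_{>0}}(\Com)$ the binary coloring with two straight inputs and a dotted output is missing. So for $\mathcal{QL}ieb^{!}$ the normal forms carry the maximal, not the total, number of straight spine edges. There you need the supremum of Lemma~\ref{lem::caterpillar::coloring}, exactly as for $\Frob$.
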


\begin{proof}
First, a direct comparison of the generators and relations for $\mathcal{PL}ieb^{!}$ and $\mathcal{QL}ieb^{!}$ with those arising from the different colorings of the commutative operad shows Isomorphism~\eqref{eq::pseudolie::coloring}.
Second, we generalize the arguments we provided for Lie bialgebras; it is sufficient to construct the Gr\"obner basis for the Koszul-dual 2-colored shuffle operad, which is a straightforward verification.
\end{proof}

\subsection{The dioperad $\LieTriang$ of triangular Lie bialgebras}
\label{example::Lie::Triang}

The following algebraic concepts were introduced by V.\,Drinfeld in~\cite{Drinfeld_Triangular} as a foundational component of his seminal work on quantum groups.

\begin{definition}
A Lie bialgebra $(\mathfrak{g}, [\ttt, \ttt], \delta)$ is called a \emph{coboundary Lie bialgebra} if there exists an element $r \in \mathfrak{g} \otimes \mathfrak{g}$ such that the Lie cobracket $\delta: \mathfrak{g} \to \mathfrak{g} \otimes \mathfrak{g}$ is defined by the following coboundary condition:
\begin{equation*}
\forall x \in \mathfrak{g} \quad    \delta(x) = \text{ad}_x(r) = [x \otimes 1 + 1 \otimes x, r].
\end{equation*}
\end{definition}

\begin{definition}
A coboundary Lie bialgebra $(\mathfrak{g}, [\ttt, \ttt], \delta, r)$ is said to be \emph{triangular} if the defining $r$-matrix is skew-symmetric (i.e., $r \in \Lambda^2 \mathfrak{g}$) and satisfies the \emph{Classical Yang-Baxter Equation (CYBE):} 
    \begin{equation}
        [[r, r]] = [r_{12}, r_{13}] + [r_{12}, r_{23}] + [r_{13}, r_{23}] = 0
    \end{equation}
    where $[[ \cdot, \cdot ]]$ denotes the algebraic Schouten bracket on $\mathfrak{g} \otimes \mathfrak{g} \otimes \mathfrak{g}$.
\end{definition}

The dioperad $\LieTriang$, whose representations correspond to triangular Lie bialgebras, is generated by two skew-symmetric operations in arities $(2,1)$ and $(0,2)$, respectively. These generators represent the Lie bracket and the $r$-matrix:
$$
 \liePic{1}{2}
=(-1)
\liePic{2}{1}: = [\ttt,\ttt]; \qquad 
\RLie{1}{2} = (-1)\RLie{2}{1} := r .  
$$
The structure is governed by the following relations, representing the Jacobi identity and the CYBE:

\begin{equation}
\label{eq::triang::LieBi}
\left\{
\LLiePic{1}{2}{3} + 	\LLiePic{2}{3}{1} + \LLiePic{3}{1}{2} = 0; \quad \quad
\RLieRel{1}{2}{3} +\RLieRel{2}{3}{1} + \RLieRel{3}{1}{2} =0
\right\}
\end{equation}

Furthermore, the associated $2$-colored shuffle operad $\Psi(\LieTriang)$ is generated by one unary generator and three binary generators, the latter of which arise from the various colorings of the Lie bracket:
\begin{equation*}
\begin{array}{c}
\begin{tikzpicture}[scale=0.15pt, baseline=-0.5ex]
    \draw (0,-0.55) -- (0,-2.5);
    \draw[dotted] (0,0.5) -- (0,2.2);
    \node[circle,draw,inner sep=1.5pt, fill = black] (A) at (0,0) {}; 
    \node at (0,2.8) {$\scriptscriptstyle 1$};
\end{tikzpicture}:=
\RLie{0}{1}
=(-1) \RLie{1}{0}; \\
\begin{tikzpicture}[scale=0.15pt, baseline=-0.5ex]
    \draw (0,-0.55) -- (0,-2.5);
    \draw (0.5,0.5) -- (2.2,2.2);
    \draw (-0.48,0.48) -- (-2.2,2.2);
    \node[circle,draw,inner sep=1.5pt, fill=white] (A) at (0,0) {};  
    \node at (-2.7,2.8) {$\scriptscriptstyle 1$};
    \node at (2.7,2.8) {$\scriptscriptstyle 2$};
    \node at (0,-3.2) {$\scriptscriptstyle 0$};    
\end{tikzpicture}
= (-1) \begin{tikzpicture}[scale=0.15pt, baseline=-0.5ex]
    \draw (0,-0.55) -- (0,-2.5);
    \draw (0.5,0.5) -- (2.2,2.2);
    \draw (-0.48,0.48) -- (-2.2,2.2);
    \node[circle,draw,inner sep=1.5pt, fill=white] (A) at (0,0) {};  
    \node at (-2.7,2.8) {$\scriptscriptstyle 2$};
    \node at (2.7,2.8) {$\scriptscriptstyle 1$};
    \node at (0,-3.2) {$\scriptscriptstyle 0$};
\end{tikzpicture},
 \\
\begin{tikzpicture}[scale=0.15pt, baseline=-0.5ex]
    \draw[dotted] (0,-0.55) -- (0,-2.5);
    \draw (0.5,0.5) -- (2.2,2.2);
    \draw[dotted] (-0.48,0.48) -- (-2.2,2.2);
    \node[circle,draw,inner sep=1.5pt, fill = white] (A) at (0,0)   {};
    \node at (-2.7,2.8) {$\scriptscriptstyle 1$};
    \node at (2.7,2.8) {$\scriptscriptstyle 2$}; 
    \node at (0,-3.2) {$\scriptscriptstyle 0$};    
\end{tikzpicture}:= 
\begin{tikzpicture}[scale=0.15pt, baseline=-0.5ex]
    \draw (0,-0.55) -- (0,-2.5);
    \draw (0.5,0.5) -- (2.2,2.2);
    \draw (-0.48,0.48) -- (-2.2,2.2);
    \node[circle,draw,inner sep=1.5pt, fill=white] (A) at (0,0) {};  
    \node at (-2.7,2.8) {$\scriptscriptstyle 0$};
    \node at (2.7,2.8) {$\scriptscriptstyle 2$};
        \node at (0,-3.2) {$\scriptscriptstyle 1$};
\end{tikzpicture}
= (-1) 
\begin{tikzpicture}[scale=0.15pt, baseline=-0.5ex]
    \draw (0,-0.55) -- (0,-2.5);
    \draw (0.5,0.5) -- (2.2,2.2);
    \draw (-0.48,0.48) -- (-2.2,2.2);
    \node[circle,draw,inner sep=1.5pt, fill=white] (A) at (0,0) {};  
    \node at (-2.7,2.8) {$\scriptscriptstyle 2$};
    \node at (2.7,2.8) {$\scriptscriptstyle 0$};
        \node at (0,-3.2) {$\scriptscriptstyle 1$};
\end{tikzpicture},
\\
\begin{tikzpicture}[scale=0.15pt, baseline=-0.5ex]
    \draw[dotted] (0,-0.55) -- (0,-2.5);
    \draw[dotted] (0.5,0.5) -- (2.2,2.2);
    \draw (-0.48,0.48) -- (-2.2,2.2);
    \node[circle,draw,inner sep=1.5pt, fill = white] (A) at (0,0) {}; 
    \node at (-2.7,2.8) {$\scriptscriptstyle 1$};
    \node at (2.7,2.8) {$\scriptscriptstyle 2$};
       \node at (0,-3.2) {$\scriptscriptstyle 0$};
\end{tikzpicture}
:=(-1)
\begin{tikzpicture}[scale=0.15pt, baseline=-0.5ex]
    \draw (0,-0.55) -- (0,-2.5);
    \draw (0.5,0.5) -- (2.2,2.2);
    \draw (-0.48,0.48) -- (-2.2,2.2);
    \node[circle,draw,inner sep=1.5pt, fill=white] (A) at (0,0) {};  
    \node at (-2.7,2.8) {$\scriptscriptstyle 0$};
    \node at (2.7,2.8) {$\scriptscriptstyle 1$};
        \node at (0,-3.2) {$\scriptscriptstyle 2$};
\end{tikzpicture}
=  
\begin{tikzpicture}[scale=0.15pt, baseline=-0.5ex]
    \draw (0,-0.55) -- (0,-2.5);
    \draw (0.5,0.5) -- (2.2,2.2);
    \draw (-0.48,0.48) -- (-2.2,2.2);
    \node[circle,draw,inner sep=1.5pt, fill=white] (A) at (0,0) {};  
    \node at (-2.7,2.8) {$\scriptscriptstyle 1$};
    \node at (2.7,2.8) {$\scriptscriptstyle 0$};
        \node at (0,-3.2) {$\scriptscriptstyle 2$};
\end{tikzpicture}.
\end{array}
\end{equation*}

\begin{theorem} 
\label{thm::triangular::Grobner}
The shuffle relations
\begin{gather}
\label{eq::Jacobi::triang}
\begin{array}{ccc}
{
\begin{tikzpicture}[x=1.2mm,y=1.2mm, xscale=-0.7]	
	\node[draw,circle,inner sep=1pt] (A) at (0,0) {};
	\draw (0,-0.49) -- (0,-3.0);
	\draw (0.49,0.49) -- (1.9,1.9);
	\draw (-0.5,0.5) -- (-1.9,1.9);	
	\node[draw,circle,inner sep=1pt] (B) at (-2.3,2.3) {};
	\draw (-1.8,2.8) -- (0,4.9);
	\draw (-2.8,2.9) -- (-4.6,4.9);	
	\node at (2.7,2.3) {\scriptsize $1$};
	\node at (0.4,5.3) {\scriptsize $2$};
	\node at (-5.1,5.3) {\scriptsize $3$};
\end{tikzpicture}
=
\begin{tikzpicture}[x=1.2mm,y=1.2mm,xscale=0.7]	
	\node[draw,circle,inner sep=1pt] (A) at (0,0) {};
	\draw (0,-0.49) -- (0,-3.0);
	\draw (0.49,0.49) -- (1.9,1.9);
	\draw (-0.5,0.5) -- (-1.9,1.9);	
	\node[draw,circle,inner sep=1pt] (B) at (-2.3,2.3) {};
	\draw (-1.8,2.8) -- (0,4.9);
	\draw (-2.8,2.9) -- (-4.6,4.9);	
	\node at (2.7,2.3) {\scriptsize $3$};
	\node at (0.4,5.3) {\scriptsize $2$};
	\node at (-5.1,5.3) {\scriptsize $1$};
\end{tikzpicture}
- 
\begin{tikzpicture}[x=1.2mm,y=1.2mm, xscale=0.7]	
	\node[draw,circle,inner sep=1pt] (A) at (0,0) {};
	\draw (0,-0.49) -- (0,-3.0);
	\draw (0.49,0.49) -- (1.9,1.9);
	\draw (-0.5,0.5) -- (-1.9,1.9);	
	\node[draw,circle,inner sep=1pt] (B) at (-2.3,2.3) {};
	\draw (-1.8,2.8) -- (0,4.9);
	\draw (-2.8,2.9) -- (-4.6,4.9);	
	\node at (2.7,2.3) {\scriptsize $2$};
	\node at (0.4,5.3) {\scriptsize $3$};
	\node at (-5.1,5.3) {\scriptsize $1$};
\end{tikzpicture}
}, & \quad &
{
\begin{tikzpicture}[x=1.2mm,y=1.2mm, xscale=-0.7]	
	\node[draw,circle,inner sep=1pt] (A) at (0,0) {};
	\draw[dotted] (0,-0.49) -- (0,-3.0);
	\draw (0.49,0.49) -- (1.9,1.9);
	\draw[dotted] (-0.5,0.5) -- (-1.9,1.9);	
	\node[draw,circle,inner sep=1pt] (B) at (-2.3,2.3) {};
	\draw (-1.8,2.8) -- (0,4.9);
	\draw[dotted] (-2.8,2.9) -- (-4.6,4.9);	
	\node at (2.7,2.3) {\scriptsize $1$};
	\node at (0.4,5.3) {\scriptsize $2$};
	\node at (-5.1,5.3) {\scriptsize $3$};
\end{tikzpicture}
=
\begin{tikzpicture}[x=1.2mm,y=1.2mm,xscale=0.7]	
	\node[draw,circle,inner sep=1pt] (A) at (0,0) {};
	\draw[dotted] (0,-0.49) -- (0,-3.0);
	\draw[dotted] (0.49,0.49) -- (1.9,1.9);
	\draw (-0.5,0.5) -- (-1.9,1.9);	
	\node[draw,circle,inner sep=1pt] (B) at (-2.3,2.3) {};
	\draw (-1.8,2.8) -- (0,4.9);
	\draw (-2.8,2.9) -- (-4.6,4.9);	
	\node at (2.7,2.3) {\scriptsize $3$};
	\node at (0.4,5.3) {\scriptsize $2$};
	\node at (-5.1,5.3) {\scriptsize $1$};
\end{tikzpicture}
- 
\begin{tikzpicture}[x=1.2mm,y=1.2mm, xscale=0.7]	
	\node[draw,circle,inner sep=1pt] (A) at (0,0) {};
	\draw[dotted] (0,-0.49) -- (0,-3.0);
	\draw (0.49,0.49) -- (1.9,1.9);
	\draw[dotted] (-0.5,0.5) -- (-1.9,1.9);	
	\node[draw,circle,inner sep=1pt] (B) at (-2.3,2.3) {};
	\draw[dotted] (-1.8,2.8) -- (0,4.9);
	\draw (-2.8,2.9) -- (-4.6,4.9);	
	\node at (2.7,2.3) {\scriptsize $2$};
	\node at (0.4,5.3) {\scriptsize $3$};
	\node at (-5.1,5.3) {\scriptsize $1$};
\end{tikzpicture}
},
\\
{
\begin{tikzpicture}[x=1.2mm,y=1.2mm, xscale=-0.7]	
	\node[draw,circle,inner sep=1pt] (A) at (0,0) {};
	\draw[dotted] (0,-0.49) -- (0,-3.0);
	\draw (0.49,0.49) -- (1.9,1.9);
	\draw[dotted] (-0.5,0.5) -- (-1.9,1.9);	
	\node[draw,circle,inner sep=1pt] (B) at (-2.3,2.3) {};
	\draw[dotted] (-1.8,2.8) -- (0,4.9);
	\draw (-2.8,2.9) -- (-4.6,4.9);	
	\node at (2.7,2.3) {\scriptsize $1$};
	\node at (0.4,5.3) {\scriptsize $2$};
	\node at (-5.1,5.3) {\scriptsize $3$};
\end{tikzpicture}
=
\begin{tikzpicture}[x=1.2mm,y=1.2mm,xscale=0.7]	
	\node[draw,circle,inner sep=1pt] (A) at (0,0) {};
	\draw[dotted] (0,-0.49) -- (0,-3.0);
	\draw (0.49,0.49) -- (1.9,1.9);
	\draw[dotted] (-0.5,0.5) -- (-1.9,1.9);	
	\node[draw,circle,inner sep=1pt] (B) at (-2.3,2.3) {};
	\draw[dotted] (-1.8,2.8) -- (0,4.9);
	\draw (-2.8,2.9) -- (-4.6,4.9);	
	\node at (2.7,2.3) {\scriptsize $3$};
	\node at (0.4,5.3) {\scriptsize $2$};
	\node at (-5.1,5.3) {\scriptsize $1$};
\end{tikzpicture}
- 
\begin{tikzpicture}[x=1.2mm,y=1.2mm,xscale=0.7]	
	\node[draw,circle,inner sep=1pt] (A) at (0,0) {};
	\draw[dotted] (0,-0.49) -- (0,-3.0);
	\draw[dotted] (0.49,0.49) -- (1.9,1.9);
	\draw (-0.5,0.5) -- (-1.9,1.9);	
	\node[draw,circle,inner sep=1pt] (B) at (-2.3,2.3) {};
	\draw (-1.8,2.8) -- (0,4.9);
	\draw (-2.8,2.9) -- (-4.6,4.9);	
	\node at (2.7,2.3) {\scriptsize $2$};
	\node at (0.4,5.3) {\scriptsize $3$};
	\node at (-5.1,5.3) {\scriptsize $1$};
\end{tikzpicture}    
},
& \quad &
{
\begin{tikzpicture}[x=1.2mm,y=1.2mm, xscale=-0.7]	
	\node[draw,circle,inner sep=1pt] (A) at (0,0) {};
	\draw[dotted] (0,-0.49) -- (0,-3.0);
	\draw[dotted] (0.49,0.49) -- (1.9,1.9);
	\draw (-0.5,0.5) -- (-1.9,1.9);	
	\node[draw,circle,inner sep=1pt] (B) at (-2.3,2.3) {};
	\draw (-1.8,2.8) -- (0,4.9);
	\draw (-2.8,2.9) -- (-4.6,4.9);	
	\node at (2.7,2.3) {\scriptsize $1$};
	\node at (0.4,5.3) {\scriptsize $2$};
	\node at (-5.1,5.3) {\scriptsize $3$};
\end{tikzpicture}
=
\begin{tikzpicture}[x=1.2mm,y=1.2mm, xscale=0.7]	
	\node[draw,circle,inner sep=1pt] (A) at (0,0) {};
	\draw[dotted] (0,-0.49) -- (0,-3.0);
	\draw (0.49,0.49) -- (1.9,1.9);
	\draw[dotted] (-0.5,0.5) -- (-1.9,1.9);	
	\node[draw,circle,inner sep=1pt] (B) at (-2.3,2.3) {};
	\draw (-1.8,2.8) -- (0,4.9);
	\draw[dotted] (-2.8,2.9) -- (-4.6,4.9);	
	\node at (2.7,2.3) {\scriptsize $3$};
	\node at (0.4,5.3) {\scriptsize $2$};
	\node at (-5.1,5.3) {\scriptsize $1$};
\end{tikzpicture}
- 
\begin{tikzpicture}[x=1.2mm,y=1.2mm,xscale=0.7]	
	\node[draw,circle,inner sep=1pt] (A) at (0,0) {};
	\draw[dotted] (0,-0.49) -- (0,-3.0);
	\draw (0.49,0.49) -- (1.9,1.9);
	\draw[dotted] (-0.5,0.5) -- (-1.9,1.9);	
	\node[draw,circle,inner sep=1pt] (B) at (-2.3,2.3) {};
	\draw (-1.8,2.8) -- (0,4.9);
	\draw[dotted] (-2.8,2.9) -- (-4.6,4.9);	
	\node at (2.7,2.3) {\scriptsize $2$};
	\node at (0.4,5.3) {\scriptsize $3$};
	\node at (-5.1,5.3) {\scriptsize $1$};
\end{tikzpicture}.  
}
\end{array}
\\
\label{eq::Yang-Bakster::di}
\begin{tikzpicture}[x=1.2mm,y=1.2mm]	
	\node[draw,circle,inner sep=1pt, fill =black] (C) at (0,-3.3) {};    
	\node[draw,circle,inner sep=1pt] (A) at (0,0) {};
	\draw[dotted] (0,-0.49) -- (0,-3.0);
    \draw (0,-3.5) -- (0,-5.9);
	\draw (0.49,0.49) -- (1.9,1.9);
	\draw[dotted] (-0.5,0.5) -- (-2.9,2.9);	
	\node[draw,circle,inner sep=1pt,fill = black] (B) at (2.3,2.3) {};
	\draw[dotted] (2.3,2.8) -- (2.3,5.2);
	\node at (2.3,5.8) {\scriptsize $2$};
	\node at (-3.8,3.8) {\scriptsize $1$};
\end{tikzpicture}  =
\begin{tikzpicture}[x=1.2mm,y=1.2mm]	
	\node[draw,circle,inner sep=1pt] (A) at (0,0) {};
	\draw (0,-0.49) -- (0,-3.0);
	\draw (0.49,0.49) -- (1.9,1.9);
	\draw (-0.5,0.5) -- (-1.9,1.9);	
	\node[draw,circle,inner sep=1pt, fill = black] (B) at (-2.3,2.3) {};
	\node[draw,circle,inner sep=1pt, fill =black] (C) at (2.3,2.3) {};
	\draw[dotted] (-2.3,2.8) -- (-2.3,4.9);
	\draw[dotted] (2.3,2.8) -- (2.3,4.9);	
	\node at (-2.3,5.6) {\scriptsize $1$};
	\node at (2.3,5.6) {\scriptsize $2$};
\end{tikzpicture}
  -
\begin{tikzpicture}[x=1.2mm,y=1.2mm,xscale=-1]	
	\node[draw,circle,inner sep=1pt, fill =black] (C) at (0,-3.3) {};    
	\node[draw,circle,inner sep=1pt] (A) at (0,0) {};
	\draw[dotted] (0,-0.49) -- (0,-3.0);
    \draw (0,-3.5) -- (0,-5.9);
	\draw (0.49,0.49) -- (1.9,1.9);
	\draw[dotted] (-0.5,0.5) -- (-2.9,2.9);	
	\node[draw,circle,inner sep=1pt,fill = black] (B) at (2.3,2.3) {};
	\draw[dotted] (2.3,2.8) -- (2.3,5.2);
	\node at (2.3,5.8) {\scriptsize $1$};
	\node at (-3.8,3.8) {\scriptsize $2$};
\end{tikzpicture} 
\end{gather}
associated with the relations in~\eqref{eq::triang::LieBi} constitute a Gr\"obner basis with respect to the reverse path-lexicographical ordering of monomials induced by the following ordering of generators:
$$
\begin{tikzpicture}[scale=0.15pt, baseline=-0.5ex]
    \draw (0,-0.55) -- (0,-2.5);
    \draw (0.5,0.5) -- (2.2,2.2);
    \draw (-0.48,0.48) -- (-2.2,2.2);
    \node[circle,draw,inner sep=1.5pt, fill=white] (A) at (0,0) {};  
    \node at (-2.7,2.8) {$\scriptscriptstyle 1$};
    \node at (2.7,2.8) {$\scriptscriptstyle 2$};
\end{tikzpicture}
>
\begin{tikzpicture}[scale=0.15pt, baseline=-0.5ex]
    \draw[dotted] (0,-0.55) -- (0,-2.5);
    \draw (0.5,0.5) -- (2.2,2.2);
    \draw[dotted] (-0.48,0.48) -- (-2.2,2.2);
    \node[circle,draw,inner sep=1.5pt, fill = white] (A) at (0,0)   {};
    \node at (-2.7,2.8) {$\scriptscriptstyle 1$};
    \node at (2.7,2.8) {$\scriptscriptstyle 2$}; 
\end{tikzpicture}
>
\begin{tikzpicture}[scale=0.15pt, baseline=-0.5ex]
    \draw[dotted] (0,-0.55) -- (0,-2.5);
    \draw[dotted] (0.5,0.5) -- (2.2,2.2);
    \draw (-0.48,0.48) -- (-2.2,2.2);
    \node[circle,draw,inner sep=1.5pt, fill = white] (A) at (0,0) {}; 
    \node at (-2.7,2.8) {$\scriptscriptstyle 1$};
    \node at (2.7,2.8) {$\scriptscriptstyle 2$};
\end{tikzpicture}
>
\begin{tikzpicture}[scale=0.15pt, baseline=-0.5ex]
    \draw (0,-0.55) -- (0,-2.5);
    \draw[dotted] (0,0.5) -- (0,2.2);
    \node[circle,draw,inner sep=1.5pt, fill = black] (A) at (0,0) {}; 
    \node at (0,2.8) {$\scriptscriptstyle 1$};
\end{tikzpicture}
$$
(In particular, the left-hand sides of the relations in~\eqref{eq::Jacobi::triang}--\eqref{eq::Yang-Bakster::di} are the leading terms, which we have expressed as a rewriting system).
\end{theorem}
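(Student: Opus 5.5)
Since $\Psi(\LieTriang)$ is presented by the four generators listed above (one unary $r$ and three binary colorings of the bracket) subject to the shuffle images of~\eqref{eq::triang::LieBi}, I will use the Buchberger/Diamond criterion of Theorem~\ref{thm::rewrite::system}. It suffices to check three things: the relations~\eqref{eq::Jacobi::triang}--\eqref{eq::Yang-Bakster::di} span the shuffle ideal generated by the images of Jacobi and CYBE in all admissible colorings; their leading terms with respect to the reverse path-lexicographical order are indeed the displayed left-hand sides; and every S-polynomial arising from an overlap of two leading terms reduces to zero.

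\textbf{Spanning and leading terms.} First I list all colorings of the Jacobi identity compatible with $\Psi$: a ternary Lie monomial has one output and three inputs, with at most one straight input when the output is dotted (a dioperadic $(2,1)$ bracket dualized). This yields exactly the four colorings in~\eqref{eq::Jacobi::triang}. In each, the three shuffle monomials span the Jacobi relation, and I will solve for the comb growing to the right, as in the standard Gr\"obner basis for $\Lie$. The CYBE has arity $(0,3)$. Rooting at an output gives a $2$-colored operation with two dotted inputs and a straight root, and there is a unique such relation, namely~\eqref{eq::Yang-Bakster::di}. Next I verify leadership by comparing path sequences and generator words under the stated order of generators. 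Both sides of each relation have the same number of vertices, and the unary black generator is smallest. In the CYBE relation the two monomials with $r$ at the root path are compared against the one with $r$ at the leaves, and the first one is the largest.

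\textbf{Overlaps.} The leading terms are quadratic (the Jacobi left combs) and cubic (the CYBE term: bracket with dotted output composed below with $r$ at the root and $r$ on input $2$). I will enumerate the overlap shapes. The first family is the Jacobi--Jacobi overlaps, which are cubic combs in the bracket generators with all compatible colorings. These reduce to zero because the straight-colored part is literally the $\Lie$ computation, and the dotted colorings are obtained by the same skew-symmetric recoloring; I expect to argue uniformly by noting that $\Psi$ of the sub-dioperad $\Lie$ has a quadratic Gr\"obner basis (the $\Lie$ comb basis, cf.\ Corollary~\ref{cor::coloring::Koszul} applied to the $(2,1)$ and $(1,2)$ coloring with no cobracket). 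The second family is the Jacobi--CYBE overlaps, where the dotted-output bracket of the CYBE leading term is also the lower vertex of a Jacobi leading term. This happens when a bracket is grafted into input $1$ or at the top. The third family is the CYBE--CYBE overlaps, which I expect to be impossible or trivial, since the root $r$ is unary with a straight output and cannot sit on a dotted input.

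\textbf{Main obstacle.} The hard part is the Jacobi--CYBE S-polynomials of arity four: a bracket, a bracket, and two copies of $r$ with various colorings. I will reduce each by hand with the rewriting rules and check that it vanishes, using the identity that CYBE is $\mathrm{ad}$-invariant modulo Jacobi. As a cross-check I will count normal forms in small arities and compare them with $\dim \LieTriang(m,n)$, computed from the representation on $\Lambda^{\bullet}$ (polyvector fields generated by $r$). I expect this dimension count in arities $(1,2)$, $(0,4)$ and $(2,2)$ to confirm confluence if some sign bookkeeping becomes uncertain.
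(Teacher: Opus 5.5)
There is a genuine gap: your classification of the non-Jacobi overlaps is reversed, and you discard the one ambiguity that actually requires work.

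\textbf{The Jacobi--CYBE family is empty.} In the CYBE leading term the bracket is the generator with dotted output, dotted left input and straight right input. Its output already goes into the root $r$, so it cannot be the upper vertex of a Jacobi leading term. Its right slot carries an $r$, so it cannot be the lower vertex of one either, since every leading term in~\eqref{eq::Jacobi::triang} has its internal edge in the \emph{right} slot of the lower vertex. (Grafting a bracket into input $1$ produces a left comb, which is not a leading term.)

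\textbf{The CYBE--CYBE family is not trivial.} Your reason (``the root $r$ cannot sit on a dotted input'') misses the \emph{upper} $r$ of the leading term. That $r$ has a straight output, sitting on the straight input of the bracket, and a dotted input. Hence it can itself serve as the root $r$ of a second copy of the leading term. This gives the self-overlap $r\circ(\text{bracket})\circ r\circ(\text{bracket})\circ r$ with three dotted leaves and a straight root, i.e.\ dioperadic arity $(0,4)$ with three copies of $r$ and two brackets. This is exactly the unique new S-polynomial that the paper reduces by hand in~\eqref{eq::triang::reduction::1}--\eqref{eq::triang::reduction::2}. There the CYBE rule and the dotted colorings of the Jacobi rules are applied alternately until both reduction paths land on the same combination of normal forms. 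Without this computation your argument does not establish the theorem.

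Your fallback of comparing the number of normal forms with $\dim\LieTriang(0,4)$ could replace that computation only if you had an independent exact value, or at least a matching lower bound, for this dimension. The proposal does not supply one.

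The Jacobi--Jacobi part is fine; the paper likewise reduces it to the $\Lie$ case via Theorem~\ref{thm::rewrite::system}. One minor slip: with dotted output, a colored Jacobi monomial has exactly two straight inputs and one dotted input (a $(3,1)$ operation rooted at an input), not ``at most one straight input''. Your count of four colorings is nonetheless correct.
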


\begin{proof}
To verify that these relations form a Gr\"obner basis, we must show that all $S$-polynomials reduce to zero. 
Note that the only nontrivial compositions occur between the two leading terms of the (colored) Jacobi identities~\eqref{eq::Jacobi::triang} and a unique composition arising from the two Yang-Baxter equation~\eqref{eq::Yang-Bakster::di}. 

The vanishing of the $S$-polynomials for the compositions of the (colored) Jacobi identities follows from Theorem~\ref{thm::rewrite::system}. The reduction of the remaining $S$-polynomial is illustrated in the pictorial computation below. 
Namely, we compare two different reduction paths for the same monomial. For the reader's convenience, after each reduction, we encircle (in dotted red) the divisor to be reduced in the subsequent step, and we denote the corresponding reduction in brackets after the equality:
\begin{multline}
\label{eq::triang::reduction::1}
\begin{tikzpicture}[scale=0.22,xscale=0.55]
   	\node[int] (v0) at (0,-2) {};
    \node (l0) at (0,-3.8) {};
    \draw (v0) -- (l0);
	\node[ext] (v1) at (0,0) {};
    \draw[dotted] (v0) -- (v1);    
	\node[int] (v2) at (2,2) {};
    \draw (v1) -- (v2);
    \node (l1) at (-2,2) {$\scriptstyle 1$};
    \draw[dotted] (v1) -- (l1);
    \node[ext] (v3) at (2,4) {};
    \draw[dotted] (v2) -- (v3);
    \node (l2) at (0,6) {$\scriptstyle 2$};
    \draw[dotted] (v3) -- (l2);
    \node[int] (v4) at (4,6) {};
    \draw (v3) -- (v4);
    \node (l3) at (4,8.3) {$\scriptstyle 3$};
    \draw[dotted] (v4) -- (l3);
    \ovalthree{v2}{v3}{v4};
\end{tikzpicture}
= \RwordRLLRR123 - \RLwordRLrLR123 = \Bigl(\LRwordRLRLR123-\LRwordRLRLR132\Bigr) - \Bigl(\LwordLRRLR123 - \LRwordRLRRL123 \Bigr) = \\
= \Bigl( \RwordLRLRR123 - \RLwordRLRLR123 \Bigr) 
- \Bigl( \RwordLRLRR132 - \RLwordRLRLR132 \Bigr) - \LwordLRRLR123 
+ \Bigl( \LwordRLLRR123 - \LRwordRLLRR132 \Bigr) = \\
= \Bigl( \LwordLLRRR123 - \LwordLRLRR123 \Bigr)  
- \Bigl( \cbl{\LwordRLLRR123} - \LwordRLRLR123 \Bigr) 
- \Bigl( \LwordLLRRR132 - {\color{purple}{\LwordLRLRR132}} \Bigr)  
+ \Bigl( \LwordRLLRR132 - \cbrown{\LwordRLRLR132} \Bigr) 
 - \LwordLRRLR123 + \\ + \cbl{\LwordRLLRR123} 
 -\Bigl( {\color{purple}{\LwordLRLRR132}} - \cbrown{\LwordRLRLR132} \Bigr) =  \LwordLLRRR123 - \LwordLRLRR123   
+ \LwordRLRLR123 
- \LwordLLRRR132 + \LwordRLLRR132 
 - \LwordLRRLR123.
\end{multline}
\begin{multline}
\label{eq::triang::reduction::2}
\begin{tikzpicture}[scale=0.22,xscale=0.55]
   	\node[int] (v0) at (0,-2) {};
    \node (l0) at (0,-3.8) {};
    \draw (v0) -- (l0);
	\node[ext] (v1) at (0,0) {};
    \draw[dotted] (v0) -- (v1);    
	\node[int] (v2) at (2,2) {};
    \draw (v1) -- (v2);
    \node (l1) at (-2,2) {$\scriptstyle 1$};
    \draw[dotted] (v1) -- (l1);
    \node[ext] (v3) at (2,4) {};
    \draw[dotted] (v2) -- (v3);
    \node (l2) at (0,6) {$\scriptstyle 2$};
    \draw[dotted] (v3) -- (l2);
    \node[int] (v4) at (4,6) {};
    \draw (v3) -- (v4);
    \node (l3) at (4,8.3) {$\scriptstyle 3$};
    \draw[dotted] (v4) -- (l3);
    \ovalthree{v0}{v1}{v2};
\end{tikzpicture}
= \RwordLRRLR123 - \LRwordRRLLR123
=
\Bigl(\RLwordLRLRR123 -\LwordLRRLR123 \Bigr) - 
\Bigl( \LRwordRLLRR123 - \LwordRLLRR132\Bigr) =  
\\
= \Bigl(\LwordLLRRR123- \LwordLLRRR132\Bigr) - \LwordLRRLR123
-\Bigl( \LwordLRLRR123 - \LwordRLRLR123 \Bigr) + \LwordRLLRR132.
\end{multline}
The right-hand sides of reductions~\eqref{eq::triang::reduction::1} and~\eqref{eq::triang::reduction::2} coincide, which implies that the corresponding $S$-polynomial is equal to zero.
\end{proof}

\begin{theorem}
\label{thm::Triang::Lieb::Anick}
Let $(\calQ_{\ldot},d)$ be the quasi-free dioperad generated by the following two collections of skew-symmetric generators:
$$
\begin{array}{rccl}
\forall \sigma\in\bS_{m}, m\geq 2 &
\ell_m:=
 \begin{tikzpicture}[scale =0.25,xscale=0.5]
 \node[ext] (v) at (0,0) {};
 \node (l0) at (0,-2) {};
 \node (l1) at (-6,2.2) {$\scriptscriptstyle 1$};
 \node (l2) at (-4,2.2) {$\scriptscriptstyle 2$};
 \node (l3) at (-1,2.2) {$\scriptscriptstyle \dots$};
 \node (l4) at (4,2.2) {$\scriptscriptstyle m$};
 \draw (v) -- (l0);
 \draw (v) -- (l1);
 \draw (v) -- (l3);
 \draw (v) -- (l4);
 \end{tikzpicture}
    =(-1)^{|\sigma|}
 \begin{tikzpicture}[scale =0.25,xscale=0.5]
 \node[ext] (v) at (0,0) {};
 \node (l0) at (0,-2) {};
 \node (l1) at (-6,2.2) {$\scriptscriptstyle \sigma(1)$};
 \node (l2) at (-4,2.2) {};
 \node (l3) at (-1,2.2) {$\scriptscriptstyle \dots$};
 \node (l4) at (4,2.2) {$\scriptscriptstyle \sigma(m)$};
 \draw (v) -- (l0);
 \draw (v) -- (l2);
 \draw (v) -- (l3);
 \draw (v) -- (l4);
 \end{tikzpicture}
& \in & \calQ(m,1), \\ 
\forall \tau\in\bS_n, n\geq 2 \ & 
r_n:=
 \begin{tikzpicture}[scale =0.2,xscale=0.5,yscale=-1]
 \node[int] (v) at (0,-1) {};
 \node (l1) at (-6,2.2) {$\scriptscriptstyle 1$};
 \node (l2) at (-4,2.2) {$\scriptscriptstyle 2$};
 \node (l3) at (-1,2.2) {$\scriptscriptstyle \dots$};
 \node (l4) at (4,2.2) {$\scriptscriptstyle n$};
 \draw (v) -- (l1);
 \draw (v) -- (l2);
 \draw (v) -- (l3);
 \draw (v) -- (l4);
 \end{tikzpicture} =
 (-1)^{|\tau|}
 \begin{tikzpicture}[scale =0.2,xscale=0.5,yscale=-1]
 \node[int] (v) at (0,-1) {};
 \node (l1) at (-6,2.2) {$\scriptscriptstyle \tau(1)$};
 \node (l2) at (-4,2.2) {};
 \node (l3) at (-1,2.2) {$\scriptscriptstyle \dots$};
 \node (l4) at (4,2.2) {$\scriptscriptstyle \tau(n)$};
 \draw (v) -- (l1);
 \draw (v) -- (l2);
 \draw (v) -- (l3);
 \draw (v) -- (l4);
 \end{tikzpicture} & \in & \calQ(0,n)
\end{array}
$$
(with degrees $2-m$ and $2-n$, respectively), where the differential is given by:
\begin{equation*}
\begin{array}{rcl}
d\left(  \begin{tikzpicture}[scale =0.25,xscale=0.5]
 \node[ext] (v) at (0,0) {};
 \node (l0) at (0,-2) {};
 \node (l1) at (-6,2.2) {$\scriptscriptstyle 1$};
 \node (l2) at (-4,2.2) {$\scriptscriptstyle 2$};
 \node (l3) at (-1,2.2) {$\scriptscriptstyle \dots$};
 \node (l4) at (4,2.2) {$\scriptscriptstyle m$};
 \draw (v) -- (l0);
 \draw (v) -- (l1);
 \draw (v) -- (l3);
 \draw (v) -- (l4);
 \end{tikzpicture}
 \right) & = &
 \sum_{\substack{A \subsetneq [m] \\ \# A \geq 2}} \ \pm
\begin{tikzpicture}[scale=0.25]
\node[ext] (v0) at (0,0) {};
\node[ext] (v1) at (-4,2) {};
\node (l0) at (0,-2) {};
\node (l1) at (-6,4) {};
\node (l2) at (-4,4.2) {$\scriptscriptstyle \dots$};
\node (l3) at (-2,4.2) {};
\node (l4) at (-2,2.2) {};
\node (l5) at (1,2.2) {$\scriptscriptstyle \dots$};
\node (l6) at (4,2.2) {};
\draw (l0) -- (v0);
\draw (v0) -- (v1); 
\draw (v1) -- (l1);
\draw (v1) -- (l2);
\draw (v1) -- (l3);
\draw (v0) -- (l4);
\draw (v0) -- (l5);
\draw (v0) -- (l6);
\draw [decorate, decoration={brace, amplitude=3pt}] (l1) -- (l3) 
        node [midway, yshift=0.4cm] {\scalebox{0.7}{$\scriptstyle A$}};
\draw [decorate, decoration={brace, amplitude=3pt}] (l4) -- (l6) 
        node [midway, yshift=0.4cm] {\scalebox{0.7}{$\scriptstyle [m]\setminus A$}};        
\end{tikzpicture},
\\
d\left(
 \begin{tikzpicture}[scale =0.2,xscale=0.5,yscale=-1]
 \node[int] (v) at (0,-1) {};
 \node (l1) at (-6,2.2) {$\scriptscriptstyle 1$};
 \node (l2) at (-4,2.2) {$\scriptscriptstyle 2$};
 \node (l3) at (-1,2.2) {$\scriptscriptstyle \dots$};
 \node (l4) at (4,2.2) {$\scriptscriptstyle n$};
 \draw (v) -- (l1);
 \draw (v) -- (l2);
 \draw (v) -- (l3);
 \draw (v) -- (l4);
 \end{tikzpicture}
\right) & = &
 \sum_{k\geq 2, [n]=\sqcup [n_\ldot],\atop { n_{0}=1, n_1,...,n_k\geq 1}}\ \pm  
\begin{tikzpicture}[baseline=-0.6ex, scale=0.6]
    \node[ext] (L) at (-2.5, -0.2) {};
    \node[bu] (B) at (-1.4, 0.5) {};
    \node[bu] (C) at (-0.8, 0.5) {};
    \node[bu] (D) at (0.3, 0.5) {};
    \node at (-0.3, 0.5) {\tiny $\dots$};
    
    \draw (D) -- (L);
    \draw (C) -- (L);
    \draw (B) -- (L);
    \draw (L) -- (-2.5, -0.8); 
    
    \draw (B) -- (-1.8, -0.8); \draw (B) -- (-1.3, -0.8); \node at (-1.5, -0.7) {\tiny $\dots$};
    \draw (C) -- (-1.1, -0.8); \draw (C) -- (-0.6, -0.8); \node at (-0.85, -0.7) {\tiny $\dots$};
    \draw (D) -- (0.1, -0.8); \draw (D) -- (0.5, -0.8); \node at (0.3, -0.7) {\tiny $\dots$};
    
    \node at (-0.5, 1.2) {$\overbrace{\hspace{1.5cm}}^{k}$};
    
    \node at (-2.5, -1.1) {\tiny $\scriptscriptstyle \underbrace{\ }_{n_0}$};
    \node at (-1.55, -1.1) {\tiny $\underbrace{\ }_{n_1}$};
    \node at (-0.85, -1.1) {\tiny $\underbrace{\ }_{n_2}$};
    \node at (0.3, -1.1) {\tiny $\underbrace{\ }_{n_k}$};
\end{tikzpicture}.
\end{array}
\end{equation*}
Then the dioperad morphism $(\calQ_{\ldot},d) \rightarrow \LieTriang$ that sends $\ell_2$ to the Lie bracket, $r_2$ to the copairing, and all other generators to zero is the minimal model of the dioperad $\LieTriang$.
\end{theorem}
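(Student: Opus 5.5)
The plan is to pass through $\Psi$ and use the Gröbner basis of Theorem~\ref{thm::triangular::Grobner} together with the inclusion–exclusion machinery of \S\ref{sec::Anick}.

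\textbf{Reduction to the colored shuffle setting.} By Theorem~\ref{thm::diop->op}, $\Psi$ is exact, sends quasi-free dioperads to quasi-free colored operads, and detects quasi-isomorphisms (as in the proof of Corollary~\ref{cor::Psi::Koszul}). It therefore suffices to show that $\shuffle\Psi(\calQ_{\ldot})\to\shuffle\Psi(\LieTriang)$ is a quasi-isomorphism and that its differential has no linear part. Minimality is immediate from the formulas for $d$: every term is a tree with at least two vertices. We also check $d^2=0$. For $\ell_m$ this is the standard $L_\infty$ identity. For $r_n$ it is the statement that the given formula is the Maurer--Cartan type ``twisting'' differential: $r_n$ is a degree-$(2-n)$ element and $\{\ell_m\}$ acts on it by corollas. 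This check is routine once signs are fixed, but it must be done.

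\textbf{Quasi-isomorphism via the inclusion–exclusion resolution.} Theorem~\ref{thm::triangular::Grobner} gives a Gröbner basis with leading terms: the four colored Jacobi left combs~\eqref{eq::Jacobi::triang} and the single cubic Yang–Baxter monomial~\eqref{eq::Yang-Bakster::di}. Let $\gr\calP$ be the associated monomial shuffle operad. Section~\ref{sec::Anick} then provides a resolution $\calI^{\udot}_{\LieTriang}$, obtained by deforming the differential of $\calI^{\udot}_{\gr\calP}$.

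The key combinatorial step is to enumerate the generators of $\calI^{\udot}_{\gr\calP}$, namely monomials with a non-split covering by leading terms, organized by colored arity. We proceed as follows.

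\begin{itemize}
\item Coverings made only of Jacobi left combs are the colored versions of the classical Lie case. Chains of $m-2$ overlapping left combs give one generator per shuffle left comb of arity $m$. After symmetrization and forgetting colors, these correspond to a single skew-symmetric $\ell_m$ of degree $2-m$, appearing in the $m$ possible colorings via $\Psi$.
\item Coverings involving Yang–Baxter leading terms come from a unary $r$-vertex at the root followed by a chain of bracket vertices with $r$'s attached. Overlapping Yang–Baxter and Jacobi divisors along the spine should produce exactly one generator for each $n$. This generator has $n$ dotted leaves and homological degree $n-2$, and it matches $\Psi(r_n)$.
\end{itemize}

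We then compare generating series: the count of generators should equal the count of generators of $\shuffle\Psi(\calQ)$ in every colored arity and degree.

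\textbf{Main obstacle.} The hard part is the second bullet. One must prove that there are no further non-split coverings, and in particular no mixed chains producing extra generators in arities $(m,n)$ with both $m\ge 2$ and $n\ge 1$. One must also show that the deformed differential on the $r_n$-type generators agrees, up to a filtered change of generators, with the stated $d(r_n)$.

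For the counting, I would first use the caterpillar shape of all leading terms: every leading term is a comb along a spine. This lets us encode monomials as words in the letters $L$ and $R$, as in the pictures of~\eqref{eq::triang::reduction::1}. Non-split coverings then become overlapping factor chains, handled as in Anick chains for the associative case.

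For the differential, I would argue by the degree and weight grading. The linear-in-generators part of the Anick differential is forced by the Gröbner relations. Higher terms are then determined up to homotopy, because any two minimal models with the same generators and the same quadratic part are isomorphic. The explicit formula for $d(r_n)$ is checked as one valid choice, since it squares to zero and reproduces the CYBE at $n=3$.
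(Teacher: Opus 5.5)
Your reduction to $\shuffle\Psi(\calQ_{\ldot})$, the minimality and $d^2=0$ remarks, and your plan to enumerate non-split coverings by the leading terms of Theorem~\ref{thm::triangular::Grobner} all match the paper. The gap is in your last paragraph, where the stated $d(r_n)$ is justified as ``one valid choice''.

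Uniqueness of minimal models only compares dg dioperads already known to be quasi-isomorphic to $\LieTriang$. It cannot be used to show that $(\calQ_{\ldot},d)$ is one. Moreover, the data you propose to check do not control the cohomology. Every term of $d(r_n)$ has $k+1\geq 3$ vertices, so the quadratic part of $d$ vanishes on all $r_n$, and your criterion sees nothing of $d(r_n)$.

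Here is a concrete counterexample. Keep $d(\ell_m)$ and $d(r_3)$ as stated, but set $d'(r_n)=0$ for $n\geq 4$. Then:
\begin{itemize}
\item $d'^2=0$, because $d\ell_2=dr_2=0$;
\item $H^0$ is still $\LieTriang$, and the CYBE is reproduced at $n=3$;
\item the generators are exactly those of $\calQ_{\ldot}$.
\end{itemize}
Yet $r_4$ is a cocycle of degree $-2$ that is not a coboundary, since $d'$ only outputs trees with at least two vertices. So generator counting plus these checks cannot finish the proof. You have to show that the \emph{given} $d$ has no negative cohomology.

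The missing idea, which is how the paper concludes, is to compare differentials rather than generators, and to do so on $\shuffle\Psi(\calQ_{\ldot})$ itself rather than on a deformed resolution.
\begin{enumerate}
\item Filter $\shuffle\Psi(\calQ_{\ldot})$ by the number of internal edges growing to the left. This filtration is finite in each arity, since a tree in arity $(m,n)$ has at most $n-1$ vertices $r$ and at most $m+n-2$ vertices $\ell$.
\item In the associated graded, only right-growing terms survive. $d_{\gr F}\Psi(\ell_m)$ is the sum over $j$ of the two-vertex right combs splitting the leaves as $\{1,\dots,j\}\,|\,\{j+1,\dots,m\}$. $d_{\gr F}\Psi(r_{n+1})$ is the sum over $j$ of the $r$--$\ell$--$r$ trees splitting the dotted leaves at position $j$. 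The other colorings behave similarly.
\item These are precisely the inclusion--exclusion differentials on the chain generators: consecutive Jacobi divisors on right combs, and consecutive Yang--Baxter divisors on $r\ell r\ell\cdots r$ spines. Removing one divisor corresponds to one term.
\item Hence $(\gr_F\shuffle\Psi(\calQ_{\ldot}),d_{\gr F})$ is the undeformed inclusion--exclusion resolution of the monomial shuffle operad $\gr\shuffle\Psi(\LieTriang)$, which has no negative cohomology. The spectral sequence transfers this to $d$.
\item Together with $H^0=\LieTriang$, which comes from $d\ell_3$ and $dr_3$, this proves the theorem.
\end{enumerate}
Your enumeration of chains is exactly the input needed here. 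You do not need to identify a deformed Anick differential with the stated $d(r_n)$ up to a change of generators.

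Two minor count errors. $\Psi(\ell_m)$ has $m+1$ colorings, not $m$. A Yang--Baxter chain with $n$ dotted leaves has $n-1$ divisors and matches $\Psi(r_{n+1})$, not $\Psi(r_n)$.
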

\begin{proof}
To prove the theorem, we establish the following three conditions:

\textbf{First}, we verify that the aforementioned map $d$ is indeed a differential ($d^2=0$). This verification is straightforward to check pictorially; we refer to~\cite{Merkulov_Properad_Twisting} for the detailed verification.

\textbf{Second}, we show that the map $\calQ_{\ldot} \rightarrow \LieTriang$ is a surjective morphism of dioperads. Note that all generators of $\calQ_{\ldot}$ are negatively graded except for $\ell_2$ and $r_2$. The zero-th cohomology of $\calQ_{\ldot}$ is the free dioperad generated by $\ell_2$ and $r_2$ subject to the relations arising from the differential of $\ell_3$ (the Jacobi identity) and the differential of $r_3$ (the classical Yang-Baxter equation); thus, it coincides with $\LieTriang$.

\textbf{Third}, we demonstrate that all negatively graded cohomology of $\calQ_{\ldot}$ vanishes. To this end, we apply the Inclusion-Exclusion resolutions discussed in Definition~\ref{def::Anick::chains}. It suffices to show that the higher cohomology vanishes for the $2$-colored quasi-free shuffle operad $\shuffle(\Psi(\calQ_{\ldot}))$, as both functors $\Psi$ and $\shuffle$ are exact and do not affect the underlying vector spaces.

For the shuffle operad, we consider the decreasing filtration $F$ arising from the Gr\"obner theory of monomials. Specifically, we define:
$$
F_k(\Psi(\calQ_{\ldot})):= \mathsf{Span}\Bigl\langle 
\begin{array}{c}
\text{shuffle tree monomials with} \\
\text{at least $k$ internal edges growing to the left} 
\end{array}
\Bigr\rangle.
$$
The associated graded differential $d_{\gr F}$ may only create edges that grow to the right in the shuffle description. This yields the following pictorial description of the associated graded differential $d_{\gr F}$:
$$
\begin{array}{ccl}
\forall m\geq 2, \
d_{\gr F}(\Psi(\ell_m)) =
d_{\gr F}\left(  
\begin{tikzpicture}[scale =0.25,xscale=0.5]
 \node[ext] (v) at (0,0) {};
 \node (l0) at (0,-2) {};
 \node (l1) at (-6,2.2) {$\scriptscriptstyle 1$};
 \node (l2) at (-4,2.2) {$\scriptscriptstyle 2$};
 \node (l3) at (-1,2.2) {$\scriptscriptstyle \dots$};
 \node (l4) at (4,2.2) {$\scriptscriptstyle m$};
 \draw (v) -- (l0);
 \draw (v) -- (l1);
 \draw (v) -- (l3);
 \draw (v) -- (l4);
 \end{tikzpicture}
 \right) & = &
 \sum_{j=1}^{m-2} \ \pm
\begin{tikzpicture}[scale=0.25]
\node[ext] (v0) at (0,0) {};
\node[ext] (v1) at (4,2) {};
\node (l0) at (0,-2) {};
\node (l1) at (6,4) {$\scriptscriptstyle m$};
\node (l2) at (4,4.2) {$\scriptscriptstyle \dots$};
\node (l3) at (2,4.2) {$\scriptscriptstyle j+1$};
\node (l4) at (2,2.2) {$\scriptscriptstyle j$};
\node (l5) at (-1,2.2) {$\scriptscriptstyle \dots$};
\node (l6) at (-4,2.2) {$\scriptscriptstyle 1$};
\draw (l0) -- (v0);
\draw (v0) -- (v1); 
\draw (v1) -- (l1);
\draw (v1) -- (l2);
\draw (v1) -- (l3);
\draw (v0) -- (l4);
\draw (v0) -- (l5);
\draw (v0) -- (l6);
\end{tikzpicture}, \\
\forall n\geq 1, \ 
d_{\gr F}(\Psi(r_{n+1})) = d_{\gr F}\left( 
\begin{tikzpicture}[scale =0.25,xscale=0.5]
 \node[int] (v) at (0,0) {};
 \node (l0) at (0,-2) {};
 \node (l1) at (-6,2.2) {$\scriptscriptstyle 1$};
 \node (l2) at (-4,2.2) {$\scriptscriptstyle 2$};
 \node (l3) at (-1,2.2) {$\scriptscriptstyle \dots$};
 \node (l4) at (4,2.2) {$\scriptscriptstyle n$};
 \draw (v) -- (l0);
 \draw[dotted] (v) -- (l1);
 \draw[dotted] (v) -- (l3);
 \draw[dotted] (v) -- (l4);
 \end{tikzpicture}
\right)
& = & 
 \sum_{j=0}^{n-1} \ \pm
\begin{tikzpicture}[scale=0.25]
\node[int] (v0) at (0,0) {};
\node[ext] (v) at (3,1.5) {};
\node[int] (v1) at (5,3) {};
\node (l0) at (0,-2) {};
\node (l1) at (7,5.2) {$\scriptscriptstyle n$};
\node (l2) at (5,5.2) {$\scriptscriptstyle \dots$};
\node (l3) at (3,5.2) {$\scriptscriptstyle j+1$};
\node (l4) at (1.5,4.2) {$\scriptscriptstyle j$};
\node (l5) at (-1,2.2) {$\scriptscriptstyle j-1$};
\node (l) at (-3,2.2) {$\scriptscriptstyle \dots$};
\node (l6) at (-4,2.2) {$\scriptscriptstyle 1$};
\draw (l0) -- (v0);
\draw[dotted] (v0) -- (v); 
\draw (v) -- (v1);
\draw[dotted] (v1) -- (l1);
\draw[dotted] (v1) -- (l2);
\draw[dotted] (v1) -- (l3);
\draw[dotted] (v) -- (l4);
\draw[dotted] (v0) -- (l5);
\draw[dotted] (v0) -- (l6);
\end{tikzpicture}.
\end{array}
$$
Note that $\Psi(\ell_m)$ yields $m+1$ distinct generators depending on which input or output is chosen as the root. We illustrate only the case corresponding to the "straight" color; the others differ by assigning the "dotted" color to one of the inputs and the output. The right-hand side of the differential also varies according to the corresponding coloring.

Finally, one observes that these generators coincide with the generators of the corresponding inclusion-exclusion operad associated with the following shuffle monomials (where corresponding divisors are encircled in red):
\begin{equation*}
\label{eq::Anick::chains::Triang}
\begin{tikzpicture}[scale=0.2, yscale=1.4]
\node[ext] (v0) at (0,0) {};
\node[ext] (v1) at (2,1) {};
\node[ext] (v2) at (4,2) {};
\node (v3) at (6,3) {$\scriptstyle \ldots$};
\node[ext] (v4) at (8,4) {};
\node (u1) at (0,2) {$\scriptscriptstyle 2$};
\node (u2) at (5.5,5.5) {$\scriptscriptstyle m-1$};
\node (l0) at (0,-1.5) {};
\node (l1) at (-2,1) {$\scriptscriptstyle 1$};
\node (l2) at (0,4.2) {};
\node (l3) at (2,3) {};
\node (l5) at (10,5) {$\scriptscriptstyle m$};
\draw (l0) -- (v0);
\draw (v0) -- (l1);
\draw (v0) -- (v1);
\draw (v1) -- (u1);
\draw (v1) -- (v2);
\draw (v2) -- (l3);
\draw (v2) -- (v3);
\draw (v3) -- (v4);
\draw (v4) -- (u2);
\draw (v4) -- (l5);
\ovaltwo{v0}{v1};
\ovaltwo{v1}{v2};
\ovaltwo{v3}{v4};
\end{tikzpicture},
\qquad
\text{ for }j=1\dots m \ 
\begin{tikzpicture}[scale=0.2, xscale = 1]
\node[ext] (v0) at (0,0) {};
\node[ext] (v1) at (2,1) {};
\node (v2i) at (4,2) {};
\node (v2) at (4,2) {$\scriptscriptstyle \ldots$};
\node[ext] (v3) at (6,3) {};
\node[ext] (v4) at (8,4) {};
\node (v5) at (10,5) {$\scriptscriptstyle \ldots$};
\node[ext] (v6) at (12,6) {};
\node (u1) at (6,5) {};
\node (u2) at (14.5,7.5) {$\scriptscriptstyle m$};
\node (l0) at (0,-2) {};
\node (l1) at (-2,1) {$\scriptscriptstyle 1$};
\node (l2) at (0,2) {$\scriptscriptstyle 2$};
\node (l3) at (3,4) {$\scriptscriptstyle j$};
\node (l4) at (6,7) {};
\node (l5) at (9.5,7.5) {$\scriptscriptstyle m-1$};
\draw[dotted] (l0) -- (v0);
\draw (v0) -- (l1);
\draw[dotted] (v0) -- (v1);
\draw (v1) -- (l2);
\draw[dotted] (v1) -- (v2i);
\draw[dotted] (v2i) -- (v3);
\draw[dotted] (v3) -- (l3);
\draw (v3) -- (v4);
\draw (v4) -- (u1);
\draw (v4) -- (v5);
\draw (v5) -- (v6);
\draw (v6) -- (l5);
\draw (v6) -- (u2);
\ovaltwo{v0}{v1};
\ovaltwo{v3}{v4};
\ovaltwo{v5}{v6};
\end{tikzpicture},
\
\begin{tikzpicture}[scale=0.22, xscale =1.5, yscale=0.6]
    \node[int] (v0) at (0,-2) {};
    \node (l0) at (0,-4.8) {};
    \draw (v0) -- (l0);
	\node[ext] (v1) at (0,0) {};
    \draw[dotted] (v0) -- (v1);    
	\node[int] (v2) at (2,2) {};
    \draw (v1) -- (v2);
    \node (l1) at (-2,2) {$\scriptscriptstyle 1$};
    \draw[dotted] (v1) -- (l1);
    \node[ext] (v3) at (2,4) {};
    \draw[dotted] (v2) -- (v3);
    \node (l2) at (0,6) {$\scriptscriptstyle 2$};
    \draw[dotted] (v3) -- (l2);
    \node[int] (v4) at (4,6) {};
    \draw (v3) -- (v4);
    \node (l3) at (4,9.8) {};
    \draw[dotted] (v4) -- (l3);
    \node (l4) at (5,8.3) {$\scriptscriptstyle \ldots$};
    \node (l5) at (6,8) {};
    \node[int] (v5) at (7,10) {};
    \draw (l4) -- (v5);
    \node[ext] (v6) at (7,12) {};
    \draw[dotted] (v5) -- (v6);
    \node (l6) at (4.5,15) {$\scriptscriptstyle n-1$};
    \draw[dotted] (v6) -- (l6);
    \node[int] (v7) at (9,14) {};
    \node (l7) at (7,16) {$\scriptscriptstyle n$};
    \draw (v6) -- (v7);
    \draw[dotted] (v7) -- (l7);
    \ovalthree{v0}{v1}{v2};
\ovalthree{v2}{v3}{v4};
\ovalthree{v5}{v6}{v7};
\end{tikzpicture}.
\end{equation*}
Here, the first two diagrams correspond to different colorings of the inputs/output in the generator $\ell_m$, while the final diagram corresponds to the generator $\Psi(r_{n+1})$. Consequently, the higher homology vanishes for the associated graded differential $d_{\gr F}$, which completes the proof.
\end{proof}

\subsection{The dioperad $\calV^{(d)}$ of Tradler and Zeinalian}
\label{example::Tradler}

The representations of the dioperad $\calV^{(d)}$, introduced by Tradler and Zeinalian in \cite{Tradler_Zeinalian}, consist of an associative algebra $(A, \cdot)$ equipped with a symmetric and invariant element $c \in S^2 A$ of degree $d$. Writing $c = c^{(1)} \otimes c^{(2)}$ in Sweedler's notations, the symmetry and invariance conditions are expressed as:
\begin{gather*}
     c^{(1)} \otimes c^{(2)} =  (-1)^{|c^{(1)}||c^{(2)}|} c^{(2)} \otimes c^{(1)}, \\
     (a \cdot c^{(1)}) \otimes c^{(2)} = (-1)^{|a|(|c^{(1)}| + |c^{(2)}|)} c^{(1)} \otimes (c^{(2)}\cdot a) \quad \text{for all } a \in A,
\end{gather*}
respectively. 

From a diagrammatic perspective, the dioperad $\calV^{(d)}$ is generated by the following subspaces:
$$
\calV^{(d)}(2,1) := \Bbbk[\bS_2] = \mathsf{span}\left\langle 
\liePic{1}{2}, \liePic{2}{1}
\right\rangle; \qquad \calV^{(d)}(0,2) := \one_2[d] =
\mathsf{span}\left\langle 
\RLie{1}{2} = \RLie{2}{1}  
\right\rangle,
$$
subject to the quadratic relations:
\begin{equation}
\label{eq::ref::Tradler}  
\left\{
\LLiePic{1}{2}{3} = \LLiePicReverse{1}{2}{3};
\qquad
\mVPic{1}{2} = \mVPicT{1}{2} 
\right\}.
\end{equation}
The Koszul property of $\calV^{(d)}$ was originally verified in~\cite{Tradler}. Below, we provide a significantly simpler proof of this fact using the Gr\"obner basis theory introduced earlier.

\begin{notation}
The $2$-colored non-symmetric operad
$\VV^{(d)}$ is generated by three binary operations of degree $0$ and a single unary operation of degree $d$:
$$
\begin{array}{ccccccc}
\begin{tikzpicture}[scale=0.15pt, baseline=-0.5ex]
    \draw (0,-0.55) -- (0,-2.5);
    \draw (0.5,0.5) -- (2.2,2.2);
    \draw (-0.48,0.48) -- (-2.2,2.2);
    \node[ext] (A) at (0,0) {};  
\end{tikzpicture}
& , &
\begin{tikzpicture}[scale=0.15pt, baseline=-0.5ex]
    \draw[dotted] (0,-0.55) -- (0,-2.5);
    \draw (0.5,0.5) -- (2.2,2.2);
    \draw[dotted] (-0.48,0.48) -- (-2.2,2.2);
    \node[ext] (A) at (0,0) {};
\end{tikzpicture}
& , &
\begin{tikzpicture}[scale=0.15pt, baseline=-0.5ex]
    \draw[dotted] (0,-0.55) -- (0,-2.5);
    \draw[dotted] (0.5,0.5) -- (2.2,2.2);
    \draw (-0.48,0.48) -- (-2.2,2.2);
    \node[ext] (A) at (0,0) {}; 
\end{tikzpicture}
& , & 
\begin{tikzpicture}[scale=0.15pt, baseline=-0.5ex]
    \draw (0,-0.55) -- (0,-2.5);
    \draw[dotted] (0,0.5) -- (0,2.5);
    \node[int] (A) at (0,0) {}; 
\end{tikzpicture}
\end{array}
$$
subject to the following relations:
\begin{gather}
\label{eq::Tradler::ass}
\begin{tikzpicture}[scale=0.2]
\node[ext] (a) at (0,0) {};
\node[ext] (b) at (-1.5,2) {};
\node (l0) at (0,-2) {};
\node (l1) at (-3,4.3) {};
\node (l2) at (0, 4.3) {};
\node (l3) at (1.5,2.3) {};
\draw (a) -- (l0);
\draw (a) -- (b);
\draw (b) -- (l1);
\draw (b) -- (l2);
\draw (a) -- (l3);
\end{tikzpicture}
=
\begin{tikzpicture}[scale=0.2,xscale=-1]
\node[ext] (a) at (0,0) {};
\node[ext] (b) at (-1.5,2) {};
\node (l0) at (0,-2) {};
\node (l1) at (-3,4.3) {};
\node (l2) at (0, 4.3) {};
\node (l3) at (1.5,2.3) {};
\draw (a) -- (l0);
\draw (a) -- (b);
\draw (b) -- (l1);
\draw (b) -- (l2);
\draw (a) -- (l3);
\end{tikzpicture},
\quad 
\begin{tikzpicture}[scale=0.2]
\node[ext] (a) at (0,0) {};
\node[ext] (b) at (-1.5,2) {};
\node (l0) at (0,-2) {};
\node (l1) at (-3,4.3) {};
\node (l2) at (0, 4.3) {};
\node (l3) at (1.5,2.3) {};
\draw[dotted] (a) -- (l0);
\draw[dotted] (a) -- (b);
\draw[dotted] (b) -- (l1);
\draw (b) -- (l2);
\draw (a) -- (l3);
\end{tikzpicture}
=
\begin{tikzpicture}[scale=0.2,xscale=-1]
\node[ext] (a) at (0,0) {};
\node[ext] (b) at (-1.5,2) {};
\node (l0) at (0,-2) {};
\node (l1) at (-3,4.3) {};
\node (l2) at (0, 4.3) {};
\node (l3) at (1.5,2.3) {};
\draw[dotted] (a) -- (l0);
\draw (a) -- (b);
\draw (b) -- (l1);
\draw (b) -- (l2);
\draw[dotted] (a) -- (l3);
\end{tikzpicture},
\quad 
\begin{tikzpicture}[scale=0.2]
\node[ext] (a) at (0,0) {};
\node[ext] (b) at (-1.5,2) {};
\node (l0) at (0,-2) {};
\node (l1) at (-3,4.3) {};
\node (l2) at (0, 4.3) {};
\node (l3) at (1.5,2.3) {};
\draw[dotted] (a) -- (l0);
\draw[dotted] (a) -- (b);
\draw (b) -- (l1);
\draw[dotted] (b) -- (l2);
\draw (a) -- (l3);
\end{tikzpicture}
=
\begin{tikzpicture}[scale=0.2,xscale=-1]
\node[ext] (a) at (0,0) {};
\node[ext] (b) at (-1.5,2) {};
\node (l0) at (0,-2) {};
\node (l1) at (-3,4.3) {};
\node (l2) at (0, 4.3) {};
\node (l3) at (1.5,2.3) {};
\draw[dotted] (a) -- (l0);
\draw[dotted] (a) -- (b);
\draw (b) -- (l1);
\draw[dotted] (b) -- (l2);
\draw (a) -- (l3);
\end{tikzpicture},
\quad 
\begin{tikzpicture}[scale=0.2]
\node[ext] (a) at (0,0) {};
\node[ext] (b) at (-1.5,2) {};
\node (l0) at (0,-2) {};
\node (l1) at (-3,4.3) {};
\node (l2) at (0, 4.3) {};
\node (l3) at (1.5,2.3) {};
\draw[dotted] (a) -- (l0);
\draw (a) -- (b);
\draw (b) -- (l1);
\draw (b) -- (l2);
\draw[dotted] (a) -- (l3);
\end{tikzpicture}
=
\begin{tikzpicture}[scale=0.2,xscale=-1]
\node[ext] (a) at (0,0) {};
\node[ext] (b) at (-1.5,2) {};
\node (l0) at (0,-2) {};
\node (l1) at (-3,4.3) {};
\node (l2) at (0, 4.3) {};
\node (l3) at (1.5,2.3) {};
\draw[dotted] (a) -- (l0);
\draw[dotted] (a) -- (b);
\draw[dotted] (b) -- (l1);
\draw (b) -- (l2);
\draw (a) -- (l3);
\end{tikzpicture};
\\ 
\label{eq::Tradler::copairing}
\begin{tikzpicture}[scale=0.2]
\node[ext] (a) at (0,2) {};
\node[int] (b) at (0,0) {};
\node (l0) at (0,-2) {};
\node (l1) at (-1.5,4.2) {};
\node (l2) at (1.5, 4.2) {};
\draw (b) -- (l0);
\draw[dotted] (a) -- (b);
\draw[dotted] (a) -- (l1);
\draw (a) -- (l2);
\end{tikzpicture}=
\begin{tikzpicture}[scale=0.2]
\node[ext] (a) at (0,0) {};
\node[int] (b) at (-1.5,2) {};
\node (l0) at (0,-2) {};
\node (l1) at (-1.5,4.4) {};
\node (l2) at (1.5, 2.2) {};
\draw (a) -- (l0);
\draw (a) -- (b);
\draw[dotted] (b) -- (l1);
\draw (a) -- (l2);
\end{tikzpicture}, 
\quad 
\begin{tikzpicture}[scale=0.2,xscale=-1]
\node[ext] (a) at (0,2) {};
\node[int] (b) at (0,0) {};
\node (l0) at (0,-2) {};
\node (l1) at (-1.5,4.2) {};
\node (l2) at (1.5, 4.2) {};
\draw (b) -- (l0);
\draw[dotted] (a) -- (b);
\draw[dotted] (a) -- (l1);
\draw (a) -- (l2);
\end{tikzpicture}
=
\begin{tikzpicture}[scale=0.2,xscale=-1]
\node[ext] (a) at (0,0) {};
\node[int] (b) at (-1.5,2.2) {};
\node (l0) at (0,-2) {};
\node (l1) at (-1.5,4.4) {};
\node (l2) at (1.5, 2.2) {};
\draw (a) -- (l0);
\draw (a) -- (b);
\draw[dotted] (b) -- (l1);
\draw (a) -- (l2);
\end{tikzpicture}. 
\end{gather}
\end{notation}

\begin{proposition}
\label{prp::Sym::Tradler}
    The $2$-colored symmetric operad $\Psi(\calV^{(d)})$ and the symmetrization $\mathsf{Sym}(\VV^{(d)})$ of the non-symmetric $2$-colored operad $\VV^{(d)}$ are isomorphic.
\end{proposition}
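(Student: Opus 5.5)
Both sides are quotients of free $2$-colored symmetric operads, so the plan is to match generators and then relations. The map will be defined generator by generator, and the statement will follow once the ideals of relations are shown to correspond.

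\textbf{Step 1: generators.} Recall that $\mathsf{Sym}$ of a non-symmetric colored operad is obtained by inducing each arity component along the regular representation of the symmetric group. By Proposition~\ref{prp::Psi:diop->op}, $\Psi(\calV^{(d)})$ has the following generators.
\begin{itemize}
\item The associative product of arity $(2,1)$, viewed as the regular $\bS_2$-module $\Bbbk[\bS_2]$, produces three colored operations: root at the output, at the first input, or at the second input. Each is a free $\bS_2$-orbit on its two inputs, and the inputs carry the colorings (straight, straight) $\to$ straight and (straight, dotted) $\to$ dotted, up to order.
\item The symmetric copairing $c\in\calV^{(d)}(0,2)$ gives a unary operation with a dotted input and a straight output. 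It must be placed at one of its two outputs, which are identified by the symmetry $c^{(1)}\otimes c^{(2)}=\pm c^{(2)}\otimes c^{(1)}$. So exactly one unary generator of degree $d$ remains, the one listed in the Notation.
\end{itemize}
I will check that the three binary planar generators of $\VV^{(d)}$ give exactly one representative of each free $\bS_2$-orbit. For the mixed coloring, the two orderings (dotted, straight) and (straight, dotted) of the inputs are related by the $\bS_2$ action, so picking the planar order (dotted left, straight right) loses nothing. This yields an isomorphism of generating $\bS$-collections $\mathsf{Sym}(E_{\VV})\cong\Psi(E_{\calV})$.

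\textbf{Step 2: relations.} The relations of $\Psi(\calV^{(d)})$ are the images under $\Psi$ of the $\bS$-orbits of~\eqref{eq::ref::Tradler}, with all root choices, following the proof of Corollary~\ref{cor::Psi::Koszul}.
\begin{itemize}
\item Associativity is an $(3,1)$-relation. Rerooting it at its output or at any of its three inputs gives colored quadratic relations on ternary trees. Up to the $\bS_3$ action, these should be precisely the four planar identities~\eqref{eq::Tradler::ass}, together with their symmetric translates. These correspond to the root-color patterns (straight,straight,straight) and the three patterns with one dotted leaf in positions $1,2,3$ after planar ordering.
\item The invariance relation $\mVPic{1}{2}=\mVPicT{1}{2}$ has arity $(1,2)$. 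Rerooting it at either output gives~\eqref{eq::Tradler::copairing} and its mirror. Rerooting at the input would give a relation whose output is dotted, fed by two dotted leaves. Using the symmetry of $c$, I expect this to coincide with one of the two listed relations after an $\bS_2$-relabelling. I will verify this directly, and it is the point needing care.
\end{itemize}

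\textbf{Step 3: conclusion.} Once generators and the $\bS$-span of the relations agree, the universal properties of free operads give mutually inverse morphisms $\mathsf{Sym}(\VV^{(d)})\rightleftarrows\Psi(\calV^{(d)})$. Exactness of $\Psi$ (Theorem~\ref{thm::diop->op}) guarantees that $\Psi(\calV^{(d)})$ is presented by the $\Psi$-images of the relations, so no further relations need to be considered.

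The main obstacle is the bookkeeping in Step 2. I need to show that every colored, symmetric rerooting of associativity and of invariance lies in the $\bS$-span of~\eqref{eq::Tradler::ass}--\eqref{eq::Tradler::copairing}, and that no rerooting produces a genuinely new relation. This includes the Koszul signs coming from the degree $d$ of $c$. I would handle it by enumerating the leaf colorings of ternary trees: only colorings with at most one dotted leaf occur, plus a dotted root. I would then match each enumerated rerooting against the planar forms, case by case.
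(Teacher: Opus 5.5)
Your route is the paper's own: match generators, then match the rerooted relations. The paper compresses this into a ``direct comparison'', with \eqref{eq::Tradler::ass} as the rerootings of associativity and \eqref{eq::Tradler::copairing} as those of invariance.

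\textbf{The gap.} It sits exactly at the point you flagged, and the expectation you state there is false. An $\bS$-relabelling never changes the colour of the root. Both relations in \eqref{eq::Tradler::copairing} have a straight root, whereas rerooting the invariance relation at its input $a$ gives a dotted root with two dotted leaves.

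Write $u$ for the unary generator, $\mu_{sd},\mu_{ds}$ for the dotted-output products with input colours (straight, dotted) and (dotted, straight), and $\bar x,\bar y$ for dotted leaves. Reroot $(a\cdot c^{(1)})\otimes c^{(2)}=\pm\, c^{(1)}\otimes(c^{(2)}\cdot a)$ at $a$, keeping the cyclic order of legs. This is the convention that produces \eqref{eq::Tradler::ass} and \eqref{eq::Tradler::copairing}. It gives the planar relation
\[ \mu_{sd}\bigl(u(\bar x),\bar y\bigr)=\pm\,\mu_{ds}\bigl(\bar x,u(\bar y)\bigr). \]
This relation is not in the $\bS$-span of \eqref{eq::Tradler::ass}--\eqref{eq::Tradler::copairing}, for the following reasons.
\begin{itemize}
\item In this colour profile, the weight-two part of $\VV^{(d)}$ is spanned by the two monomials above.
\item No listed relation has this profile, so these monomials are independent, and $\mathsf{Sym}(\VV^{(d)})$ is $4$-dimensional there.
\item The corresponding component of $\Psi(\calV^{(d)})$ is $\calV^{(d)}(1,2)$. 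Its dimension is $2$: four tree monomials modulo two independent translates of the invariance relation, consistent with $(m+n-1)!=2$.
\end{itemize}
So the direct check you plan would produce a discrepancy, not a confirmation. Your enumeration ``at most one dotted leaf, plus a dotted root'' also omits this profile.

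\textbf{The fix.} Add the displayed relation to the presentation of $\VV^{(d)}$. Since it is planar, your Steps 1--3 then go through and give the isomorphism. The paper's one-line comparison, and its list of relations for $\VV^{(d)}$, do not account for this component either.

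\textbf{A smaller point in Step 1.} The sentence ``picking the planar order (dotted left, straight right) loses nothing'' is misleading. The two mixed generators $\mu_{ds}$ and $\mu_{sd}$ are not interchangeable; both are needed. They are the planar product rerooted, by cyclic rotation, at its second and first input respectively. Together they span the $2$-dimensional space of dotted-output operations with one straight and one dotted input, which $\Psi$ identifies with $\calV^{(d)}(2,1)$. Choosing planar representatives by cyclic rotation, rather than by a single fixed order, is exactly what makes every rerooted relation planar.
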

\begin{proof}
    Note that the relations in~\eqref{eq::ref::Tradler} are planar (i.e., they do not permute the order of inputs or outputs as drawn in the plane). A direct comparison of these relations completes the proof. Specifically, relations~\eqref{eq::Tradler::ass} arise from selecting the input in the associativity relation, while relations~\eqref{eq::Tradler::copairing} correspond to the second relation in~\eqref{eq::ref::Tradler}.
\end{proof}

\begin{theorem}
\label{thm::Tradler::Grobner}
The rewriting system that maps the left-hand sides to the right-hand sides in relations~\eqref{eq::Tradler::ass} and~\eqref{eq::Tradler::copairing} is convergent.
\end{theorem}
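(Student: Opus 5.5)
Since $\VV^{(d)}$ is non-symmetric and all relations in~\eqref{eq::Tradler::ass}--\eqref{eq::Tradler::copairing} are planar, I will work with planar (non-symmetric) tree monomials. By Proposition~\ref{prp::Sym::Tradler}, a convergent system for $\VV^{(d)}$ yields one for $\shuffle(\Psi(\calV^{(d)}))$, since symmetrization does not alter the combinatorics of planar monomials. The plan is then to apply Theorem~\ref{thm::rewrite::system}: prove termination, and verify that every ambiguity (overlap of two leading terms sharing a vertex) resolves.

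\textbf{Termination.} Every rule in~\eqref{eq::Tradler::ass} turns a left-growing binary subtree into a right-growing one, in the appropriate coloring. The rules~\eqref{eq::Tradler::copairing} move the unary copairing vertex from below a binary vertex to one of its inputs. I will use a lexicographic weight.
\begin{enumerate}
\item The first component is the sum, over unary vertices, of their distances from the root. This strictly increases under~\eqref{eq::Tradler::copairing} and is unchanged by~\eqref{eq::Tradler::ass}, which only rebracket binary vertices.
\item The second component is the standard associativity weight: the sum over binary vertices of the number of leaves in their left subtree. This strictly decreases under~\eqref{eq::Tradler::ass}.
\end{enumerate}
Both components are bounded for a fixed arity, so the system terminates. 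Equivalently, one can check that the rules are compatible with a reverse path-lexicographic order in which the unary generator is smallest.

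\textbf{Normal forms and confluence.} The irreducible monomials should be right combs of binary operations, with unary vertices placed only on leaves, with colorings as forced. I will enumerate the ambiguities by their underlying shape. There are three kinds.
\begin{enumerate}
\item Cubic left combs $((ab)c)d$ in every admissible coloring. These are the colored pentagon overlaps, and they resolve exactly as in the associative operad. A cleaner argument is that after forgetting colors each such overlap is the associativity pentagon. Colors on internal edges are then determined by the leaves, because each binary generator is fixed by its input colors: the output is dotted iff some input is dotted. So both sides of the pentagon land on the same colored right comb.
\item Overlaps of a copairing rule with an associativity rule: a unary vertex sitting below a left-comb of two binary vertices. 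These have three internal vertices.
\item Overlaps of two copairing rules. These do not occur, since two unary vertices cannot both sit directly below the same binary vertex.
\end{enumerate}

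The main obstacle is the second kind. I expect there to be a handful of shapes, according to whether the binary vertex adjacent to the unary one is the top or the bottom of the associativity leading term, and according to which of the two rules in~\eqref{eq::Tradler::copairing} applies. I will first try to reduce each by hand along both paths to the common normal form, the right comb with the unary vertex pushed onto the appropriate leaf. The mechanism is that the copairing invariance $(a c^{(1)})\otimes c^{(2)} = c^{(1)}\otimes(c^{(2)} a)$ is compatible with associativity: sliding $a\cdot b$ across $c$ equals sliding $b$ and then $a$. If some case fails to close, the fallback is a dimension count. The normal forms should number $\dim\calV^{(d)}(m,n)$, which can be computed from the known description of $\calV^{(d)}$ via cyclic words; equality of counts gives confluence through Theorem~\ref{thm::rewrite::system}.
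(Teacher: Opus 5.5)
\textbf{Verdict.} Your skeleton matches the paper's proof. Termination comes from an order. The associativity--associativity overlaps reduce to the uncoloured pentagon; your remark that each binary generator is determined by its input colours is a clean way to see this. There are no copairing--copairing overlaps, and the real content lies in the copairing--associativity overlaps. However, two steps fail as written.

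\textbf{Notation.} Write $m$ for the all-straight multiplication, $\beta$ (resp.\ $\beta'$) for the binary generator with dotted output and dotted left (resp.\ right) input, and $u$ for the unary one. The rules then read $m(m(x,y),z)\to m(x,m(y,z))$, $\beta(\beta(\bar x,y),z)\to\beta(\bar x,m(y,z))$, $\beta(\beta'(x,\bar y),z)\to\beta'(x,\beta(\bar y,z))$, $\beta'(m(x,y),\bar z)\to\beta'(x,\beta'(y,\bar z))$, $u(\beta(\bar x,y))\to m(u(\bar x),y)$ and $u(\beta'(y,\bar x))\to m(y,u(\bar x))$.

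\textbf{Termination.} Your first component is neither invariant under associativity nor strictly increasing under copairing, because both kinds of rules move whole subtrees.
In $m(m(u(\bar x),y),z)\to m(u(\bar x),m(y,z))$ the unary vertex moves one step toward the root.
In $u(\beta(\bar x,u(\bar y)))\to m(u(\bar x),u(\bar y))$ one unary vertex moves up and the other moves down, so the sum is unchanged.

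A repair in the same spirit is to take $\sum_u \#\{\text{leaves above } u\}$ as the first component. Rebracketing never changes the set of leaves above a unary vertex. Each copairing rule strictly decreases it for the moved $u$, which loses the leaves of the other branch, and leaves it unchanged for every other $u$. Your associativity weight then works as the second component. Alternatively, use the paper's quantum-monomial order, or ordinary path-lexicographic order with $u$ the largest generator.

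Your parenthetical alternative is oriented the wrong way. With this paper's conventions, reverse path-lexicographic order makes right combs leading. And with $u$ smallest, even ordinary path-lexicographic order prefers $m(u(\bar x),y)$ to $u(\beta(\bar x,y))$: compare the first-leaf words $(m,u)$ and $(u,\beta)$.

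\textbf{Confluence.} The mixed overlaps are the whole content of the paper's proof, and you only announce them. Your anticipated case split is also off. The binary vertex feeding into $u$ sends its output to $u$, so it can only be the \emph{lower} vertex of an associativity leading term, and it has dotted output. Hence there are exactly three overlaps, one for each position of the dotted leaf. All three close; in each pair, the first line applies associativity first and the second line applies copairing first:
\begin{align*}
u(\beta(\beta(\bar x,y),z)) &\to u(\beta(\bar x,m(y,z)))\to m(u(\bar x),m(y,z)),\\
u(\beta(\beta(\bar x,y),z)) &\to m(u(\beta(\bar x,y)),z)\to m(m(u(\bar x),y),z)\to m(u(\bar x),m(y,z)),\\
u(\beta(\beta'(x,\bar y),z)) &\to u(\beta'(x,\beta(\bar y,z)))\to m(x,u(\beta(\bar y,z)))\to m(x,m(u(\bar y),z)),\\
u(\beta(\beta'(x,\bar y),z)) &\to m(u(\beta'(x,\bar y)),z)\to m(m(x,u(\bar y)),z)\to m(x,m(u(\bar y),z)),\\
u(\beta'(m(x,y),\bar z)) &\to u(\beta'(x,\beta'(y,\bar z)))\to m(x,u(\beta'(y,\bar z)))\to m(x,m(y,u(\bar z))),\\
u(\beta'(m(x,y),\bar z)) &\to m(m(x,y),u(\bar z))\to m(x,m(y,u(\bar z))).
\end{align*}
The paper draws the first and third of these overlaps and calls the middle one analogous.

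\textbf{The fallback.} Your dimension-count fallback does not replace this check. Normal forms always span, so a count proves confluence only given an independent lower bound. In the paper, $\dim\mathcal{V}^{(d)}(m,n)=(m+n-1)!$ is deduced \emph{from} this theorem, so it cannot serve as that bound.

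Worse, the target number is wrong in dotted-output arities. The listed rules omit the relation $\beta'(u(\bar x),\bar y)=\beta(\bar x,u(\bar y))$, obtained by rooting the invariance relation at its input. Consequently, in those arities the listed system has more irreducible monomials than $\mathcal{V}^{(d)}$ has operations. Already with two dotted inputs and dotted output there are $2\cdot 2!=4$ irreducible monomials, against $\dim\mathcal{V}^{(d)}(1,2)=2$. A global count would therefore fail even though the system is convergent. It can only be run in the straight-output arities, which is where the three overlaps live.

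For the same reason, your opening transfer to $\Psi(\mathcal{V}^{(d)})$ via Proposition~\ref{prp::Sym::Tradler} is not available. It is not needed either, since the statement concerns the non-symmetric operad $\mathbb{V}^{(d)}$.
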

\begin{proof}
Following the framework of quantum-monomial orderings introduced in~\cite{Dotsenko_QP-order}, we define an admissible monomial ordering such that the left-hand sides of the relations are the leading monomials. This choice automatically ensures the termination of the rewriting system. Consequently, these relations constitute a Gr\"obner basis with respect to this ordering (see~\cite{Dotsenko_Vallette_Nonsym} for a detailed treatment of Gr\"obner bases for non-symmetric operads).

Recall that monomials in the ring of quantum polynomials
$$\Bbbk\langle x,y,q \mid xq=qx, yq=qy, yx =q xy \rangle$$
admit a multiplication-compatible ordering defined by:
$$
x^{k_1} y^{l_1} q^{m_1} >  x^{k_2} y^{l_2} q^{m_2} \ \Leftrightarrow \ 
\begin{cases}
k_1 > k_2, \\
k_1 = k_2 \text{ and } l_1 > l_2, \\
k_1 = k_2, l_1 = l_2, \text{ and } m_1 > m_2.
\end{cases}
$$

To each tree monomial $T$ with $n$ leaves, we assign a collection of $n$ quantum monomials. The $i$-th monomial is the noncommutative word in variables $x$ and $y$ corresponding to the path from the root to the $i$-th input, where $x$ represents associative multiplication and $y$ represents the unary operation. To compare tree monomials $T_1$ and $T_2$, we compare their corresponding quantum monomials lexicographically. We omit the proof that this is an admissible ordering, as it follows the construction in~\cite{Dotsenko_QP-order}.

Confluence for compositions of the associativity relations follows from the standard confluence of associativity. Confluence for the remaining compositions is verified below for two of the three possible colorings of inputs (the third case being entirely analogous):
\begin{equation}
\label{eq::Tradler::S:polynom}
\begin{tikzcd}
\begin{tikzpicture}[scale=0.2]
\node[ext] (a) at (0,2) {};
\node[int] (b) at (0,0) {};
\node[ext] (c) at (-1.5, 4) {};
\node (l0) at (0,-2) {};
\node (l1) at (-3,6.2) {};
\node (l2) at (0, 6.2) {};
\node (l3) at (1.5,4.2) {};
\draw (b) -- (l0);
\draw[dotted] (b) -- (a);
\draw[dotted] (a) -- (c);
\draw (a) -- (l3);
\draw[dotted] (c) -- (l1);
\draw (c) -- (l2);
\end{tikzpicture}
\arrow[d] \arrow[rr]
&
&
\begin{tikzpicture}[scale=0.2]
\node[ext] (a) at (0,2) {};
\node[int] (b) at (0,0) {};
\node[ext] (c) at (1.5, 4) {};
\node (l0) at (0,-2) {};
\node (l3) at (3,6.2) {};
\node (l2) at (0, 6.2) {};
\node (l1) at (-1.5,4.2) {};
\draw (b) -- (l0);
\draw[dotted] (b) -- (a);
\draw (a) -- (c);
\draw[dotted] (a) -- (l1);
\draw (c) -- (l2);
\draw (c) -- (l3);
\end{tikzpicture}
\arrow[d]
& 
\begin{tikzpicture}[scale=0.2]
\node[ext] (a) at (0,2) {};
\node[int] (b) at (0,0) {};
\node[ext] (c) at (-1.5, 4) {};
\node (l0) at (0,-2) {};
\node (l1) at (-3,6.2) {};
\node (l2) at (0, 6.2) {};
\node (l3) at (1.5,4.2) {};
\draw (b) -- (l0);
\draw[dotted] (b) -- (a);
\draw (a) -- (c);
\draw[dotted] (a) -- (l3);
\draw (c) -- (l1);
\draw (c) -- (l2);
\end{tikzpicture}
\arrow[r] \arrow[d]
&
\begin{tikzpicture}[scale=0.2]
\node[ext] (a) at (0,2) {};
\node[int] (b) at (0,0) {};
\node[ext] (c) at (1.5, 4) {};
\node (l0) at (0,-2) {};
\node (l3) at (3,6.2) {};
\node (l2) at (0, 6.2) {};
\node (l1) at (-1.5,4.2) {};
\draw (b) -- (l0);
\draw[dotted] (b) -- (a);
\draw[dotted] (a) -- (c);
\draw (a) -- (l1);
\draw (c) -- (l2);
\draw[dotted] (c) -- (l3);
\end{tikzpicture}
\arrow[r]
&
\begin{tikzpicture}[scale=0.2]
\node[int] (a) at (1.5,2) {};
\node[ext] (b) at (0,0) {};
\node[ext] (c) at (1.5, 4.2) {};
\node (l0) at (0,-2) {};
\node (l3) at (3,6.4) {};
\node (l2) at (0, 6.4) {};
\node (l1) at (-1.5,3.2) {};
\draw (b) -- (l0);
\draw (b) -- (a);
\draw[dotted] (a) -- (c);
\draw[dotted] (c) -- (l3);
\draw (c) -- (l2);
\draw (b) -- (l1);
\end{tikzpicture}
\arrow[d]
\\
\begin{tikzpicture}[scale=0.2]
\node[int] (a) at (-1.5,2) {};
\node[ext] (b) at (0,0) {};
\node[ext] (c) at (-1.5, 4.2) {};
\node (l0) at (0,-2) {};
\node (l1) at (-3,6.2) {};
\node (l2) at (0, 6.2) {};
\node (l3) at (1.5,3.2) {};
\draw (b) -- (l0);
\draw[dotted] (b) -- (a);
\draw[dotted] (a) -- (c);
\draw (b) -- (l3);
\draw[dotted] (c) -- (l1);
\draw (c) -- (l2);
\end{tikzpicture}
\arrow[r]
&
\begin{tikzpicture}[scale=0.2]
\node[ext] (a) at (-1.5,2) {};
\node[ext] (b) at (0,0) {};
\node[int] (c) at (-3, 4) {};
\node (l0) at (0,-2) {};
\node (l1) at (-3,6.4) {};
\node (l2) at (0, 4.2) {};
\node (l3) at (1.5,2.2) {};
\draw (b) -- (l0);
\draw (b) -- (a);
\draw (a) -- (c);
\draw (b) -- (l3);
\draw[dotted] (c) -- (l1);
\draw (a) -- (l2);
\end{tikzpicture}
\arrow[r]
& 
\begin{tikzpicture}[scale=0.2]
\node[ext] (a) at (1.5,2) {};
\node[ext] (b) at (0,0) {};
\node[int] (c) at (-1.5, 2) {};
\node (l0) at (0,-2) {};
\node (l1) at (-1.5,4.4) {};
\node (l2) at (0, 4.2) {};
\node (l3) at (3,4.2) {};
\draw (b) -- (l0);
\draw (b) -- (a);
\draw (b) -- (c);
\draw (a) -- (l3);
\draw[dotted] (c) -- (l1);
\draw (a) -- (l2);
\end{tikzpicture};
&
\begin{tikzpicture}[scale=0.2,xscale=-1]
\node[ext] (a) at (1.5,2) {};
\node[ext] (b) at (0,0) {};
\node[int] (c) at (-1.5, 2) {};
\node (l0) at (0,-2) {};
\node (l1) at (-1.5,4.4) {};
\node (l2) at (0, 4.2) {};
\node (l3) at (3,4.2) {};
\draw (b) -- (l0);
\draw (b) -- (a);
\draw (b) -- (c);
\draw (a) -- (l3);
\draw[dotted] (c) -- (l1);
\draw (a) -- (l2);
\end{tikzpicture}
\arrow[rr]
&
&
\begin{tikzpicture}[scale=0.2,xscale=-1]
\node[ext] (a) at (-1.5,2) {};
\node[ext] (b) at (0,0) {};
\node[int] (c) at (-3, 4) {};
\node (l0) at (0,-2) {};
\node (l1) at (-3,6.4) {};
\node (l2) at (0, 4.2) {};
\node (l3) at (1.5,2.2) {};
\draw (b) -- (l0);
\draw (b) -- (a);
\draw (a) -- (c);
\draw (b) -- (l3);
\draw[dotted] (c) -- (l1);
\draw (a) -- (l2);
\end{tikzpicture}.
\end{tikzcd}
\end{equation}
\end{proof}

Thus, relation~\ref{eq::Tradler::copairing} defines a distributive law:
$$
\lambda: 
\left\langle 
\begin{tikzpicture}[scale=0.15pt, baseline=-0.5ex]
    \draw (0,-0.55) -- (0,-2.5);
    \draw[dotted] (0,0.5) -- (0,2.5);
    \node[int] (A) at (0,0) {}; 
\end{tikzpicture}
\right\rangle \circ \Psi(\mathsf{Assoc}) \longrightarrow \Psi(\mathsf{Assoc}) \circ \left\langle 
\begin{tikzpicture}[scale=0.15pt, baseline=-0.5ex]
    \draw (0,-0.55) -- (0,-2.5);
    \draw[dotted] (0,0.5) -- (0,2.5);
    \node[int] (A) at (0,0) {}; 
\end{tikzpicture}
\right\rangle
$$
between the $2$-colored operad associated with the non-symmetric associative operad (viewed as a dioperad) and the $2$-colored operad consisting of a single unary operation. We refer to~\cite{Markl::Distributive} for the original definition of distributive laws and to~\cite[\S8.6]{Loday::Vallette} for a refined exposition.

\begin{corollary}
The non-symmetric $2$-colored operad $\VV$ is Koszul. Furthermore, for each coloring of inputs/outputs, there exists a unique normal monomial in which all multiplications precede the copairing and grow to the right:
\begin{equation}
\label{eq::Tradler:normal::forms}
\begin{tikzpicture}[scale=0.2, yscale=1.4]
\node[ext] (v0) at (0,0) {};
\node[ext] (v1) at (2,1) {};
\node[ext] (v2) at (4,2) {};
\node (v3) at (6,3) {$\scriptstyle \ldots$};
\node[ext] (v4) at (8,4) {};
\node[int] (u1) at (0,2) {};
\node[int] (u2) at (6,5) {};
\node (l0) at (0,-1.5) {};
\node (l1) at (-2,1) {};
\node (l2) at (0,4.2) {};
\node (l3) at (2,3) {};
\node (l4) at (6,6.5) {};
\node (l5) at (10,5) {};
\draw (l0) -- (v0);
\draw (v0) -- (l1);
\draw (v0) -- (v1);
\draw (v1) -- (u1);
\draw[dotted] (u1) -- (l2);
\draw (v1) -- (v2);
\draw (v2) -- (l3);
\draw (v2) -- (v3);
\draw (v3) -- (v4);
\draw (v4) -- (u2);
\draw[dotted] (u2) -- (l4);
\draw (v4) -- (l5);
\end{tikzpicture},
\qquad
\begin{tikzpicture}[scale=0.2, xscale = 1]
\node[ext] (v0) at (0,0) {};
\node[ext] (v1) at (2,1) {};
\node (v2) at (4,2) {$\scriptstyle \ldots$};
\node[ext] (v3) at (6,3) {};
\node[ext] (v4) at (8,4) {};
\node (v5) at (10,5) {$\scriptscriptstyle \ldots$};
\node[ext] (v6) at (12,6) {};
\node[int] (u1) at (6,5) {};
\node[int] (u2) at (14,7) {};
\node (l0) at (0,-2) {};
\node (l1) at (-2,1) {};
\node (l2) at (0,2) {};
\node (l3) at (3,4) {};
\node (l4) at (6,7) {};
\node (l5) at (10,7) {};
\node (l6) at (14,9) {};
\draw[dotted] (l0) -- (v0);
\draw (v0) -- (l1);
\draw[dotted] (v0) -- (v1);
\draw (v1) -- (l2);
\draw[dotted] (v1) -- (v2);
\draw[dotted] (v2) -- (v3);
\draw[dotted] (v3) -- (l3);
\draw (v3) -- (v4);
\draw (v4) -- (u1);
\draw[dotted] (u1) -- (l4);
\draw (v4) -- (v5);
\draw (v5) -- (v6);
\draw (v6) -- (l5);
\draw (v6) -- (u2);
\draw[dotted] (u2) -- (l6);
\end{tikzpicture}
\end{equation}
Consequently, the dioperad $\calV^{(d)}$ is Koszul and satisfies
$$\dim \calV^{(d)}(m,n) = (m+n-1)!.$$ 
\end{corollary}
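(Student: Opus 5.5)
The first step is to deduce Koszulness of $\VV^{(d)}$ from Theorem~\ref{thm::Tradler::Grobner}. All rewriting rules in~\eqref{eq::Tradler::ass} and~\eqref{eq::Tradler::copairing} are quadratic, and that theorem shows they form a quadratic Gr\"obner basis. By the non-symmetric version of the argument in Corollary~\ref{cor::Grobner->Koszul}, the associated monomial operad is Koszul and the inclusion-exclusion resolution stays minimal after the differential is deformed. Hence $\VV^{(d)}$ is Koszul.

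Symmetrization $\mathsf{Sym}$ is exact and preserves free objects and bar constructions. So $\mathsf{Sym}(\VV^{(d)})$ is Koszul as well. By Proposition~\ref{prp::Sym::Tradler} this operad is $\Psi(\calV^{(d)})$, and Corollary~\ref{cor::Psi::Koszul} then gives that $\calV^{(d)}$ is Koszul.

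Next I describe the normal forms, i.e.\ the irreducible monomials for the convergent system of Theorem~\ref{thm::Tradler::Grobner}. Consider a monomial that avoids every left-hand side.

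\emph{Placement of the unary copairing.} Relations~\eqref{eq::Tradler::copairing} forbid a unary vertex whose input is the (dotted) output of a multiplication vertex. So every unary vertex sits directly on a leaf. Its input is then a dotted leaf, and its output is a straight edge feeding a multiplication.

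\emph{Shape of the multiplications.} Relations~\eqref{eq::Tradler::ass} leave as irreducible, for each coloring, only the right-growing quadratic monomial. Each rule replaces one coloring of a left comb by the right comb with the corresponding recoloring. Hence the multiplication part is a right comb.

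\emph{Coloring of the spine.} Along the right comb the colors of the spine edges are forced by the input/output coloring. The spine is dotted until the unique position $j$ where the dotted path toward the dotted output leaves the spine, and straight above it. This gives exactly the two pictures in~\eqref{eq::Tradler:normal::forms}.

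\emph{Uniqueness.} I check uniqueness for each coloring by the following count. With $k$ leaves, the leaf colors fix which leaves carry a copairing vertex. The unique dotted leaf (if the output is dotted) fixes $j$. The planar comb is then determined.

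For the dimension, $\Psi(\calV^{(d)})^{\strt}(m,n-1)\cong\calV^{(d)}(m,n)$ as $\bS$-modules. Its symmetrization has dimension $(m+n-1)!$ times the number of non-symmetric normal forms with a fixed planar order of the $m+n-1$ leaves. That number is $1$, since for each leaf-coloring word there is exactly one normal form. Therefore $\dim\calV^{(d)}(m,n)=(m+n-1)!$.

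The main obstacle is making sure that every coloring of a quadratic monomial actually appears as a leading term. If some coloring of a left comb were missing from~\eqref{eq::Tradler::ass}, extra normal forms would survive and both uniqueness and the count would fail. I would verify the four colorings in~\eqref{eq::Tradler::ass} directly against the three colored generators; they should be exhaustive.
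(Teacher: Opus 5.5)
Your Koszulness argument and dimension count follow the paper's proof. That is: Gr\"obner basis $\Rightarrow$ $\VV$ Koszul, exactness of $\mathsf{Sym}$, Proposition~\ref{prp::Sym::Tradler}, Corollary~\ref{cor::Psi::Koszul}; and $(m+n-1)!=m!\,(n-1)!\binom{m+n-1}{m}$, which you phrase as $(m+n-1)!$ planar arrangements of labelled leaves times one normal form per word. Your straight-output normal forms are also correct: all multiplications are straight, every dotted leaf is capped by a copairing, and the right comb is forced. That is all the dimension formula needs. The four relations in \eqref{eq::Tradler::ass} do exhaust the colorings of a left comb, so the obstacle you single out is not where the difficulty lies.

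\textbf{The gap is uniqueness when the output is dotted.} An operation in $\Psi(\calV^{(d)})^{\dott}(m-1,n)$ has $n$ dotted leaves, not one. All but one are capped by a copairing, and the bare one ends the dotted path from the root. The coloring word does not say which dotted leaf is bare. So ``the leaf colors fix which leaves carry a copairing vertex'' and ``the unique dotted leaf fixes $j$'' fail once $n\geq 2$, and they are false for the rules you use. Write $\mu_{sd},\mu_{ds}$ for the dotted-output multiplications with inputs (straight, dotted) and (dotted, straight), and $u$ for the copairing. For the word $dd$ with dotted output, both $\mu_{sd}(u(y_1),y_2)$ and $\mu_{ds}(y_1,u(y_2))$ avoid every left-hand side of \eqref{eq::Tradler::ass} and \eqref{eq::Tradler::copairing}. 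But $\Psi(\calV^{(d)})^{\dott}(0,2)\cong\calV^{(d)}(1,2)$ has dimension $2=2!\cdot 1$, so this word must carry exactly one normal form.

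The missing ingredient is the third rooting of the invariance relation of $\calV^{(d)}$, at its input: $\mu_{sd}\circ_1 u=\mu_{ds}\circ_2 u$. It holds in $\Psi(\calV^{(d)})$ but is not among the two straight-output relations displayed in \eqref{eq::Tradler::copairing}. It must therefore be included in the rewriting system you take from Theorem~\ref{thm::Tradler::Grobner}, and in the presentation of $\VV$ for Proposition~\ref{prp::Sym::Tradler} to hold. Orient it as $\mu_{sd}\circ_1 u\to\mu_{ds}\circ_2 u$. It then forbids a capped leaf hanging off the dotted part of the spine, which forces the bare dotted leaf to be the leftmost dotted leaf. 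This is exactly the second picture of the statement, where the leaves below the switch vertex are plain straight leaves. With this rule, uniqueness follows after checking the two new overlaps $u(\mu_{sd}(u(y_1),y_2))$ and $\mu_{ds}(\mu_{sd}(u(y_1),y_2),y_3)$, and both resolve. Your dimension formula is unaffected, since straight-output normal forms contain no dotted-output multiplication at all.
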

\begin{proof}
First, the existence of a quadratic Gr\"obner basis for $\VV$ (Theorem~\ref{thm::Tradler::Grobner}) implies the Koszulness of the non-symmetric operad $\VV$ (by Corollary~\ref{cor::Grobner->Koszul}). Second, the symmetrization functor $\mathsf{Sym}$ is an exact functor that preserves the Koszul property (see, e.g.,~\cite{Loday::Symmetrization} for the uncolored case). Thus, the $2$-colored operad $\Psi(\calV^{(d)})$ is Koszul, and by Corollary~\ref{cor::Psi::Koszul}, we conclude that the dioperad $\calV^{(d)}$ is Koszul.

The description of normal forms is straightforward: for each possible coloring of inputs and outputs, there is exactly one normal form. Since the number of such colorings is given by a binomial coefficient, we have:
\begin{multline*}
\dim \calV^{(d)}(m,n) = \dim\left( \Psi(\calV^{(d)})^{\strt}(m,n-1) \right) 
= \dim\Bigl(\mathsf{Sym}(\VV^{\strt}(m,n-1)\Bigr) = \\
= m! \cdot (n-1)! \cdot \dim\Bigl(\VV^{\strt}(m,n-1)\Bigr) = m! \cdot (n-1)! \binom{m+n-1}{m} = (m+n-1)!.
\end{multline*}
\end{proof}

It is also asked in~\cite{Tradler} whether the dioperad $\mathcal{W}^{(d)}$, obtained by replacing the symmetric element $c$ with a skew-symmetric one, is Koszul. The diagrammatic description of $\mathcal{W}^{(d)}$ is analogous to the symmetric case. Specifically, $\mathcal{W}^{(d)}$ is generated by the following subspaces:
$$
\begin{array}{c}
\mathcal{W}^{(d)}(2,1) := \Bbbk[\mathbb{S}_2] = \mathsf{span}\left\langle 
\liePic{1}{2}, \liePic{2}{1}
\right\rangle; \\
\mathcal{W}^{(d)}(0,2) := \sgn_2[d] =
\mathsf{span}\left\langle 
\RLie{1}{2} = - \RLie{2}{1}  
\right\rangle,
\end{array}
$$
subject to the quadratic relations:
\begin{equation}
\label{eq::ref::Tradler}  
\left\{
\LLiePic{1}{2}{3} = \LLiePicReverse{1}{2}{3};
\qquad
\mVPic{1}{2} = \mVPicT{1}{2} 
\right\}.
\end{equation}
We claim that in this case, one must introduce a sign in the relation~\eqref{eq::Tradler::copairing}, which fundamentally alters the algebraic structure. To demonstrate this, we define the $2$-colored non-symmetric operad $\mathbb{W}^{(d)}$, generated by three binary operations of degree $0$ and a single unary operation of degree $d$:
$$
\begin{array}{ccccccc}
\begin{tikzpicture}[scale=0.15pt, baseline=-0.5ex]
    \draw (0,-0.55) -- (0,-2.5);
    \draw (0.5,0.5) -- (2.2,2.2);
    \draw (-0.48,0.48) -- (-2.2,2.2);
    \node[ext] (A) at (0,0) {};  
\end{tikzpicture}
& , &
\begin{tikzpicture}[scale=0.15pt, baseline=-0.5ex]
    \draw[dotted] (0,-0.55) -- (0,-2.5);
    \draw (0.5,0.5) -- (2.2,2.2);
    \draw[dotted] (-0.48,0.48) -- (-2.2,2.2);
    \node[ext] (A) at (0,0) {};
\end{tikzpicture}
& , &
\begin{tikzpicture}[scale=0.15pt, baseline=-0.5ex]
    \draw[dotted] (0,-0.55) -- (0,-2.5);
    \draw[dotted] (0.5,0.5) -- (2.2,2.2);
    \draw (-0.48,0.48) -- (-2.2,2.2);
    \node[ext] (A) at (0,0) {}; 
\end{tikzpicture}
& , & 
\begin{tikzpicture}[scale=0.15pt, baseline=-0.5ex]
    \draw (0,-0.55) -- (0,-2.5);
    \draw[dotted] (0,0.5) -- (0,2.5);
    \node[int] (A) at (0,0) {}; 
\end{tikzpicture}
\end{array}
$$
subject to the following relations:
$$
\begin{array}{c}
\begin{tikzpicture}[scale=0.2]
\node[ext] (a) at (0,0) {};
\node[ext] (b) at (-1.5,2) {};
\node (l0) at (0,-2) {};
\node (l1) at (-3,4.3) {};
\node (l2) at (0, 4.3) {};
\node (l3) at (1.5,2.3) {};
\draw (a) -- (l0);
\draw (a) -- (b);
\draw (b) -- (l1);
\draw (b) -- (l2);
\draw (a) -- (l3);
\end{tikzpicture}
=
\begin{tikzpicture}[scale=0.2,xscale=-1]
\node[ext] (a) at (0,0) {};
\node[ext] (b) at (-1.5,2) {};
\node (l0) at (0,-2) {};
\node (l1) at (-3,4.3) {};
\node (l2) at (0, 4.3) {};
\node (l3) at (1.5,2.3) {};
\draw (a) -- (l0);
\draw (a) -- (b);
\draw (b) -- (l1);
\draw (b) -- (l2);
\draw (a) -- (l3);
\end{tikzpicture},
\quad 
\begin{tikzpicture}[scale=0.2]
\node[ext] (a) at (0,0) {};
\node[ext] (b) at (-1.5,2) {};
\node (l0) at (0,-2) {};
\node (l1) at (-3,4.3) {};
\node (l2) at (0, 4.3) {};
\node (l3) at (1.5,2.3) {};
\draw[dotted] (a) -- (l0);
\draw[dotted] (a) -- (b);
\draw[dotted] (b) -- (l1);
\draw (b) -- (l2);
\draw (a) -- (l3);
\end{tikzpicture}
=
\begin{tikzpicture}[scale=0.2,xscale=-1]
\node[ext] (a) at (0,0) {};
\node[ext] (b) at (-1.5,2) {};
\node (l0) at (0,-2) {};
\node (l1) at (-3,4.3) {};
\node (l2) at (0, 4.3) {};
\node (l3) at (1.5,2.3) {};
\draw[dotted] (a) -- (l0);
\draw (a) -- (b);
\draw (b) -- (l1);
\draw (b) -- (l2);
\draw[dotted] (a) -- (l3);
\end{tikzpicture},
\quad 
\begin{tikzpicture}[scale=0.2]
\node[ext] (a) at (0,0) {};
\node[ext] (b) at (-1.5,2) {};
\node (l0) at (0,-2) {};
\node (l1) at (-3,4.3) {};
\node (l2) at (0, 4.3) {};
\node (l3) at (1.5,2.3) {};
\draw[dotted] (a) -- (l0);
\draw[dotted] (a) -- (b);
\draw (b) -- (l1);
\draw[dotted] (b) -- (l2);
\draw (a) -- (l3);
\end{tikzpicture}
=
\begin{tikzpicture}[scale=0.2,xscale=-1]
\node[ext] (a) at (0,0) {};
\node[ext] (b) at (-1.5,2) {};
\node (l0) at (0,-2) {};
\node (l1) at (-3,4.3) {};
\node (l2) at (0, 4.3) {};
\node (l3) at (1.5,2.3) {};
\draw[dotted] (a) -- (l0);
\draw[dotted] (a) -- (b);
\draw (b) -- (l1);
\draw[dotted] (b) -- (l2);
\draw (a) -- (l3);
\end{tikzpicture},
\quad 
\begin{tikzpicture}[scale=0.2]
\node[ext] (a) at (0,0) {};
\node[ext] (b) at (-1.5,2) {};
\node (l0) at (0,-2) {};
\node (l1) at (-3,4.3) {};
\node (l2) at (0, 4.3) {};
\node (l3) at (1.5,2.3) {};
\draw[dotted] (a) -- (l0);
\draw (a) -- (b);
\draw (b) -- (l1);
\draw (b) -- (l2);
\draw[dotted] (a) -- (l3);
\end{tikzpicture}
=
\begin{tikzpicture}[scale=0.2,xscale=-1]
\node[ext] (a) at (0,0) {};
\node[ext] (b) at (-1.5,2) {};
\node (l0) at (0,-2) {};
\node (l1) at (-3,4.3) {};
\node (l2) at (0, 4.3) {};
\node (l3) at (1.5,2.3) {};
\draw[dotted] (a) -- (l0);
\draw[dotted] (a) -- (b);
\draw[dotted] (b) -- (l1);
\draw (b) -- (l2);
\draw (a) -- (l3);
\end{tikzpicture};
\\ 
\begin{tikzpicture}[scale=0.2]
\node[ext] (a) at (0,2) {};
\node[int] (b) at (0,0) {};
\node (l0) at (0,-2) {};
\node (l1) at (-1.5,4.2) {};
\node (l2) at (1.5, 4.2) {};
\draw (b) -- (l0);
\draw[dotted] (a) -- (b);
\draw[dotted] (a) -- (l1);
\draw (a) -- (l2);
\end{tikzpicture}= -
\begin{tikzpicture}[scale=0.2]
\node[ext] (a) at (0,0) {};
\node[int] (b) at (-1.5,2) {};
\node (l0) at (0,-2) {};
\node (l1) at (-1.5,4.4) {};
\node (l2) at (1.5, 2.2) {};
\draw (a) -- (l0);
\draw (a) -- (b);
\draw[dotted] (b) -- (l1);
\draw (a) -- (l2);
\end{tikzpicture}, 
\quad 
\begin{tikzpicture}[scale=0.2,xscale=-1]
\node[ext] (a) at (0,2) {};
\node[int] (b) at (0,0) {};
\node (l0) at (0,-2) {};
\node (l1) at (-1.5,4.2) {};
\node (l2) at (1.5, 4.2) {};
\draw (b) -- (l0);
\draw[dotted] (a) -- (b);
\draw[dotted] (a) -- (l1);
\draw (a) -- (l2);
\end{tikzpicture}
= -
\begin{tikzpicture}[scale=0.2,xscale=-1]
\node[ext] (a) at (0,0) {};
\node[int] (b) at (-1.5,2.2) {};
\node (l0) at (0,-2) {};
\node (l1) at (-1.5,4.4) {};
\node (l2) at (1.5, 2.2) {};
\draw (a) -- (l0);
\draw (a) -- (b);
\draw[dotted] (b) -- (l1);
\draw (a) -- (l2);
\end{tikzpicture}. 
\end{array}
$$

By adapting the arguments used in Proposition~\ref{prp::Sym::Tradler}, we find that the $2$-colored operad $\Psi(\mathcal{W}^{(d)})$ is isomorphic to the symmetrization of the $2$-colored non-symmetric operad $\mathbb{W}^{(d)}$. The following Proposition details why $\mathbb{W}^{(d)}$ is not Koszul, which implies that the dioperad $\mathcal{W}^{(d)}$ also fails to be Koszul.

\begin{proposition}
\begin{enumerate}[itemsep=0pt,topsep=0pt]
\item The ideal generated by the unary operation $\begin{tikzpicture}[scale=0.15pt, baseline=-0.5ex]
    \node[int] (A) at (0,0) {};     
    \draw (A) -- (0,-2.5);
    \draw[dotted] (A) -- (0,2.5);
\end{tikzpicture}$ in the $2$-colored non-symmetric operad $\mathbb{W}^{(d)}$ is finite-dimensional and possesses the following monomial basis:
\begin{equation}
\label{eq::u:ideal}
\left\{
\begin{tikzpicture}[scale=0.15pt, baseline=(current bounding box.center)]
    \node[int] (A) at (0,0) {};     
    \draw (A) -- (0,-2.5);
    \draw[dotted] (A) -- (0,2.5);
\end{tikzpicture}\ ,
\ \ 
\begin{tikzpicture}[scale=0.2, baseline=(current bounding box.center)]
\node[ext] (a) at (0,0) {};
\node[int] (b) at (-1.5,2.2) {};
\node (l0) at (0,-2) {};
\node (l1) at (-1.5,4.8) {};
\node (l2) at (1.5, 2.2) {};
\draw (a) -- (l0);
\draw (a) -- (b);
\draw[dotted] (b) -- (l1);
\draw (a) -- (l2);
\end{tikzpicture},
\ \
\begin{tikzpicture}[scale=0.2,xscale=-1, baseline=(current bounding box.center)]
\node[ext] (a) at (0,0) {};
\node[int] (b) at (-1.5,2.2) {};
\node (l0) at (0,-2) {};
\node (l1) at (-1.5,4.8) {};
\node (l2) at (1.5, 2.2) {};
\draw (a) -- (l0);
\draw (a) -- (b);
\draw[dotted] (b) -- (l1);
\draw (a) -- (l2);
\end{tikzpicture},
\ \ 
\begin{tikzpicture}[scale=0.2, baseline=(current bounding box.center)]
\node[ext] (a) at (0,0) {};
\node[int] (b) at (-1.5,2.2) {};
\node (l0) at (0,-2) {};
\node (l1) at (-1.5,4.8) {};
\node (l2) at (1.5, 2.2) {};
\draw[dotted] (a) -- (l0);
\draw (a) -- (b);
\draw[dotted] (b) -- (l1);
\draw[dotted] (a) -- (l2);
\end{tikzpicture},
\ \ 
\begin{tikzpicture}[scale=0.2, xscale=-1, baseline=(current bounding box.center)]
\node[ext] (a) at (0,0) {};
\node[int] (b) at (-1.5,2.2) {};
\node (l0) at (0,-2) {};
\node (l1) at (-1.5,4.8) {};
\node (l2) at (1.5, 2.2) {};
\draw[dotted] (a) -- (l0);
\draw (a) -- (b);
\draw[dotted] (b) -- (l1);
\draw[dotted] (a) -- (l2);
\end{tikzpicture},
\ \
\begin{tikzpicture}[scale=0.2, baseline=(current bounding box.center)]
\node[ext] (a) at (0,0) {};
\node[int] (b) at (-1.5,2.2) {};
\node[int] (c) at (1.5,2.2) {};
\node (l0) at (0,-2) {};
\node (l1) at (-1.5,4.8) {};
\node (l2) at (1.5, 4.8) {};
\draw (a) -- (l0);
\draw (a) -- (b);
\draw (a) -- (c);
\draw[dotted] (b) -- (l1);
\draw[dotted] (c) -- (l2);
\end{tikzpicture} 
\right\}
\end{equation} 
\item The dimensions of $\mathcal{W}^{(d)}(m,n)$ are given by:
$$\dim\mathcal{W}^{(d)}(m,n)=\begin{cases}
   m!, & \text{if } (n=1) \text{ and } (m\geq 1), \\
   2, & \text{if } (m=1, n=2) \text{ or } (m=0, n=3), \\
   1, & \text{if } (m=0, n=2), \\
   0, & \text{otherwise.}
\end{cases}
$$
\item Neither the non-symmetric operad $\mathbb{W}^{(d)}$ nor the dioperad $\mathcal{W}^{(d)}$ are Koszul.
\end{enumerate}
\end{proposition}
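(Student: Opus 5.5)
The plan is to work entirely inside the non-symmetric $2$-colored operad $\mathbb{W}^{(d)}$, using $\Psi(\mathcal{W}^{(d)})\simeq\mathsf{Sym}(\mathbb{W}^{(d)})$. The symmetrization multiplies dimensions by $m!\,(n-1)!$ and is exact. Together with Corollary~\ref{cor::Psi::Koszul}, this transfers both dimensions and Koszulness back to the dioperad.

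\textbf{Step 1: the ideal generated by the unary operation $u$.} The sign in the copairing relation is the key point. I would take the composition $u\circ(\text{dotted multiplication})\circ(\text{multiplication})$, i.e. the same overlap as in~\eqref{eq::Tradler::S:polynom}, and run both reduction paths. In $\mathbb{V}^{(d)}$ the two paths agree. In $\mathbb{W}^{(d)}$ they differ by an overall factor $(-1)^2$ versus $(-1)^1$ coming from the extra copairing step. The $S$-polynomial therefore reduces to $2\cdot(\text{monomial})=0$, and in characteristic zero that monomial vanishes. It is a cubic monomial containing $u$ attached to a multiplication with a further multiplication. Iterating (in each coloring), every monomial with $u$ and at least two multiplications vanishes, as does any monomial with $u$ on a dotted output fed into further composition. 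This leaves exactly the list~\eqref{eq::u:ideal}. I would then check that these six monomials are nonzero. For this I would build an explicit model: a representation on a small graded algebra (e.g.\ $A=\Bbbk[\varepsilon]/\varepsilon^2$ or $A=\Bbbk$ with a suitable odd skew element $c$), or equivalently a quotient operad in which the rewriting system augmented by the new cubic monomial relations is convergent. This step is the main obstacle, because new non-quadratic Gröbner elements appear and confluence of the enlarged system must be checked.

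\textbf{Step 2: dimensions.} Outside the ideal the operad is $\Psi(\mathsf{Assoc})$, which gives $\dim=m!$ for $n=1$ after symmetrization. From~\eqref{eq::u:ideal} I read off the contributions. The monomial $u$ alone gives $(0,2)$ with dimension $1$ (skew-symmetric, $r_2$). The two monomials with a straight output give operations of type $(1,2)$. The dotted-output ones and the two-$u$ monomial correspond to $(0,3)$ and $(1,2)$. I would translate via $\Psi^{\strt}(m,n-1)=\mathcal{W}(m,n)$ and account for the symmetric-group quotient by the skewness of $c$; this should give the values $2$ and $1$ stated.

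\textbf{Step 3: non-Koszulness.} Here I would apply Corollary~\ref{cor::sereis::diop::Koszul}. The Koszul dual $\mathcal{W}^{(d)!}$ is computed from the orthogonal relations. I would argue that it is "$\mathsf{Assoc}$ plus a symmetric copairing with the planar distributive relation", so its series is explicit, similar to $\mathcal{V}^{(d)}$, with dimensions $(m+n-1)!$. Plugging both series into the functional equation and comparing low-order coefficients (arity $(0,3)$ or $(1,3)$) should produce a mismatch, since the finite ideal forces $\mathcal{W}^{(d)}$ to be far too small. Alternatively, non-Koszulness of $\mathbb{W}^{(d)}$ follows directly from the non-quadratic cubic Gröbner element found in Step 1 via the inverse-series test for non-symmetric operads. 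By Corollary~\ref{cor::Psi::Koszul} and exactness of $\mathsf{Sym}$, the dioperad is then not Koszul either.
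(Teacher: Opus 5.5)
You reproduce the paper's argument faithfully: the sign in the copairing rule makes the overlap \eqref{eq::Tradler::S:polynom} non-confluent, dimensions are then read off through $\mathsf{Sym}$, and the series test finishes. However, the sign placement in $\mathbb W^{(d)}$ that both arguments rest on does not match $\Psi(\mathcal W^{(d)})$. Write $\alpha$ for the straight product, $\beta$ (resp.\ $\gamma$) for the product with dotted output and dotted left (resp.\ right) input, and $u$ for the unary operation. Root the invariance relation $a\,c^{(1)}\otimes c^{(2)}=c^{(1)}\otimes c^{(2)}a$ at either of its outputs. This gives $u\beta(\xi,a)=\alpha(u\xi,a)$ and $u\gamma(a,\xi)=\alpha(a,u\xi)$ with a \emph{plus} sign. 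The reason is that on both sides the copairing is rooted at the same leg, so its skew-symmetry never enters. As a check, in any unital $\mathcal W^{(d)}$-algebra, setting $a=1$ in the minus-sign version of \eqref{eq::Tradler::copairing} gives $u\xi=-u\xi$, i.e.\ $c=0$. The skew-symmetry enters only when the relation is rooted at its input $a$. That rooting yields the dotted-output relation $\gamma(u\eta,\zeta)=-\beta(\eta,u\zeta)$, which is absent from $\mathbb W^{(d)}$. Your Step~2 should have caught this. $\mathcal W^{(d)}(1,2)$ equals both $\Psi(\mathcal W^{(d)})^{\strt}(1,1)$ and $\Psi(\mathcal W^{(d)})^{\dott}(0,2)$. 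With the list \eqref{eq::u:ideal}, the former is $2$-dimensional while the latter is $2!\cdot 2=4$-dimensional. Since $\mathsf{Sym}$ is free, there is no further quotient ``by the skewness of $c$'' to appeal to.

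With the correct relations, the overlaps in \eqref{eq::Tradler::S:polynom} are confluent exactly as for $\mathbb V^{(d)}$, so your cancellation $2\cdot(\text{monomial})=0$ does not happen there. A cancellation of that kind does occur in the overlap $u\gamma(u\eta,\zeta)$. Both reductions end at $\alpha(u\eta,u\zeta)$ and give $(1+(-1)^d)\,\alpha(u\eta,u\zeta)=0$, where $(-1)^d$ is the Koszul sign for reordering the two copies of $u$. So for even $d$ it kills $\mathcal W^{(d)}(0,3)$, and for odd $d$ it gives nothing. Consequently the list \eqref{eq::u:ideal} and the dimension table cannot be established this way, and in fact they are wrong. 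For $d=0$, take the unital algebra spanned by $e_1,e_2,a,b$ (the path algebra of the oriented $2$-cycle modulo paths of length two) with $c=b\otimes a-a\otimes b$. Here $c$ is skew and invariant, and $(x,y)\mapsto c^{(1)}\otimes c^{(2)}xy$ is nonzero at $x=y=1$. Hence $\mathcal W^{(0)}(2,2)\neq 0$, while $\mathcal W^{(0)}(0,3)=0$. For $d=1$, $H^*(S^1)$ with its diagonal class gives $\mathcal W^{(1)}(2,2)\neq 0$ and $\mathcal W^{(1)}(0,4)\neq 0$. The explicit model you planned in Step~1 is exactly what would have exposed this.

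Step~3 is therefore left without support, and it has problems of its own:
\begin{itemize}
\item A cubic element in a Gr\"obner basis for one ordering does not imply non-Koszulness, so your alternative argument is invalid.
\item Your description of $\mathcal W^{(d)!}$ as ``$\mathcal V$-like'' contradicts the paper's identification $\mathbb W^{(1-d)}$ and would need an actual computation.
\item The profile one expects for even $d$ after the correction is $m!$ for $n=1$, $(m+1)!$ for $n=2$, and $0$ for $n\geq 3$. This profile satisfies \eqref{eq::generating::Koszul} with itself in both slots, so you should not expect the series test to produce a contradiction.
\end{itemize}
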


\begin{proof}
The principal difference between the relations for $\mathbb{V}^{(d)}$ and $\mathbb{W}^{(d)}$ lies in the sign. However, this sign is critical: the resulting rewriting system is no longer confluent. In the rewriting rule corresponding to~\eqref{eq::Tradler::S:polynom}, one changes the sign in three terms. This leads to the vanishing of all elements composed of one unary operation and two binary operations. Consequently, the only remaining operations containing at most one binary generator are those listed in~\eqref{eq::u:ideal}. The dimension counts follow directly from the explicit description of the symmetrization of $\mathbb{W}^{(d)}$.

To disprove the Koszul property, we consider the generating series:
$$\rchi_{\mathcal{W}}(x,y;q)= \sum_{m,n} \dim_q \mathcal{W}(m,n) \frac{x^{m}}{m!}\frac{y^n}{n!} = \frac{xy}{1-qx}  + \frac{q y^2}{2} + q^2 xy^2  + \frac{q^3 y^3}{3}. $$
Let us find the inverse series. I.e. let
$$
\begin{cases}
    u:=\frac{\partial \rchi_{\mathcal{W}}(x,y;-q)}{\partial y} = \frac{x}{1+qx} -qy + 2q^2xy -q^3y^2;\\
    v:=\frac{\partial \rchi_{\mathcal{W}}(x,y;-q)}{\partial x} = \frac{y}{(1+qx)^2} +q^2 y^2.
\end{cases}
$$
Consequently, we have 
$$
\begin{array}{c}
 u +q v = \frac{x}{1+qx} + \frac{qy}{(1+qx)^{2}} -qy(1-2qx) \ \Leftrightarrow 
 \ y = \frac{(1+qx)\Bigl((u+qv)(1+qx)-x\Bigr)}{1-(1-2x)(1+qx)^{2}}. 
 \end{array}
$$
Substituting $y$ into the second equation, we end up with an algebraic equation on $x,u,v$ of degree $7$ on the $x$-variable:
$$
\begin{array}{c}
v = \frac{\Bigl((u+qv)(1+qx)-x\Bigr)}{(1+qx)\Bigl(1-(1-2x)(1+qx)^{2}\Bigr)} + 
\frac{q^2(1+qx)^2\Bigl((u+qv)(1+qx)-x\Bigr)^2}{\Bigl(1-(1-2x)(1+qx)^{2}\Bigr)^2}
\ \Leftrightarrow \\ 
\Leftrightarrow \
\left[
\begin{array}{c}
 v(1+qx){\Bigl(1-(1-2x)(1+qx)^{2}\Bigr)^2} =
 \\ 
= {\Bigl(1-(1-2x)(1+qx)^{2}\Bigr)} \Bigl((u+qv)(1+qx)-x\Bigr) + 
\\ + {q^2(1+qx)^3\Bigl((u+qv)(1+qx)-x\Bigr)^2}
\end{array}
\right]
\end{array}
$$
This equation uniquely determines the coefficients of the expansion $x(u,v) = u+qv+qu^2+\dots$. Computer-aided calculations show that the coefficient of $u^2 v$ is not a polynomial in $q$ with non-negative coefficients. This contradicts the Hilbert series criterion for Koszul dioperads provided in Corollary~\ref{cor::sereis::diop::Koszul}.

Alternatively, one may observe that, as in the $\mathbb{V}^{(d)}$ case, the quadratic dual to the $2$-colored non-symmetric operad $\mathbb{W}^{(d)}$ is $\mathbb{W}^{(1-d)}$. However, their Hilbert series fail to satisfy the functional equation~\eqref{eq::generating::Koszul}.
\end{proof}

\subsection{The dioperad $\QP$ of quadratic Poisson structures}
\label{example::QP}
Recall that a Poisson structure on a flat space $V \cong \mathbb{R}^d$ is defined by a bivector field
$\pi := \sum_{i,j} \pi_{ij} \frac{\partial}{\partial x^i} \wedge \frac{\partial}{\partial x^j}$, 
where $x^1, \dots, x^d$ are coordinates on $V$, satisfying the Poisson identity:
\[
0 = [\pi, \pi]_{Sch} = \sum_{i,j,k,l} \left( \pi_{ij} \frac{\partial \pi_{kl}}{\partial x^i} \frac{\partial}{\partial x^j} \wedge \frac{\partial}{\partial x^k} \wedge \frac{\partial}{\partial x^l} - \pi_{ij} \frac{\partial \pi_{kl}}{\partial x^j} \frac{\partial}{\partial x^i} \wedge \frac{\partial}{\partial x^k} \wedge \frac{\partial}{\partial x^l} \right).
\]
Here, $[\cdot, \cdot]_{Sch}$ denotes the Schouten-Nijenhuis bracket on polyvector fields. A Poisson structure is called \textbf{quadratic} if the coefficients $\pi_{ij}$ are quadratic functions of the coordinates. In this case, the Poisson bivector $\pi$ may be viewed as an element of $S^2V \otimes \Lambda^2 V^{*}$.

The corresponding properad $\QP$ is generated by a single element that is skew-symmetric with respect to its inputs and symmetric with respect to its outputs:
\[
\qpgen{1}{2}{1}{2} = - \qpgen{2}{1}{1}{2} = \qpgen{1}{2}{2}{1} = -\qpgen{2}{1}{2}{1}.
\] 
This generator satisfies the following quadratic dioperadic relations:
\begin{equation}
    \qprel{1}{2}{3}{1}{2}{3} + \qprel{2}{3}{1}{1}{2}{3} + \qprel{3}{1}{2}{1}{2}{3} + \qprel{1}{2}{3}{2}{3}{1} + \qprel{2}{3}{1}{2}{1}{3} + \qprel{3}{1}{2}{2}{1}{3} + \qprel{1}{2}{3}{3}{1}{2} + \qprel{2}{3}{1}{3}{1}{2} + \qprel{3}{1}{2}{3}{1}{2} = 0.
\end{equation}

The deformation theory of the properad $\QP$ and its relationship with the Kontsevich graph complex were discussed in~\cite{Merkulov_Quadratic}. The Koszul property for this properad was recently proven in~\cite{Khor_QP}. In what follows, we provide an alternative perspective by showing that this dioperad admits a quadratic Gr\"obner basis.

The Koszul-dual dioperad $\QP^{!}$ also possesses a single generator of arity $(2,2)$, and all of its non-trivial quadratic compositions are proportional to one another. Specifically:
	$$
	\QP^{!}:= \left\langle \qpgen{1}{2}{1}{2} = (-1)^{\tau} \qpgen{$\sigma(1)$}{$\sigma(2)$}{$\tau(1)$}{$\tau(2)$} \left| 
	\qprel{1}{2}{3}{1}{2}{3} = (-1)^{\tau}\qprel{$\sigma(1)$}{$\sigma(2)$}{$\sigma(3)$}{$\tau(1)$}{$\tau(2)$}{$\tau(3)$} 
	\right.\right\rangle
	$$
where $\sigma, \tau \in S_3$. Consequently, we have:
\[
\dim(\QP^{!})(m,n) = \begin{cases}
    1, & \text{if } m=n, \\ 
    0, & \text{otherwise.}
\end{cases}
\]

Let $\Com_2$ be the suboperad of the commutative operad $\Com$ whose $n$-ary operations are non-zero only for odd $n$:
$$
\Com_2(n) := \begin{cases}
    \Com(n), & \text{if } n=2k+1; \\
    0, & \text{if } n=2k.
\end{cases}
$$
The operad $\Com_2$ is a cyclic operad generated by a single symmetric ternary operation. We refer to~\cite{Dotsenko_Veronese, Khor_single_generated} for a description of its Koszul dual and to~\cite{Etingof_M0n} for the connection between this operad and the real locus of the Deligne-Mumford compactification of the moduli space of marked points. Notably, $\Com_2$ is known to admit a quadratic Gr\"obner basis with caterpillar normal forms.

It follows directly from the definition that $\QP^{!}$ differs from the dioperad $\Theta_{\mathfrak{c}}(\Com_2)$ only by the twisting of outputs by the sign representation:
$$\QP^{!}(n,n) \simeq_{\bS_n \times \bS_n^{\op}} \one_n \otimes \Sgn_n \simeq_{\bS_n \times \bS_n^{\op}} \Theta_{\mathfrak{c}}(\Com_2)(n,n) \otimes \Sgn_n. $$

\begin{proposition}
    The dioperad $\QP$ admits a quadratic Gr\"obner basis.
\end{proposition}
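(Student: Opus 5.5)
We plan to obtain the Gr\"obner basis for $\QP$ by Koszul duality from one for $\QP^{!}$, which we first describe via the coloring functor $\Theta_{\mathfrak{c}}$ applied to $\Com_2$ with $\mathfrak{c}=\{(n,n)\colon n\in\bZ_{>0}\}$.

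\textbf{Step 1: normal forms for $\Theta_{\mathfrak{c}}(\Com_2)$.} The cyclic operad $\Com_2$ admits a quadratic Gr\"obner basis with caterpillar normal forms (recalled above). The rule $\mathfrak{c}$ is one of the coloring rules listed in \S\ref{sec::Theta}, and the Remark following Corollary~\ref{cor::coloring::Koszul} says that Proposition~\ref{prp::cyclic::coloring} and Theorem~\ref{thm::cycl->diop} extend to such rules. For a ternary generator of $\Com_2$ with four legs, $\mathfrak{c}$ forces exactly two inputs and two outputs. Hence in $\Psi$ every vertex has exactly two straight and two dotted legs, counting the root leg with its color. The steps are:
\begin{itemize}
\item Produce the colored shuffle generators as in~\eqref{eq::colored::generators}, restricted to colorings with this two/two balance.
\item Form the recoloring relations~\eqref{eq::rewrite::recolor} and the rules~\eqref{eq::rewrite::caterpillar}.
\item Check that Lemma~\ref{lem::caterpillar::coloring} holds verbatim. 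The spine of a caterpillar still has flexible edges, and the greedy recoloring argument never uses the particular rule $\mathfrak{c}$, only the local admissibility condition at each vertex.
\end{itemize}
Theorem~\ref{thm::cycl->diop} then gives a quadratic convergent caterpillar rewriting system $\Theta_{\mathfrak{c}}(S)$ for $\shuffle(\Psi(\Theta_{\mathfrak{c}}(\Com_2)))$.

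\textbf{Step 2: transport the sign twist.} $\QP^{!}$ differs from $\Theta_{\mathfrak{c}}(\Com_2)$ only by tensoring the outputs with $\Sgn$. We make this twist explicit on the shuffle level. After applying $\shuffle$, the symmetric-group action is forgotten. The twist then only rescales each quadratic shuffle monomial by a sign, namely the sign of the shuffle permutation of its output labels (straight root plus dotted leaves). Rescaling basis monomials by signs preserves leading terms and the divisibility structure. Therefore the same leading terms give a quadratic convergent rewriting system for $\shuffle(\Psi(\QP^{!}))$, with the right-hand sides multiplied by the appropriate signs. Equivalently, the irreducible monomials are still one caterpillar per admissible input/output coloring, consistent with $\dim \QP^{!}(n,n)=1$.

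\textbf{Step 3: pass to $\QP$.} By Corollary~\ref{cor::Grobner->Koszul}, a quadratic convergent rewriting system for $\Psi(\QP^{!})$ yields one for $\Psi\bigl((\QP^{!})^{!}\bigr)=\Psi(\QP)$, with leading terms the complementary quadratic monomials. To upgrade from rewriting system to Gr\"obner basis, we would exhibit an admissible ordering making the Step 1 leading terms maximal. We would try a path-lexicographic ordering, for instance reverse path-lex with straight internal edges preferred and right-growing spines minimal, as was done for $\Frob$ and $\LieBi$. The dual ordering then makes the complementary monomials the leading terms for $\QP$.

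\textbf{Main obstacle.} The delicate point is Step 3's ordering requirement together with the sign bookkeeping of Step 2. Theorem~\ref{thm::cycl->diop} only provides a rewriting system, because recoloring rules increase the number of straight edges rather than decrease a global order. So we must check that a single admissible order has both the recoloring leading terms and the $\Com_2$-caterpillar leading terms on top. Our first attempt would be a weight ordering that compares:
\begin{enumerate}
\item the number of dotted internal edges first (more dotted means larger);
\item then the path-lexicographic order underlying the Gr\"obner basis of $\Com_2$.
\end{enumerate}
Both criteria are compatible with shuffle composition. If this fails, we fall back to stating the result as a quadratic convergent rewriting system, which suffices for Koszulness.
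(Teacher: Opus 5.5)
Your outline matches the paper's: colour the caterpillar Gr\"obner basis of $\Com_2$ with $\mathfrak c=\{(n,n)\}$, transfer it to $\QP^{!}$ across the output sign twist, then pass to $\QP$ by the opposite ordering. However, Step~1 does not work as you describe it, and the obstacle you single out is not the real one.

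For this $\mathfrak c$, every vertex of $\Psi$ is of one of two types: a straight output edge with two straight and one dotted input, or a dotted output edge with one straight and two dotted inputs. Your ``two and two'' count holds only if the output leg is counted with the opposite colour. Weight straight leaves by $+1$ and dotted leaves by $-1$. An induction then shows that an internal edge is straight exactly when the leaves above it sum to $+1$, and dotted when they sum to $-1$. So every internal colour is forced by the leaves, which has several consequences:
\begin{itemize}
\item there are no flexible edges, and the recoloring relations~\eqref{eq::rewrite::recolor} are vacuous;
\item each $[T]_{\mathfrak c}$ is a singleton, and the greedy argument of Lemma~\ref{lem::caterpillar::coloring} does not apply: it uses the admissibility condition of $\bZ_{>0}\times\bZ_{>0}$, whereas here changing one edge unbalances both of its endpoints;
\item your weight by the number of dotted internal edges resolves no conflict between rules.
\end{itemize}
The paper's proof rests exactly on this forced colouring.

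What the forced colouring actually breaks is the rule~\eqref{eq::rewrite::caterpillar}, which needs $[f_j]_{\bar c}\neq\emptyset$. Take a straight root with leaves $1,2$ dotted and $3,4,5$ straight; this is a legitimate $(3,3)$ arity. The right comb $\mu(x_1,x_2,\mu(x_3,x_4,x_5))$ would then need an upper vertex with three straight inputs, which is not a generator. In fact no single uncoloured shape is admissible for every leaf colouring: colour its upper leaf set straight and the remaining two leaves dotted. Hence the normal forms of $\Psi(\Theta_{\mathfrak c}(\Com_2))$ must depend on the colouring, and their confluence is not inherited from that of $\Com_2$.

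The missing step is therefore a direct verification. Fix an admissible ordering on coloured shuffle monomials and show that in each admissible coloured arity exactly one monomial avoids the quadratic leading terms. Since irreducible monomials always span and $\dim\QP^{!}(n,n)=1$, this is precisely the Gr\"obner property, after which your Steps~2--3 go through. The paper itself only asserts this step as ``easily verified'', and its description of the normal forms as right combs cannot hold for colourings like the one above. Your fallback to a mere convergent rewriting system would not prove the proposition, which claims a Gr\"obner basis.

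Finally, Step~2 needs no sign cocycle on shuffle monomials. In each admissible quadratic coloured arity, both operads are one-dimensional and every admissible monomial is nonzero, so the relations read $m=\pm m_0$. The quadratic leading terms, and hence the irreducible monomials, therefore coincide, and equality of dimensions does the rest, as in the paper.
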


\begin{proof}
    By a straightforward generalization of Theorem~\ref{thm::cycl->diop} and Corollary~\ref{cor::coloring::Koszul} to the coloring $\mathfrak{c} = \{(n,n) : n \in \mathbb{Z}_{>0}\}$, we conclude that $\Theta_{\mathfrak{c}}(\Com_2)$ admits a terminating and confluent quadratic rewriting system. This system is induced by the coloring of the quadratic Gr\"obner basis for the cyclic operad $\Com_2$. It is easily verified that this rewriting system corresponds to a Gr\"obner basis under the path-lexicographical ordering for any choice of ordering on the generators. The resulting set of normal forms consists of right-combs, (the coloring of the inputs and outputs uniquely determines the coloring of the internal edges).

    Note that for the dioperads $\QP^{!}$ and $\Theta_{\mathfrak{c}}(\Com_2)$ -- and, consequently, for their associated $2$-colored shuffle operads $\Psi(\QP^{!})$ and $\Psi(\Theta_{\mathfrak{c}}(\Com_2))$ -- the sets of generators and the leading terms of the quadratic relations coincide. Therefore, the set of monomials not divisible by the leading terms of the quadratic relations is identical for both colored shuffle operads and consists of right-normalized combs.
    Finally, since the dimensions of these operads coincide, the set of normal forms constitutes a basis in both cases. This establishes the existence of a quadratic Gr\"obner basis for $\QP^{!}$. Applying the opposite ordering on monomials yields the quadratic Gr\"obner basis for $\QP$.
\end{proof}

\end{document}